\theoremstyle{plain}
\newtheorem{theorem}{Theorem}[section]
\newtheorem{assumption}[theorem]{Assumption}
\newtheorem{lemma}[theorem]{Lemma}
\newtheorem{corollary}[theorem]{Corollary}
\newtheorem{proposition}[theorem]{Proposition}
\theoremstyle{remark}
\newtheorem{remark}[theorem]{Remark}
\newtheorem{example}[theorem]{Example}
\def\C{{\mathbf C}}
\def\R{{\mathbf R}}
\def\N{{\mathbf N}}
\def\Z{{\mathbf Z}}
\def\dd{\mathrm d}
\def\({\left(}
\def\){\right)}
\def\<{\left\langle}
\def\>{\right\rangle}
\def\1{{\mathbf 1}}
\def\eps{\varepsilon}
\newcommand{\Eb}{\mathbb{E}}
\newcommand{\En}{\mathcal{E}}
\newcommand{\Etwo}{\mathbb{E}_{\mathrm{GL}}}
\newcommand{\Etwoc}{\mathcal{E}_{\mathrm{GL}}}
\newcommand{\Hc}{\mathcal{H}}
\newcommand{\Fc}{\mathcal{F}_c}
\newcommand{\Fone}{\mathcal{F}_1}
\newcommand{\Na}{\mathcal{N}_1}
\newcommand{\Nb}{\mathcal{N}_2}
\newcommand{\Nbl}{\mathcal{N}_{2,\mathrm{bd}}}
\newcommand{\Nbh}{\mathcal{N}_{2,\mathrm{int}}}
\newcommand{\psil}{\psi_{\mathrm{bd}}}
\newcommand{\psih}{\psi_{\mathrm{int}}}
\newcommand{\Zt}{Z_T}
\newcommand{\imu}{\mathrm{i}}
\newcommand{\eul}{\mathrm{e}}
\newcommand{\eitD}{\eul^{\frac{\imu}{2}t\Delta}}
\DeclareMathOperator{\RE}{Re}
\DeclareMathOperator{\IM}{Im}
\newcommand{\supp}{\operatorname{supp}}
\newcommand{\loc}{\operatorname{loc}}
\numberwithin{equation}{section}
\date\today
\title[Well-posedness for NLS with non-vanishing conditions at infinity]{Finite energy well-posedness for nonlinear Schr{\"o}dinger equations with non-vanishing conditions at infinity}
\author[P. Antonelli]{Paolo Antonelli}
\address{Gran Sasso Science Institute, viale Francesco Crispi, 7, 67100 L'Aquila, Italy}
\email{paolo.antonelli@gssi.it}
\author[L.E. Hientzsch]{Lars Eric Hientzsch}
\address{
Universit\"at Bielefeld, Fakult\"at f\"ur Mathematik, Postfach 10 01 31, 33501 Bielefeld, Germany \\
Department of Mathematics, Karlsruhe Institute of Technology, Englerstraße 2, 76131 Karlsruhe, Germany
}
\email{lars.hientzsch@kit.edu}
\author[P. Marcati]{Pierangelo Marcati}
\address{Gran Sasso Science Institute, viale Francesco Crispi, 7, 67100 L'Aquila, Italy}
\email{pierangelo.marcati@gssi.it}
\subjclass{Primary: 35Q55; Secondary: 35B30, 37L50.}
 \keywords{nonlinear Schr{\"o}dinger equation, Gross-Pitaevskii, well-posedness,  non-vanishing conditions at infinity}
\begin{document}
\begin{abstract}
Relevant physical phenomena are described by nonlinear Schr\"odinger equations with non-vanishing conditions at infinity. 
This paper investigates the respective 2D and 3D Cauchy problems. Local well-posedness in the (curved) energy space for energy-subcritical nonlinearities, merely satisfying Kato-type assumptions, is proven, providing the analogue of the well-established local $H^1$-theory for solutions vanishing at infinity. The critical nonlinearity will be simply a byproduct of our analysis and the existing literature.
Under an assumption that prevents the onset of a Benjamin-Feir type instability, global well-posedness in the energy space is proven for a) non-negative Hamiltonians, b) sign-indefinite Hamiltonians under additional assumptions on the zeros of the nonlinearity, c) generic nonlinearities and small initial data. The cases b) and c) only concern the 3D case.
\end{abstract}
\maketitle
\section{Introduction}
This paper is devoted to the study of the Cauchy theory for nonlinear Schr{\"o}dinger equations posed on $\R^d$ with $d=2,3$, namely 
\begin{equation}\label{eq:NLS}
    i\partial_t\psi=-\frac{1}{2}\Delta \psi+f(|\psi|^2)\psi,
\end{equation}
equipped with non-trivial boundary conditions at infinity, i.e.
\begin{equation}\label{eq:farfield}
    |\psi(x)|^2\rightarrow \rho_0 \quad \text{as} \quad |x|\rightarrow \infty, 
\end{equation}
and where the nonlinearity satisfies $f(\rho_0)=0$.
Without loss of generality, we assume $\rho_0=1$ as the general case is obtained by a suitable scaling.
The Hamiltonian (coinciding with the total energy, in many relevant physical contexts) associated to \eqref{eq:NLS} is given by
\begin{equation}\label{eq:HamiltonianNLS}
    \Hc(\psi)=\int_{\R^d}\frac{1}{2}|\nabla\psi|^2+F(|\psi|^2)\dd x, \quad \text{with} \quad  F(\rho)=\int_{1}^\rho f(r)\dd r.
\end{equation}
The finite energy assumption encodes \eqref{eq:farfield}. Namely, we deal with infinite energy solutions having finite relative energy with respect to the far-field state.\\  
The system \eqref{eq:NLS}-\eqref{eq:farfield} appears in relevant physical applications. Most prominently, the Gross-Pitaevskii (GP) equation, i.e. $f(\rho)=\rho-1$, is studied as model for Bose-Einstein condensates (BEC) \cite{Gross, P, GR74, PS}, superfluidity in Helium II close to the $\lambda$-point \cite{GinzburgP,P} and for quantum vortices \cite{P}, see also \cite{Berloff}. 
Competing (focusing-defocusing) see e.g. \eqref{eq:cubicquintic}, saturating or exponential nonlinearities for \eqref{eq:NLS}-\eqref{eq:farfield} emerge as models in nonlinear optics \cite{Barashenkov89, KL98, KR95, PelinovskySK}. Further physically relevant models are listed in Example \ref{ex:nonlinearities} below.\\
In the first part of the paper, we establish local well-posedness in the energy space for \eqref{eq:NLS}-\eqref{eq:farfield} with energy-subcritical nonlinear potentials $f$ under Kato-type \cite{Kato87} regularity assumptions. The continuity of the solution map is proven with respect to the topology of the (curved) energy space and not only in affine spaces. {The choice of a suitable functional framework plays a crucial role for the stability analysis of particular solutions \cite{ChironM, Pacherie}}. Indeed, as pointed out in \cite{Gerard08} and \cite[Remark 1.2]{CollotGermainPacherie} the constant solution with $|c|=1$ is linearly unstable in the affine space $1+H^1(\R^d)$ while orbitally stable in the energy space for $d=1,2$. A similar result holds for the Ginzburg-Landau vortex of degree one \cite{Pacherie}.
\\
Second, global well-posedness is proven, provided that $f'(1)>0$, see Assumption \ref{ass:D} below. Specifically, global well-posedness is shown for sign-definite total energies and $d=2,3$, and for sign-indefinite total energies and $d=3$ under  suitable additional assumptions on $f$ and the decay of the initial data at infinity or alternatively for small initial data. \\
Regarding the $3D$-energy critical problem, we remark that global well-posedness is easily achieved relying on the existing literature \cite{KOPV, CKSTT, TaoVisanZhang} combined with our analysis for the sub-critical case, see Section \ref{sec:energy critical}. \\
The mathematical analysis of \eqref{eq:NLS}, with far-field behavior \eqref{eq:farfield}, differs significantly from the usual $H^1$-theory for NLS equations with trivial far-field. Finite energy wave-functions are not integrable and may exhibit non-trivial oscillations at spatial infinity, in particular for $d=2$. \\
System \eqref{eq:NLS}-\eqref{eq:farfield} with defocusing nonlinearity exhibits a very rich dynamics and admits a large variety of special solutions, contrary to the case of vanishing far-field \cite{GV85}. Concerning the GP equation, the existence of sub-sonic traveling waves is known for $d=2$ \cite{BethuelS, BethuelGS} and $d=3$ \cite{BethuelS,BethuelOS, Chiron}, while non-existence in the super-sonic regime is proven in \cite{Gravejat}. 
Traveling waves exist for arbitrarily small energy for $d=2$ \cite{BethuelS}. On the contrary, for $d=3$ non-existence of traveling waves with small energy is proven in \cite{BethuelGS, deLaire}.\\
For general defocusing nonlinearities, including the nonlinearities considered in Assumption \ref{ass:D} below, the existence of sub-sonic traveling waves is investigated in \cite{Maris13, ChironM}. Non-existence in the super-sonic regime is shown in \cite{Maris08}. For $d=2$, traveling waves exist for any, and in particular arbitrarily small energy ruling out scattering, while for $d=3$ there is an energy threshold below which no traveling waves exist. 
We remark that the assumptions given in \cite{Maris13, ChironM} are strongly related with our assumptions on the nonlinear potential $f$.
The stability of multi-dimensional traveling waves is addressed in \cite{Chiron13, LinWZ} and stationary bubbles and their stability in \cite{dBouard}. Transverse instability is studied in \cite{kuznetsov}. The GP equation admits vortex solutions with infinite energy, see \cite{P, BethuelSmets} and \cite{Weinstein, Pacherie} as well as references therein for stability properties. \\
Regarding large time behavior, the existence of global dispersive solutions and small data scattering for the $3D$ and $4D$-GP equation has been investigated in a series of papers \cite{GNTT06, GNTT07, GNTT09, GuoHN}. In \cite{Killip16, Killip18} the final state problem is considered for the 3D defocusing cubic-quintic equation which is energy-critical. For general nonlinear potentials $f$, the respective problems remain open. 
\subsection{Previous well-posedness results}
Local existence of solutions to GP in Zhidkov spaces has been investigated in \cite{Zhidkov87, Zhidkov01} for $d=1$ and \cite{Gallo04} for the multi-dimensional case. 
For $d=1$, GP is known to be completely integrable \cite{zakharovShabat}. The global well-posedness of GP in the energy space is shown in \cite{Zhidkov87} and has recently been proven for fractional Sobolev and low regularity in \cite{KochLiao, KochLiao1}.
While the energy space for GP for $d=1$ coincides with the set of functions in the Zhidkov space, such that $|\psi|^2-1\in L^2(\R)$, this identification does not hold true in the multi-dimensional case, see \cite{Gerard} and Section \ref{sec:energyspace} below. The GP is well-posed in $1+H^1(\R^d)$ for $d=2,3$ \cite{BethuelS}. Global well-posedness in $1+H^s(\R^3)$ with $s\in(5/6,1)$ is proven in \cite{Pecher}. However, the space $1+H^1(\R^d)$ is strictly smaller than the natural energy space $\Eb(\R^d)$, see \eqref{eq:defEb} below.
In fact, there exist traveling waves for GP in the energy space that do not belong to $1+L^2(\R^d)$, see \cite{Gravejat}. Global well-posedness in the energy space for the multi-dimensional GP has been introduced in the seminal paper \cite{Gerard}. One of the major novelties of \cite{Gerard} consists in the precise characterization of the energy space as complete metric space and the action of the free propagator on the energy space. A more general class of defocusing and energy-subcritical $C^3$-nonlinearities has been considered in \cite{Gallo} with subsequent improvement to $C^2$-nonlinearities \cite{Miyazaki}. In \cite{Gallo, Miyazaki}, the respective authors crucially rely on a smooth decomposition of wave-functions in the energy space. Global well-posedness is proven in affine spaces determined by this decomposition which requires the aforementioned regularity assumptions and precise growth conditions for $f$. The result in the affine spaces then implies existence and uniqueness in the energy space. The cubic-quintic equation being energy-critical is studied in \cite{KOPV, Killip16, Killip18}.\\
In \cite{Carles24}, global existence of unique mild solutions to \eqref{eq:NLS}-\eqref{eq:farfield} with a logarithmic nonlinearity is introduced.\\
\subsection{Local well-posedness results}
Our first purpose is to prove local well-posedness assuming merely Kato-type regularity assumptions \cite{Kato87} and with the continuous dependence on the initial data is stated with respect to the topology of the energy space. \\
Let us point out that our well-posedness result will also be useful in the study of a class of quantum hydrodynamic (QHD) systems with non-trivial far-field \cite{AHMX}, see also \cite{AHMZ, H} for some previous results in this direction. The analysis of the Cauchy problem for QHD systems with non-zero conditions at infinity is pivotal to initiate a rigorous study of some relevant physical phenomena described by quantum fluid models, see for instance \cite{Berloff, GR74}.\\
Our main assumptions on the nonlinearity $f$ are the following. 
\begin{assumption}\label{ass:N}
Let $f$ be a real-valued function satisfying the following Kato-type assumptions, namely
\begin{enumerate}[(K1)]
    \item $f\in C([0,\infty))\cap C^1((0,\infty))$ and is such that $f(1)=0$,
     \item the nonlinearity is energy-subcritical, namely there exists $\alpha>0$, with $\alpha<\infty$ for $d=2$ and $\alpha<2$ for $d=3$, such that
     \begin{equation*}
         |f(\rho)|, |\rho f'(\rho)|\leq C(1+\rho^\alpha)
     \end{equation*}
     for all $\rho\geq 0$.
\end{enumerate}
\end{assumption}
The assumptions \emph{(K1), (K2)} are commonly referred to as Kato-type assumptions, see \cite{Kato87, Kato89} and also \cite[Chapter 4]{Cazenave}. For trivial far-field behavior, namely integrable wave-functions $\psi$, these assumptions correspond to the state of the art for the $H^1$-well-posedness for energy-subcritical nonlinearities $f$, see \cite{Cazenave} and references therein for a detailed overview of the theory.\\
The energy-subcritical power-type nonlinearities constitute an example of nonlinearities that satisfy Assumption \ref{ass:N} but in general not covered by \cite{Gallo, Gerard, Miyazaki}.
\begin{example}
The energy-subcritical power-type nonlinearities read
\begin{equation}\label{eq:powernonlinearity}
f(|\psi|^2)=\lambda(|\psi|^{2\alpha}-1), \qquad \text{with} \quad \lambda=\pm 1  \, \, \text{and} \,\,\begin{cases}
    \alpha>0 \quad &\text{for}\, d=2,\\
    0<\alpha<2 \quad &\text{for}\, d=3.
    \end{cases}
\end{equation}
These nonlinearities being included in Assumption \ref{ass:N} merely satisfy $f\in C^{0,\alpha}([0,\infty))$. Previous results require $\lambda=+1$ and $\alpha=1$ \cite{Gerard}, $\lambda>0$, and $f\in C^3([0,\infty))$ \cite{Gallo} or $f\in C^2([0,\infty))$ \cite{Miyazaki}.
The corresponding nonlinear potential energy density reads
\begin{equation}\label{eq:power-potential}
    F(|\psi|^2)=\int_{1}^{|\psi|^2}f(r)\dd r=\frac{\lambda}{\alpha(\alpha+1)}\left(|\psi|^{2(\alpha+1)}-1-(\alpha+1)(|\psi|^2-1)\right).
\end{equation}
For $\lambda=1$, we note that  $F:[0,\infty)\rightarrow \R$ is non-negative, convex and with global minimum achieved by $|\psi|^2=1$. For $\lambda=\alpha=1$, system \eqref{eq:NLS} with nonlinearity \eqref{eq:powernonlinearity} corresponds to the GP-equation
 \begin{equation}\label{eq:GP}
     i\partial_t\psi=-\frac12\Delta\psi+(|\psi|^2-1)\psi,
 \end{equation}
 for which the associated Hamiltonian energy $\mathcal{H}(\psi)$ becomes the well-known Ginzburg-Landau energy functional 
  \begin{equation}\label{eq: EGL}
  \Etwoc(\psi):=\mathcal{H}(\psi)=\int_{\R^d}\frac{1}{2}|\nabla\psi|^2+\frac{1}{2}(|\psi|^2-1)^2\dd x.
\end{equation}
Global well-posedness of \eqref{eq:GP} in the energy space has been established in \cite{Gerard} in the space of states where the associated Hamiltonian is finite, namely
\begin{equation}\label{eq:EGP}
     \begin{aligned}
         \Etwo&=\{\psi\in L_{\loc}^1(\R^d) : \mathcal{H}(\psi)<+\infty \}\\
         &=\{\psi\in L_{\loc}^1(\R^d) : \nabla\psi\in L^2(\R^d), |\psi|^2-1\in L^2(\R^d) \}.
     \end{aligned}
\end{equation}
\end{example}
In the present paper, we define the \emph{energy space} in the spirit of \cite{Zhidkov89, Zhidkov01, ChironM} as
\begin{equation}\label{eq:defEb}
    \Eb(\R^d)=\{\psi\in L_{\loc}^1(\R^d)   :  \mathcal{E}(\psi)<\infty\}
\end{equation}
with
\begin{equation}\label{eq:defE}
    \mathcal{E}(\psi)=\int_{\R^d}|\nabla\psi|^2+\left||\psi|-1\right|^2\dd x.
\end{equation}
As $||\psi|-1|\leq ||\psi|^2-1|$ it follows that $\Etwo \subset\Eb$ and the converse inclusion is straightforward to check, see Lemma \ref{lem:finiteenergy}. Working in $\Eb$ rather than $\Etwo$ is more convenient in several aspects when dealing with a general class of nonlinearities $f$ satisfying Assumption \ref{ass:N}.\\
Wave functions in $\Eb(\R^d)$ may exhibit oscillations at spatial infinity due to the non-vanishing far-field behavior, especially for $d=2$. Since $\psi\notin L^p(\R^d)$ for any $p\geq 1$, the mass is infinite. As its properties are central to the well-posedness theory, a detailed analysis of $\Eb(\R^d)$ is provided in Section \ref{sec:energyspace}. At this stage, we only mention that $\Eb(\R^d)\subset \{\Hc(\psi)<+\infty\}$ and that $\Eb(\R^d)\subset X^1(\R^d)+H^1(\R^d)$, where $X^1$ denotes the Zhidkov space \cite{Zhidkov87,Zhidkov01} defined by
\begin{equation}\label{eq:Zhidkov}
    X^1(\R^d)=\{\psi\in L^{\infty}(\R^d)   :   \nabla\psi\in L^2(\R^d)\}, \quad \|\psi\|_{X^1(\R^d)}:=\|\psi\|_{L^{\infty}(\R^d)}+\|\nabla\psi\|_{L^{2}(\R^d)}.
\end{equation}
While $\Eb$ is not a vector space, we notice that 
\begin{equation}\label{eq:metricE}
    d_{\Eb}(\psi_1,\psi_2)=\|\psi_1-\psi_2\|_{X^1+H^1}+\||\psi_1|-|\psi_2|\|_{L^2}
\end{equation}
defines a metric on $\Eb$ and $(\Eb,d_{\Eb})$ is a complete metric space. We recall that for a sum of Banach spaces, the norm is defined by
\begin{equation*}
    \|\psi\|_{X^1+H^1}=\inf\left\{\|\phi\|_{X_1}+\|u\|_{H^1} \,  : \,  \psi=\phi+u\right\}.
\end{equation*}
 Note, that the two metric spaces $\mathbb E$ and $\Etwo$ turn out to be equivalent, see Lemmata \ref{lem:metric} and \ref{lem:identification Eb} below.\\
Our first main result provides local well-posedness for \eqref{eq:NLS} in the energy space $\Eb$. It suffices to consider positive existence times. Local existence for negative times follows, as usual, from the time reversal symmetry of \eqref{eq:NLS}.
\begin{theorem}\label{thm:mainlocal}
Let $d=2,3$ and let $f$ be as in Assumption \ref{ass:N}. Then \eqref{eq:NLS} is locally well-posed in the energy space $\Eb(\R^d)$. More precisely,
\begin{enumerate}
    \item for any $\psi_0\in \Eb(\R^d)$, there exist a maximal time of existence $T^{\ast}>0$ and a unique solution $\psi\in C([0,T^{\ast});\Eb(\R^d))$ with initial data $\psi(0)=\psi_0$. The following blow-up alternative holds namely, either $T_{\ast}=\infty$ or
    \begin{equation}\label{eq:blow_up_alt}
     \lim_{t\nearrow T^{\ast}}\En(\psi)(t)=+\infty;
    \end{equation}
    \item $\psi-\psi_0\in C([0,T^{\ast});H^1(\R^d))$;
    \item the solution depends continuously on the initial data with respect to the topology induced by the metric $d_{\Eb}$;
    \item the identity $\mathcal{H}(\psi)(t)=\mathcal{H}(\psi_0)$ holds for all $t\in [0,T^{\ast})$;
     \item if in addition $\Delta\psi_0\in L^2(\R^d),$ then $\Delta\psi\in C([0,T^{\ast});L^2(\R^d))$.
\end{enumerate}
\end{theorem}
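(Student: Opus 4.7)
The plan is to perform a fixed-point argument on the Duhamel formulation of \eqref{eq:NLS}, writing a candidate solution as $\psi(t)=\eitD\psi_0+v(t)$ with $v(0)=0$ satisfying
\begin{equation*}
v(t) = -i\int_0^t e^{i(t-s)\Delta/2}\bigl[f(|\psi|^2)\psi\bigr](s)\,ds.
\end{equation*}
A preliminary step is to verify that $\eitD$ defines a continuous dynamics on $\Eb$, so that $\eitD\psi_0\in C(\R;\Eb)$. Since $\eitD$ commutes with $\nabla$ and is $L^2$-unitary, the gradient component is immediate; the more delicate $L^2$ control of $|\eitD\psi_0|-1$ follows by adapting the $\Etwo$-analysis of \cite{Gerard} to $\Eb$, using the detailed study of the energy space provided in Section \ref{sec:energyspace}.

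The fixed point will be set up in a ball of $Y_T := C([0,T];H^1(\R^d))\cap L^q([0,T];W^{1,r}(\R^d))$ for an admissible Strichartz pair $(q,r)$ chosen according to the dimension. The core estimate is control of $f(|\psi|^2)\psi$ in a dual Strichartz norm, where the crucial use of Assumption \ref{ass:N} is the vanishing $f(1)=0$: rewriting $f(|\psi|^2)\psi=(f(|\psi|^2)-f(1))\psi$ and using (K1)--(K2), one gets pointwise bounds of the form $|f(|\psi|^2)-f(1)|\lesssim\bigl||\psi|^2-1\bigr|(1+|\psi|^{2\max(\alpha-1,0)})$ and corresponding Lipschitz-type estimates for differences. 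Combined with the decomposition $\psi=\eitD\psi_0+v$, treating $\eitD\psi_0$ as a background with finite energy (hence controlled via $\Eb\subset X^1+H^1$), Sobolev embeddings in $d=2,3$ and the subcriticality $\alpha<2$ (resp.\ $\alpha<\infty$) provide the slack to close a contraction on $Y_T$ for $T$ sufficiently small. This yields local existence, uniqueness, property (2) (since $\eitD\psi_0-\psi_0\in C([0,T];H^1)$ automatically and $v\in C([0,T];H^1)$), and the blow-up alternative \eqref{eq:blow_up_alt} via a standard continuation argument. Property (5), propagation of $\Delta\psi_0\in L^2$, follows by a persistence-of-regularity argument applied to $\Delta v$ in the same Strichartz loop. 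Property (4), energy conservation, is obtained by approximation: regularise the initial datum to obtain smooth solutions for which the Hamiltonian identity is rigorous, then pass to the limit using the continuity of $\Hc$ and of the solution map.

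The main obstacle is property (3), continuous dependence with respect to the metric $d_{\Eb}$. Given $\psi_0^{(n)}\to\psi_0$ in $d_{\Eb}$, one must control simultaneously $\|\psi^{(n)}(t)-\psi(t)\|_{X^1+H^1}$ and $\||\psi^{(n)}(t)|-|\psi(t)|\|_{L^2}$, uniformly on $[0,T]$. The first quantity reduces, via the decomposition $\psi^{(n)}-\psi=\eitD(\psi_0^{(n)}-\psi_0)+(v^{(n)}-v)$, to continuity of $\eitD$ on $X^1+H^1$ together with the Strichartz difference estimate already used for the contraction. The genuinely subtle point is the $L^2$ control of the moduli, since neither $|\psi^{(n)}|$ nor $|\psi|$ belongs to $L^2$ individually: the key identity
\begin{equation*}
(|\psi^{(n)}|-|\psi|)(|\psi^{(n)}|+|\psi|)=|\psi^{(n)}|^2-|\psi|^2 = 2\RE\bigl(\overline{\psi}(\psi^{(n)}-\psi)\bigr)+|\psi^{(n)}-\psi|^2
\end{equation*}
converts the claim into an $L^2$ estimate on the right-hand side, accessible via the $H^1$ convergence of $\psi^{(n)}-\psi$ combined with the uniform boundedness of $|\psi^{(n)}|+|\psi|$ in the appropriate mixed $X^1+H^1$ sense. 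Uniform-in-$n$ control of the energies, propagated by item (4), closes the argument and yields convergence directly in $(\Eb,d_{\Eb})$, bypassing the affine-space decomposition employed in \cite{Gallo,Miyazaki}.
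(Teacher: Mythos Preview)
Your overall strategy matches the paper's: Duhamel decomposition $\psi=\eitD\psi_0+v$, Kato-type fixed point for $v$ in an $H^1$-based ball, blow-up alternative by continuation, persistence of regularity for (5), and energy conservation by approximation for (4). The gap is in property (3), and it is not a presentational gap but a genuine one.

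First, the ``Strichartz difference estimate already used for the contraction'' does \emph{not} give $H^1$ convergence of $v^{(n)}-v$. Under Assumption \ref{ass:N} alone, $\nabla\mathcal{N}(\psi)$ is \emph{not} locally Lipschitz in $\psi$ (this requires the stronger hypothesis \eqref{ass:lipschitz} of Theorem \ref{thm:lipschitz}). The fixed point is therefore a contraction only in the \emph{weak} metric $d_X$ built from $L^\infty_t L^2_x$ (and $L^q_tL^r_x$ in $d=3$), while the ball is defined in $L^\infty_t H^1_x$; this is precisely Kato's trick. So the contraction yields $v^{(n)}\to v$ in $L^2$-type norms, but says nothing directly about $\nabla v^{(n)}-\nabla v$. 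The paper recovers gradient convergence by a separate argument (Step~2 in the proof of Proposition \ref{prop:localWP}): one writes $\nabla\mathcal{N}(\psi^{(n)})-\nabla\mathcal{N}(\psi)$, absorbs the piece proportional to $\nabla\psi^{(n)}-\nabla\psi$ for $T$ small, and then shows the remaining piece $(G(\psi^{(n)})-G(\psi))\nabla\psi\to 0$ in $N^0$ by contradiction and dominated convergence, using the a.e.\ convergence coming from Step~1. This is a continuity statement, not a Lipschitz estimate, and your sketch omits it.

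Second, your modulus identity $|\psi^{(n)}|^2-|\psi|^2=2\RE(\overline{\psi}(\psi^{(n)}-\psi))+|\psi^{(n)}-\psi|^2$ requires $\psi^{(n)}-\psi$ in some integrable space, but in general (especially for $d=2$, where $\Eb$ has uncountably many connected components) $\psi_0^{(n)}-\psi_0$ lies only in $X^1+H^1$, not in $L^2$ or $H^1$; the same holds for $\psi^{(n)}(t)-\psi(t)$. So the right-hand side is not obviously in $L^2$, and ``$H^1$ convergence of $\psi^{(n)}-\psi$'' is not available. The paper circumvents this by introducing an auxiliary metric involving $\|\psi_1-\psi_2\|_{L^\infty_t(L^\infty_x+L^2_x)}$ together with $\||\psi_1|-|\psi_2|\|_{L^2_tL^2_x}$ (Lemma \ref{lem:CDOID} and Lemma \ref{coro:nonlinear flow}), proves Lipschitz control of $\mathcal{N}(\psi_1)-\mathcal{N}(\psi_2)$ in $N^0$ with respect to \emph{that} metric, and only then bootstraps to $d_{\Eb}$ via the triangle-type inequality \eqref{ineq:triangular}. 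Your identity is morally part of this, but the $X^1+H^1$ versus $L^2$ distinction, and the need to work in an auxiliary metric first, are essential and missing from your outline.
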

Note that (2) of Theorem \ref{thm:mainlocal} states that $\psi$ and $\psi_0$ share the same far-field behavior, i.e. they belong to the same connected component of $\Eb(\R^d)$ for all $t\in [0,T^{\ast})$, see Remarks \ref{rem: connected components 3D} and \ref{rem: connected components 2D}. Moreover, it can be shown that the nonlinear flow $\psi-\eitD\psi_0$ belongs to the full range of Strichartz spaces, see Proposition \ref{prop:localWP} and \ref{prop:LWP3d} for $d=2,3$ respectively. The precise notion of continuous dependence on the initial data is given in Proposition \ref{prop:localWP} and \ref{prop:LWP3d}. The topological structure of the metric space $(\Eb(\R^d),d_{\Eb})$ differs for $d=2$ and $d=3$, see \cite{Gerard,Gerard08}. For $d=3$, the energy space $\Eb(\R^3)$ has an affine structure; if $\psi\in \Eb(\R^3)$ then $\psi=c+v$ for some $c\in S^1$, $v\in \dot{H}^1(\R^3)$. For $d=2$, unbounded phase oscillations may occur at spatial infinity that rule out characterizing the energy space with an affine structure. The space $(\Eb(\R^2),d_{\Eb})$ is not separable. Given its relevance for the well-posedness theory, this question is going to be addressed in detail in Section \ref{sec:energyspace}. In particular, one may introduce a weaker topology that restores separability and connectedness. 
Note that this affine structure of the energy space is available for higher dimensions $d\geq 4$ to which our approach adapts. As $\Eb(\R)\subset X^1(\R)$,  the local well-posedness theory simplifies for $d=1$. We expect our approach to extend to $d=1$. Previous results \cite{Gallo04, Gallo, GialelisStratis} do not cover the full generality of Assumption \ref{ass:N}.\\
Assumption \ref{ass:N} is not sufficient in order to prove that the solution map is Lipschitz continuous. This is analogue to the $H^1$-theory for \eqref{eq:NLS} with vanishing far-field behavior. Indeed, for instance for power-law type nonlinearities \eqref{eq:powernonlinearity} Lipschitz continuity of the solution map can only be expected if $\alpha\geq \frac{1}{2}$ for both vanishing and non-vanishing far-field, see \cite[Remark 4.4.5]{Cazenave} and Section \ref{sec:Lipschitz} respectively. 
\begin{theorem}\label{thm:lipschitz}
Let $d=2,3$ and $f$ be as in Assumption \ref{ass:N}. If in addition,
\begin{equation}\label{ass:lipschitz}
f\in C^{1}([0,\infty))\cap C^2((0,\infty)), \,\, \left|\sqrt{\rho}f'(\rho)\right|, \left|\rho^{\frac{3}{2}}f''(\rho)\right|\leq C(1+\rho^{\max\{0,\alpha-\frac12\}}),    
\end{equation}
then the solution map is Lipschitz continuous on bounded sets of $\Eb(\R^d)$.\\
Namely, for any $r,R>0$ and $\psi_0^{\ast}\in \Eb(\R^d)$ such that $\En(\psi_0^{\ast})\leq R$ let $\mathcal{O}_{r}:=\{\psi_0\in \Eb(\R^d) : d(\psi_0, \psi_0^{\ast})\leq r\}$. Then, there exists $T^{\ast}(\mathcal{O}_r)>0$ such that $\psi\in C([0,T^{\ast});\Eb(\R^d))$ for all initial data $\psi(0)=\psi_0\in \mathcal{O}_r$. Moreover, for any $0<T<T^{\ast}(\mathcal{O}_r)$ there exists $C>0$ such that for any $\psi_1,\psi_2\in C([0,T];\Eb(\R^d))$ with initial data $\psi_0^1,\psi_0^2\in \mathcal{O}_r$, we have 
\begin{equation}\label{eq:Lipschitz main}
    \sup_{t\in[0,T]}d_{\Eb}(\psi_1(t),\psi_2(t))\leq C d_{\Eb}(\psi_0^1,\psi_0^2).
\end{equation}
\end{theorem}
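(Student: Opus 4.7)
The strategy is to combine Theorem \ref{thm:mainlocal} with a Cazenave-style Strichartz/Gronwall argument for the difference of two solutions; the assumption \eqref{ass:lipschitz} is precisely what makes the nonlinearity Lipschitz at the level of $H^1$-Strichartz. First, continuity of the flow and the blow-up alternative \eqref{eq:blow_up_alt} yield a uniform lifespan $T^{\ast}(\mathcal{O}_r)>0$ on which every solution starting in $\mathcal{O}_r$ is defined, with uniformly bounded energy and uniform control on the $H^1$-Strichartz norms of the nonlinear flows $v_i:=\psi_i-\eitD\psi_0^i$ supplied by Propositions \ref{prop:localWP} and \ref{prop:LWP3d}. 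Setting $w:=\psi_1-\psi_2$ one has
\begin{equation*}
w(t)=\eitD(\psi_0^1-\psi_0^2)+(v_1-v_2)(t),
\end{equation*}
so the $X^1+H^1$-part of $d_{\Eb}(\psi_1(t),\psi_2(t))$ splits into a purely linear and a Duhamel contribution, to be treated separately, while the $L^2$-part $\||\psi_1|-|\psi_2|\|_{L^2}$ must be handled on its own.

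For the linear contribution I would prove the propagation bound $\|\eitD g\|_{X^1+H^1}\leq C(1+t)\|g\|_{X^1+H^1}$. Splitting $g=g_X+g_H$, the $H^1$-piece is an isometry; for the $X^1$-piece one writes $\eitD g_X=g_X+(\eitD-1)g_X$ and checks that $(\eitD-1)g_X\in H^1$ with norm $\leq C(1+t)\|\nabla g_X\|_{L^2}$. Since $\nabla(\eitD-1)g_X=(\eitD-1)\nabla g_X$ is immediately in $L^2$ with norm $\leq 2\|\nabla g_X\|_{L^2}$, and since on $\{|\xi|\leq 1\}$ and $\{|\xi|>1\}$ the bound $\bigl|\eul^{-\imu t|\xi|^2/2}-1\bigr|\leq\min(t|\xi|^2,2)$ combined with $\||\xi|\hat g_X\|_{L^2}=\|\nabla g_X\|_{L^2}$ controls the $L^2$-piece even though $g_X$ itself is only in $L^{\infty}\cap\dot H^1$, this reduces to a direct Plancherel computation.

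For the Duhamel contribution, a chain-rule expansion along $\psi_\theta=\theta\psi_1+(1-\theta)\psi_2$ gives
\begin{equation*}
\nabla\bigl[f(|\psi_1|^2)\psi_1-f(|\psi_2|^2)\psi_2\bigr]=\sum_j A_j(\psi_\theta)\,\nabla w+\sum_j B_j(\psi_\theta,\nabla\psi_\theta)\,w,
\end{equation*}
whose coefficients are built out of $f(|\psi_\theta|^2)$, $|\psi_\theta|^2f'(|\psi_\theta|^2)$, $\psi_\theta^2 f'(|\psi_\theta|^2)$ and $|\psi_\theta|^3f''(|\psi_\theta|^2)$; assumption \eqref{ass:lipschitz} is exactly tuned so that each such coefficient is bounded by $C(1+|\psi_\theta|^{2\alpha})$, matching the subcritical exponents already allowed by Assumption \ref{ass:N}. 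Plugging this bound into $H^1$-Strichartz applied to
\begin{equation*}
(v_1-v_2)(t)=-\imu\int_0^t e^{\frac{\imu(t-s)}{2}\Delta}\bigl[f(|\psi_1|^2)\psi_1-f(|\psi_2|^2)\psi_2\bigr]\,ds,
\end{equation*}
using the uniform Strichartz control on $\psi_i$ and closing by Gronwall on short intervals, produces $\|v_1-v_2\|_{L^{\infty}_tH^1_x([0,T])}\leq C\,d_{\Eb}(\psi_0^1,\psi_0^2)$ for every $T<T^{\ast}(\mathcal{O}_r)$.

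The main obstacle I anticipate is the $L^2$-bound on $\||\psi_1|-|\psi_2|\|_{L^2}$: the pointwise estimate $||\psi_1|-|\psi_2||\leq|w|$ is useless here because $\psi_0^1-\psi_0^2$ generically carries a non-trivial $X^1$-component not in $L^2$, especially for $d=2$ where wave functions in $\Eb$ may exhibit distinct phase asymptotics at infinity. The remedy is to work with $|\psi_1|^2-|\psi_2|^2\in L^2$: since the nonlinearity is phase-independent, the mass density obeys $\partial_t|\psi_i|^2=-\diver J_i$, $J_i=\IM(\bar\psi_i\nabla\psi_i)$, so
\begin{equation*}
\tfrac{d}{dt}\bigl\||\psi_1|^2-|\psi_2|^2\bigr\|_{L^2}^2=2\int\nabla(|\psi_1|^2-|\psi_2|^2)\cdot(J_1-J_2)\,\dd x,
\end{equation*}
and the identity $J_1-J_2=\IM(\bar\psi_2\nabla w+\bar w\nabla\psi_1)$ reduces the right-hand side to bilinear expressions in $w$ and $\nabla w$ already controlled by the preceding step. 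Passage from $\||\psi_1|^2-|\psi_2|^2\|_{L^2}$ back to $\||\psi_1|-|\psi_2|\|_{L^2}$ is finally carried out via the equivalence of the $\Eb$- and $\Etwoc$-metrics established in Lemmata \ref{lem:metric} and \ref{lem:identification Eb}.
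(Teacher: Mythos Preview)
Your scheme decouples the problem into (i) a Duhamel/Strichartz bound for $\|v_1-v_2\|_{L^\infty_tH^1_x}$ and (ii) a separate Gronwall on $\||\psi_1|^2-|\psi_2|^2\|_{L^2}$. The difficulty is that (i) cannot be closed before (ii). In order to place $v_1-v_2$ in $H^1$ via Strichartz you need $\|\mathcal N(\psi_1)-\mathcal N(\psi_2)\|_{N^0}$ (not only its gradient), and this quantity is \emph{not} controlled by $\|w\|_{X^1+H^1}$ alone: since $D\mathcal N(\psi)[w]=f(|\psi|^2)w+2f'(|\psi|^2)\RE(\bar\psi w)\psi$ does not vanish when $|\psi|=1$ (it equals $2f'(1)\RE(\bar\psi w)\psi$ there), on the bulk region $\{|\psi_\theta|\sim 1\}$ one has $|\mathcal N(\psi_1)-\mathcal N(\psi_2)|\gtrsim |w|$, and the $L^\infty$-component of $w\in X^1+H^1$ prevents this from lying in any dual Strichartz space. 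The cure is precisely to trade $\RE(\bar\psi w)$ for $|\psi_1|^2-|\psi_2|^2$ (up to $O(|w|^2)$), i.e.\ to feed $\||\psi_1|-|\psi_2|\|_{L^2}$ back into the Duhamel estimate. Hence (i) and (ii) are coupled, and your sequential presentation is circular.

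The paper avoids the whole issue by a single use of the triangle-type inequality \eqref{ineq:triangular}: writing $\psi_i=\eitD\psi_0^i+\Phi(\psi_i)$ one gets
\[
d_{\Eb}(\psi_1(t),\psi_2(t))\leq C(1+M)\Bigl(d_{\Eb}(\eitD\psi_0^1,\eitD\psi_0^2)+\|\Phi(\psi_1)-\Phi(\psi_2)\|_{L^\infty_tH^1_x}\Bigr),
\]
then bounds the linear term by \eqref{eq:stabilitylinsol} and the Duhamel term by $\|\mathcal N(\psi_1)-\mathcal N(\psi_2)\|_{N^1}\leq C_MT^\theta\sup_t d_{\Eb}(\psi_1(t),\psi_2(t))$ using Lemma~\ref{coro:nonlinear flow}/Lemma~\ref{lem:CDOID3D} for the $N^0$ part and the local Lipschitz bound on $G_\infty,G_q$ (which is where \eqref{ass:lipschitz} enters) for the $\nabla$-part. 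This closes directly on $\sup_t d_{\Eb}(\psi_1,\psi_2)$ for $T$ small, with no separate treatment of the density difference needed. Your mass-continuity idea is not wrong in spirit, but to make it rigorous you would have to run (i) and (ii) simultaneously (a coupled Gronwall) and also reconcile the pointwise-in-time differential inequality for $\|\rho\|_{L^2}^2$ with the space-time Strichartz control on $\nabla\psi_i$; the paper's route is considerably shorter.
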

Provided that the solutions are global, then the Lipschitz continuity holds for arbitrary times, see Corollary \ref{coro:lipschitz}.\\
\subsection{Global well-posedness results}
The proof of global existence relies on conserved quantities. Compared to the classical $H^1$-theory, the global well-posedness theory for \eqref{eq:NLS} with non-trivial farfield \eqref{eq:farfield} faces the obstacle of the lack of the conservation of mass which is infinite. No suitable notion of a "renormalized" mass being conserved seems to be available. \\
The results are inferred by means of the blow-up alternative stated in \emph{(1)} of Theorem \ref{thm:mainlocal}. 
In the following, we require the nonlinearity $f$ to be defocusing in the following sense.
\begin{assumption}\label{ass:D}
Let $f$ be as in Assumption \ref{ass:N}. Moreover, assume~$f'(1)>0$.
\end{assumption}
This assumption yields that $F$ achieves a local minimum for the constant solution $|\psi|^2=1$. 
In nonlinear optics, his requirement appears in the physical literature to prevent the onset of modulational instability, also known as Benjamin-Feir instability \cite{BenjaminFeir} of the constant equilibrium solution, i.e. the continuous wave background \cite{KivsharAndersonLisak, PelinovskySK}. \\
A sufficient condition allowing for a control of $\En(\psi)$ in terms of $\mathcal{H}(\psi)$ consists in requiring Assumption \ref{ass:D} to hold and the Hamiliton energy to be sign-definite, i.e. the nonlinear potential energy density $F$ to be non-negative.
\begin{theorem}\label{thm:main}
Let $d=2,3$. Let $f$ be such that Assumption \ref{ass:D} is satisfied and the nonlinear potential energy density $F$ defined in \eqref{eq:HamiltonianNLS} is non-negative, i.e. $F\geq 0$, then \eqref{eq:NLS} is globally well-posed in the energy space $\Eb$. 
\end{theorem}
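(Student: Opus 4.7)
The plan is to combine the blow-up alternative \eqref{eq:blow_up_alt} of Theorem~\ref{thm:mainlocal} with conservation of the Hamiltonian to derive a uniform-in-time bound on $\En(\psi)(t)$. By the identification of $\Eb$ with $\Etwo$ provided by Lemmata~\ref{lem:metric} and~\ref{lem:identification Eb}, it suffices to bound $\|\nabla\psi(t)\|_{L^2}$ and $\||\psi(t)|^2-1\|_{L^2}$ in terms of $\Hc(\psi_0)$ alone.

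The Dirichlet part comes for free: $F\geq 0$ together with the conservation identity $\Hc(\psi(t))=\Hc(\psi_0)$ yields
\begin{equation*}
\tfrac12\|\nabla\psi(t)\|_{L^2}^2\leq \Hc(\psi_0),\qquad \int_{\R^d}F(|\psi(t)|^2)\,\dd x\leq \Hc(\psi_0),
\end{equation*}
uniformly in $t\in [0,T^*)$.

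For the potential term, I would localize around the equilibrium $\rho=1$. Assumption~\ref{ass:D} ($f'(1)>0$) combined with $F(1)=F'(1)=0$ and $F\geq 0$ produces constants $\delta,c>0$ such that $F(\rho)\geq c(\rho-1)^2$ whenever $|\rho-1|\leq\delta$. Writing $\rho(t,x)=|\psi(t,x)|^2$ and denoting the near set $N(t)=\{|\rho(t)-1|\leq\delta\}$, this immediately gives
\begin{equation*}
\int_{N(t)}(\rho(t)-1)^2\,\dd x\leq c^{-1}\Hc(\psi_0).
\end{equation*}
For the complementary far set $A(t)=\{|\rho(t)-1|>\delta\}$, one must couple the gradient bound with embeddings associated to the structural description of $\Eb(\R^d)$. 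In $d=3$ I would exploit the affine structure $\Eb(\R^3)\subset c_0+\dot H^1$ with $|c_0|=1$ together with the Sobolev embedding $\dot H^1(\R^3)\hookrightarrow L^6$ to obtain $|\psi(t)|-1\in L^6$ with norm controlled by $\|\nabla\psi(t)\|_{L^2}$; a Chebyshev estimate then bounds $|A(t)|$, and H\"older interpolation between the $L^6$-control and the bounded measure of $A(t)$ yields the required estimate for $\int_{A(t)}(\rho(t)-1)^2\,\dd x$ in terms of $\Hc(\psi_0)$. For $d=2$ I would instead use the inclusion $\Eb(\R^2)\subset X^1(\R^2)+H^1(\R^2)$ together with Gagliardo--Nirenberg-type inequalities applied to the $H^1$-part, to reach an analogous far-region bound.

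Summing the near and far region estimates produces $\||\psi(t)|^2-1\|_{L^2}^2\leq \Phi(\Hc(\psi_0))$ uniformly in $t$, which through the equivalence of $\Eb$ and $\Etwo$ upgrades to a uniform bound on $\En(\psi(t))$, ruling out the scenario~\eqref{eq:blow_up_alt} and forcing $T^*=+\infty$. The principal obstacle is the control of the far region in $d=2$, where the two-dimensional topology of the energy space and the possible unbounded phase oscillations at infinity rule out the clean affine decomposition available in $d=3$ and require a more delicate interpolation argument.
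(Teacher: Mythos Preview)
Your overall strategy---blow-up alternative, conservation of $\Hc$, and a bound of the form $\En(\psi)\leq g(\Hc(\psi))$---is exactly the paper's. However, you are re-deriving by hand what Lemma~\ref{lem:identification Eb} already hands you: under Assumption~\ref{ass:D} and $F\geq 0$ it gives precisely the inequality \eqref{eq:equivEH}, namely $\En(\psi)\leq g(\Hc(\psi))$ for an increasing $g$. With this in hand, conservation of $\Hc$ (item (4) of Theorem~\ref{thm:mainlocal}) immediately rules out \eqref{eq:blow_up_alt} and forces $T^\ast=+\infty$. The paper's proof of Theorem~\ref{thm:main} is essentially this one line; your near/far decomposition is a partial reproof of Lemma~\ref{lem:identification Eb} rather than an application of it.

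Regarding your far-region sketch: the $d=3$ argument via the affine decomposition $\psi=c+v$, $v\in\dot H^1\hookrightarrow L^6$ is sound and depends only on $\|\nabla\psi\|_{L^2}$, so no circularity arises. In $d=2$, however, your proposed use of the $X^1+H^1$ splitting runs into a genuine issue you do not resolve: the bounds in Lemma~\ref{lem:energyspace} for $\psi_\infty,\psi_q$ are stated in terms of $\En(\psi)$ itself, so invoking them to bound $\En(\psi)$ is circular. The paper (following \cite{ChironM}) sidesteps this by introducing the \emph{modified} Ginzburg--Landau energy $\mathcal{E}_{\mathrm{mGL}}$ built from a smooth truncation $\varphi(|\psi|)$ that is bounded regardless of the size of $|\psi|$; one shows $\mathcal{E}_{\mathrm{mGL}}$ is equivalent to $\En$ and then bounds $\mathcal{E}_{\mathrm{mGL}}$ by $\Hc$ (this is \cite[Lemma~4.8]{ChironM}). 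That truncation is the missing ingredient in your $d=2$ argument, and it also handles the possibility that $F$ has additional zeros away from $\rho=1$, which your far-region discussion does not address.
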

Note that the pure power-type nonlinearities \eqref{eq:powernonlinearity} satisfy $F\geq 0$ for $\lambda>0$. \\
In the case of sign-indefinite Hamiltonian energies, the respective $H^1$-theory for \eqref{eq:NLS} fails in general to provide global existence results without further assumptions. Blow-up occurs for instance for certain focusing nonlinearities, see e.g. \cite{Cazenave}. Similar difficulties occur in the present setting, where in addition we lack the conservation of mass. We provide a global well-posedness result for $d=3$ and a class of competing (focusing-defocusing) nonlinearities $f$ for which the internal energy fails to be non-negative. Such models are of physical relevance for instance in nonlinear optics when self-focusing phenomena in a defocusing background are considered \cite{Barashenkov89, PelinovskySK}. 
We exploit the affine structure of the energy space $\Eb(\R^3)$ that can be identified with the set of functions
\begin{equation*}
 \Eb(\R^3)=\left\{\psi=c+v, \,  c\in \mathbb{C}, \, |c|=1, \,  v\in \Fc\right\}
\end{equation*}
where 
\begin{equation}\label{def:Fc}
    \Fc=\left\{v\in \dot{H}^1(\R^3)\,  :  \,  |v|^2+2\RE(\overline{c}v)\in L^2(\R^3)\right\},
\end{equation}
see \cite{Gerard} and Proposition \ref{prop:energyspace3d}.
\begin{theorem}\label{thm:main3D}
Let $d=3$, $f$ satisfy Assumptions \ref{ass:D} and further such that
\begin{equation*}
    f(r)=a(r^{\alpha_1}-1)+g(r)
\end{equation*}
with $a>0$, $0<\alpha_1<2$ and where $g$ satisfies Assumptions \ref{ass:N} \emph{(K1)} and \emph{(K2)} for some $0\leq\alpha_2<\alpha_1$. In addition, $F$ is such that $F(\rho)>0$ for all $\rho>1$.\\
Then, the solution to \eqref{eq:NLS} given by Theorem \ref{thm:mainlocal} is global provided that the initial data satisfies $\psi_0=c+v_0\in \Eb(\R^3)$ with $\RE(\overline{c}v_0)\in L^2(\R^3)$. 
\end{theorem}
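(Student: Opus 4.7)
The plan is to use the blow-up alternative of Theorem \ref{thm:mainlocal}~(1): it suffices to show that $\En(\psi)(t)$ remains bounded on every finite interval $[0,T]$ with $T < T^\ast$. I write $\psi = c + v$, $v_1 := \RE(\overline c v)$, $v_2 := \IM(\overline c v)$, so that $|\psi|^2 - 1 = 2v_1 + |v|^2$, and I record two structural observations that will be crucial: on $\{|\psi|^2 \le 1\}$ one automatically has $v_1 \le 0$ and $|v|^2 \le -2v_1 = 2|v_1|$; and by Theorem \ref{thm:mainlocal}~(2), $v(t) - v_0 = \psi(t) - \psi_0 \in H^1$, so $v_1(t) \in L^2$ propagates along the flow whenever $v_1(0) \in L^2$. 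I split $F = F_1 + G$, where $F_1(\rho) = \frac{a}{\alpha_1+1}\bigl(\rho^{\alpha_1+1} - 1 - (\alpha_1+1)(\rho-1)\bigr) \ge 0$ is the non-negative, strictly convex defocusing part and $G(\rho) = \int_1^\rho g(r)\,dr$ is the lower-order remainder (since $\alpha_2 < \alpha_1$).

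My first task is to prove the a priori inequality $\En(\psi)(t) \le C\bigl(\Hc(\psi_0) + \|v_1(t)\|_{L^2}^2\bigr)$. Splitting $\int F(|\psi|^2)\,dx$ along $\{|\psi|^2 \le 1\}$ and its complement, Taylor at $\rho=1$ (with $F(1)=F'(1)=0$, $F''(1)=f'(1)>0$) combined with $||\psi|^2 - 1| \le 2|v_1|$ gives $|F(|\psi|^2)| \le 4C v_1^2$ pointwise on $\{|\psi|^2 \in [1/2,1]\}$; on $\{|\psi|^2 \le 1/2\}$, $|v|^2 \ge (1 - 1/\sqrt{2})^2$ forces $|v_1| \ge c_0 > 0$, so by Chebyshev $|\{|\psi|^2 \le 1/2\}| \le C\|v_1\|_{L^2}^2$ and boundedness of $F$ on $[0,1/2]$ delivers the same bound. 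Hence $\int_{\{|\psi|^2 \le 1\}} |F|\,dx \le C\|v_1\|_{L^2}^2$. On $\{|\psi|^2 > 1\}$, where $F > 0$, I establish a uniform pointwise coercivity $F(|\psi|^2) \ge c(|\psi|-1)^2$ assembled from the Taylor expansion near $\rho = 1$, compactness plus positivity on $[1+\delta, M]$, and $F_1(\rho) \gtrsim \rho^{\alpha_1 + 1}$ dominating $|G| \lesssim \rho^{\alpha_2 + 1}$ for large $\rho$. Conservation of $\Hc$ then controls $\|\nabla v\|_{L^2}^2$ and $\int_{\{|\psi|>1\}} (|\psi|-1)^2\,dx$; combined with $\int_{\{|\psi| \le 1\}} (|\psi|-1)^2\,dx \le 4\|v_1\|_{L^2}^2$ (from $(|\psi|-1)^2 \le (|\psi|^2-1)^2 \le 4v_1^2$), this produces the claimed bound on $\En(\psi)$.

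The second task is to propagate $\|v_1(t)\|_{L^2}^2$ on $[0,T]$. The real/imaginary decomposition of \eqref{eq:NLS} yields $\partial_t v_1 = -\tfrac12 \Delta v_2 + f(|\psi|^2)v_2$, whence
\begin{equation*}
\frac{d}{dt}\|v_1\|_{L^2}^2 = \int \nabla v_1 \cdot \nabla v_2\,dx + 2\int v_1 v_2\, f(|\psi|^2)\,dx.
\end{equation*}
The first term is bounded by $\|\nabla v\|_{L^2}^2 \le C(\Hc(\psi_0)+\|v_1\|_{L^2}^2)$ by the first task. For the second, I use $|v_1 v_2| \le \tfrac12 |v|^2$, the Kato bound $|f(|\psi|^2)| \le C(1+|v|^{2\alpha_1})$, the integrabilities $v_1 \in L^2 \cap L^6$ and $v \in L^4 \cap L^6$ available from $\Eb$ in $\R^3$, and the subcriticality $\alpha_1 < 2$ (so $2\alpha_1 + 2 < 6$). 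A region split along $\{|\psi| \le M\}$ and $\{|\psi| > M\}$ with $M$ large, exploiting the smallness $|f(|\psi|^2)| \lesssim ||\psi|^2 - 1|$ around $|\psi|=1$ and the inequality $|v|^2 \le 2|v_1|$ on the vacuum set, is expected to produce $\frac{d}{dt}\|v_1\|_{L^2}^2 \le C(\Hc(\psi_0))\bigl(1+\|v_1\|_{L^2}^2\bigr)$. Gronwall then bounds $\|v_1(t)\|_{L^2}^2$ on $[0,T]$, the energy estimate transfers this to $\En(\psi)(t)$, and the blow-up alternative concludes.

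The main obstacle is the Gronwall step: because $v_2$ need not lie in $L^2$ (only $L^4 \cap L^6$ follows from $\Eb$ alone), the cross-term $\int v_1 v_2 f(|\psi|^2)\,dx$ cannot be estimated by a naive $L^2 \times L^2 \times L^\infty$ Hölder. The delicate region-splitting needed to obtain a \emph{linear} dependence on $\|v_1\|_{L^2}^2$ (required to avoid finite-time blow-up in the Gronwall iteration) crucially exploits both the gap $\alpha_2 < \alpha_1$---so that $F_1$, controlled by $\Hc$, coercively dominates $|f|^{1+\epsilon}$ at high density---and the vanishing of $f$ at $|\psi|=1$, which compensates for the poor control of $v_2$ in the transition region where $|\psi|$ is close to $1$.
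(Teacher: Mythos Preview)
Your proposal is correct and follows essentially the same approach as the paper. The paper packages your two tasks into a single augmented functional $M(\psi)=\Hc(\psi)+C_0\|\RE(\overline{c}v)\|_{L^2}^2$, proving first the coercivity $\En(\psi)\le C\,M(\psi)$ (your first task, their Lemma~\ref{lem:EM}) and then the Gronwall bound $\frac{d}{dt}M\le C\,M$ (your second task, their Lemma~\ref{lem:bound M}); the region split you sketch for the cross-term $\int v_1 v_2 f(|\psi|^2)\,dx$ into vacuum/transition/high-density pieces is exactly the paper's decomposition $I_1+I_2+I_3$, with the high-density piece controlled pointwise by $F$ (since $|v_1v_2|\lesssim\rho$ and $|f|\lesssim\rho^{\alpha_1}$ while $F\gtrsim\rho^{1+\alpha_1}$ there), the transition piece by Young plus the Taylor expansion \eqref{eq:Taylor F}, and the vacuum piece by the lower bound $|v_1|\ge\eta$ converting $|v_1|$ into $|v_1|^2/\eta$.
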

The assumption on the roots of $F$ allows for physically relevant nonlinearities to be studied. It appears from the physics literature \cite{Barashenkov89, KL98, PelinovskySK} that in relevant applications the largest root of $F$ corresponds to the far-field behavior $\rho_0=1$ and constitutes a local minimum of $F$ which is linked to preventing modulational instability of the continuous background wave \cite{KivsharAndersonLisak, PelinovskySK}. To obtain global existence, we rely on the aforementioned affine structure of the energy space $\Eb(\R^3)$ and require that $\RE(\overline{c}v_0)\in L^2(\R^3)$ while $v_0\in \Fc(\R^3)$ only yields $|v_0|^2+2\RE(\overline{c}v_0)\in L^2(\R^3)$. An exponential bound on $\RE(\overline{c}v)(t)\in L^2(\R^3)$ is derived which compensates for the lack of the conserved mass due to the non-trivial farfield. The result of Theorem \ref{thm:main3D} remains valid if the assumption $\RE(\overline{c}v_0)\in L^2$ is replaced by a smallness assumption on $\Hc(\psi_0)$ and $\|\nabla\RE(\overline{c}v_0)\|_{L^2}^2$ depending only on the second largest positive root of $F$, see Remark \ref{rem:smallness}.\\
Finally, we consider the general scenario in which $F$ satisfies Assumption \ref{ass:D} is satisfied but may be unbounded from below, e.g. in the case of competing power-law nonlinearities with the focusing one dominant at large intensities.
\begin{theorem}\label{thm: smalldata GWP}
Let $d=3$, $f$ satisfy Assumptions \ref{ass:D}. There exists $\eps>0$ such that if the initial data $\psi_0$ satisfies $\Hc(\psi_0)\leq\frac{\eps}{4}$ and $\|\nabla \psi_0\|_{L^2}^2\leq \eps$, then $\psi_0\in \mathcal{E}(\R^3)$ and the solution to \eqref{eq:NLS} with $\psi(0,x)=\psi_0(x)$ given by Theorem \ref{thm:mainlocal} is global.
\end{theorem}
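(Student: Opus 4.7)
The strategy is a bootstrap argument based on the blow-up alternative of Theorem~\ref{thm:mainlocal}(1). Since $F$ may be sign-indefinite, the conservation identity $\Hc(\psi(t))=\Hc(\psi_0)$ alone does not control $\En(\psi(t))$; the plan is to use smallness to extract a coercive inequality in which the negative part of $\int F\,\dd x$ appears as a genuinely higher-order correction in $\|\nabla\psi\|_{L^2}$.

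The first ingredient is local coercivity of $F$ at the far-field value. Assumption~\ref{ass:D} gives $F(1)=F'(1)=0$ and $F''(1)=f'(1)>0$, so Taylor's formula yields $\delta_0,c_0>0$ with $F(\rho)\geq c_0(\rho-1)^2$ whenever $|\rho-1|\leq\delta_0$. Outside this range the Kato bound (K2) together with $F(1)=0$ gives $|F(\rho)|\leq C(\rho-1)^2$ on bounded intervals and $|F(\rho)|\leq C\rho^{\alpha+1}$ for $\rho$ large. The second ingredient is the 3D Sobolev inequality applied to $|\psi|-1$: for $\psi\in\Eb(\R^3)$, Kato's inequality gives $|\nabla(|\psi|-1)|\leq|\nabla\psi|$ a.e., and combined with $|\psi|-1\in L^2$ one has $|\psi|-1\in H^1(\R^3)$, whence
\[
\||\psi|-1\|_{L^6}\leq C\|\nabla\psi\|_{L^2}.
\]

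Splitting $\R^3=A\cup A^c$ with $A=\{||\psi|^2-1|\leq\delta_0\}$, the bound $||\psi|-1|\gtrsim\delta_0$ on $A^c$ combined with Chebyshev and the Sobolev estimate yields $|A^c|\leq C\|\nabla\psi\|_{L^2}^6$. A case split in $A^c$ --- on $\{|\psi|\leq\sqrt{2}\}$ use $|F|\leq C$, on $\{|\psi|\geq\sqrt{2}\}$ use $|\psi|\leq C||\psi|-1|$ together with the subcriticality $\alpha<2$ (so $2(\alpha+1)<6$) --- produces
\[
\int_{A^c}|F(|\psi|^2)|\,\dd x+\int_{A^c}(|\psi|-1)^2\,\dd x\leq C\|\nabla\psi\|_{L^2}^6.
\]
Combined with coercivity on $A$ and conservation of $\Hc$, this delivers the key inequality
\[
\tfrac12\|\nabla\psi(t)\|_{L^2}^2+c_0\||\psi(t)|-1\|_{L^2}^2\leq \Hc(\psi_0)+C\|\nabla\psi(t)\|_{L^2}^6\leq \tfrac{\eps}{4}+C\|\nabla\psi(t)\|_{L^2}^6.
\]

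A standard continuity/bootstrap argument then closes the proof: on any subinterval where $\|\nabla\psi(t)\|_{L^2}^2\leq 2\eps$, the previous display improves this to $\|\nabla\psi(t)\|_{L^2}^2\leq\eps/2+C\eps^3$, strictly smaller than $2\eps$ for $\eps$ sufficiently small; together with continuity of $t\mapsto\|\nabla\psi(t)\|_{L^2}$ this forces the bound $\|\nabla\psi(t)\|_{L^2}^2\leq 2\eps$ to persist on the entire maximal existence interval $[0,T^{\ast})$. The simultaneous estimate $\||\psi(t)|-1\|_{L^2}^2\leq C\eps$ then ensures $\En(\psi(t))$ stays bounded, and the blow-up alternative \eqref{eq:blow_up_alt} forces $T^{\ast}=\infty$. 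The assertion $\psi_0\in\Eb(\R^3)$ itself follows from the same coercive estimate applied at $t=0$, after using the affine decomposition of Proposition~\ref{prop:energyspace3d} (writing $\psi_0=c+v_0$ with $v_0\in\dot H^1$) to legitimize the $L^6$-control on $|\psi_0|-1$ a priori. The main obstacle is precisely the estimate on $A^c$: the subcriticality $\alpha<2$ and the 3D embedding $\dot H^1\hookrightarrow L^6$ must conspire so that the possibly large negative contribution of $F$ there enters at the higher-order $\|\nabla\psi\|_{L^2}^6$ level, making it absorbable by the gradient term.
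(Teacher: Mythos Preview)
Your proposal is correct and follows essentially the same strategy as the paper: extract coercivity of $F$ near $\rho=1$ from Assumption~\ref{ass:D}, control the complementary region by a Chebyshev--Sobolev argument yielding a higher-order term $C\|\nabla\psi\|_{L^2}^6$, and then run a continuity/bootstrap argument together with conservation of $\Hc$ and the blow-up alternative. The only notable variation is that the paper works through the affine decomposition $\psi=1+v$ and controls the ``bad'' set via $|\RE(v)|\geq\eta$ (respectively $|\IM(v)|\geq\nu$) together with $\|v\|_{L^6}\lesssim\|\nabla v\|_{L^2}$, whereas you work directly with $|\psi|-1$ via Kato's inequality $|\nabla|\psi||\leq|\nabla\psi|$; both routes produce the same $\|\nabla\psi\|_{L^2}^6$ correction and the argument closes identically.
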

It remains an open problem to determine whether small data global well-posedness holds for general subcritical nonlinearities satisfying only Assumptions \ref{ass:N}.\\
To the best of the authors' knowledge, global results for \eqref{eq:NLS} in $d=2,3$ are in general not available in the literature in the case of non sign-definite total energies unless the nonlinear potential energy density is assumed to be bounded from below and in addition more regularity on $f$ \cite{Gallo}, or in the cubic-quintic case a condition on $\RE(v)$ like the ones mentioned are assumed, cf. \cite{KOPV, Killip16} are assumed. In \cite{Killip18} small-data global well-posedness for cubic-quintic nonlinearities is proven also in the case where the quintic nonlinearity is focusing. In \cite{Masaki} the authors consider for $d=1,2$ nonlinear potentials energies unbounded from below for specific regular energy subcritical nonlinearities and prove small data global existence for solutions of the form $\psi=1+u$ in tailored function spaces. 
\\
The main steps of our approach are briefly sketched. First, we identify the suitable mathematical setting for our analysis, namely the energy space $\Eb$, see \eqref{eq:defEb}. We crucially rely on the fact that $(\Eb,d_{\Eb})$ is a complete metric space as well as the properties of the free propagator introduced in \cite{Gerard, Gerard08}. The Hamiltonian $\mathcal{H}$ is well-defined for functions in $\Eb$. While wave-functions in $d=3$ can be decomposed as $\psi=c+v$ with $|c|=1, c\in \C$ and $v\in \dot{H}^1(\R^3)$, for $d=2$ the wave-functions may exhibit unbounded oscillations of the phase at spatial infinity. This motivates treating separately the well-posedness problem for $d=2,3$. In both cases, we show local existence of a solution in the affine space $\psi=\psi_0+H^1(\R^d)$ by a perturbative Kato-type argument \cite{Kato87} and also \cite[Chapter 4]{Cazenave}. Subsequently, uniqueness in $C([0,T];\Eb(\R^d))$ is proven.  The fixed-point argument only provides continuous dependence with respect to perturbations in the space $\psi_0+H^1(\R^d)$. The proof of continuous dependence on the initial data with respect to the topology induced by the metric $d_{\Eb}$ requires additional estimates and differs in a substantial way from the $H^1$-well-posedness theory for NLS-equations with vanishing conditions at infinity. This is due to the non-integrability of wave-functions and the intricate topological structure of the energy space linked to the far-field behavior including oscillations of the phase and the low regularity of the nonlinearity. Global well-posedness is shown relying on the conservation of the Hamiltonian $\mathcal{H}$.
\\
While our method for the $3D$-theory exploits the particular structure of the energy space, the approach used for $d=2$ can easily be adapted to sub-cubic nonlinearities for $d=3$. However, for super-cubic nonlinearities, we exploit the affine structure of $\Eb(\R^3)$. It is then no longer sufficient to work in $L^2$-based spaces as done for $d=2$ but we need that the gradient of the solution belongs to the full range of Strichartz spaces.
\\
In \cite{Gallo,Miyazaki} the authors rely on a decomposition of the initial data as $\psi=\varphi+H^1$ with $\varphi\in C_b^{\infty}$ and develop a well-posedness theory in the affine space $\varphi+H^1$. This approach requires additional regularity assumptions on $f$ not needed for our method. 
\\
For the 3D energy-critical quintic equation, one may proceed as described in Section \ref{sec:energy critical}. 
\\
We conclude this section by providing further examples of physical relevance that enter the class of nonlinearities characterised by Assumption \ref{ass:N}.
 \begin{example}\label{ex:nonlinearities}
Beyond the mentioned power-type nonlinearities, here are some examples of physically relevant nonlinearties and far-field \eqref{eq:farfield}:
\begin{enumerate}
    \item competing nonlinearities $f(\rho)=a\rho^{\alpha_1}-b\rho^{\alpha_2}+c$ with $a,b,c>0$ and $\sigma_1\geq \sigma_2\geq 0$ that arise in the description of self-focusing phenomena in defocusing media \cite{KR95,KL98, PelinovskySK}, see also \cite{SulemSulem, Zhidkov01},
    \item saturated nonlinearities $f(\rho)=\frac{\rho}{1+\gamma \rho}-\frac{1}{1+\gamma}$ with $\gamma>0$, see for instance \cite[Chapter 9.3]{SulemSulem} and references therein,
    \item exponential nonlinearities $f(\rho)=(\eul^{-\gamma}-\eul^{-\gamma\rho})$ with $\gamma>0$ \cite[Chapter 9.3]{SulemSulem},
    \item transiting nonlinearities of the form $f(\rho)=2\rho\left(1+\alpha\tanh\left(\gamma(\rho^2-1\right)\right)$ occurring in nonlinear optics \cite[Section VI]{PelinovskySK},
    \item logarithmic nonlinearities of type $f(\rho)=\rho\log(\rho)$ which arise in the context of dilute quantum gases, see \cite{Carles} and references therein,
    \item the nonlinearity $f(\rho)=\rho^{-1}(\rho-1)$ arises in the study of $1D$-NLS type equations as model for nearly parallel vortex filaments, see \cite{KleinMajdaDamodaran} and \cite[Eq. (1.5)]{BanicaMiot}.
\end{enumerate}
The cubic-quintic equation \eqref{eq:cubicquintic} falls within (1) of the aforementioned list and is also recovered in the small amplitude approximation of (2) and (3) of the above examples \cite[Chapter 9.3]{SulemSulem}. We also refer to \cite{ChironScheid} for a more detailed overview.
 \end{example}
\begin{remark}
Nonlinear Schr\"odinger equations equipped with non-vanishing boundary conditions \eqref{eq:farfield} are also investigated with non-local interaction, e.g. in the context of supersolids, see \cite{lewin} for an overview of relevant models. While we do not pursue this topic here, we expect that our analysis combined with \cite{deLaireNonlocal} allows for a straightforward extension to general non-local nonlinearities. 
\end{remark}
\subsection{The energy-critical equation}\label{sec:energy critical}
We briefly discuss the Cauchy problem for the energy-critical equation for $d=3$, namely the quintic equation
\begin{equation}\label{eq:3Dcritical}
     i\partial_t\psi=-\frac{1}{2}\Delta\psi+(|\psi|^4-1)\psi.
 \end{equation}
The well-posedness of \eqref{eq:3Dcritical} is not addressed by Theorem \ref{thm:mainlocal}. Local well-posedness for small data is introduced in \cite[Theorem 1.3]{Gallo}. Furthermore, note that the cubic-quintic equation
\begin{equation}\label{eq:cubicquintic}
    i\partial_t\psi=-\frac{1}{2}\Delta\psi+\left(\alpha_5|\psi|^4-\alpha_3|\psi|^2+\alpha_1\right)\psi.
\end{equation}
with $\alpha_1,\alpha_3,\alpha_5>0$, $\alpha_3^2-4\alpha_1\alpha_5>0$ and far-field \eqref{eq:farfield} is known to be globally well-posed in the respective energy space due to \cite{KOPV}. The cubic-quintic nonlinearity considered satisfies Assumption \ref{ass:D} and is such that $F(1)=0$ and $F(\rho)>0$ for all $\rho>1$. The authors rely on the affine structure of the respective energy space for $d=3$, the perturbative approach introduced in \cite{TaoVisan, TaoVisanZhang} and the well-posedness of the energy-critical nonlinear Schr\"odinger equation with trivial far-field \cite{CKSTT}. This approach can be adapted to show global well-posedness of \eqref{eq:3Dcritical}. More precisely, it is straightforward to update the perturbative argument, see \cite[Eq. (1.14) and (1.15)]{KOPV} to the respective problem for \eqref{eq:3Dcritical}, see also \eqref{eq:NLS3d}. 
 \subsection{Outline of the paper}
 The remaining part of the paper is structured as follows. Section \ref{sec:energyspace} provides preliminary results on the energy space $\Eb$, its structure and the action of the Schr\"odinger group on $\Eb$. Useful estimates for the nonlinearity are collected. Section \ref{sec:2D} introduces first local and second global well-posedness for $d=2$. More precisely, Theorem \ref{thm:mainlocal} and Theorem \ref{thm:main} are proven for $d=2$. In Section \ref{sec:3d}, we provide the respective proofs for $d=3$. Further, Theorem \ref{thm:main3D} is proven. Finally, Section \ref{sec:Lipschitz} is devoted to the proof of Theorem \ref{thm:lipschitz} and Corollary \ref{coro:lipschitz}.
\subsection{Notations}
We fix some notations. We denote by $\mathcal{L}^d$ the $d$-dimensional Lebesgue measure. The usual Lebesgue spaces are denoted by $L^p(\Omega)$ for $\Omega\subset \R^d$ and Lebesgue exponent $p\in [1,\infty]$. Sobolev spaces are denoted by $H^s(\R^d)$ with norm $\|f\|_{H^s(\R^d)}=\|\left\langle\xi\right\rangle^{s}\hat{f}\|_{L^2}$, where $\hat{f}$ denotes the Fourier transform. For $k\in \Z$ and $r\in [1,\infty]$, we write $W^{k,r}$ for the Sobolev space with norm $\|f\|_{W^{k,r}}=\sum_{|\alpha|\leq k}\|D^{\alpha}f\|_{L^r(\R^d)}$. Mixed space-time Lebesgue or Sobolev spaces are indicated by $L^{p}(I;W^{k,r}(\R^d))$. To shorten notations, we write $L_t^pW_x^{k,r}$ when there is no ambiguity. Further, $C(I;H^s(\R^d))$ and $C(I;\Eb(\R^d))$ denote the space of continuous $H^s$- and $\Eb$-valued functions respectively. 
Finally, $C>0$ denotes any absolute constant.
\section{The energy space and the linear propagator}\label{sec:energyspace}
In the present paper, we define the energy space $\Eb$ as in \eqref{eq:defEb}, see also \cite[Section 2]{ChironM}. For the GP equation \eqref{eq:GP}, being the prototype for \eqref{eq:NLS} with non-vanishing far-field, the energy space considered in \cite{Gerard, Gerard08} consists of the set of wave-functions of finite Ginzburg-Landau energy $\Etwoc(\psi)$ is more convenient when dealing with general nonlinearities $f$. In general, $\Eb\subset \{\mathcal{H}(\psi)<+\infty\}$ while the converse inclusion only holds under further assumptions on $f$. The energy space $(\Eb,d_{\Eb})$, endowed with the metric \eqref{eq:metricE} can be shown to be a complete metric space and may be regarded as the analogue of $H^1$ for NLS equations with trivial far-field. However, $\Eb$ is not a vector space and wave functions $\psi\in\Eb(\R^d)$ may exhibit oscillations at spatial infinity, in particular for low dimensions. A suitable characterization of the energy space and the action of the Schr\"odinger semigroup on $\Eb$ is essential for the subsequent well-posedness theory.
Although many of the facts proven here can be found in the literature \cite{Gerard, Gerard08, ChironM}, we provide a self-contained characterization of the energy space $\Eb$.
\\
To that end, we rely on decompositions of functions by means of smooth cut-off functions that we introduce. Let $\chi\in C_c^{\infty}([0,\infty))$ be a smooth cut-off function such that
\begin{equation}\label{eq:cutoffchi}
      \mathbf{1}_{[0,2]}(r)\leq \chi(r)\leq \mathbf{1}_{[0,3)}(r).
\end{equation}
In particular, given a wave-function $\psi: \R^d\rightarrow \C$ we introduce
\begin{equation}\label{eq:psihighlow}
    \psil:=\chi(|\psi|)\psi, \qquad \psih:=(1-\chi(|\psi|))\psi,
\end{equation}
Further, let $\eta\in C_c^{\infty}([0,\infty))$ with $\supp(\eta)\subset [\frac12,\frac32]$ such that
\begin{equation}\label{eq:eta}
    \mathbf{1}_{[\frac{3}{4},\frac{5}{4}]}(r)\leq \eta(r)\leq \mathbf{1}_{[\frac{1}{2},\frac{3}{2}]}(r),  
\end{equation}
We observe that if $\psi\in \Eb(\R^d)$, then it follows for all $c>0$ from the Chebychev inequality that, for all $c>0$,
\begin{equation}\label{eq:Cheby}
\mathcal{L}^d(\{\left||\psi|-1\right|>c\}\leq \frac{1}{c^2}\||\psi|-1\|_{L^2(\R^d)}^2,
\end{equation}
where $\mathcal{L}^d$ denotes the $d$-dimensional Lebesgue measure.
Consequently,
for all $\psi\in \Eb(\R^d)$ the supports of $(1-\chi(|\psi|))$ and $(1-\eta(|\psi|))$ are of finite Lebesgue measure
\begin{equation}\label{eq:supp eta}
   \mathcal{L}^d(\supp(1-\chi(|\psi|)))\leq \En(\psi), \qquad \mathcal{L}^d(\supp(1-\eta(|\psi|)))\leq 16\En(\psi).
\end{equation}
Following \cite[Lemma 1]{Gerard}, we start by proving that any $\psi\in\Eb(\R^d)$ can be decomposed as sum of a $X^{1}$-function and an $H^1$-function, where the Zhidkov space $X^1(\R^d)$ is defined in \eqref{eq:Zhidkov}.
\begin{lemma}\label{lem:energyspace}
The energy space $(\Eb(\R^d),d_{\Eb})$ with $d_{\Eb}$ defined by \eqref{eq:metricE} is a complete metric space and is embedded in $X^1(\R^d)+H^1(\R^d)$. In particular, for any $\psi\in \Eb$ one has
\begin{equation*}
    \|\psil\|_{X^1(\R^d)}\leq C\left(1+\sqrt{\En(\psi)}\right), \quad \text{and} \quad
    \|\psih\|_{H^1(\R^d)}\leq C\sqrt{\En(\psi)}.
\end{equation*}
Moreover, the energy space is stable under $H^1$ perturbations, in the sense that $\Eb(\R^d)+H^1(\R^d)\subset \Eb(\R^d)$ with
\begin{equation}\label{eq:energyH1}
    \En(\psi+u)\leq 2\En(\psi)+2\|u\|_{H^1(\R^d)}^2.
\end{equation}
\end{lemma}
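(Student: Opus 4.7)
The strategy is to build the proof around the explicit decomposition $\psi=\psi_\infty+\psi_q$, verify the two quantitative bounds first (which immediately gives the embedding into $X^1+H^1$), then handle $H^1$-stability by a direct triangle-inequality calculation, and finally establish completeness by combining the Banach-space structure of $X^1+H^1$ with the $L^2$-Cauchy property of $|\psi_n|-1$.

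First, I would verify the pointwise/quantitative estimates for the decomposition. Since $\chi$ is supported in $B_3(0)$, we have $|\psi_\infty|\le 3$ everywhere, so $\psi_\infty\in L^\infty$ with a universal bound. Computing
\[
\nabla\psi_\infty=\chi(\psi)\nabla\psi+\psi\,(D\chi)(\psi)\!\cdot\!\nabla\psi,
\]
and noting that $z\,D\chi(z)$ is bounded on $\C$ (since $\chi$ is smooth and compactly supported), one gets $\|\nabla\psi_\infty\|_{L^2}\lesssim\|\nabla\psi\|_{L^2}\le\sqrt{\En(\psi)}$, yielding the $X^1$-bound. For $\psi_q=(1-\chi(\psi))\psi$, observe that its support is contained in $\{|\psi|>2\}$; on this set $||\psi|-1|\ge\tfrac12|\psi|$, so $|\psi_q|^2\le 4\,||\psi|-1|^2$, which gives $\|\psi_q\|_{L^2}^2\le 4\En(\psi)$. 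The gradient of $\psi_q$ is estimated exactly as for $\psi_\infty$. The embedding $\Eb\hookrightarrow X^1+H^1$ follows at once from $\psi=\psi_\infty+\psi_q$.

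For the $H^1$-stability \eqref{eq:energyH1}, I would simply use $|\nabla(\psi+u)|^2\le 2|\nabla\psi|^2+2|\nabla u|^2$ together with the triangle inequality $||\psi+u|-1|\le||\psi|-1|+|u|$, squared and integrated. This is essentially a one-line estimate.

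The main work is the completeness of $(\Eb,d_\Eb)$. I would argue as follows. The metric clearly satisfies symmetry and the triangle inequality; separation uses that $\|\cdot\|_{X^1+H^1}$ is a norm. Take a $d_\Eb$-Cauchy sequence $(\psi_n)\subset\Eb$. Since $X^1+H^1$, equipped with the infimum norm, is a Banach space (standard: it is isomorphic to $(X^1\oplus H^1)/\Delta$ with $\Delta=\{(\varphi,-\varphi):\varphi\in X^1\cap H^1\}$ closed), there exists $\psi\in X^1+H^1$ with $\psi_n\to\psi$ in $X^1+H^1$. This convergence implies convergence in $L^2_{\mathrm{loc}}$ (because $X^1\hookrightarrow L^2_{\mathrm{loc}}$ and $H^1\hookrightarrow L^2$), hence, along a subsequence, $\psi_n\to\psi$ a.e.; in particular $|\psi_n|\to|\psi|$ a.e. On the other hand, from $\psi_n\in\Eb$ one has $|\psi_n|-1\in L^2$, and the second term of $d_\Eb$ shows that $\bigl(|\psi_n|-1\bigr)_n$ is Cauchy in $L^2$, so $|\psi_n|-1\to\phi$ in $L^2$ for some $\phi\in L^2$. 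The two modes of convergence force $|\psi|-1=\phi\in L^2$. Combined with $\nabla\psi\in L^2$ (extracted from the $X^1+H^1$ convergence together with the fact that $\nabla$ is bounded on both summands), we conclude $\psi\in\Eb$ and $d_\Eb(\psi_n,\psi)\to 0$.

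The subtlest step is clearly identifying the modulus of the limit: one cannot simply pass $|\cdot|$ through $X^1+H^1$ convergence, which is why the a.e. subsequential convergence is crucial to reconcile the $L^2$-limit of $|\psi_n|-1$ with the modulus of the $X^1+H^1$-limit. Once this matching is in place, every remaining step is a bookkeeping exercise.
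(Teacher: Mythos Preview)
Your proof is correct and follows essentially the same approach as the paper: the same cutoff decomposition with the same pointwise estimates, the same triangle-inequality argument for $H^1$-stability, and the same route to completeness via the Banach structure of $X^1+H^1$. Your completeness argument is in fact more carefully spelled out than the paper's, which simply invokes lower semicontinuity to conclude $\psi\in\Eb$ and leaves the convergence $d_{\Eb}(\psi_n,\psi)\to 0$ implicit; your a.e.\ subsequence matching of $|\psi_n|-1$ with $|\psi|-1$ makes this step explicit.
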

For $d=1$, one has $\Eb(\R)\subset X^1(\R)$ due to Sobolev embedding. 
\begin{proof}
Given the decomposition \eqref{eq:psihighlow}, we show that $\psil\in X^1(\R^d)$. As $\psil\in L^{\infty}(\R^d)$ it suffices to check that
\begin{equation*}
    \|\nabla\psil\|_{L^2(\R^d)}=\|\chi(|\psi|)\nabla\psi+\psi\chi'(|\psi|)\nabla|\psi|\|_{L^2(\R^d)}\leq C\|\nabla\psi\|_{L^2(\R^d)},
\end{equation*}
where we used $|\nabla|\psi||\leq |\nabla\psi|$ a.e. on $\R^d$.
The bound $\psih\in L^2(\R^d)$ follows from the pointwise inequality $|\psih|\leq C \left||\psih|-1\right|$ valid on the support of $1-\chi(|\psi|)$ and
\begin{equation*}
    \|\nabla\psih\|_{L^2(\R^d)}\leq C\|\nabla\psi\|_{L^2(\R^d)}.
\end{equation*}
To prove \eqref{eq:energyH1}, it suffices to observe that if $\psi\in\Eb(\R^d)$ and $u\in H^1(\R^d)$, then 
\begin{align*}
    \|\nabla(\psi+u)\|_{L^2(\R^d)}^2&\leq 2\|\nabla \psi\|_{L^{2}(\R^d)}^2+ 2\|\nabla u\|_{L^{2}(\R^d)}^2,\\
    \||\psi+u|-1\|_{L^2(\R^d)}^2&\leq 2\||\psi|-1\|_{L^2(\R^d)}^2+2\|u\|_{L^2(\R^d)}^2
\end{align*}
by means of Minkowski's inequality. It remains to prove that $(\Eb,d_{\Eb})$ is a complete metric space. One readily verifies that $d_{\Eb}$ defines a distance function on $\Eb(\R^d)$. To check that $(\Eb,d_{\Eb})$ is complete, let $\{\psi_n\}_{n}\subset \Eb$ be a Cauchy sequence with respect to $d_{\Eb}$. Then, there exists $\psi\in X^1+H^1$ such that $\psi_n\rightarrow \psi$ strongly in $X^1+H^1$. By lower semi-continuity of norms and \eqref{eq:defE} it follows that $\psi\in \Eb$. 
\end{proof}
\subsection{The structure of the energy space depending on the dimension}
The structure of the energy space $\Eb(\R^d)$ is sensitive to the dimension $d$. To illustrate this, we recall the following fact.
Let $\phi\in\mathcal{D}'(\R^d)$, if $\nabla\phi\in L^p(\R^d)$ for some $p<d$, then there exists $c\in\C$ such that $\phi-c\in L^{p^{\ast}}(\R^d)$, where $p^{\ast}=\frac{dp}{d-p}$, see for instance \cite[Theorem 4.5.9]{Hormander}.
Hence, if $\psi\in\Eb(\R^3)$, then $\psi$ admits a decomposition $\psi=c+v$ where $c\in \C$ with $|c|=1$ and $v\in \dot{H}^1(\R^3)$, where
\begin{equation}\label{eq:Hdot}
    \dot{H}^1(\R^3)=\{v\in L^6(\R^3) \,  : \,  \nabla v\in L^2(\R^3)\},
\end{equation}
denotes the completion of $C_0^{\infty}(\R^3)$ with respect to the $L^2$ norm of the gradient.
This observation allows for a equivalent definition of $\Eb(\R^3)$. We equip the space $\Fc$ defined in \eqref{def:Fc} with
\begin{equation*}
    \tilde{\delta}(u,v)=\|\nabla u-\nabla v\|_{L^2(\R^3)}+\||u|^2+2\RE(c^{-1}u)-2\RE(c^{-1}v)-|v|^2\|_{L^2(\R^3)}.
\end{equation*}
It is straightforward to verify that $\delta$ defines distance function on $\Fc$. One has the following characterization given by \cite[Proposition 4.1]{Gerard}. 
\begin{proposition}[\cite{Gerard}]\label{prop:energyspace3d}
For $d=3$, the energy space $\Eb(\R^3)$ can be identified with the set of functions
\begin{equation}
 \Eb(\R^3)=\left\{\psi=c+v, \,  c\in \mathbb{C}, \, |c|=1, \,  v\in \Fc\right\}. 
\end{equation}
Moreover the metric function $d_{\Eb}$ is equivalent to 
\begin{multline}\label{eq:metric3d}
    \delta(c+v,\tilde{c}+\tilde{v})\\=|c-\tilde{c}|+\left\|\nabla v-\nabla\tilde{v}\right\|_{L^2(\R^3)}+\left\||v|^2+2\RE(c^{-1}v)-|\tilde{v}|^2-2\RE(\tilde{c}^{-1}\tilde{v})\right\|_{L^2(\R^3)}.
\end{multline}
\end{proposition}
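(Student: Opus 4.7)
My plan is to carry out the two assertions of the proposition in order: first the set-theoretic identification of $\Eb(\R^3)$ with $\{c+v:\,|c|=1,\,v\in\Fc\}$, and then the equivalence of the metrics $d_{\Eb}$ and $\delta$.

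For the identification, the key tool is the distributional result recalled just above the statement. Given $\psi\in\Eb(\R^3)$, I would apply it with $p=2<3=d$ to produce $c\in\C$ such that $v:=\psi-c\in L^6(\R^3)$, whence $v\in\dot H^1(\R^3)$. To show $|c|=1$, I would write $|c|-1=(|c|-|\psi|)+(|\psi|-1)$, notice that the first summand is pointwise dominated by $|v|\in L^6(\R^3)$ while the second belongs to $L^2(\R^3)$ by definition of $\Eb$, and conclude that the constant $|c|-1$ must vanish since no nonzero constant lies in $L^2(\R^3)+L^6(\R^3)$ on $\R^3$. To verify $v\in\Fc$, I would expand
\begin{equation*}
    |\psi|^2-1=|v|^2+2\RE(\overline{c}v),
\end{equation*}
so that the condition $v\in\Fc$ becomes equivalent to $|\psi|^2-1\in L^2(\R^3)$. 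Writing $|\psi|^2-1=(|\psi|-1)^2+2(|\psi|-1)$ and combining Kato's inequality $|\nabla|\psi||\le|\nabla\psi|\in L^2$ with the Sobolev embedding $\dot H^1(\R^3)\hookrightarrow L^6(\R^3)$ yields $|\psi|-1\in L^2\cap L^6\subset L^4$, hence $(|\psi|-1)^2\in L^2$ and the claim follows. Uniqueness of the pair $(c,v)$ is immediate since two decompositions would differ by a constant in $L^6(\R^3)$, which forces it to vanish. The reverse inclusion is obtained analogously: given $\psi=c+v$ with $|c|=1$ and $v\in\Fc$, one has $\nabla\psi=\nabla v\in L^2$ and, since $|\psi|+1\ge 1$, the pointwise estimate $\bigl||\psi|-1\bigr|\le\bigl||\psi|^2-1\bigr|$ together with $|\psi|^2-1=|v|^2+2\RE(\overline{c}v)\in L^2$ gives $\psi\in\Eb(\R^3)$.

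For the equivalence of metrics, I would first use the same expansion to rewrite
\begin{equation*}
    \delta(\psi_1,\psi_2)=|c_1-c_2|+\|\nabla(\psi_1-\psi_2)\|_{L^2}+\bigl\||\psi_1|^2-|\psi_2|^2\bigr\|_{L^2},
\end{equation*}
reducing the comparison to three pieces. The gradient term matches its counterpart in $d_{\Eb}$ by the continuity of $\nabla:X^1+H^1\to L^2$; the difference of constants $|c_1-c_2|$ is controlled by a localisation argument based on the cut-off decomposition of Lemma~\ref{lem:energyspace} (on a sufficiently large ball $\psi_i$ approximates $c_i$, so $|c_1-c_2|$ is dominated by the $X^1+H^1$-norm of $\psi_1-\psi_2$ plus lower-order terms); and the quadratic term is compared with $\bigl\||\psi_1|-|\psi_2|\bigr\|_{L^2}$ through the factorisation $|\psi_1|^2-|\psi_2|^2=(|\psi_1|-|\psi_2|)(|\psi_1|+|\psi_2|)$, combined with the Sobolev-type control $|\psi_i|-1\in L^2\cap L^6$ established in the first part. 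The main obstacle will be that the last factorisation costs a multiplicative factor involving $\||\psi_1|+|\psi_2|\|_{L^\infty+L^4}$, which is not globally bounded on $\Eb(\R^3)$; consequently the equivalence constants unavoidably depend on the energy level and the equivalence is uniform only on bounded subsets of $(\Eb(\R^3),d_{\Eb})$. This however suffices to conclude that $d_{\Eb}$ and $\delta$ induce the same topology, which is exactly the content of the proposition.
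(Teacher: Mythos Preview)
Your proposal is correct and follows essentially the route the paper takes: the paper does not reprove the proposition but quotes it from \cite{Gerard} (stated there for $(\Etwo,d_{\Etwo})$) and then invokes Lemma~\ref{lem:metric} to transfer it to $(\Eb,d_{\Eb})$. Your argument simply merges these two steps---the identification via the distributional lemma is exactly the tool the paper singles out, and your comparison of $\bigl\||\psi_1|-|\psi_2|\bigr\|_{L^2}$ with $\bigl\||\psi_1|^2-|\psi_2|^2\bigr\|_{L^2}$, including the correct observation that the constants depend on the energy level, reproduces precisely the content of Lemma~\ref{lem:metric}.
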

In \cite{Gerard}, the Proposition is stated for $(\Etwo, d_{\Etwo})$. We prove below, see Lemma \ref{lem:metric}, that the two metric spaces can be identified and that the topologies induced by the respective metrics are equivalent.
\begin{remark}\label{rem: connected components 3D}
We observe that the connected components of $\Eb(\R^3)$ are given by $c+\Fc(\R^3)$ for $c\in \C$ with $|c|=1$. The energy space $\Eb(\R^3)$ is an affine space and the far-field behavior is determined by $c$ corresponding to a phase shift. The affine structure of the energy space allows for an alternative approach to solve the Cauchy Problem for $d=3$, as observed in \cite[Remark 4.5]{Gerard} for \eqref{eq:GP} and exploited in \cite{KOPV} for cubic-quintic nonlinearities and far-field behavior \eqref{eq:farfield}.
\end{remark}
\begin{remark}\label{rem: connected components 2D}
The $2D$ energy space $\Eb(\R^2)$ lacks an affine structure due to non-trivial oscillations at spatial infinity. Indeed, unbounded phase oscillations at spatial infinity may occur, e.g. $\psi(x)=\eul^{\imu(2+\log|x|)^{\beta}}$ with $\beta<\frac12$ is such that $\psi\in \Eb(\R^2)$, see \cite[Remark 4.2]{Gerard}. Moreover, the metric space $(\Eb(\R^2),d_{\Eb})$ is not separable.
We refer to Remark \ref{rem:topology} for a detailed discussion and a weakened topology for which $\Eb(\R^2)$ is connected and separable.
\end{remark}
\subsection{The Hamiltonian for wave-functions in the energy space}\label{sec:H vs E}
The following inequality turns out to be handy for applications in the sequel. For any $q\in[1,\infty)$ there exists $C_q>0$ such that for all $\phi\in L_{\loc}^1(\R^2)$ with $\mathcal{L}^2(\supp(\phi))<+\infty$ and $\nabla\phi\in L^2(\R^2)$, we have
    \begin{equation}\label{eq:finite supp}
        \|\phi\|_{L^q(\R^2)}\leq C_q\|\nabla\phi\|_{L^2(\R^2)}\left(\mathcal{L}^2(\text{supp}(\phi)\right)^{\frac{1}{q}},
    \end{equation}
see for instance \cite[Proof of Lemma 2.1]{ChironM}. For $\psi\in \Eb(\R^2)$, applying \eqref{eq:finite supp} to $\phi=\psi(1-\eta(|\psi|))$ with $\eta$ as defined in \eqref{eq:eta} yields $\psi(1-\eta(|\psi|))\in L^q(\R^2)$ for any $q\in[1,\infty)$. Indeed, in view of \eqref{eq:supp eta} it suffices to check that
\begin{equation*}
    \nabla\left(\psi(1-\eta(|\psi|))\right)=(1-\eta(|\psi|))\nabla\psi-\eta'(|\psi|)\psi\nabla|\psi|\in L^2(\R^2)
\end{equation*}
since $(1-\eta(|\psi|))\in L^{\infty}(\R^2)$, $\psi\eta'(|\psi|)\in L^{\infty}(\R^2)$ as well as $|\nabla|\psi||\leq |\nabla\psi|$ a.e. on $\R^2$.
\\
Next, we show that the functional $\mathcal{H}(\psi)$, introduced in \eqref{eq:defE}, is bounded for all  $\psi\in\Eb(\R^d)$ provided that Assumption \ref{ass:N} holds. Specifically, we have $F(1)=0$ and $F'(1)=f(1)=0$ which allows one to control $F(|\psi|^2)\leq C(|\psi|^2-1)^2$ for $|\psi|^2$ sufficiently close to $1$. Furthermore, if one also requires Assumption \ref{ass:D} ($f'(1)>0$) to be satisfied, then it follows from Taylor expansion that
\begin{equation}\label{eq:Taylor F}
F(r)\simeq \frac12 f'(1)(r-1)^2
\end{equation}
in a small neighborhood of $1$. Hence, there exists $\delta>0$ and $C_1, C_2>0$ such that 
\begin{equation}\label{F:convex}
    \frac{1}{C_2}(|\psi|-1)^2\leq \frac{1}{C_1}(|\psi|^2-1)^2\leq F(|\psi|^2)\leq C_1(|\psi|^2-1)^2\leq C_2(|\psi|-1)^2
    \end{equation}
provided that $||\psi|^2-1|<\delta$. Note that the following Lemma only requires the upper bound on $F$ close to $1$.
\begin{lemma}\label{lem:finiteenergy}
For $d=2,3$ and $f$ satisfying Assumption \ref{ass:N} one has 
\begin{equation*}
    \Eb(\R^d)\subset\left\{\psi \, : \, \left|\Hc(\psi)\right|<+\infty\right\}.
\end{equation*}
\end{lemma}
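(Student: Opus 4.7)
The kinetic term $\tfrac12\|\nabla\psi\|_{L^2}^2$ is immediately bounded by $\En(\psi)$, so the task reduces to showing that $\int_{\R^d} |F(|\psi|^2)|\,dx < \infty$. My plan is to split this integral according to the cutoff $\eta$ from \eqref{eq:eta}, treating the region close to the far-field value $|\psi|=1$ separately from the region where $|\psi|$ is either small or large. The key point is that $F$ behaves quadratically near $\rho=1$ (because $f(1)=0$) and only grows like a power at infinity (by \emph{(K2)}), which meshes respectively with the quantitative information $\||\psi|-1\|_{L^2}\leq\sqrt{\En(\psi)}$ and with $\psih\in H^1$ from Lemma~\ref{lem:energyspace}.

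On the set $\supp\eta(|\psi|)\subset\{|\psi|\in[\tfrac12,\tfrac32]\}$ one has $|\psi|^2$ confined to a compact subset of $(0,\infty)$ containing $1$. Since $f(1)=0$ and $\rho f'(\rho)$ is uniformly bounded there by \emph{(K2)}, a one-variable Taylor expansion of $F$ around $\rho=1$ yields $|F(|\psi|^2)|\leq C(|\psi|^2-1)^2$ on this set. Factoring $|\psi|^2-1=(|\psi|-1)(|\psi|+1)$ and using that $|\psi|+1$ is bounded on $\supp\eta(|\psi|)$, this contribution is controlled by $C\||\psi|-1\|_{L^2}^2 \leq C\,\En(\psi)$.

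On the complementary set $\supp(1-\eta(|\psi|))$, integrating the growth bound from \emph{(K2)} gives $|F(|\psi|^2)|\leq C(1+|\psi|^{2(\alpha+1)})$. I would further split this support into the bounded part $S_1=\supp(1-\eta(|\psi|))\cap\{|\psi|\leq 3\}$ and the tail $S_2=\{|\psi|\geq 3\}$. On $S_1$ the integrand is pointwise bounded by an absolute constant, while \eqref{eq:supp eta} provides $\mathcal{L}^d(S_1)\leq C\En(\psi)$, so this piece contributes at most $C\En(\psi)$. On $S_2$ the cutoff from \eqref{eq:cutoffchi} gives $\chi(\psi)=0$ and hence $\psi=\psih$ pointwise, so
\begin{equation*}
    \int_{S_2} |\psi|^{2(\alpha+1)}\,dx \leq \|\psih\|_{L^{2(\alpha+1)}(\R^d)}^{2(\alpha+1)}.
\end{equation*}
Here Lemma~\ref{lem:energyspace} yields $\psih\in H^1(\R^d)$ with $\|\psih\|_{H^1}^2\leq C\En(\psi)$; the energy-subcriticality $\alpha<\infty$ for $d=2$ and $\alpha<2$ for $d=3$ in \emph{(K2)} is precisely what places $2(\alpha+1)$ in the range of the Sobolev embedding $H^1(\R^d)\hookrightarrow L^{2(\alpha+1)}(\R^d)$, giving a bound $\leq C\En(\psi)^{\alpha+1}$.

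Combining the three contributions delivers $|\Hc(\psi)|\leq C\bigl(\En(\psi)+\En(\psi)^{\alpha+1}\bigr)$, proving the inclusion. The only genuinely delicate step is the tail estimate on $S_2$: it is the Sobolev embedding applied to $\psih$ that forces the subcriticality threshold appearing in \emph{(K2)}, whereas the other two pieces rely only on the structural property $f(1)=0$ and on the finite-measure principle \eqref{eq:supp eta}.
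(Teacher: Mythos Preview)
Your argument is correct and follows the same high-level structure as the paper: split via the cutoff $\eta$, control the contribution near $|\psi|=1$ by the Taylor expansion of $F$ around $\rho=1$, and bound the complement using the growth assumption \emph{(K2)} together with the finite-measure property \eqref{eq:supp eta}.

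The one genuine difference lies in how you treat the tail $S_2$. The paper handles this separately in each dimension: for $d=3$ it invokes the affine decomposition $\psi=c+v$ with $v\in L^6$ from Proposition~\ref{prop:energyspace3d}, while for $d=2$ it applies the finite-support inequality \eqref{eq:finite supp} to $\psi(1-\chi(\psi))$. You instead use the single observation that $\psih\in H^1(\R^d)$ from Lemma~\ref{lem:energyspace} and then the Sobolev embedding $H^1\hookrightarrow L^{2(\alpha+1)}$, which is valid precisely under the subcriticality condition in \emph{(K2)}. This is a cleaner and more uniform route: it avoids the dimension-specific machinery and makes transparent why the subcriticality threshold enters. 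The paper's approach, on the other hand, yields slightly sharper exponents in the final bound (e.g.\ $\En(\psi)^{(5+2\alpha)/3}$ for $d=3$), but this refinement is irrelevant for the qualitative inclusion claimed in the lemma.
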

Note that we do not require Assumption \ref{ass:D} to hold for Lemma \ref{lem:finiteenergy}. 
\begin{proof}
In view of \emph{(K1)} Assumption \ref{ass:N}, a Taylor expansion of $F$ in a small neighborhood $\mathcal{O}$ of $1$ yields that there exist $C,C'>0$ such that 
\begin{equation*}
    F(|\psi|^2)\leq C'(|\psi|^2-1)^2\leq C(|\psi|-1)^2,
\end{equation*}
for all $x\in\R^d$ such that $|\psi|^2\in\mathcal{O}$. Let $\delta>0$ be such that $B(1,\delta)\subset \mathcal{O}$ and $\eta_{\delta}(r):=\eta(\frac{r}{\delta})$ with $\eta$ as in \eqref{eq:eta} and $\psi\in \Eb(\R^d)$, then 
\begin{align*}
    \int_{\R^d}F(|\psi|^2)\dd x=\int_{\R^d}F(|\psi|^2)\eta_\delta(|\psi|)\dd x+\int_{\R^d}F(|\psi|^2)(1-\eta_\delta(|\psi|))\dd x \\
    \leq C \int_{\R^d}\left||\psi|-1\right|^2\dd x+C\int_{\R^d}\left(1+|\psi|^{2\alpha}\right)\left||\psi|^2-1\right|(1-\eta_\delta(|\psi|))\dd x,\\
\end{align*}
where we used \emph{(K2)} Assumption \ref{ass:N} in the last inequality. To control the second term, we consider separately the cases $d=2,3$. For $d=3$, Proposition \ref{prop:energyspace3d} yields that there exist $c\in \C$ with $|c|=1$ and $v\in \Fc(\R^3)$ such that $\psi=c+v$ and
\begin{align*}
    &\int_{\R^3}\left(1+|\psi|^{2\alpha}\right)\left||\psi|^2-1\right|(1-\eta_\delta(|\psi|))\dd x\\ &\leq C \int_{\R^d}(1-\eta_\delta(|\psi|))\chi(\psi)\dd x
    +\int_{\R^3}|c+v|^{2(\alpha+1)}(1-\chi(\psi))\dd x\\
    &\leq C \En(\psi)+\|v\|_{L^6}^{2(1+\alpha)}\En(\psi)^{\frac{2-\alpha}{3}}\leq C\left(\En(\psi)+\En(\psi)^{\frac{5+2\alpha}{3}}\right),
\end{align*}
where we used \eqref{eq:supp eta} in the second to last inequality and that $0<\alpha<2$ for $d=3$. For $d=2$, one has that 
\begin{multline*}
    \int_{\R^2}\left(1+|\psi|^{2\alpha}\right)\left||\psi|^2-1\right|(1-\eta_\delta(|\psi|))\dd x\\
    \leq C \int_{\R^d}(1-\eta_\delta(|\psi|))\chi(\psi)\dd x+\int_{\R^d}\left(1+|\psi|^{2(\alpha+1)}\right)(1-\chi(\psi))\dd x
\end{multline*}
The first integral is bounded by $C\En(\psi)$ and for the second it follows from \eqref{eq:finite supp} that 
\begin{equation*}
    \|\psi(1-\chi(|\psi|))\|_{L^{2(\alpha+1)}(\R^2)}^{2(\alpha+1)}\leq \En(\psi)^{1+\alpha}\mathcal{L}^2(\supp(\psi(1-\chi(\psi))))\leq \En(\psi)^{2+\alpha}.
\end{equation*}
This allows one to bound 
\begin{equation*}
    \int_{\R^2}(1+|\psi|^{2\alpha})\left||\psi|^2-1\right|(1-\eta_\delta(\psi))\dd x\leq \En(\psi)+\En(\psi)^{2+\alpha}.
\end{equation*}
\end{proof}
Next, we identify suitable conditions on $f$ under which the converse inclusion, namely $\{\psi : |\Hc(\psi)|<+\infty\}\subset \Eb(\R^d)$, holds true.
First, we treat the particular case of the Gross-Pitaevskii equation \eqref{eq:GP} for which $\mathcal{H}(\psi)=\Etwoc(\psi)$ and so $\Etwo(\R^d)=\{\mathcal{H}(\psi)<+\infty\}$, see \eqref{eq: EGL} and \eqref{eq:EGP} respectively. 
It has been shown in \cite{Gerard}, see also \cite{Gerard08}, that $(\Etwo,d_{\Etwo})$ with
\begin{equation}\label{eq:metricGP}
    d_{\Etwo}(\psi_1,\psi_2)=\|\psi_1-\psi_2\|_{X^1+H^1}+\||\psi_1|^2-|\psi_2|^2\|_{L^2}.
\end{equation}
is a complete metric space. It is pointed out in \cite[p.13]{ChironM} without proof that $\Eb=\Etwo$ with equivalent respective metrics. We provide a proof for the sake of completeness. 
\begin{lemma}\label{lem:metric}
Let $d\geq 1$, then $\Eb(\R^d)=\Etwo(\R^d)$. Moreover, for $d=2,3$ and any $R>0$, there exists $C=C(R)>0$ such that for any $\psi_1,\psi_2$ with $\En(\psi_i)\leq R$ for $i=1,2$ it holds
\begin{equation}\label{eq:metricXH}
    \frac{1}{C}d_{\Etwo}(\psi_1,\psi_2)\leq d_{\Eb}(\psi_1,\psi_2)\leq C d_{\Etwo}(\psi_1,\psi_2).
\end{equation}
Moreover, there exists $C>0$ such that for $\psi_1,\psi_2\in \Eb(\R^d))$ and $u,v\in H^1(\R^d)$ it holds
\begin{multline}\label{ineq:triangular}
d_{\Eb}(\psi_1+u,\psi_2+v)\\
\leq C\left(1+\sqrt{\En(\psi_1)}+\sqrt{\En(\psi_2)}+\|u\|_{H^1}+\|v\|_{H^1}\right)\left(d_{\Eb}(\psi_1,\psi_2)+\|u-v\|_{H^1}\right).
\end{multline}
\end{lemma}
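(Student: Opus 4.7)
The plan is to prove the three statements in order. For $\Eb(\R^d) = \Etwo(\R^d)$, the inclusion $\Etwo \subset \Eb$ is immediate from the pointwise bound $\bigl||\psi|-1\bigr|^2 \leq \bigl||\psi|^2-1\bigr|^2$ (since $|\psi|+1 \geq 1$). For the reverse inclusion I split $\R^d = \{|\psi| \leq 2\} \cup \{|\psi| > 2\}$: on the first set $||\psi|^2-1| \leq 3||\psi|-1|$ is immediate, while on the second $\psi$ coincides with the high-frequency part $\psi_q \in H^1(\R^d)$ provided by Lemma \ref{lem:energyspace}, whose support has finite measure at most $\En(\psi)$ by Chebyshev. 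In dimension $d=3$, Sobolev embedding $H^1 \hookrightarrow L^4$ controls $\||\psi|^2\|_{L^2(\{|\psi|>2\})}$; in dimension $d=2$ I instead apply the finite-measure inequality \eqref{eq:finite supp} to $|\psi_q|^2$.

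For the Lipschitz equivalence \eqref{eq:metricXH}, only the second terms in $d_{\Eb}$ and $d_{\Etwo}$ need be compared, since $\|\psi_1-\psi_2\|_{X^1+H^1}$ is common to both. In the direction $d_{\Etwo} \leq C(R)\, d_{\Eb}$, I factor $|\psi_1|^2-|\psi_2|^2 = (|\psi_1|-|\psi_2|)(|\psi_1|+|\psi_2|)$ and bound $(|\psi_1|+|\psi_2|)^2 \leq C\bigl(1 + (|\psi_1|-1)^2 + (|\psi_2|-1)^2\bigr)$; the hard integral $\int ||\psi_1|-|\psi_2||^2 (|\psi_i|-1)^2 \dd x$ is treated by replacing $||\psi_1|-|\psi_2||$ with the pointwise majorant $|\psi_1-\psi_2|$ and decomposing $\psi_1-\psi_2 = \phi_\infty + \phi_q \in X^1 + H^1$. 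The $\phi_\infty$-contribution is bounded by $\|\phi_\infty\|_{L^\infty}^2 \||\psi_i|-1\|_{L^2}^2$, and the $\phi_q$-contribution by Hölder and Sobolev applied to $\phi_q$ and to $|\psi_i|-1$ (the latter lies in $H^1$ since $|\nabla|\psi_i|| \leq |\nabla \psi_i| \in L^2$); taking the infimum over decompositions yields $C(R)\|\psi_1-\psi_2\|_{X^1+H^1}^2$. For the direction $d_{\Eb} \leq C(R)\, d_{\Etwo}$, I split $\R^d$ by whether $|\psi_1|+|\psi_2| \geq 1$: on the first set the pointwise bound $||\psi_1|-|\psi_2|| \leq ||\psi_1|^2-|\psi_2|^2|$ holds, and the complementary set has measure at most $4R$ since at least one $|\psi_i| < 1/2$ forces $||\psi_i|-1| > 1/2$; on this small set I use $||\psi_1|-|\psi_2|| \leq |\psi_1-\psi_2|$ and again decompose into $X^1 + H^1$.

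For \eqref{ineq:triangular}, I invoke the equivalence just established, so that it suffices to bound $d_{\Etwo}(\psi_1+u, \psi_2+v)$ (the summand $\|(\psi_1+u)-(\psi_2+v)\|_{X^1+H^1}$ being bounded trivially by $\|\psi_1-\psi_2\|_{X^1+H^1} + \|u-v\|_{H^1}$). Expanding
\begin{equation*}
|\psi_1+u|^2 - |\psi_2+v|^2 = |\psi_1|^2-|\psi_2|^2 + 2\RE\bigl(\bar\psi_1(u-v) + (\bar\psi_1-\bar\psi_2)v\bigr) + |u|^2-|v|^2,
\end{equation*}
I estimate the four pieces: the first by the equivalence; the cross term $\bar\psi_1(u-v)$ by splitting $\psi_1 = \psi_{1,\infty} + \psi_{1,q}$ and applying Hölder with Sobolev on $u-v\in H^1$; the cross term $(\psi_1-\psi_2)v$ by decomposing $\psi_1-\psi_2$ in $X^1+H^1$, producing a factor $\|\psi_1-\psi_2\|_{X^1+H^1}\|v\|_{H^1}$; and $\||u|^2-|v|^2\|_{L^2} \leq \|u-v\|_{L^4}(\|u\|_{L^4}+\|v\|_{L^4}) \leq C\|u-v\|_{H^1}(\|u\|_{H^1}+\|v\|_{H^1})$. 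The multiplicative prefactor in \eqref{ineq:triangular} then records the dependence on $\En(\psi_i)$, $\|u\|_{H^1}$ and $\|v\|_{H^1}$, while the second factor captures $d_{\Eb}(\psi_1,\psi_2) + \|u-v\|_{H^1}$.

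The main obstacle throughout is that elements of $\Eb(\R^d)$ are only in $X^1+H^1$, not in $L^\infty$, so every estimate involving products of $\psi_i$ must go through the splitting $\psi_i = \psi_{i,\infty}+\psi_{i,q}$ from Lemma \ref{lem:energyspace} together with the appropriate Sobolev embedding, which differs between dimensions: in $d=3$ the embedding $H^1 \hookrightarrow L^6$ is the workhorse, whereas in $d=2$ inequality \eqref{eq:finite supp} compensates for the failure of the $L^\infty$-embedding on the finite-measure support of the $\psi_q$-pieces.
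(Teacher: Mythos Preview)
Your arguments for the set equality $\Eb=\Etwo$ and for the metric equivalence \eqref{eq:metricXH} are correct and essentially match the paper's approach; the paper phrases the splittings via explicit cutoffs $\chi$ and $\chi_6$, but the underlying mechanism (separating the region where both $|\psi_i|$ are bounded away from zero from its finite-measure complement, and invoking \eqref{eq:finite supp} or Sobolev there) is the same.

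For \eqref{ineq:triangular}, however, your route through the equivalence does not quite yield the stated inequality. Converting the left-hand side $d_{\Eb}(\psi_1+u,\psi_2+v)$ to $d_{\Etwo}(\psi_1+u,\psi_2+v)$ costs a factor $C(R')$ with $R'\sim \En(\psi_1)+\En(\psi_2)+\|u\|_{H^1}^2+\|v\|_{H^1}^2$, and your own analysis of the equivalence shows $C(R')\sim 1+\sqrt{R'}$. Combined with the linear prefactor you obtain for $d_{\Etwo}(\psi_1+u,\psi_2+v)$, the final bound carries a \emph{quadratic} prefactor $\bigl(1+\sqrt{\En(\psi_1)}+\sqrt{\En(\psi_2)}+\|u\|_{H^1}+\|v\|_{H^1}\bigr)^2$ rather than the linear one claimed. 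The paper is aware of this loss: it remarks that \eqref{ineq:triangular} ``follows from the equivalence of metrics'' but then gives a direct proof ``to track constants explicitly''. That direct proof bounds $\||\psi_1+u|-|\psi_2+v|\|_{L^2}$ without passing through $d_{\Etwo}$ on the left: it splits via $\chi_6(\psi_i+\cdot)$ exactly as in the proof of \eqref{eq:metricXH}, applying \eqref{eq:finite supp} (or the $3$D analogue) to the small-modulus piece and the pointwise bound $\bigl||\psi_1+u|-|\psi_2+v|\bigr|\leq \bigl||\psi_1+u|^2-|\psi_2+v|^2\bigr|$ on the complement, and only then expands $|\psi_1+u|^2-|\psi_2+v|^2$ as you do. This avoids the extra $C(R')$ factor. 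For the applications later in the paper (continuous dependence, Lipschitz continuity) the energies are uniformly bounded, so your quadratic version would still suffice; but it does not establish \eqref{ineq:triangular} exactly as stated.
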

\begin{remark}\label{rem:topology}
Lemma \ref{lem:metric} allows to infer the topological properties of $(\Eb,d_{\Eb})$ from the results for $(\Etwo(\R^d),d_{\Etwo})$ in \cite{Gerard,Gerard08}.
For instance, the functional $\En$ measures the distance to the circle of constants $S^1=\{\psi\in \Eb : \En(\psi)=0\}$ for $d=3$ but not for $d=2$. Indeed, it follows from Lemma \ref{lem:metric} and \cite[Proposition 4.3]{Gerard} that there exists $A>0$ such that for every $\psi\in \Eb(\R^3)$,
\begin{equation*}
    \frac{1}{A}d_{\Eb}(\psi,S^1)^2\leq \mathcal{E}_{\mathrm{GL}}(\psi)\leq C d_{\Eb}(\psi, S^1)^2.
\end{equation*}
If $d=2$, there exists a sequence $\{\psi_n\}$ in $\Eb(\R^2)$ such that $\En(\psi_n)\rightarrow 0$ but $d_{\Eb}(\psi_n, S^1)\geq c_0>0$. Note that the complete metric space $(\Etwo(\R^2),d_{\Etwo})$ lacks an affine structure and to be separable.
In \cite{Gerard08} a detailed characterisation of $\Etwo(\R^d)$ including a manifold structure for $\Etwo(\R^d)$ is provided. The connected components are characterised by \cite[Theorem 1.8]{Gerard08} and \cite[Proposition 1.10]{Gerard08}. We introduce a (strictly) weaker topology \cite[p. 140]{Gerard08} induced by the metric 
\begin{equation*}
    d_{\Eb}'(\psi_1,\psi_2):=\|\psi_1-\psi_2\|_{L^2(B(1,0))}+\|\nabla\psi_1-\nabla\psi_2\|_{L^2(\R^2)}+\||\psi_1|^2-|\psi_2|^2\|_{L^2(\R^2)}
\end{equation*}
is introduced. It follows that $(\Eb,d_{\Eb}')$ is connected. Relying on the decomposition of elements of $\Eb$ provided by \cite[Theorem 1.8]{Gerard08}, one can show that $(\Eb,d_{\Eb}')$ is separable. If one only requires continuity of the solution map with respect to this weakened topology, the proof of Proposition \ref{prop:localWP} can be simplified. This metric has widely been used in the study of the stability of special solutions for $d=1$. We refer to \cite{KochLiao}, where the authors introduce new energy spaces for \eqref{eq:GP} and $d=1$ in order to tackle global well-posedness in the energy space at $H^s$-regularity.
\end{remark}
\begin{proof}
We start by showing that there exists $C>0$ such that 
\begin{equation*}
    \||\psi_1|-|\psi_2|\|_{L^2(\R^d)}\leq C\left(\||\psi_1|^2-|\psi_2|^2\|_{L^2(\R^d)}+\|\nabla\psi_1-\nabla\psi_2\|_{L^2(\R^d)}\right).
    \end{equation*}
Indeed, let $\chi_6(z)=\chi(6z)$ with $\chi$ defined in \eqref{eq:cutoffchi}, then
\begin{multline*}
    \||\psi_1|-|\psi_2|\|_{L^2(\R^d)}\\
    \leq \||\psi_1|\chi_6(\psi_1)-|\psi_2|\chi_6(\psi_2)\|_{L^2(\R^d)}+\||\psi_1|(1-\chi_6(\psi_1))-|\psi_2|(1-\chi_6(|\psi_2|))\|_{L^2(\R^d)}.
\end{multline*}
The second contribution can be bounded by
\begin{equation*}
    \||\psi_1|(1-\chi_6({\psi_1}))-|\psi_2|(1-\chi_6(\psi_2))\|_{L^2(\R^d)}\leq C \||\psi_1|^2-|\psi_2|^2\|_{L^2(\R^d)}.
\end{equation*}
Next, we notice that for $i=1,2$, the support of $\chi_6(\psi_i)$ is of finite measure as $\psi_i\in\Eb(\R^d)$, see \eqref{eq:Cheby}. 
For $d=2$, by invoking \eqref{eq:finite supp} applied to $\phi=|\psi_1|\chi_6({|\psi_1|})-|\psi_2|\chi_6(|\psi_2|)$, we conclude that 
\begin{multline*}
    \||\psi_1|\chi_6({|\psi_1|})-|\psi_2|\chi_6(|\psi_2|)\|_{L^2(\R^2)}\\
    \leq C\left(\sqrt{\En(\psi_1)}+\sqrt{\En(\psi_2)}\right)\left(\|\psi_1-\psi_2\|_{X^1+H^1(\R^2)}+\||\psi_1|^2-|\psi_2|^2\|_{L^2(\R^2)}\right). 
\end{multline*}
For $d=3$, one proceeds similarly exploiting the decomposition $\psi_i=c_i+v_i$, $v_i\in \Fc(\R^3)$ and Proposition \ref{prop:energyspace3d}. We have 
\begin{multline*}
     \||\psi_1|\chi_6({|\psi_1|})-|\psi_2|\chi_6(|\psi_2|)\|_{L^2(\R^3)}\\ \leq C\left(1+\sqrt{\En(\psi_1)}+\sqrt{\En(\psi_2)}\right)\left(|c_1-c_2|+\|\nabla v_1-\nabla v_2\|_{L^2(\R^3)}\right)
     \leq C(R) d_{\Etwo}(\psi_1,\psi_2).
\end{multline*}
Next, we show that there exists $C=C(R)>0$ such that
\begin{equation*}
    \||\psi_1|^2-|\psi_2|^2\|_{L^2(\R^d)}\leq C_1\left(\||\psi_1|-|\psi_2|\|_{L^2(\R^d)}+\|\psi_{1}-\psi_{2}\|_{X^1+H^1(\R^d)}\right).
\end{equation*}
It suffices to notice that 
\begin{equation*}
   \||\psi_1|^2\chi(\psi_1)-|\psi_2|^2\chi(\psi_2)\|_{L^2(\R^d)}\leq C_1\||\psi_1|-|\psi_2|\|_{L^2(\R^d)},  
\end{equation*}
while
\begin{multline*}
     \||\psi_1|^2(1-\chi(\psi_1))-|\psi_2|^2(1-\chi(\psi_2))\|_{L^2(\R^d)}\\
     \leq C\left(1+\sqrt{\En(\psi_1)}+\sqrt{\En(\psi_2)
     }+\|\psi_{1,\mathrm{int}}\|_{L^4(\R^d)}+\|\psi_{2,\mathrm{int}}\|_{L^4(\R^d)}
     \right)\|\psi_{1,\mathrm{int}}-\psi_{2,\mathrm{int}}\|_{L^4(\R^d)}\\
      \leq 2C\left(1+\sqrt{\En(\psi_1)}+\sqrt{\En(\psi_2)
     }
     \right)\|\psi_{1,\mathrm{int}}-\psi_{2,\mathrm{int}}\|_{L^4(\R^d)}.
\end{multline*}
In the second last inequality, we used that
\begin{equation}\label{eq:pointwise psiq}
    |\psi|^{4}\sqrt{1-\chi(\psi)}\leq C\left|\psih\right|^{4},
\end{equation}
with $\psih$ defined in \eqref{eq:psihighlow} which is only valid provided $(1-\chi(\psi))>\theta$ for some small $\theta>0$. However, this is harmless as
\begin{equation*}
    \mathcal{L}^2\left(\left\{x\in \supp(1-\chi(\psi)) \,  : \,  0<1-\chi(\psi)\leq \theta \right\}\right)\leq \sqrt{\En(\psi)}
\end{equation*}
and $|\psi|\leq 3$ on the respective set. The error can be controlled at the expense of a factor $\sqrt{\En(\psi)}$ in the estimate.
One has that 
\begin{equation*}
    \|\psi_{1,\mathrm{int}}-\psi_{2,\mathrm{int}}\|_{L^4(\R^d)}\leq C \left(\sqrt{\En(\psi_1)}+\sqrt{\En(\psi_2)}\right) \|\psi_{1}-\psi_{2}\|_{X^1+H^1(\R^d)}
\end{equation*}
by means of \eqref{eq:finite supp} for $d=2$ and the decomposition provided by Proposition \ref{prop:energyspace3d} for $d=3$.
Finally,
\begin{align*}
  \||\psi_1|^2&-|\psi_2|^2\|_{L^2(\R^d)}\\
  &\leq C\left(1+\sqrt{\En(\psi_1)}+\sqrt{\En(\psi_2)}\right)\left(\||\psi_1|-|\psi_2|)\|_{L^2(\R^d)}+ \|\psi_{1}-\psi_{2}\|_{X^1+H^1(\R^d)}\right) . 
\end{align*}
It remains to show \eqref{ineq:triangular}. The respective property is known for $d_{\Etwo}$, see \cite[Lemma 2]{Gerard}, and hence follows from the equivalence of metrics. However, we provide a proof to track constants explicitly. Note that
\begin{equation*}
    \||\psi_1+u|-|\psi_2+v|\|_{L^2}\leq  \||\psi_1+u|\chi_6(\psi_1+u)-|\psi_2+v|\chi_6(\psi_2+v)\|_{L^2}+\||\psi_1+u|^2-|\psi_2+v|^2\|_{L^2},
\end{equation*}
by arguing as in the first part of the proof. By invoking \eqref{eq:finite supp}, one has
\begin{multline*}
    \||\psi_1+u|\chi_6(\psi_1+u)-|\psi_2+v|\chi_6(\psi_2+v)\|_{L^2}\\
    \leq C\left(\sqrt{\En(\psi_1)}+\sqrt{\En(\psi_2)}+\|u\|_{H^1}+\|v\|_{H^1}\right)\left(\|\psi_{1}-\psi_{2}\|_{X^1+H^1(\R^d)}+\|u-v\|_{H^1}\right).
\end{multline*}
For the second term, one has
\begin{align*}
    &\||\psi_1+u|^2-|\psi_2+v|^2\|_{L^2}\\
    &\leq \||\psi_1|^2-|\psi_2|^2\|_{L^2}+\||u|^2-|v|^2\|_{L^2}+\|2\RE(\overline{\psi_1}u)-2\RE(\overline{\psi_2}v)\|_{L^2}\\
    &\leq \||\psi_1|^2-|\psi_2|^2\|_{L^2}+\left(\|u\|_{H^1}+\|v\|_{H^1}\right)\|u-v\|_{H^1}\\
    &+2\|\RE\left((\overline{\psi}_{1,\infty}+\overline{\psi}_{1,q})(u-v)\right)\|_{L^2}
    +2\|\RE\left(\left(\overline{\psi}_{1,q}-\overline{\psi}_{2,q}+\overline{\psi}_{1,\infty}-\overline{\psi}_{2,\infty}\right)v\right)\|_{L^2}\\
    &\leq \||\psi_1|^2-|\psi_2|^2\|_{L^2}+\left(\|u\|_{H^1}+\|v\|_{H^1}+1+\En(\psi_1)\right)\|u-v\|_{H^1}
   +\|v\|_{H^1}d_{\Eb}(\psi_1,\psi_2)\\
    &\leq C\left(1+\sqrt{\En(\psi_1}+\sqrt{\En(\psi_2)}+\|u\|_{H^1}+\|v\|_{H^1}\right)\left(d_{\Eb}(\psi_1,\psi_2)+\|u-v\|_{H^1}\right).
\end{align*}
\end{proof}
Next, we provide a sufficient condition on $f$ under which the space of functions with finite Hamiltonian energy is included in $\Eb$. To that end, we require Assumption \ref{ass:D} to be satisfied
so that \eqref{F:convex} holds. The nonlinear potential energy density $F$ is locally convex in a neighborhood of $1$. It was shown in \cite[Lemma 4.8]{ChironM} that requiring in addition that $F\geq 0$ and hence the Hamiltonian energy is sign-definite, implies that $ \Eb=\{\mathcal{H}(\psi)<\infty\}$. Note that the condition $F\geq 0$ is for instance satisfied for the pure power-type nonlinearities in \eqref{eq:powernonlinearity}.
\begin{lemma}\label{lem:identification Eb}
Let $d=2,3$ and suppose that Assumption \ref{ass:D} is satisfied. If in addition $F\geq 0$, then 
\begin{equation*}
   \Eb=\{\mathcal{H}(\psi)<\infty\}.
\end{equation*}
In particular, there exists an increasing function $g: (0,\infty)\rightarrow [0,\infty)$ with $\displaystyle \lim_{r\rightarrow 0}g(r)=0$ such that
\begin{equation}\label{eq:equivEH}
    \En(\psi)\leq g\left(\mathcal{H}(\psi)\right).
\end{equation}
\end{lemma}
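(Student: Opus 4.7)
The forward inclusion $\Eb\subset\{\mathcal{H}(\psi)<\infty\}$ is already the content of Lemma \ref{lem:finiteenergy}, so my plan focuses on the reverse inclusion together with the quantitative bound \eqref{eq:equivEH}. The first step is to fix $\psi$ with $\mathcal{H}(\psi)<\infty$ and observe that, since $F\geq 0$, the two non-negative contributions are individually controlled: $\|\nabla\psi\|_{L^2}^2\leq 2\mathcal{H}(\psi)$ and $\int F(|\psi|^2)\,dx\leq \mathcal{H}(\psi)$. What remains to establish is the missing bound on $\bigl\|\,|\psi|-1\,\bigr\|_{L^2}$ with a modulus that vanishes as $\mathcal{H}(\psi)\to 0$.

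Next, I would fix $\delta>0$ small enough that the quadratic comparison \eqref{F:convex} holds on $(1-\delta,1+\delta)$, and pick a cutoff $\eta_\delta$ as in \eqref{eq:eta} adapted to this neighborhood. On the near-one region $A=\{x:\bigl||\psi(x)|^2-1\bigr|<\delta\}$, \eqref{F:convex} yields the pointwise bound $(|\psi|-1)^2\leq C_2 F(|\psi|^2)$, and therefore
$$
\int_{A}\bigl||\psi|-1\bigr|^2\,dx\leq C_2\,\mathcal{H}(\psi).
$$
On the complementary far region $B=\{\bigl||\psi|^2-1\bigr|\geq\delta\}$, the hypothesis $F\geq 0$ together with Assumption \ref{ass:D} (which, via the Taylor expansion \eqref{eq:Taylor F}, makes $\rho=1$ a strict local minimum of $F$) and the fact that $F$ is continuous and vanishes only at $\rho=1$ provide a positive lower bound $F(|\psi|^2)\geq c_0(\delta)>0$ on the portion of $B$ where $|\psi|$ stays in any fixed compact set. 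This immediately gives $\mathcal{L}^d\bigl(B\cap\{|\psi|\leq M\}\bigr)\leq c_0^{-1}\mathcal{H}(\psi)$, so the bounded part of the far set has measure controlled linearly by $\mathcal{H}(\psi)$.

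The main remaining difficulty is the unbounded tail of $B$, where $|\psi|$ may be large. Here I plan to follow the strategy of Lemma \ref{lem:finiteenergy} and set $\tilde\psi=\psi\bigl(1-\eta_\delta(|\psi|^2)\bigr)$, which is supported in $B$ and satisfies $\|\nabla\tilde\psi\|_{L^2}\leq C\|\nabla\psi\|_{L^2}$ by the chain rule. For $d=2$, applying \eqref{eq:finite supp} to $\tilde\psi$ provides $L^q$-control for every finite $q$ in terms of $\|\nabla\tilde\psi\|_{L^2}$ and $\mathcal{L}^2(B)$, from which the $L^2$-mass of $(|\psi|-1)\mathbf{1}_B$ is extracted by Hölder combined with the linear measure bound $\mathcal{L}^2(B)\lesssim \mathcal{H}(\psi)$. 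For $d=3$, I would use the decomposition $\psi=c+v$ with $v\in\dot H^1(\R^3)\subset L^6$ granted by Proposition \ref{prop:energyspace3d}, noting that integrability of $F(|\psi|^2)$ and the uniqueness of the zero of $F$ force $|c|=1$, so that $(|\psi|-1)\mathbf{1}_B$ is controlled through $\|v\|_{L^6}\lesssim\|\nabla v\|_{L^2}$ interpolated against $\mathcal{L}^3(B)$. Summing both pieces yields an estimate of the form $\En(\psi)\leq C\bigl(\mathcal{H}(\psi)+\mathcal{H}(\psi)^{k}\bigr)$ for some $k\geq 1$ depending on $d$ and $\alpha$, and setting $g(r):=C(r+r^k)$ furnishes an increasing modulus with $g(r)\to 0$ as $r\to 0$. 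The subtlest point, and the one I would check most carefully, is that the Sobolev bound on $\tilde\psi$ truly degenerates with $\mathcal{L}^d(B)$ at a positive power; without this the quantitative statement \eqref{eq:equivEH} fails near $\mathcal{H}=0$, even though the inclusion $\{\mathcal{H}<\infty\}\subset\Eb$ would still be valid.
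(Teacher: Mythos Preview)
Your argument hinges on the assertion that ``$F$ is continuous and vanishes only at $\rho=1$,'' which you invoke to obtain the lower bound $F(|\psi|^2)\geq c_0(\delta)>0$ on the bounded part of the far set $B$ and hence the measure control $\mathcal L^d(B)\lesssim\mathcal H(\psi)$. This claim is not a consequence of the stated hypotheses: Assumption~\ref{ass:D} only makes $\rho=1$ a strict \emph{local} minimum of $F$, and $F\geq 0$ does not rule out further zeros. In $d=2$ the polynomial $f(\rho)=(\rho-1)(\rho-2)(\rho-3)$ satisfies Assumption~\ref{ass:D} with $F\geq 0$, yet $F(3)=0$; the constant function $\psi\equiv\sqrt3$ then has $\mathcal H(\psi)=0$ while $\En(\psi)=\infty$, so no positive lower bound $c_0(\delta)$ exists, the Chebyshev step collapses, and the subsequent use of \eqref{eq:finite supp} (which needs $\mathcal L^2(\supp\tilde\psi)<\infty$) fails. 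The same obstruction hits your $d=3$ step, where $|c|=1$ is deduced from ``uniqueness of the zero of $F$.'' In short, your argument proves the lemma under the extra hypothesis that $\rho=1$ is the unique zero of $F$ on $[0,\infty)$, but not under the hypotheses as written.

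The paper takes a different route: it introduces the modified Ginzburg--Landau functional $\mathcal E_{\mathrm{mGL}}(\psi)=\int|\nabla\psi|^2+\tfrac12(\varphi(|\psi|)^2-1)^2\,dx$, with $\varphi$ a smooth bounded truncation of the identity, and first establishes $\En\simeq\mathcal E_{\mathrm{mGL}}$ via \cite[Corollary~4.3]{ChironM}; here the finite-support Sobolev step lives, but since $(\varphi(|\psi|)^2-1)^2$ is bounded no coercivity of $F$ at large $|\psi|$ is needed. The comparison $\mathcal E_{\mathrm{mGL}}\leq C(\mathcal H)$ is then outsourced to \cite[Lemma~4.8]{ChironM}. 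The paper explicitly notes that the easy case is $\inf_{B}F>0$---precisely the regime your argument covers---and defers the general $F\geq 0$ case to that reference. So if you want to match the paper's generality you must either pass through a truncated surrogate energy as the paper does or import the cited result; if instead you are content with the (physically most common) situation where $F$ has its unique zero at $1$, your direct approach is sound and more explicit than the truncation detour.
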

By exploiting Lemma \ref{lem:metric} and the conservation of the Hamiltonian along solutions to (1.1), it is then possible to extend the local solutions globally in time. Notice that when in the framework of NLS equations with trivial far-field, the blow-up alternative is given in terms of the $H^1$-norm, whereas here it involves $\mathcal E(\psi)$. In the classical, integrable case, it is possible to infer the analogue of \eqref{eq:equivEH} under less restrictive assumptions on $F$; for instance it is possible to consider mass-subcritical focusing nonlinearities. In this case indeed the analogue of \eqref{eq:equivEH} is derived by exploiting Gagliardo-Nirenberg inequalities. However, the lack of a suitable control of the mass in our case prevents us from considering more general nonlinearities.
\begin{proof}
We sketch of the proof, see \cite{ChironM} for full details. First, we borrow from \cite[Equation (1.18)]{ChironM} the following equivalent definition of $\Etwo(\R^d)=\Eb(\R^d)$. Let $\varphi\in C^{\infty}(\R)$ be such that $\varphi(r)=r$ for $r\in[0,2]$, $0\leq \varphi'\leq 1$ on $\R$ and $\varphi(r)=3$ for $r\geq 4$. We define the modified Ginzburg-Landau energy
\begin{equation*}
    \mathcal{E}_{\mathrm{mGL}}(\psi)=\int_{\R^d}|\nabla\psi|^2+\frac{1}{2}\left(\varphi(|\psi|)^2-1\right)^2\dd x.
\end{equation*}
The functional $\Etwoc$ is well-approximated by $\mathcal{E}_{\mathrm{mGL}}$. Indeed, it is shown in \cite[Section 2]{ChironM} that
\begin{equation*}
    \Etwo(\R^d)=\{\psi\in L_{\loc}^1(\R^d) \,  : \,  \nabla\psi\in L^2(\R^d), \, \varphi(|\psi|)^2-1\in L^2(\R^d)\}.
\end{equation*}
Since $|\varphi(|\psi|)^2-1|\leq 4 ||\psi|-1|$, one has $\varphi(|\psi|)^2-1\in L^2(\R^d)$ if $\psi\in\Eb(\R^d)$. For the converse, see \cite[Lemma 2.1]{ChironM}. We sketch the main idea. On the set where $|\psi(x)|\leq 2$, one has $\varphi(|\psi|)^2=|\psi|^2$ and hence the desired bound follows. Further, $\mathcal{L}^d(\{x\, : \, ||\psi(x)|-1|>\frac32\})<+\infty$ from the Chebychev inequality \eqref{eq:Cheby} if $\varphi(|\psi|)^2-1\in L^2(\R^d)$.  By means of \eqref{eq:finite supp} for $d=2$ and Sobolev embedding for $d=3$ one concludes. Finally, there exist $C>0$ and an increasing function $m: \R_{+}\rightarrow \R_{+}$ with $\displaystyle \lim_{r\rightarrow 0}m(r)=0$ such that
\begin{equation*}
    \frac{1}{4} \mathcal{E}_{\mathrm{mGL}}(\psi)\leq \En(\psi)\leq C m\left(\mathcal{E}_{\mathrm{mGL}}(\psi)\right), 
\end{equation*}
see \cite[Corollary 4.3]{ChironM}.
Second, we note that it suffices to establish inequality \eqref{eq:equivEH} with $\mathcal{E}$ replaced by $\mathcal{E}_{\mathrm{mGL}}$.
In virtue of \eqref{F:convex}, it suffices to consider the region where $\{x : ||\psi|-1|\geq \delta\}$. If $\inf F>0$ on $\{x : ||\psi|-1|\geq \delta\}$, then it is clear that 
\begin{equation*}
    \int_{\{||\psi|-1|\geq \delta\}}\left(\varphi(|\psi|)^2-1\right)^2\dd x \leq C  \int_{\{||\psi|-1|\geq \delta\}}F(|\psi|^2)\dd x.
\end{equation*}
It follows that $\En(\psi)$ can be controlled in terms of $\mathcal{H}(\psi)$. More generally, provided that $F\geq 0$, it follows from \cite[Lemma 4.8]{ChironM} that for all $\psi$ with $|\mathcal{H}(\psi)|<\infty$ there exist $C_1=C_1(\mathcal{H}(\psi))>0$ and $C_2=C_2(\mathcal{H}(\psi))>0$ such that
\begin{equation*}
    C_1\left(\mathcal{H}(\psi)\right)\leq \mathcal{E}_{\mathrm{mGL}}(\psi)\leq C_2\left(\mathcal{H}(\psi)\right).
\end{equation*}
The statement of Lemma \ref{lem:identification Eb} follows.
\end{proof}
\begin{remark}
System \eqref{eq:NLS} is closely related to the QHD system with non-trivial far-field. In a reminiscent analysis, the regularity and integrability properties of its unknowns $(\rho,J)$ corresponding to the mass density $\rho=|\psi|^2$ and momentum density $J=\IM(\overline{\psi}\nabla\psi)$ are then captured in terms of Orlicz spaces, see \cite{AHMZ} and \cite[Chapter 2]{H} as well as \cite{AHM18,HAIMS} for the respective uniform bounds for solutions to the quantum Navier-Stokes equations, a viscous regularization of the QHD system. 
\end{remark}
\subsection{Smooth approximation}
Elements of the energy space can be approximated by smooth functions via convolution with a smooth mollifier.
\begin{lemma}\label{lem:approx}
Let $\psi\in\Eb(\R^d)$, then there exists $\{\psi_n\}_{n\in \N}\subset C^{\infty}(\R^d)\cap \Eb(\R^d)$ such that 
\begin{equation*}
    d_{\Eb}(\psi,\psi_n)\rightarrow 0,
\end{equation*}
as $n\rightarrow 0$. Moreover, for any $\psi\in \Eb(\R^d)$, there exists $\varphi\in C_b^{\infty}(\R^d)\cap \Eb(\R^d)$ such that $\nabla\varphi\in H^{\infty}(\R^d)$ and
\begin{equation}
    \psi-\varphi \in H^1(\R^d).
\end{equation}
\end{lemma}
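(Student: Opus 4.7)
The plan is to approximate $\psi$ by convolution with a standard mollifier. Let $\rho\in C_c^{\infty}(\R^d)$ with $\rho\geq 0$, $\int_{\R^d}\rho\,\dd x=1$, $\supp(\rho)\subset B_1(0)$, and set $\rho_n(x):=n^d\rho(nx)$. Define $\psi_n:=\rho_n\ast\psi$, which belongs to $C^{\infty}(\R^d)$ since $\psi\in L_{\loc}^1(\R^d)$.

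The crucial ingredient, valid even though $\psi\notin L^2(\R^d)$ in general, is the estimate
\begin{equation*}
\|\psi-\rho_n\ast\psi\|_{L^2(\R^d)}\leq \frac{C}{n}\|\nabla\psi\|_{L^2(\R^d)}.
\end{equation*}
To prove it, I would write $\psi(x)-\psi(x-y)=\int_0^1 y\cdot\nabla\psi(x-ty)\,\dd t$ and apply Minkowski's integral inequality in $L^2(\dd x)$ to obtain $\|\psi(\cdot)-\psi(\cdot-y)\|_{L^2}\leq |y|\|\nabla\psi\|_{L^2}$; a second application of Minkowski's inequality with respect to $\rho_n(y)\,\dd y$ yields the claim. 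The analogous bound applied to $\nabla\psi\in L^2$, combined with standard $L^2$-mollification, gives $\|\nabla\psi-\nabla\psi_n\|_{L^2}\to 0$. Hence $\psi-\psi_n\in H^1(\R^d)$ with $\|\psi-\psi_n\|_{H^1}\to 0$, so in particular $\|\psi-\psi_n\|_{X^1+H^1}\to 0$. The pointwise inequality $\bigl||\psi|-|\psi_n|\bigr|\leq|\psi-\psi_n|$ then forces $\||\psi|-|\psi_n|\|_{L^2}\to 0$, whence $d_{\Eb}(\psi,\psi_n)\to 0$. Finally, $\psi_n\in\Eb(\R^d)$ follows from $\nabla\psi_n=\rho_n\ast\nabla\psi\in L^2$ together with $|\psi_n|-1=(|\psi_n|-|\psi|)+(|\psi|-1)\in L^2(\R^d)$.

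For the second assertion, fix any single $n$ (say $n=1$) and set $\varphi:=\rho_n\ast\psi$. The previous step already gives $\varphi\in\Eb(\R^d)$ and $\psi-\varphi\in H^1(\R^d)$. To see $\varphi\in C_b^{\infty}(\R^d)$, I would use the decomposition $\psi=\psil+\psih$ from Lemma~\ref{lem:energyspace}, with $\psil\in X^1\subset L^{\infty}$ and $\psih\in H^1\subset L^2$: Young's inequality delivers $\rho_n\ast\psil\in L^{\infty}$ via $L^1\ast L^{\infty}\hookrightarrow L^{\infty}$ and $\rho_n\ast\psih\in L^{\infty}$ via $L^2\ast L^2\hookrightarrow L^{\infty}$; for $|\alpha|\geq 1$ one writes $\partial^{\alpha}\varphi=(\partial^{\alpha-e_i}\rho_n)\ast\partial_i\psi$ and exploits $\nabla\psi\in L^2$ with Young's $L^2\ast L^2\hookrightarrow L^{\infty}$. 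The regularity $\nabla\varphi\in H^{\infty}(\R^d)$ follows from $\partial^{\alpha}(\nabla\varphi)=(\partial^{\alpha}\rho_n)\ast\nabla\psi\in L^2$ by Young's $L^1\ast L^2\hookrightarrow L^2$.

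The only non-routine step is the integral-of-gradient estimate above: unlike the textbook mollifier convergence result, here $\psi$ itself is not in $L^2(\R^d)$, so a density argument is unavailable and one is forced to exploit the $L^2$-gradient directly through the fundamental theorem of calculus.
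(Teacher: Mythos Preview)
Your proof is correct and follows essentially the same route as the paper, which simply cites \cite[Lemma 6]{Gerard} (convolution with a standard mollifier) for the first assertion and \cite[Proposition 1.1]{Gallo} for the second, transferring the conclusions from $(\Etwo,d_{\Etwo})$ to $(\Eb,d_{\Eb})$ via Lemma~\ref{lem:metric}. Your argument is slightly more direct: by establishing $\psi-\psi_n\to 0$ in $H^1(\R^d)$ you obtain convergence in $d_{\Eb}$ immediately without passing through $d_{\Etwo}$, and your construction of $\varphi$ as a single mollification (rather than invoking Gallo's decomposition) is a clean self-contained shortcut.
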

The first statement is proven in \cite[Lemma 6]{Gerard} by considering the convolution with a standard mollification kernel and the second statement follows from  \cite[Proposition 1.1.]{Gallo}. In \cite{Gerard,Gallo}, the statements are given for $(\Etwo,d_{\Etwo})$ being equivalent to $(\Eb,d_{\Eb})$ by virtue of Lemma \ref{lem:metric}.
\subsection{Action of the linear propagator on the energy space}
The action of the linear Schr{\"o}dinger group on the space $X^k(\R^d)+H^k(\R^d)$ is well-defined, see \cite[Lemma 3]{Gerard} and also \cite{Gerard08}. While the results in \cite{Gerard, Gerard08} are stated for $(\Etwo,d_{\Etwo})$, we state them $(\Eb,d_{\Eb})$ which by Lemma \ref{lem:metric} is equivalent. 
\begin{lemma}[\cite{Gerard}]\label{lem:semigroup}
Let $d$ be a positive integer. For every $k$, for every $t\in \R$, the operator $\eitD$ maps $X^k(\R^d)+H^k(\R^d)$ into itself and it satisfies
\begin{equation}\label{eq:linearflow}
    \|\eitD f\|_{X^k+H^k}\leq C\left(1+|t|\right)^{\frac{1}{2}}\|f\|_{X^k+H^k},
\end{equation}
and 
\begin{equation}\label{eq:linearflow1}
    \|\eitD f-f\|_{L^2}\leq C |t|^{\frac{1}{2}}\|\nabla f\|_{L^2}.
\end{equation}
Moreover, if $f\in X^k(\R^d)+H^k(\R^d)$, the map $t\in \R\mapsto \eitD f\in X^k(\R^d)+H^k(\R^d)$ is continuous.
\end{lemma}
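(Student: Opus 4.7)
The plan is to reduce everything to standard Fourier-multiplier estimates on $L^2$ by exploiting the natural decomposition of the sum space. Given $f\in X^k(\R^d)+H^k(\R^d)$, I write $f=f_1+f_2$ with $f_1\in X^k$, $f_2\in H^k$. Since $\eul^{\frac{\imu}{2}t\Delta}$ commutes with all derivatives and is unitary on $L^2$, the term $\eul^{\frac{\imu}{2}t\Delta}f_2$ stays in $H^k$ with norm preserved. The work is therefore to handle $\eul^{\frac{\imu}{2}t\Delta}f_1$; the guiding idea is to split it as
\begin{equation*}
\eul^{\frac{\imu}{2}t\Delta}f_1 \;=\; f_1 \;+\; \bigl(\eul^{\frac{\imu}{2}t\Delta}f_1-f_1\bigr),
\end{equation*}
keeping $f_1\in X^k$ and showing the bracketed correction lies in $H^k$ with a controlled norm depending on $(1+|t|)^{1/2}$.

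First I would prove \eqref{eq:linearflow1}. By Plancherel this reduces to the pointwise multiplier bound $|\eul^{-\frac{\imu}{2}t|\xi|^2}-1|\leq \sqrt{|t|}\,|\xi|$, which follows from the elementary inequality $\min(2,s)\leq\sqrt{2s}$ applied with $s=|t||\xi|^2/2$. Then
\begin{equation*}
\|\eul^{\frac{\imu}{2}t\Delta}f-f\|_{L^2}^2 \;=\; \int_{\R^d}\bigl|\eul^{-\frac{\imu}{2}t|\xi|^2}-1\bigr|^2\,|\hat f(\xi)|^2\,\dd\xi \;\leq\; |t|\,\|\nabla f\|_{L^2}^2,
\end{equation*}
which is exactly \eqref{eq:linearflow1}.

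Next, for \eqref{eq:linearflow} I apply \eqref{eq:linearflow1} to $f_1$ at the $L^2$ level, using $\nabla f_1\in L^2$ because $f_1\in X^k$, to get $\|\eul^{\frac{\imu}{2}t\Delta}f_1-f_1\|_{L^2}\leq |t|^{1/2}\|f_1\|_{X^k}$. For any multi-index with $1\leq|\beta|\leq k$, I use commutation of $D^\beta$ with the propagator together with its $L^2$-isometry property to obtain $\|D^\beta(\eul^{\frac{\imu}{2}t\Delta}f_1-f_1)\|_{L^2}\leq 2\|D^\beta f_1\|_{L^2}\leq 2\|f_1\|_{X^k}$, using that $\nabla f_1\in H^{k-1}$. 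Summing over $|\beta|\leq k$ yields $\|\eul^{\frac{\imu}{2}t\Delta}f_1-f_1\|_{H^k}\leq C(1+|t|^{1/2})\|f_1\|_{X^k}$. Writing
\begin{equation*}
\eul^{\frac{\imu}{2}t\Delta}f \;=\; \underbrace{f_1}_{\in\,X^k} \;+\; \underbrace{(\eul^{\frac{\imu}{2}t\Delta}f_1-f_1)+\eul^{\frac{\imu}{2}t\Delta}f_2}_{\in\,H^k},
\end{equation*}
I conclude $\|\eul^{\frac{\imu}{2}t\Delta}f\|_{X^k+H^k}\leq C(1+|t|)^{1/2}(\|f_1\|_{X^k}+\|f_2\|_{H^k})$, and taking the infimum over admissible decompositions gives \eqref{eq:linearflow}.

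Finally, for the strong continuity of $t\mapsto \eul^{\frac{\imu}{2}t\Delta}f$ in $X^k+H^k$, I use the group property $\eul^{\frac{\imu}{2}t\Delta}f-\eul^{\frac{\imu}{2}t_0\Delta}f=\eul^{\frac{\imu}{2}t_0\Delta}(\eul^{\frac{\imu}{2}(t-t_0)\Delta}f-f)$ together with the boundedness already established. So it suffices to show $\eul^{\frac{\imu}{2}s\Delta}f\to f$ in $X^k+H^k$ as $s\to 0$. Keeping $f_1$ as the first component (fixed, independent of $s$) and absorbing $\eul^{\frac{\imu}{2}s\Delta}f_1-f_1$ into the $H^k$ part as above, one is reduced to the $L^2$ convergence of $\eul^{\frac{\imu}{2}s\Delta}\nabla^\beta f_1\to \nabla^\beta f_1$ for $|\beta|\leq k$ (with $\nabla^\beta f_1\in L^2$ for $|\beta|\geq 1$, and the $|\beta|=0$ component via \eqref{eq:linearflow1}) and the $H^k$-strong continuity on $f_2$, both of which are standard properties of the free Schr\"odinger group. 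The only subtle point in the whole argument is ensuring that the norm bound on $\eul^{\frac{\imu}{2}t\Delta}f_1-f_1$ at the $L^2$-level degenerates only as $|t|^{1/2}$ and not worse at higher derivatives; this is precisely why the refined multiplier inequality $\min(2,s)\leq\sqrt{2s}$ is what one wants, rather than the crude bound $|\eul^{i\theta}-1|\leq|\theta|$.
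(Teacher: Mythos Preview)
The paper does not prove this lemma itself; it is quoted from \cite{Gerard} (Lemma~3 there) and simply cited without argument. Your proof is correct and is essentially the standard multiplier argument one expects in that reference: decompose $f=f_1+f_2$, leave $f_1\in X^k$ in place, and show that $\eul^{\frac{\imu}{2}t\Delta}f_1-f_1\in H^k$ via the Fourier multiplier bound $|\eul^{-\frac{\imu}{2}t|\xi|^2}-1|\leq\sqrt{|t|}\,|\xi|$.

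One minor technical point worth making explicit: in the Plancherel step for \eqref{eq:linearflow1} you write $\int|\eul^{-\frac{\imu}{2}t|\xi|^2}-1|^2|\hat f(\xi)|^2\,\dd\xi$, but for $f\in X^k$ (bounded but not square-integrable) the Fourier transform $\hat f$ is in general only a tempered distribution, possibly with a singular part at the origin. This is harmless because the multiplier $m(\xi)=\eul^{-\frac{\imu}{2}t|\xi|^2}-1$ vanishes to second order at $\xi=0$, so $m(\xi)/|\xi|$ extends to a bounded continuous function; since $|\xi|\hat f=\widehat{\nabla f}\in L^2$, the product $m(\xi)\hat f=(m(\xi)/|\xi|)\cdot|\xi|\hat f$ is a genuine $L^2$ function and the computation is justified. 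With that caveat, your argument is complete.
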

For $d=1$, we notice that $X^k(\R)+H^k(\R)\subset X^k(\R)$ for any $k$ positive integer. The action of $\eitD$ on $X^1(\R)$ has been studied in \cite{Zhidkov87,Zhidkov01}, see also \cite{Gallo04} for the action of the linear propagator on Zhidkov spaces $X^k(\R^d)$ with $d>1$. 
\\
The action of the linear Schr\"odinger group on the space $\Eb(\R^d)$ is described by \cite[Proposition 2.3]{Gerard}.
\begin{proposition}[\cite{Gerard}]\label{prop:linear}
Let $d=2,3$. For every $t\in\R$, the linear propagator $\eitD$ maps $\Eb(\R^d)$ to itself and for every $\psi_0\in\Eb(\R^d)$ the map $t\in \R \mapsto \eitD\psi_0 \in \Eb(\R^d)$ is continuous. Moreover, given $R>0$, $T>0$ there exists $C>0$ such that for every $\psi_0^1,\psi_0^2\in \Eb(\R^d)$ with $\mathcal{E}(\psi_0
^1)\leq R$, $\mathcal{E}(\psi_0^2)\leq R$ one has
\begin{equation}\label{eq:stabilitylinsol}
    \sup_{|t|\leq T}d_{\Eb}(\eitD\psi_0^1,\eitD\psi_0^2)\leq C d_{\Eb}(\psi_0^1,\psi_0^2).
\end{equation}
Furthermore, given $R>0$, there exists $T(R)>0$ such that, for every $\psi_0\in \Eb(\R^d)$ with $\En(\psi_0)\leq R$, we have
\begin{equation}\label{eq:freesolution}
     \sup_{|t|\leq T(R)}\mathcal{E}(\eitD\psi_0)\leq 2R.
\end{equation}
\end{proposition}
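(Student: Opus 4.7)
The overall strategy is to split $d_{\Eb}$ into its $X^1+H^1$-part, which is directly preserved by $\eitD$ via Lemma \ref{lem:semigroup}, and its modulus part, which requires new work. By Lemma \ref{lem:metric}, on bounded-energy sets the modulus part is equivalent to an $L^2$-difference of squared moduli, which is more convenient because the increment $w_t:=\eitD\psi-\psi$ enters linearly through
\[
|\eitD\psi|^2-|\psi|^2=2\RE(\bar\psi w_t)+|w_t|^2.
\]
The core estimates on $w_t$ come from Lemma \ref{lem:semigroup}: applying \eqref{eq:linearflow1} to $\psi$ (which has $\nabla\psi\in L^2$) gives $\|w_t\|_{L^2}\leq C|t|^{1/2}\|\nabla\psi\|_{L^2}$, while $\nabla w_t=(\eitD-I)\nabla\psi$ yields $\|\nabla w_t\|_{L^2}\leq 2\|\nabla\psi\|_{L^2}$ with $\|\nabla w_t\|_{L^2}\to 0$ as $t\to 0$ by strong continuity of the free $L^2$-flow. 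Hence $w_t\in H^1(\R^d)\hookrightarrow L^4(\R^d)$ for $d\leq 3$, and $w_t\to 0$ in $H^1$ as $t\to 0$.

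For statement (i), by Lemma \ref{lem:metric} it suffices to show $|\eitD\psi|^2-1\in L^2$. From
\[
|\eitD\psi|^2-1=(|\psi|^2-1)+2\RE(\bar\psi w_t)+|w_t|^2,
\]
the first term lies in $L^2$ by assumption and $|w_t|^2\in L^2$ with vanishing norm as $t\to 0$ by Sobolev embedding. For the cross term, invoke the decomposition $\psi=\psil+\psih$ of Lemma \ref{lem:energyspace}: $\psil w_t\in L^2$ since $\psil\in L^\infty$ and $w_t\in L^2$, while $\psih w_t\in L^2$ since $\psih\in H^1\hookrightarrow L^4$ and $w_t\in L^4$. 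Continuity of $t\mapsto\eitD\psi$ in $\Eb$ then follows by applying the same expansion to the increment $\eitD\psi(t_2)-\eitD\psi(t_1)=(e^{\frac{\imu}{2}(t_2-t_1)\Delta}-I)\eitD\psi(t_1)$, combined with the $X^1+H^1$-continuity furnished by Lemma \ref{lem:semigroup}.

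For the stability estimate \eqref{eq:stabilitylinsol}, the $X^1+H^1$-part is controlled directly by \eqref{eq:linearflow}: $\|\eitD(\psi_0^1-\psi_0^2)\|_{X^1+H^1}\leq C(1+|t|^{1/2})\|\psi_0^1-\psi_0^2\|_{X^1+H^1}$. For the modulus part, set $w_t^j:=\eitD\psi_0^j-\psi_0^j$ and expand
\[
|\eitD\psi_0^1|^2-|\eitD\psi_0^2|^2=\bigl(|\psi_0^1|^2-|\psi_0^2|^2\bigr)+2\RE\bigl(\overline{\psi_0^1-\psi_0^2}\,w_t^1\bigr)+2\RE\bigl(\overline{\psi_0^2}(w_t^1-w_t^2)\bigr)+|w_t^1|^2-|w_t^2|^2.
\]
The first parenthesis is bounded by $C(R)d_{\Eb}(\psi_0^1,\psi_0^2)$ via Lemma \ref{lem:metric}. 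The remaining three terms are treated by applying Lemma \ref{lem:energyspace} to $\psi_0^2$ and any $X^1+H^1$-decomposition of $\psi_0^1-\psi_0^2$, using $\|\nabla(\psi_0^1-\psi_0^2)\|_{L^2}\leq 2\|\psi_0^1-\psi_0^2\|_{X^1+H^1}\leq 2\,d_{\Eb}(\psi_0^1,\psi_0^2)$, together with the bounds of the first paragraph applied both to $w_t^j$ and to $w_t^1-w_t^2=\eitD(\psi_0^1-\psi_0^2)-(\psi_0^1-\psi_0^2)$. Returning to $\||\eitD\psi_0^1|-|\eitD\psi_0^2|\|_{L^2}$ via Lemma \ref{lem:metric} yields \eqref{eq:stabilitylinsol}.

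Finally, \eqref{eq:freesolution} is the single-function specialization of the preceding analysis. The gradient part $\|\nabla\eitD\psi_0\|_{L^2}=\|\nabla\psi_0\|_{L^2}$ is preserved by unitarity, while the first-paragraph expansion combined with the Lemma \ref{lem:energyspace}-bounds $\|\psil\|_{X^1}\leq C(1+\sqrt{R})$, $\|\psih\|_{H^1}\leq C\sqrt{R}$ delivers $\||\eitD\psi_0|^2-1\|_{L^2}\leq \||\psi_0|^2-1\|_{L^2}+\omega(|t|,R)$ with $\omega(|t|,R)\to 0$ as $|t|\to 0$, uniformly on $\{\En(\psi_0)\leq R\}$. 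Choosing $T(R)$ sufficiently small and invoking Lemma \ref{lem:metric} produces $\En(\eitD\psi_0)\leq 2R$ for $|t|\leq T(R)$. The main technical obstacle throughout is that $\psi$ belongs to no global Lebesgue space $L^p(\R^d)$ with $p<\infty$, so the quadratic cross term $\bar\psi w_t$ cannot be estimated by any standalone Lebesgue norm of $\psi$; the $X^1+H^1$-splitting circumvents this by pairing the $L^\infty$-part $\psil$ with the $L^2$-smallness $\|w_t\|_{L^2}\leq C|t|^{1/2}\|\nabla\psi\|_{L^2}$ and the $L^4$-part $\psih$ with the $H^1$-smallness of $w_t$.
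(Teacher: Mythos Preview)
The paper does not give its own proof of this proposition; it is quoted from \cite{Gerard} (Proposition~2.3 there, stated for $(\Etwo,d_{\Etwo})$, with the passage to $(\Eb,d_{\Eb})$ justified by Lemma~\ref{lem:metric}). Your reconstruction is the natural one and is essentially correct: Lemma~\ref{lem:semigroup} handles the $X^1+H^1$-part, while the squared-modulus expansion combined with the $\psil+\psih$ splitting from Lemma~\ref{lem:energyspace} and the Sobolev embedding $H^1\hookrightarrow L^4$ (valid for $d=2,3$) handles the cross term $\bar\psi w_t$. The stability estimate \eqref{eq:stabilitylinsol} is also fine, modulo the implicit use of a uniform bound on $\En(\eitD\psi_0^j)$ to apply Lemma~\ref{lem:metric} on the output side; this bound follows from the single-function analysis and should be established first.

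There is one genuine gap, in your derivation of \eqref{eq:freesolution}. You control $\||\eitD\psi_0|^2-1\|_{L^2}\leq\||\psi_0|^2-1\|_{L^2}+\omega(|t|,R)$ and then appeal to Lemma~\ref{lem:metric} to conclude $\En(\eitD\psi_0)\leq 2R$. But Lemma~\ref{lem:metric} only gives equivalence of the metrics with an $R$-dependent constant $C(R)$; since $\||\psi_0|^2-1\|_{L^2}$ itself is only bounded by $C(R)$ (not by $\sqrt{R}$), passing back through the pointwise inequality $||\psi|-1|\leq||\psi|^2-1|$ yields at best $\En(\eitD\psi_0)\leq C'(R)$, not the stated $2R$. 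The fix is simpler than the detour through squared moduli: use the triangle inequality for the modulus directly,
\[
\bigl||\eitD\psi_0|-1\bigr|\leq\bigl||\psi_0|-1\bigr|+|w_t|,
\]
so that $\||\eitD\psi_0|-1\|_{L^2}\leq\||\psi_0|-1\|_{L^2}+C|t|^{1/2}\|\nabla\psi_0\|_{L^2}$ by \eqref{eq:linearflow1}. Combined with $\|\nabla\eitD\psi_0\|_{L^2}=\|\nabla\psi_0\|_{L^2}$ this gives $\En(\eitD\psi_0)\leq R\bigl(1+2C|t|^{1/2}+C^2|t|\bigr)\leq 2R$ for $|t|$ small enough---in fact with a threshold independent of $R$.
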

\begin{corollary}\label{coro:time derivative}
Let $d=2,3$ and $\psi_0\in \Eb(\R^d)$, then
\begin{equation}\label{eq: derivative semigroup}
    \lim_{t\rightarrow 0}\frac{\eitD\psi_0-\psi_0}{t}=-\frac{i}{2}\Delta\psi_0 \quad \text{in} \, \,  H^{-1}(\R^d).
\end{equation}
In particular, $\eitD\psi_0\in C(\R;\Eb(\R^d))\cap C^{1}(\R,H^{-1}(\R^d))$.
\end{corollary}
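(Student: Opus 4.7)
I would reduce the claim to a Duhamel identity in $H^{-1}(\R^d)$, and then pass to the limit by strong continuity of the Schr\"odinger group on $H^{-1}(\R^d)$.

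Since $\psi_0 \in \Eb(\R^d)$ satisfies $\nabla\psi_0\in L^2(\R^d)$, the distributional Laplacian $\Delta\psi_0 = \diver(\nabla\psi_0)$ lies in $H^{-1}(\R^d)$ with $\|\Delta\psi_0\|_{H^{-1}} \leq \|\nabla\psi_0\|_{L^2}$, so the right-hand side of \eqref{eq: derivative semigroup} is well-defined. Likewise, \eqref{eq:linearflow1} yields $\eitD\psi_0 - \psi_0 \in L^2(\R^d) \hookrightarrow H^{-1}(\R^d)$, so the difference quotient belongs to $H^{-1}(\R^d)$ for every $t\neq 0$, and only the identification of its limit is at stake.

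The heart of the argument is the Duhamel identity
\begin{equation*}
  \eitD\psi_0 - \psi_0 \;=\; \frac{\imu}{2}\int_0^t \eul^{\frac{\imu s}{2}\Delta}\Delta\psi_0\, ds,
\end{equation*}
understood as an equality in $H^{-1}(\R^d)$. Since $\psi_0 \in X^1(\R^d) + H^1(\R^d) \subset \mathcal{S}'(\R^d)$, both sides are tempered distributions and I would verify the identity by pairing against arbitrary $\varphi \in \mathcal{S}(\R^d)$. The multiplier $\eitD$ has symbol $\eul^{-\frac{\imu}{2}t|\xi|^2}$, even in $\xi$; its transpose on $\mathcal{S}'(\R^d)$ therefore equals $\eitD$ itself, which lets one move the group onto $\varphi$, reducing the identity to the classical Duhamel formula for $\eitD\varphi - \varphi$ in $\mathcal{S}(\R^d)$ followed by a Fubini swap. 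The swap is legitimate because $s\mapsto \eul^{\frac{\imu s}{2}\Delta}\Delta\psi_0$ is strongly continuous from $\R$ to $H^{-1}(\R^d)$ (the group acts as isometries on every $H^k(\R^d)$), which also guarantees that the right-hand side defines a bona fide Bochner integral in $H^{-1}(\R^d)$.

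With this identity in hand, dividing by $t$ gives $\frac{\eitD\psi_0 - \psi_0}{t} = \frac{\imu}{2t}\int_0^t \eul^{\frac{\imu s}{2}\Delta}\Delta\psi_0\, ds$, and the averaged integral converges in $H^{-1}(\R^d)$ to $\frac{\imu}{2}\Delta\psi_0$ as $t\to 0$, by continuity of the integrand at $s=0$, yielding \eqref{eq: derivative semigroup}. For the final assertion, $\eitD\psi_0 \in C(\R;\Eb(\R^d))$ is Proposition \ref{prop:linear}, while the semigroup property $\eitD\psi_0 = \eul^{\frac{\imu(t-t_0)}{2}\Delta}\bigl(\eul^{\frac{\imu t_0}{2}\Delta}\psi_0\bigr)$ together with $\eul^{\frac{\imu t_0}{2}\Delta}\psi_0 \in \Eb(\R^d)$ reduces differentiability at any $t_0$ to the case $t_0=0$ already treated, with derivative $\frac{\imu}{2}\eul^{\frac{\imu t_0}{2}\Delta}\Delta\psi_0$, continuous in $t_0$ with values in $H^{-1}(\R^d)$ by strong continuity of $\eul^{\frac{\imu s}{2}\Delta}$ on $H^{-1}$.

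The main obstacle is the Duhamel identity itself: $\psi_0$ need not lie in $H^{-1}(\R^d)$ (e.g.\ the constant function $\psi_0 \equiv 1 \in \Eb(\R^d) \setminus H^{-1}(\R^d)$), so $C_0$-semigroup theory on $H^{-1}(\R^d)$ cannot be invoked for $\psi_0$ directly. Working at the level of tempered distributions paired against Schwartz test functions, as sketched above, is the essential workaround: derivatives and $s$-integrals are first transferred onto the smooth $\varphi$, and only then does one reinterpret the resulting $L^2$- and $H^{-1}$-valued quantities on the $\psi_0$-side.
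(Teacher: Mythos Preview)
Your argument is correct and complete. It differs from the paper's proof mainly in packaging: the paper works directly on the Fourier side, writing
\[
\Big\langle \tfrac{\eitD\psi_0-\psi_0}{t},\,\phi\Big\rangle
=\int_{\R^d}\tfrac{\imu}{2}|\xi|^2\Big(\int_0^1 \eul^{\imu t s|\xi|^2/2}\,ds\Big)\hat\psi_0(\xi)\,\overline{\hat\phi}(\xi)\,d\xi
\]
for $\phi\in H^1$ and then invokes dominated convergence (the dominating function being $|\xi\hat\psi_0|\,|\xi\hat\phi|\in L^1$, available since $\nabla\psi_0\in L^2$). Your route recasts the same fundamental-theorem-of-calculus identity as a Duhamel formula in $H^{-1}$ and replaces dominated convergence by the strong continuity of the group on $H^{-1}$ together with a Bochner-average argument. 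The paper's version is a couple of lines shorter and entirely explicit; yours is more structural, makes the role of $\Delta\psi_0\in H^{-1}$ transparent, and cleanly isolates the only genuine difficulty (that $\psi_0\notin H^{-1}$ forces one to justify the identity by duality with Schwartz test functions rather than by semigroup theory applied to $\psi_0$ directly). Either way the content is the same identity read in two languages.
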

\begin{proof}
Note that $\eitD\psi_0-\psi_0\in L^2(\R^d)$ for any finite time $t\in\R$ by virtue of \eqref{eq:linearflow1}. For any $\phi\in H^1(\R^d)$, it follows from Plancherel's identity and the dominated convergence theorem that
\begin{align*}
    &\lim_{t\rightarrow 0}\int_{\R^d}\frac{\eitD \psi_0-\psi_0}{t}\overline{\phi}(x)\dd x=\lim_{t\rightarrow 0}\int_{\R^d}\frac{\eul^{\frac{\imu}{2}t|\xi|^2}\hat{\psi_0}-\hat{\psi_0}}{t}\overline{\hat{\phi}}(\xi)\dd \xi\\
    &=\lim_{t\rightarrow 0}\int_{\R^d}\frac{\imu}{2}|\xi|^2\left(\int_0^1\eul^{\imu ts|\xi|^2}\right)\hat{\psi_0}(\xi)\overline{\hat{\phi}}(\xi)\dd \xi
    =\int_{\R^d}(-\frac{\imu}{2}\Delta \psi_0(x))\overline{\phi}(x)\dd x.
\end{align*}
The identity \eqref{eq: derivative semigroup} follows.
\end{proof}
\subsection{Strichartz estimates}\label{sec:Stichartz}
 We say that a pair $(q,r)$ is (Schr{\"o}dinger) admissible if $q,r\geq 2$ such that 
\begin{equation*}
    \frac{2}{q}+\frac{d}{r}=\frac{d}{2}, \qquad (q,r,d)\neq(2,\infty,2),
\end{equation*}
and we recall the well-known Strichartz estimates, see \cite{KeelTao} and references therein.
\begin{lemma}\label{lem:Strichartz}
Let $d=2,3$ and $(q,r)$ be an admissible pair. Then the linear propagator satisfies,
\begin{equation*}
    \|\eitD u\|_{L^q([0,T];L^r(\R^d))}\leq C \|u\|_{L^2(\R^d)},
\end{equation*}
and for any $(q_1,r_1)$ admissible pair one has
\begin{equation}\label{eq:Strichartznonh}
    \left\|\int_0^t\eul^{\frac{\imu}{2}(t-s)\Delta}f(s)\dd s\right\|_{L^q([0,T];L^r(\R^d)}\leq C\|f\|_{L^{q_1'}([0,T];L^{r_1'}(\R^d))}.
\end{equation}
\end{lemma}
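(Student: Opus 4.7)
The plan is to follow the standard Keel--Tao strategy, since the result is a classical consequence of the dispersive and energy estimates for the free Schr\"odinger group. The starting point is the two endpoint bounds
\begin{equation*}
\bigl\|\eitD f\bigr\|_{L^2(\R^d)} = \|f\|_{L^2(\R^d)},
\qquad
\bigl\|\eitD f\bigr\|_{L^\infty(\R^d)} \leq C |t|^{-d/2}\,\|f\|_{L^1(\R^d)},
\end{equation*}
the first being unitarity of $\eitD$ on $L^2$ and the second the explicit dispersive estimate obtained from the kernel representation $\eitD f(x) = (2\pi \imu t)^{-d/2}\int \eul^{\imu|x-y|^2/(2t)} f(y)\,\dd y$. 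Complex interpolation between these two yields the pointwise-in-time bound
\begin{equation*}
\bigl\|\eitD f\bigr\|_{L^r(\R^d)} \leq C |t|^{-d(\frac12-\frac1r)}\,\|f\|_{L^{r'}(\R^d)}, \qquad r\in[2,\infty].
\end{equation*}

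The homogeneous Strichartz estimate is then proved via a $TT^*$ argument: setting $T\colon L^2_x \to L^q_t L^r_x$, $Tf(t) = \eitD f$, one reduces the boundedness of $T$ to the boundedness of $TT^*F(t) = \int \eul^{\imu(t-s)\Delta/2} F(s)\,\dd s$ from $L^{q'}_t L^{r'}_x$ to $L^q_t L^r_x$. For $(q,r)$ non-sharp admissible (i.e.\ $q>2$), this follows immediately from the dispersive estimate combined with the one-dimensional Hardy--Littlewood--Sobolev inequality in the time variable, since $d(\frac12-\frac1r)=\frac{2}{q}$ and $\frac{1}{q'}-\frac{1}{q} = 1 - \frac{2}{q}$. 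The endpoint case $q=2$ in dimension $d=3$ (the pair $(2,6)$) is excluded from Hardy--Littlewood--Sobolev and constitutes the only genuinely delicate point; this is precisely the content of \cite{KeelTao}, where a dyadic decomposition together with a bilinear interpolation argument closes the estimate. Since $d=2,3$, only the $d=3$ endpoint needs to be handled in this way; in $d=2$ the forbidden pair $(2,\infty,2)$ is already excluded by assumption.

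For the inhomogeneous estimate \eqref{eq:Strichartznonh}, the plan is to apply the $TT^*$ duality together with the Christ--Kiselev lemma to pass from the retarded Duhamel integral to the genuinely retarded operator $\int_0^t$, away from the double endpoint. When both $(q,r)$ and $(q_1,r_1)$ are the endpoint pair, the Christ--Kiselev argument breaks down and one must again invoke the bilinear endpoint estimates of \cite{KeelTao} directly. The main obstacle in the entire proof is therefore the endpoint $(q,r)=(2,6)$ for $d=3$, both in the homogeneous and in the doubly-endpoint inhomogeneous case; every other case reduces to the dispersive estimate, $L^2$-unitarity, and Hardy--Littlewood--Sobolev. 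As the lemma is a direct quotation of \cite{KeelTao} in the $d=2,3$ setting, the proof amounts to assembling these ingredients and appealing to the cited reference for the endpoint.
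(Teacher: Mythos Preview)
Your sketch is correct and follows the standard Keel--Tao route; the paper itself does not prove this lemma but simply recalls it with a reference to \cite{KeelTao}, so there is no alternative approach to compare against.
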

Given a time interval $I=[0,T]$, it is convenient to introduce the Strichartz space $S^0(I\times \R^d)$ characterised by the norm 
\begin{equation*}
    \|u\|_{S^0}:=\sup_{(q,r) admissible}\|u\|_{L^q(I;L^r(\R^d))}.
\end{equation*}
We notice that since $(q,r)=(\infty,2)$ is admissible one has 
\begin{equation}
    \|u\|_{C(I;L^2(\R^d))}\lesssim \|u\|_{S^0}.
\end{equation}
Moreover, we introduce the dual space $N^0=(S^0(I\times \R^d))^{\ast}$ satisfying the estimate
\begin{equation}\label{eq:N0}
    \|f\|_{N^0}\lesssim \|f\|_{L^{q_1'}(I;L^{r_1'}(\R^d))},
\end{equation}
for any admissible pair $(q_1,r_1)$. Further, in order to discuss the well-posedness theory for \eqref{eq:NLS} in the energy space, we also work with the function space $S^1(I\times \R^d)$ and $N^1(I\times \R^d)$ defined by the norms
\begin{equation}\label{eq:S1N1}
    \|u\|_{S^1}=\|u\|_{S^0}+\|\nabla u\|_{S^0}, \qquad \|G\|_{N^1}=\|G\|_{N^0}+\|\nabla G\|_{N^0}.
\end{equation}
While $\psi\not\in S^0$ for any solution to \eqref{eq:NLS} to l in any Strichartz space $S^0$, it will turn out that the nonlinear flow belongs to $S^1$.
\begin{remark}
Let $T>0$ and $\psi_0\in \Eb(\R^d)$, then Lemma \ref{lem:Strichartz} states that for any admissible pair $(q,r)$, we have
\begin{equation}\label{eq:Strichartz gradient}
    \left\|\eitD\nabla\psi_0\right\|_{L^q([0,T];L^r(\R^d))}\leq \|\nabla \psi_0\|_{L^2(\R^d)}.
\end{equation}
Observe that, by virtue of Lemma \ref{lem:semigroup}, one has $\eitD\psi_0-\psi_0\in C([0,T];H^1(\R^d))$ and  $\nabla \eitD\psi_0\in C([0,T];L^2(\R^d))\cap S^0([0,T]\times\R^d))$.
\end{remark}
\subsection{The nonlinearity}
We collect some properties of the nonlinearity $\mathcal{N}(\psi)=f(|\psi|^2)\psi$, with $f$ satisfying Assumption \ref{ass:N}, that will be used in the sequel.  By applying smooth cut-off functions, we separate the behavior close and away from $|\psi|=1$.  Let $\eta\in\C_c^{\infty}(\R_+)$ be given by \eqref{eq:eta}, we define
\begin{equation}\label{eq:N}
    \Na(\psi):=\mathcal{N}(\psi)\eta(|\psi|), \qquad \Nb(\psi):=\mathcal{N}(\psi)(1-\eta(|\psi|)).
\end{equation}
By means of the cut-off $\chi$ defined in \eqref{eq:cutoffchi}, we further split $\Nb$ as 
\begin{equation}\label{eq:Nb}
    \Nbl=\Nb(\psi)\chi(2\psi), \quad \Nbh(\psi)=\Nb(\psi)(1-\chi(2\psi))
\end{equation}
and notice that 
\begin{equation}\label{eq:Npointwise}
    \begin{aligned}
    |\Na(\psi)|&\leq C\left||\psi|-1\right|, \\
    |\Nbl(\psi)|&\leq C(1-\eta(|\psi|), \quad |\Nbh(\psi)|\leq C|\psi|^{2\alpha+1} (1-\chi(\psi)).
      \end{aligned}
\end{equation}
In the case of vanishing boundary conditions and infinity, the strategy developed in \cite{Kato87}, see also \cite[Chapter 4]{Cazenave}, relies on similar pointwise bounds on $\mathcal{N}$. However, here we need to consider additional cut-off functions $\eta$ isolating the behavior close to $1$ in view of the far-field and the related support properties. Note that \eqref{eq:Cheby} yields that the measure of $\supp(\Nb(\psi))$ is bounded by $\En(\psi)$. The quantity $\nabla\mathcal{N}$ can be rigorously defined by means of Nemicki operators, see \cite[Appendix A]{Kato87} and also \cite{Kato89,Cazenave}. It  reads
\begin{equation}\label{eq:nablaN}
    \nabla\mathcal{N}(\psi)=\left(f(|\psi|^2)+f'(|\psi|^2)|\psi|^2\right)\nabla\psi+f'(|\psi|^2)\psi^2\overline{\nabla\psi},
\end{equation}
so that we have
\begin{equation}\label{eq:N_locLip}
|\nabla\mathcal N(\psi)|\lesssim\left(|f(\rho)+\rho f'(\rho)|+|\rho f'(\rho)|\right)|\nabla\psi|.
\end{equation}
Inequalities \eqref{eq:Npointwise} and \eqref{eq:N_locLip} will allow us to infer bounds on the nonlinearity in the Strichartz space $N^1$ defined in \eqref{eq:S1N1}.
Moreover, (\emph{K2}) of Assumption \ref{ass:N} implies that the nonlinearity $\mathcal N(\psi)$ is locally Lipschitz.
More precisely, 
\begin{equation}\label{eq: N loc Lip}
    \left|\mathcal{N}(\psi_1)-\mathcal{N}(\psi_2)\right|\leq C\left(1+|\psi_1|^{2\alpha}+|\psi_2|^{2\alpha}\right)|\psi_1-\psi_2|.
\end{equation}
For general $\psi_1,\psi_2\in \Eb(\R^d)$ one has $\psi_1-\psi_2\notin L^p(\R^d)$ for any $p\geq 1$, unless $\psi_1,\psi_2$ belong to the same connected component of $\Eb(\R^d)$, see Remarks \ref{rem: connected components 3D} and \ref{rem: connected components 2D} for $d=2,3$ respectively. This motivates the following estimates,
\begin{equation}\label{eq:pointwise Nb}
\begin{aligned}
    \left|\Na(\psi_1)-\Na(\psi_2)\right|&\leq C|\psi_1|\left||\psi_1|-|\psi_2|\right|+\left||\psi_2|-1\right|\eta(|\psi_2|)|\psi_1-\psi_2|,\\
    \left|\Nbl(\psi_1)-\Nbl(\psi_2)\right|&\leq C\left|\psi_1-\psi_2\right|,\\
    \left|\Nbh(\psi_1)-\Nbh(\psi_2)\right|&\leq C\left(|\psi_1|^{2\alpha}+|\psi_2|^{2\alpha}\right)\left|\psi_1-\psi_2\right|.
\end{aligned}
\end{equation}
Inequalities \eqref{eq:pointwise Nb} will then lead to respective bounds in Strichartz space $N^0$.
\\
Similarly, we introduce the following estimates for $\nabla\mathcal{N}(\psi)$. One has
\begin{equation*}
   \nabla\mathcal{N}(\psi)=D\mathcal{N}(\psi)\cdot \begin{pmatrix} \nabla \psi \\ \overline{\nabla\psi}\end{pmatrix}=\begin{pmatrix}
  G_1(\psi) \\ G_2(\psi) \end{pmatrix}^T\cdot \begin{pmatrix} \nabla \psi \\ \overline{\nabla\psi}\end{pmatrix},
\end{equation*}
where 
\begin{equation}\label{eq: Gi}
    G_1(\psi)= f(|\psi|^2)+f'(|\psi|^2)|\psi|^2, \quad \text{and} \quad G_2(\psi)=f'(|\psi|^2)\psi^2.
\end{equation}
We define 
\begin{equation*}
    G_{i,\mathrm{bd}}(\psi):= G_i(\psi)\chi(\psi), \quad \text{and} \quad   G_{i,\mathrm{int}}(\psi):= G_{i}(\psi)(1-\chi(\psi)),
\end{equation*}
for $i=1,2$. For the sake of a shorter notation we introduce
\begin{equation}\label{eq: Ginfty Gq}
    G_{\mathrm{bd}}:=G_{1,\mathrm{bd}}+G_{2,\mathrm{bd}},\quad \text{and} \quad  G_{\mathrm{int}}:=G_{1,\mathrm{int}}+G_{2,\mathrm{int}}.
\end{equation}
In particular we observe that Assumption \ref{ass:N} yields that
\begin{equation}\label{eq: estG}
    |G_{\mathrm{bd}}(\psi)|\leq C, \quad \text{and} \quad |G_{\mathrm{int}}(\psi)|\leq C(1+|\psi_{\mathrm{int}}|^{2\alpha})(1-\chi(\psi)).
\end{equation}
 \section{2D well-posedness}\label{sec:2D}
Local well-posedness for energy sub-critical nonlinearities is proven by a perturbative method in the spirit of Kato \cite{Kato87} adapted to the non-trivial farfield behavior.
Subsequently, we prove global well-posedness in Section \ref{sec:global2D}.
\subsection{Local well-posedness} 
First, we provide necessary a priori bounds on the nonlinearity $\mathcal{N}(\psi)$ in the Strichartz norms for $\psi\in \Eb(\R^2)$ that will follow from \eqref{eq:Npointwise} and \eqref{eq:N_locLip}. We notice that $(q_1,r_1)=(\frac{2(\alpha+1)}{\alpha},2(\alpha+1))$ is Strichartz admissible and one has
\begin{equation}\label{eq:q1r1}
(q_1',r_1')=\left(\frac{2(\alpha+1)}{\alpha+2},\frac{2(\alpha+1)}{2\alpha+1}\right).    
\end{equation}
We recall that the space $N^0$ is defined in \eqref{eq:N0}. It suffices to consider positive times of existence as the analogue statements for negative times follow  from the time reversal symmetry of \eqref{eq:NLS}. For $\psi\in L^{\infty}([0,T];\Eb(\R^d))$ we denote
\begin{equation}\label{eq:Zt}
   \Zt:=\|\nabla\psi\|_{L^{\infty}([0,T];L^2(\R^2))}+\||\psi|-1\|_{L^{\infty}([0,T];L^2(\R^2))}
\end{equation}
and note that $\Zt(\psi)\leq 2 \sup_{t\in[0,T]}\sqrt{\En(\psi)(t)}$.
The quantity $\Zt(\psi)$ can be thought of as analogue of the $L^\infty_tH^1_x-$norm for nonlinear Schr\"odinger equations with vanishing conditions at infinity.
\begin{lemma}\label{lem:N}
Let the nonlinearity $f$ be as in Assumption \ref{ass:N} is satisfied, $T>0$, the pair $(q_1',r_1')$ as in \eqref{eq:q1r1} and $\psi\in L^{\infty}([0,T];\Eb(\R^2))$, then the following hold
\begin{equation}\label{eq:boundN}
    \|\mathcal{N}(\psi)\|_{L^1([0,T];L^2(\R^2))}\leq CT\left({\Zt(\psi)}+\Zt(\psi)^{1+2\alpha}\right),
\end{equation}
and 
\begin{equation}\label{eq:boundgradN}
    \|\nabla\mathcal{N}(\psi)\|_{N^0([0,T]\times\R^2)}\leq C\left(T+T^{\frac{1}{q_1'}}\Zt(\psi)^{2\alpha}\right)\|\nabla\psi\|_{L^{\infty}([0,T];L^2(\R^2)}.
\end{equation}
Furthermore, given $\psi\in L^{\infty}([0,T];\Eb(\R^2))$ and $u,v\in L^{\infty}([0,T];H^1(\R^2))$, one has that
\begin{multline}\label{eq:diffNSobolev}
    \|\mathcal{N}(\psi+u)-\mathcal{N}(\psi+v)\|_{N^0([0,T]\times\R^2)}\\
    \leq C\Big(T+T^{\frac{1}{q_1'}}\left(\Zt(\psi+u)^{2\alpha}+\Zt(\psi+v)^{2\alpha}\right)\Big)\|u-v\|_{L^{\infty}([0,T];L^2(\R^2)}.
\end{multline}
\end{lemma}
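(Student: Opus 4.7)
My plan is to bound each of \eqref{eq:boundN}, \eqref{eq:boundgradN}, \eqref{eq:diffNSobolev} by decomposing $\mathcal{N}(\psi)=\Na(\psi)+\Nbl(\psi)+\Nbh(\psi)$ from \eqref{eq:N}--\eqref{eq:Nb} and controlling each piece in the appropriate Strichartz dual space. The key structural input is that the cutoffs $1-\eta(|\psi|)$ and $1-\chi(\psi)$ have finite-measure support by Chebyshev \eqref{eq:Cheby}--\eqref{eq:supp eta} (namely $\lesssim\Zt(\psi)^2$), so that high-amplitude contributions can be placed in $L^q(\R^2)$ for $q>2$ via the 2D Sobolev-type inequality \eqref{eq:finite supp} applied to $\psi_q=\psi(1-\chi(\psi))$.

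For \eqref{eq:boundN} I estimate $\|\mathcal{N}(\psi(t))\|_{L^2(\R^2)}$ pointwise in $t$ and integrate in time to gain the factor $T$. The piece $\Na$ is $O(||\psi|-1|)$ by \eqref{eq:Npointwise}, so its $L^2$-norm is controlled by $C\Zt(\psi)$. The piece $\Nbl$ is pointwise bounded with support of measure $\lesssim\Zt(\psi)^2$, which also gives $\|\Nbl\|_{L^2}\leq C\Zt(\psi)$. For the high-amplitude piece $|\Nbh(\psi)|\leq C|\psi_q|^{2\alpha+1}$, I apply \eqref{eq:finite supp} to $\psi_q$ with $q=2(2\alpha+1)$, using $|\supp\psi_q|\lesssim\Zt(\psi)^2$ and $\|\nabla\psi_q\|_{L^2}\lesssim\Zt(\psi)$, producing the advertised factor $\Zt(\psi)^{1+2\alpha}$.

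For \eqref{eq:boundgradN} I start from the Nemytski representation \eqref{eq:nablaN}--\eqref{eq:N_locLip} and split $G_i=G_{i,\infty}+G_{i,q}$ as in \eqref{eq: Ginfty Gq}. Since $|G_\infty|\leq C$ by \eqref{eq: estG}, the contribution $G_\infty\nabla\psi$ is placed in $L^1([0,T];L^2(\R^2))$ via the $(\infty,2)$ endpoint, giving the $CT\|\nabla\psi\|_{L^\infty_t L^2_x}$ term. For the high-amplitude part \eqref{eq: estG} yields $|G_q(\psi)|\leq C|\psi_q|^{2\alpha}$ on its support, and I place $|\psi_q|^{2\alpha}|\nabla\psi|$ in the dual Strichartz pair $L^{q_1'}([0,T];L^{r_1'}(\R^2))$ associated to \eqref{eq:q1r1}. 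A H\"older split in space based on the identity $\tfrac{1}{2}+\tfrac{1}{q_1}=\tfrac{1}{r_1'}$ (directly verifiable from \eqref{eq:q1r1}) gives $\||\psi_q|^{2\alpha}\nabla\psi\|_{L^{r_1'}_x}\leq\|\psi_q\|_{L^{2\alpha q_1}_x}^{2\alpha}\|\nabla\psi\|_{L^2_x}$, and a further application of \eqref{eq:finite supp} bounds $\|\psi_q\|_{L^{2\alpha q_1}_x}^{2\alpha}\lesssim\Zt(\psi)^{2\alpha}$; H\"older in time then supplies the $T^{1/q_1'}$ factor.

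For \eqref{eq:diffNSobolev} I repeat the same scheme, replacing the pointwise bounds on $\mathcal{N}$ by the difference bounds \eqref{eq:pointwise Nb}. The $\Na$ and $\Nbl$ pieces of $\mathcal{N}(\psi+u)-\mathcal{N}(\psi+v)$ are pointwise controlled by (bounded, finitely supported factor)$\cdot|u-v|$, giving an $L^1_tL^2_x$ contribution of order $T\|u-v\|_{L^\infty L^2}$; the $\Nbh$ difference is bounded by $C(|\psi+u|^{2\alpha}+|\psi+v|^{2\alpha})|u-v|$ and treated exactly like $G_q\nabla\psi$ above, producing the term $T^{1/q_1'}(\Zt(\psi+u)^{2\alpha}+\Zt(\psi+v)^{2\alpha})\|u-v\|_{L^\infty L^2}$. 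The main obstacle is bookkeeping: in each step one must verify the admissibility of $(q_1,r_1)$ together with the H\"older identity $\tfrac{1}{2}+\tfrac{1}{q_1}=\tfrac{1}{r_1'}$, and absorb any slight mismatches between the exponent of $\Zt$ delivered by \eqref{eq:finite supp} and the clean form stated in the lemma---which is permitted because $\Zt$ dominates both $\|\nabla\psi\|_{L^\infty L^2}$ and $\||\psi|-1\|_{L^\infty L^2}$ simultaneously.
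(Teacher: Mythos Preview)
Your proposal is correct and follows essentially the same decomposition and Strichartz-dual strategy as the paper. Two minor points worth noting: first, to get the \emph{exact} exponent $\Zt(\psi)^{2\alpha}$ in \eqref{eq:boundgradN}, it is cleaner to use the $H^1$-bound $\|\psi_q\|_{H^1}\leq C\Zt(\psi)$ from Lemma~\ref{lem:energyspace} together with 2D Sobolev embedding (which gives $\|\psi_q\|_{L^p}\leq C\Zt(\psi)$ for every $p\in[2,\infty)$) rather than \eqref{eq:finite supp} alone, so no ``mismatch'' needs to be absorbed; second, for \eqref{eq:diffNSobolev} the paper bypasses the three-piece decomposition \eqref{eq:pointwise Nb} and simply applies the global Lipschitz bound \eqref{eq: N loc Lip} with $\psi_1=\psi+u$, $\psi_2=\psi+v$ (so $\psi_1-\psi_2=u-v\in L^2$ directly), then places the factor $1+|\psi+u|^{2\alpha}+|\psi+v|^{2\alpha}$ in $L^\infty_x+L^{q_1}_x$ exactly as you do for $G_q$---this is marginally shorter but equivalent to your route.
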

\begin{proof}
Let $\psi\in\Eb(\R^2)$. To infer \eqref{eq:boundN}, we observe that \eqref{eq:Npointwise} implies
\begin{equation*}
    \|\Na(\psi)\|_{L_t^1L_x^2}\leq CT \left\||\psi|-1\right\|_{L_t^{\infty}L_x^2}\leq CT \Zt(\psi).
\end{equation*}
To obtain the bound of $\Nb(\psi)$, we note that the Chebychev inequality \eqref{eq:Cheby} yields that $\text{supp}(1-\eta(\psi))$ is of finite Lebesgue measure for all $\psi\in \Eb(\R^2)$. It follows then from Lemma \ref{lem:energyspace} and \eqref{eq:Npointwise} that
\begin{equation*}
     \|\Nbl(\psi)\|_{L_t^1L_x^2}\leq C T\mathcal{L}^2\left(\supp(1-\eta(|\psi|)\right)^{\frac12}\leq CT\Zt(\psi).
\end{equation*}
 By exploiting that $\supp(1-\eta(\psi))\subset \supp(1-\chi(\psi))$ for $\psi\in\Eb(\R^2)$ and by \eqref{eq:Cheby}, we bound the third contribution as
\begin{multline*}
    \|\Nbh(\psi)\|_{L_t^1L_x^2}
    \leq C \||\psi|^{2\alpha}|\psi|(1-\chi(\psi))\|_{L_t^1L_x^2}
    \leq C T\Zt(\psi)+CT\|\psih\|_{L_t^{\infty}L_x^{2(1+2\alpha)}}^{1+2\alpha}\\
    \leq CT\left(\Zt(\psi)+\Zt(\psi)^{1+2\alpha}\right),
\end{multline*}
where $\psi_{\mathrm{int}}$ is defined in \eqref{eq:psihighlow}, with $\chi$ given in \eqref{eq:cutoffchi}. In the second inequality, we used that
\begin{equation}\label{eq:pointwise psi}
    |\psi|^{2\alpha+1}(1-\chi(\psi))\leq C\left(\mathbf{1}_{\{0<1-\chi(\psi)\leq 1/4\}}+\left|\psih\right|^{2\alpha+1}\right),
\end{equation}
and 
\begin{equation*}
    \mathcal{L}^2\left(\left\{x\in \supp(1-\chi(\psi)) \, : \, 0<1-\chi(\psi)\leq 1/4\right\}\right) \leq {\Zt(\psi)^2}.
\end{equation*}
To control $\nabla\mathcal{N}(\psi)$, we observe that by using \eqref{eq:N_locLip} and decomposing $\psi=\psil+\psih$, see \eqref{eq:psihighlow}, it follows that
\begin{multline*}
    \left\|\nabla\mathcal{N}(\psi)\right\|_{L_t^1L_x^2+L_t^{q_1'} L_x^{r_1'}}\leq C T \|\nabla\psi\|_{L_t^{\infty}L_x^2}+\||\psih|^{2\alpha}\nabla\psi\|_{L_t^{q_1'} L_x^{r_1'}}\\\leq 
    C\left(T+T^{\frac{1}{q_1'}}\Zt(\psi)^{2\alpha}\right)\|\nabla\psi\|_{L_t^{\infty}L_x^2}.
\end{multline*}
It remains to show \eqref{eq:diffNSobolev}. Let $\psi\in L^{\infty}([0,T];\Eb(\R^2))$ and $u,v\in L^{\infty}([0,T];H^1(\R^2))$. Then, \eqref{eq: N loc Lip} implies the pointwise bound
\begin{equation*}
    \left|\mathcal{N}(\psi+u)-\mathcal{N}(\psi+v)\right|\leq C\left(1+|\psi+u|^{2\alpha}+|\psi+v|^{2\alpha}\right)|u-v|.
\end{equation*}
Exploiting that $\Eb(\R^2)+H^1(\R^2)\subset \Eb(\R^2)$ from Lemma \ref{lem:energyspace}, we proceed as before to infer that for a.e. $t\in[0,T]$ we have
\begin{equation*}
    \||\psi+u|^{2\alpha}\|_{L_x^{\infty}+L_x^{q_1}}+\||\psi+v|^{2\alpha}\|_{L_x^{\infty}+L_x^{q_1}}\\
    \leq C\left(1+\Zt(\psi+u)^{2\alpha}+\Zt(\psi+v)^{2\alpha}\right).
\end{equation*}
It follows that 
\begin{multline*}
    \|\mathcal{N}(\psi+u)-\mathcal{N}(\psi+v)\|_{L_t^1L_x^{2}+L_t^{q_1'}L_x^{r_1'}}\\
    \leq C\left(T+T^{\frac{1}{q_1}'}\left(\Zt(\psi+u)^{2\alpha}+\Zt(\psi+v)^{2\alpha}\right)
    \right)\|u-v\|_{L_t^{\infty}L_x^2},
\end{multline*}
yielding \eqref{eq:diffNSobolev}.
\end{proof} 
With the bounds of Lemma \ref{lem:N} and the Strichartz estimates of Lemma \ref{lem:Strichartz} at hand, we are able to prove existence and uniqueness of solutions to \eqref{eq:NLS}. To that end, we consider the equivalent Duhamel formula
\begin{equation}\label{eq:Duhamel}
    \psi(t)=\eitD\psi_0-i\int_0^t\eul^{\frac{\imu}{2}(t-s)\Delta}\mathcal{N}(\psi)(s)\dd s
\end{equation}
which is justified as identity in $\Eb(\R^3)$ in virtue of the properties of the free solutions from Proposition \ref{prop:linear} and the fact the non-homogeneous terms is bounded in $L_t^{\infty}H_x^1$ by means of the Strichartz estimate \eqref{eq:Strichartznonh} and Lemma \ref{lem:N}. 
\\
 We anticipate that the continuous dependence on the initial data will differ significantly from the classical approach as consequence of the low regularity of the nonlinearity $\mathcal{N}$ combined with the lack of integrability of $\psi$. The constructed solutions are such that $\psi(t)-\psi_0\in H^1(\R^2)$ for all $t$ and hence \eqref{eq:diffNSobolev} suffices to show local existence. Note that in order to show the continuous dependence on the initial data \eqref{eq:diffNSobolev} is not sufficient as in general different initial data possess different far-field behavior, namely belongs to different connected components of $\Eb$, see also Remark \ref{rem: connected components 2D}. Lemma \ref{lem:CDOID} upgrades \eqref{eq:diffNSobolev} to the respective inequality for general initial data.
\\
The following Proposition is stated for positive existence times, the analogous statement for negative times follows from the time reversal symmetry of \eqref{eq:NLS}. 
\begin{proposition}\label{prop:localWP}
Let $d=2$ and $f$ be as in Assumption \ref{ass:N}.
\begin{enumerate}
    \item For any $\psi_0\in \Eb(\R^2)$, there exist $T=T(\En(\psi_0))>0$ and a unique strong solution $\psi\in C([0,T];\Eb(\R^2))$ to \eqref{eq:NLS} with $\psi(0)=\psi_0$. In particular, we have that $\psi-\psi_0\in C([0,T];H^1(\R^2))$;
    \item there exists a maximal existence time  $T^{\ast}=T^{\ast}(\psi_0)>0$, such that $\psi\in C([0,T^{\ast});\Eb(\R^2))$ and the blow-up alternative holds, namely if $T^{\ast}<\infty$ then
    \begin{equation*}
    \lim_{t\nearrow T^{\ast}}\En(\psi)(t)=+\infty.
    \end{equation*}
    \item For any $\psi_0^{\ast}\in\Eb(\R^2)$ there exists a open neighborhood $\mathcal O\subset\mathbb E(\R^2)$ of $\psi_0^{\ast}$ such that
    \begin{equation*}
        T^{\ast}(\mathcal{O})=\inf_{\psi_0\in\mathcal{O}}T^{\ast}(\psi_0)>0,
    \end{equation*}
    and the map $\psi_0^\ast\in \mathcal{O}\mapsto \psi\in C([0,T];\Eb(\R^2))$ is continuous for all $0<T<T^{\ast}(\mathcal{O})$. Moreover, let $\mathcal{O}_r=\{\psi_0\in \Eb(\R^2) : d_{\Eb}(\psi_0^{\ast},\psi_0)<r\}$, then 
    \begin{equation*}
        \liminf_{r\rightarrow 0} T^{\ast}(\mathcal{O}_r)\geq T^{\ast}(\psi_0^{\ast}).
    \end{equation*}
\end{enumerate}
\end{proposition}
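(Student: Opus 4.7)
The plan is a Kato-type fixed-point argument on the Duhamel formula \eqref{eq:Duhamel}. Since $\eitD\psi_0 \in C([0,T]; \Eb(\R^2))$ by Proposition \ref{prop:linear}, I will construct $w := \psi - \eitD\psi_0$ as a fixed point of
\[
\Phi(w)(t) = -\imu \int_0^t \eul^{\frac{\imu}{2}(t-s)\Delta}\, \mathcal N\bigl(\eul^{\frac{\imu s}{2}\Delta}\psi_0 + w(s)\bigr)\, ds
\]
in a closed ball $B_{T,M}$ of the complete space $Y_T := C([0,T]; H^1(\R^2)) \cap S^1([0,T]\times\R^2)$, equipped with $\|w\|_{Y_T} := \|w\|_{L^\infty_t H^1_x} + \|\nabla w\|_{S^0}$. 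The only technical inputs are the Strichartz estimates of Lemma \ref{lem:Strichartz} and the nonlinear bounds of Lemma \ref{lem:N}.

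For existence, I combine \eqref{eq:Strichartznonh} with \eqref{eq:boundN} and \eqref{eq:boundgradN}. By Proposition \ref{prop:linear}, \eqref{eq:freesolution}, and Lemma \ref{lem:energyspace}, one has $\Zt(\eitD \psi_0 + w) \le C\bigl(1 + \sqrt{\En(\psi_0)} + M\bigr)$ on $[0,T]$ for $T \le T(\En(\psi_0))$. Choosing $M = C\sqrt{\En(\psi_0)}$ suitably and shrinking $T$, the map $\Phi$ becomes a self-map of $B_{T,M}$ and, via \eqref{eq:diffNSobolev}, a contraction in the weaker $L^\infty_t L^2_x$ metric, producing a unique fixed point by Banach's theorem. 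For uniqueness in $C([0,T];\Eb(\R^2))$, any such solution satisfies the Duhamel identity, and Lemma \ref{lem:N} together with Strichartz forces $\psi - \eitD\psi_0 \in S^1$, placing it inside the fixed-point ball after possibly shrinking $T$; \eqref{eq:diffNSobolev} and Gronwall then give uniqueness, extended to the whole interval by a continuation step. For the blow-up alternative, the local time of existence depends only on $\En(\psi_0)$, so boundedness of $\En(\psi)(t)$ on $[0,T^*)$ would allow restarting the fixed-point iteration from $t = T^* - \delta$ and extending past $T^*$, contradicting maximality.

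The continuous dependence in (3) is the main obstacle, precisely because the metric $d_\Eb$ registers far-field oscillations (see Remark \ref{rem: connected components 2D}), so $\psi_0^n - \psi_0^*$ need not lie in any $L^p$. I will split
\[
\psi^n(t) - \psi^*(t) = \bigl(\eitD\psi_0^n - \eitD\psi_0^*\bigr) + \bigl(w^n(t) - w^*(t)\bigr),
\]
control the linear piece in $d_\Eb$ directly by \eqref{eq:stabilitylinsol}, and show $w^n - w^* \to 0$ in $L^\infty_t H^1_x$. In the decomposition
\begin{multline*}
\mathcal N(\psi^n) - \mathcal N(\psi^*) = \bigl[\mathcal N(\eul^{\frac{\imu s}{2}\Delta}\psi_0^n + w^n) - \mathcal N(\eul^{\frac{\imu s}{2}\Delta}\psi_0^n + w^*)\bigr] \\
+ \bigl[\mathcal N(\eul^{\frac{\imu s}{2}\Delta}\psi_0^n + w^*) - \mathcal N(\eul^{\frac{\imu s}{2}\Delta}\psi_0^* + w^*)\bigr],
\end{multline*}
the first bracket is estimated by \eqref{eq:diffNSobolev}, but the second is not, since $\eitD\psi_0^n - \eitD\psi_0^*$ lies only in $X^1 + H^1$. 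To close this gap I will establish a companion $N^0$-bound by $d_\Eb(\eitD\psi_0^n, \eitD\psi_0^*)$, exploiting the splitting $\mathcal N = \Na + \Nbl + \Nbh$ from \eqref{eq:N}, \eqref{eq:Nb} and the pointwise inequalities \eqref{eq:pointwise Nb}: the contribution $\Na$ is Lipschitz in $\bigl||\psi|-1\bigr|$, which is $L^2$-controlled by $d_\Eb$; the piece $\Nbl$ is supported on a set of finite Lebesgue measure by \eqref{eq:Cheby} and locally Lipschitz in $\psi$; and $\Nbh$ is confined to the set $\{1-\chi(\psi) > 0\}$, of finite measure, where differences are controlled by the $X^1+H^1$ part of $d_\Eb$ through \eqref{eq:finite supp}. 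Inserting these bounds into Strichartz and applying Gronwall yields $\|w^n - w^*\|_{L^\infty_t H^1_x} \to 0$, and \eqref{ineq:triangular} combined with \eqref{eq:stabilitylinsol} delivers convergence in $d_\Eb$. The lower semicontinuity $\liminf_{r\to 0} T^*(\mathcal O_r) \ge T^*(\psi_0^*)$ follows by a standard covering argument: any $T < T^*(\psi_0^*)$ is covered by finitely many local existence intervals whose lengths depend only on $\sup_{[0,T]}\En(\psi^*)$, and this uniform lower bound is inherited by nearby initial data.
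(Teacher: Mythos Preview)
Your treatment of local existence, uniqueness, and the blow-up alternative is essentially the paper's argument and is fine.

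The gap is in your continuous dependence argument. You claim that the two brackets in your decomposition of $\mathcal N(\psi^n)-\mathcal N(\psi^*)$, estimated via \eqref{eq:diffNSobolev} and your ``companion $N^0$-bound'', will give $\|w^n-w^*\|_{L^\infty_t H^1_x}\to 0$ after Strichartz and Gronwall. But both of these are $N^0$ bounds on $\mathcal N(\psi^n)-\mathcal N(\psi^*)$; via \eqref{eq:Strichartznonh} they only yield $\|w^n-w^*\|_{S^0}$, hence $L^\infty_t L^2_x$ convergence. To obtain the gradient part $\|\nabla w^n-\nabla w^*\|_{L^\infty_t L^2_x}\to 0$ you would need to estimate $\nabla\mathcal N(\psi^n)-\nabla\mathcal N(\psi^*)$ in $N^0$, and this is exactly what fails under Assumption \ref{ass:N}: by \eqref{eq:nablaN} the functions $G_1,G_2$ in \eqref{eq: Gi} are only continuous, not locally Lipschitz, so the term $\bigl(G(\psi^n)-G(\psi^*)\bigr)\nabla\psi^*$ admits no quantitative bound in terms of $d_{\Eb}(\psi^n,\psi^*)$. (A Lipschitz bound on $\nabla\mathcal N$ is precisely the extra hypothesis \eqref{ass:lipschitz} of Theorem \ref{thm:lipschitz}, which you are not allowed here.)

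The paper confronts this head-on: it first proves the $L^2$-level estimate \eqref{eq:CD1} in an auxiliary metric (essentially your $N^0$ step), and then---this is the missing idea---handles $\|\nabla\psi^n-\nabla\psi^*\|_{L^\infty_t L^2_x}$ by a \emph{non-quantitative} argument. After absorbing the piece $G(\psi^n)(\nabla\psi^n-\nabla\psi^*)$ for small $T$, one is left with $\bigl\|(G(\psi^n)-G(\psi^*))\nabla\psi^*\bigr\|_{N^0}$; the paper shows this tends to zero by contradiction, extracting from \eqref{eq:CD1} a.e.\ convergence $\psi^n\to\psi^*$, using continuity of $G$ to get pointwise convergence of the integrand, and then dominated convergence with the uniform majorant $(1+|\phi|^{2\alpha})|\nabla\psi^*|$. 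Only after this does one recover full $d_{\Eb}$-convergence via \eqref{ineq:triangular}. Your outline needs this soft step (or an equivalent) inserted between the $N^0$ bound and the claimed $H^1$ conclusion.
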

Point (1) of Proposition \ref{prop:localWP} is included in (2). Nevertheless, it is stated separately as it proves useful for the proof of continuous dependence property in (3). 
\begin{proof}
\textbf{Local existence.}
We note that $\psi\in C([0,T];\Eb(\R^2))$ is a strong solution to \eqref{eq:NLS} with initial data $\psi_0\in\Eb(\R^2)$ if and only if 
\begin{equation*}
    \psi(t)=\eitD\psi_0-i\int_0^t\eul^{\frac{\imu}{2}(t-s)\Delta}\mathcal{N}(\psi)(s)\dd s
\end{equation*}
for all $t\in[0,T]$. To show the existence of a solution $\psi$ it suffices to implement a fixed-point argument for the solution map
\begin{equation}\label{eq:mapS}
    \Phi(u)(t)= i\int_0^t\eul^{\frac{\imu}{2}(t-s)\Delta}\mathcal{N}(\eul^{\frac{\imu}{2}s\Delta}\psi_0+u(s))\dd s.
\end{equation}
Specifically, let $\psi_0\in\mathbb E$ and $R>0$ such that $\mathcal E(\psi_0)\leq R$ and given $M>0$ and $T>0$, we consider the solution map \eqref{eq:mapS} defined on the function space
\begin{equation*}
    X_T=\left\{u\in C([0,T];H^1(\R^2)) \, : \, u(0)=0, \quad \|u\|_{X_T}\leq M\right\}.
\end{equation*}
For $u,v\in X_T$, we introduce the distance function $d$ as 
\begin{equation*}
    d_X(u,v)=\|u-v\|_{L^{\infty}([0,T];L^2(\R^2))}.
\end{equation*}
It is straightforward to verify that the space $(X_T,d_X)$ is a complete metric space.
\\
Note that $\psi(t)=\eitD\psi_0+u(t)$ satisfies $\psi\in C([0,T];\Eb(\R^2))$ for $u\in X_T$ and 
$\psi_0\in \Eb(\R^2)$. Indeed, it follows from Proposition \ref{prop:linear} that $\eitD\psi_0\in C([0,T];\Eb(\R^2))$ and Lemma \ref{lem:energyspace} yields that $\eitD\psi_0+u\in C([0,T];\Eb(\R^2))$. If $u$ is a fixed-point of \eqref{eq:solutionmap3Dsuper} then $\psi=\eitD\psi_0+u$ is a local strong solution of \eqref{eq:NLS}.
\\
If $\En(\psi_0)\leq R$ and $u\in X_T$, then thanks to the Minkowski inequality and \eqref{eq:freesolution} we obtain
\begin{equation}\label{eq:CMR}
    \Zt(\eitD\psi_0+u)\leq \Zt(\eitD\psi_0)+\|u\|_{L^{\infty}([0,T];H^1(\R^2))}\leq 2 \sqrt{2R}+M,
\end{equation}
provided that $T>0$ sufficiently small.
Next, we show that $\Phi$ defined in \eqref{eq:mapS} maps $X_T$ onto $X_T$. Let $u\in X_T$ and denote $\psi=\eitD\psi_0+u$, then
by virtue of the Strichartz estimate \eqref{eq:Strichartznonh},
\eqref{eq:boundN} and \eqref{eq:CMR} we obtain
\begin{multline}\label{eq:Su1}
    \|\Phi(u)\|_{L^{\infty}([0,T];L^2(\R^2))}\leq \|\mathcal{N}(\psi)\|_{L^1([0,T];L^2(\R^2))}\\ \leq CT\left({\Zt(\psi)}+\Zt(\psi)^{1+2\alpha}\right)\leq C T\left(1+(2\sqrt{2R}+M)^{2\alpha}\right)(2\sqrt{2R}+M).
\end{multline}
To bound $\nabla \Phi(u)$, we apply the Strichartz estimate \eqref{eq:Strichartznonh} concatenated with \eqref{eq:boundgradN} to obtain
\begin{equation}\label{eq:Su2}
\begin{aligned}
        &\|\nabla \Phi(u)\|_{L^{\infty}([0,T];L^2(\R^2))}\leq C 
        \|\nabla\mathcal{N}(\psi)\|_{N^0([0,T]\times\R^2)}\\
        &\leq
        C\left(T+T^{\frac{1}{q_1'}}\Zt(\psi)^{2\alpha}\right)\|\nabla\psi\|_{L_t^{\infty}L_x^2}
        \leq C
      \left(T+T^{\frac{1}{q_1'}}(2\sqrt{2R}+M)^{2\alpha}\right)(2\sqrt{2R}+M).
      \end{aligned}
\end{equation}
We conclude that 
\begin{equation*}
    \Phi(u)\in C([0,T];H^1(\R^2)),
\end{equation*}
and summing up \eqref{eq:Su1} and \eqref{eq:Su2}, we obtain that
\begin{equation*}
    \|\Phi(u)\|_{X_T}
    \leq 
    C\left(T+T^{\frac{1}{q_1'}}(2\sqrt{2R}+M)^{2\alpha}\right)(2\sqrt{2R}+M).
\end{equation*}
 Next, we check that the map $\Phi$ defines a contraction on $(X_T,d_X)$. Let $u_1,u_2\in X_T$ and denote
 \begin{equation*}
 \psi_1=\eitD\psi_0+u_1, \qquad  \psi_2=\eitD\psi_0+u_2.   
 \end{equation*}
Upon applying \eqref{eq:Strichartznonh} followed by \eqref{eq:diffNSobolev} one has
\begin{equation*}
    \begin{aligned}
        d_X(\Phi(u_1),\Phi(u_2))&=\left\|-i\int_0^t\eul^{\frac{\imu}{2}(t-s)\Delta}\left(\mathcal{N}(\psi_1)-\mathcal{N}(\psi_2)\right)(s)\dd x\right\|_{L^{\infty}([0,T],L^2(\R^2))}\\
        &\leq C \left\|\mathcal{N}(\psi_1)-\mathcal{N}(\psi_2)\right\|_{N^0([0,T]\times\R^2)}\\
        &\leq C \left(T+T^{\frac{1}{q_1'}}(2\sqrt{2R}+M)^{2\alpha}\right)d_X(u_1,u_2).
    \end{aligned}
\end{equation*}
We fix $M=\sqrt{2R}$ and notice that there exists a sufficiently small $0<T\leq 1$ small such that 
\begin{equation*}
    C \left(T+T^{\frac{1}{q_1'}}(3\sqrt{2R})^{2\alpha}\right)\leq \frac{1}{3}.
\end{equation*}
Hence, $\Phi$ maps $X_T$ onto $X_T$ and defines a contraction on $X_T$. The Banach fixed-point Theorem yields a unique $u\in X_T$ such that $\eitD\psi_0+u$ is solution to \eqref{eq:Duhamel}. It follows from Lemma \ref{lem:energyspace} and \eqref{eq:freesolution} that $\eitD\psi_0+u\in C([0,T];\Eb(\R^2))$. In particular, $\psi-\psi_0\in C([0,T];H^1(\R^2))$ from \eqref{eq:linearflow1} and $u\in X_T$.
\\
\textbf{Uniqueness.}
Let $\psi_1,\psi_2\in C([0,T],\Eb(\R^2))$ be two solutions to \eqref{eq:NLS} with initial data $\psi_1(0)=\psi_2(0)=\psi_0\in \Eb(\R^2)$. One has that 
\begin{equation}\label{eq:diffuniqueness}
    \psi_1(t)-\psi_2(t)=-i\int_0^t \eul^{\frac{\imu}{2}(t-s)\Delta}\left(\mathcal{N}(\psi_1)-\mathcal{N}(\psi_2)\right)(s)\dd s.
\end{equation}
In particular, as the nonlinear terms are bounded in $L_t^{\infty}H_x^1(\R^2)$, one has $\psi_1-\psi_2\in L^{\infty}([0,T];H^1(\R^2))$. For $(q_1',r_1')$ given by \eqref{eq:q1r1}, the Strichartz estimate \eqref{eq:Strichartznonh} together with \eqref{eq:diffNSobolev} then yields
\begin{align*}
        &\|\psi_1-\psi_2\|_{L_t^{\infty}L_x^2}
        \leq C \|\mathcal{N}(\psi_1)-\mathcal{N}(\psi_2)\|_{N^0([0,T]\times\R^2)}\\
        &\leq C\left(T+T^{\frac{1}{q_1'}}\left(\Zt(\psi_1)^{2\alpha}+\Zt(\psi_2)^{2\alpha}\right)\right)\|\psi_1-\psi_2\|_{L_t^{\infty}L_x^2}.
\end{align*}
Hence, we deduce that there exists $T_1>0$ such that $\psi_1=\psi_2$ a.e. on $[0,T_1]\times\R^2$. As $T_1$ only depends on $\Zt(\psi_1)$, $\Zt(\psi_2)$, one may iterate the argument to obtain uniqueness of the solution on the interval $[0,T]$.
\\
\textbf{Blow-up alternative.} 
Let $\psi_0\in\Eb(\R^2)$ and define
\begin{equation*}
    T^{\ast}(\psi_0)=\sup\left\{T>0 \, : \, \text{there exists a solution to \eqref{eq:NLS} on}\, [0,T]\right\}.
\end{equation*}
Let $ T^{\ast}(\psi_0)<+\infty$ and assume that there exist $R>0$ and a sequence $\{t_n\}_{n\in \N}$ such that $t_n\rightarrow T^{\ast}(\psi_0)$ and $\En(\psi(t_n))\leq R$ for all $n\in\N$. Then, there exists a sufficiently large  $n$ such that the local existence statement allows us to uniquely extend the solution to $[0,t_n+T(R)]$ with $t_n+T(R)> T^{\ast}(\psi_0)$. This violates the maximality assumption and we conclude that 
\begin{equation*}
    \En(\psi(t_n))\rightarrow \infty, \qquad \text{as} \quad t_n\rightarrow  T^{\ast}(\psi_0),
\end{equation*}
if $ T^{\ast}(\psi_0)<+\infty$.\\
 The proof of the continuous dependence on the initial data of the solution requires some auxiliary statements and is postponed after Lemma \ref{coro:nonlinear flow}.
 \end{proof}
We introduce estimates on the nonlinear flow in Strichartz norms that are required for the proof of the continuous dependence on the initial data. The estimates used for the local existence and uniqueness in the proof of Proposition \ref{prop:localWP} are not sufficient since they only allow to control the difference of solutions $\psi_1,\psi_2$ provided that $\psi_1-\psi_2\in L^{\infty}([0,T];L^2(\R^2))$. In addition, as the regularity properties of $\mathcal{N}$ do not suffice to control $\|\nabla\Phi(\psi_1)-\nabla\Phi(\psi_2)\|_{L_t^{\infty}L_x^2}$ for $\psi_1,\psi_2\in C([0,T];\Eb(\R^2))$, we need to rely on a auxiliary metric.
\begin{lemma}\label{lem:CDOID}
Let $f$ satisfy Assumption \ref{ass:N}, $T>0$, $(q_1',r_1')$ as defined in \eqref{eq:q1r1} and $\psi_1,\psi_2\in C([0,T];\Eb(\R^2))$. Then, there exists $\theta\in(0,1]$ such that
\begin{multline*}
    \left\|\mathcal{N}(\psi_1)-\mathcal{N}(\psi_2)\right\|_{N^0([0,T]\times\R^2)}\\
    \leq C T^{\theta}\left(1+\Zt(\psi_1)+\Zt(\psi_2)+\Zt(\psi_1)^{2\alpha}+\Zt(\psi_2)^{2\alpha}\right)\\
    \times\left(\left\||\psi_1|-|\psi_2|\right\|_{L^{2}([0,T]];L^2(\R^2))}+
    \|\psi_1-\psi_2\|_{L^{2}([0,T];L^{\infty}+L^2(\R^2))}\right).
\end{multline*}
\end{lemma}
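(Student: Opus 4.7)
The plan is to split $\mathcal{N}(\psi_1)-\mathcal{N}(\psi_2)$ via the cutoff decompositions \eqref{eq:N}--\eqref{eq:Nb} and to control the three resulting differences $\Na(\psi_1)-\Na(\psi_2)$, $\Nbl(\psi_1)-\Nbl(\psi_2)$, $\Nbh(\psi_1)-\Nbh(\psi_2)$ by means of the pointwise bounds in \eqref{eq:pointwise Nb}. By \eqref{eq:N0}, the $N^0$ norm is dominated by any dual Strichartz norm: for contributions whose support has finite Lebesgue measure, or for which the non-$(\psi_1-\psi_2)$ factor is already controlled in $L_x^2$, we use $L_t^1 L_x^2$ (dual to the admissible pair $(\infty,2)$); for the high-amplitude contribution that must be paired with the $L^2$ component of the sum-space norm of $\psi_1-\psi_2$ we use $L_t^{q_1'}L_x^{r_1'}$ with $(q_1',r_1')$ as in \eqref{eq:q1r1}. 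The factor $T^\theta$ will arise from Hölder in time, with $\theta = \min\{1/2,\, 1/q_1'-1/2\} = 1/(2(\alpha+1))$.

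For $\Na$ and $\Nbl$, the cutoffs $\eta$ and $\chi$ confine the supports to $\{|\psi_i|\leq C\}$, a set of Lebesgue measure $\lesssim \En(\psi_1)+\En(\psi_2)\lesssim \Zt(\psi_1)^2+\Zt(\psi_2)^2$ by \eqref{eq:supp eta}. The contribution $||\psi_1|-|\psi_2||$ lies directly in $L_x^2$. The contributions of the form $M\cdot |\psi_1-\psi_2|$, with $M$ a bounded function of finite-measure support (hence $M\in L_x^\infty\cap L_x^2$ with norm $\lesssim 1+\Zt$), are handled by writing $\psi_1-\psi_2 = u_\infty + u_2$ realizing the $L^\infty+L^2$ infimum and applying Hölder in space separately to each piece. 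Cauchy-Schwarz in time then produces a factor $T^{1/2}$ together with the prefactor $1+\Zt$.

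For $\Nbh$, we split $|\psi_i|^{2\alpha}\lesssim 1 + |\psi_{i,q}|^{2\alpha}$ using \eqref{eq:psihighlow}. The bounded-amplitude contribution (the ``$1$'' on the finite-measure support of $\Nbh$) is treated exactly as above. For the genuinely high-amplitude contribution $|\psi_{i,q}|^{2\alpha}|\psi_1-\psi_2|$, we split $\psi_1-\psi_2 = u_\infty + u_2$ as before and treat the two halves with different admissible pairs. The $L^\infty$ part pairs with $|\psi_{i,q}|^{2\alpha}\in L_x^2$: the 2D Sobolev embedding $H^1\hookrightarrow L^p$ for every $p<\infty$, combined with the finite-measure support of $\psi_{i,q}$, yields $\||\psi_{i,q}|^{2\alpha}\|_{L_x^2}\lesssim \Zt^{2\alpha}+\Zt$; Cauchy-Schwarz in time then contributes $T^{1/2}$. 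The $L^2$ part pairs with $|\psi_{i,q}|^{2\alpha}\in L_x^{q_1}$, where $1/q_1 + 1/2 = 1/r_1'$ and $2\alpha q_1 = 4(\alpha+1)\geq 2$, so that Sobolev gives $\||\psi_{i,q}|^{2\alpha}\|_{L_x^{q_1}}\lesssim \Zt^{2\alpha}$; Hölder in time with $q_1'<2$ then produces a factor $T^{1/q_1'-1/2} = T^{1/(2(\alpha+1))}$. Summing these contributions yields the claimed estimate.

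The principal technical obstacle lies in treating the high-amplitude factor $|\psi_{i,q}|^{2\alpha}$ in conjunction with the sum-space norm $L^\infty+L^2$ of $\psi_1-\psi_2$: one must apportion each component of the splitting of $\psi_1-\psi_2$ to a different admissible Strichartz pair (the $L^\infty$ component via $(\infty,2)$, the $L^2$ component via $(q_1,r_1)$), chosen so that in each case the Lebesgue norm of $|\psi_{i,q}|^{2\alpha}$ is controlled by $\Zt^{2\alpha}$ (or $\Zt$) through the 2D Sobolev embedding, while still retaining strictly positive time exponents $1/2$ and $1/q_1'-1/2$.
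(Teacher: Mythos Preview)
Your overall strategy---splitting via $\Na,\Nbl,\Nbh$, invoking the pointwise bounds \eqref{eq:pointwise Nb}, and pairing each piece with an appropriate dual Strichartz norm---matches the paper's proof. The treatment of $\Nbl$ and $\Nbh$ is essentially correct.

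There is, however, a genuine error in your handling of $\Na$. You claim that the cutoff $\eta$ confines the support to a set of \emph{finite} Lebesgue measure, citing \eqref{eq:supp eta}. This is backwards: \eqref{eq:supp eta} controls $\mathcal L^2(\supp(1-\eta(|\psi|)))$, i.e.\ the set where $|\psi|$ is \emph{away} from $1$. The support of $\Na(\psi_i)$ is $\{|\psi_i|\in[\tfrac12,\tfrac32]\}$, which for $\psi_i\in\Eb(\R^2)$ generically has infinite measure since $|\psi_i|\to 1$ at infinity. Consequently your argument that ``$M$ has finite-measure support, hence $M\in L^2_x$'' does not apply here; for the second term in \eqref{eq:pointwise Nb} one has $M=\big||\psi_2|-1\big|\eta(|\psi_2|)\in L^2_x$ simply because $|\psi_2|-1\in L^2$, not by a support argument.

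More seriously, the first term in \eqref{eq:pointwise Nb} is $|\psi_1|\,\big||\psi_1|-|\psi_2|\big|$, and the factor $|\psi_1|$ is \emph{not} bounded on $\supp\Na(\psi_1)\cup\supp\Na(\psi_2)$: at points where $\eta(|\psi_2|)\neq 0$ but $\eta(|\psi_1|)=0$, $|\psi_1|$ can be arbitrarily large. Your statement that ``the contribution $||\psi_1|-|\psi_2||$ lies directly in $L^2_x$'' drops this factor. The paper deals with it by decomposing $\psi_1\in X^1+H^1$ via Lemma~\ref{lem:energyspace}: the bounded part $\psi_{1,\infty}$ gives the $L^1_tL^2_x$ contribution you expect, while the $H^1$ part $\psi_{1,q}$ is placed in $L^4_x$ (2D Sobolev) and paired with $\big||\psi_1|-|\psi_2|\big|\in L^2_tL^2_x$ in the dual Strichartz norm $L^{4/3}_tL^{4/3}_x$, producing the $T^{1/4}\Zt(\psi_1)$ prefactor. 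Without this step (or an equivalent symmetrization of the pointwise bound), your $\Na$ estimate does not close.
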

\begin{proof}
First, we notice that from the first inequality of \eqref{eq:pointwise Nb} and the decomposition provided by Lemma \ref{lem:energyspace}, it follows
\begin{multline*}
    \left\|\Na(\psi_1)-\Na(\psi_2)\right\|_{L_t^1L_x^2+L_t^{\frac43}L_x^{\frac43}}
    \leq C\left(T^{\frac12}+T^{\frac14}\Zt(\psi_1)\right) \left\||\psi_1|-|\psi_2|\right\|_{L_t^{2}L_x^2}\\
    +CT^{\frac12}(1+\Zt(\psi_2))\|\psi_1-\psi_2\|_{L_t^{\infty}(L_x^{\infty}+L_x^2)},
\end{multline*}
where we used that $||\psi_2|-1|\eta(|\psi_2|)\in L^{\infty}([0,T];L^{\infty}(\R^2)\cap L^2(\R^2))$.
Indeed, let the set $\Omega\subset \R^2$ of finite Lebesgue measure and $f\in L^{\infty}(\Omega)+L^p(\Omega)$, then
\begin{equation*}
    \|f\|_{L^p(\Omega)}\leq C\left(1+\mathcal{L}^2(\Omega)^{\frac{1}{p}}\right)\|f\|_{L^p(\Omega)+L^{\infty}(\Omega)}.
\end{equation*}
Second, we observe that $\mathcal{L}^2(\supp(\Nb(\psi_i)))\leq \En(\psi_i)$ for $i=1,2$ from \eqref{eq:Cheby}. From \eqref{eq:pointwise Nb}, we conclude
\begin{equation*}
    \|\Nbl(\psi_1)-\Nbl(\psi_2)\|_{L_t^1L_x^2}\leq CT(1+{\Zt(\psi_1)}+{\Zt(\psi_2)}) \left\|\psi_1-\psi_2\right\|_{L_t^{\infty}(L_x^{\infty}+L_x^2)}.
\end{equation*}
Third, arguing as in the proof of Lemma \ref{lem:N} and using $\mathcal{L}^2(\supp(\Nb(\psi_i)))\leq \En(\psi_i)$ we obtain
\begin{multline*}
     \|\Nbh(\psi_1)-\Nbh(\psi_2)\|_{L_t^1L_x^2+L_t^{q_1'}L_x^{r_1'}}\leq \left\|\mathbf{1}_{\supp(1-\chi(\psi_1))\cup \supp(1-\chi(\psi_2))}|\psi_1-\psi_2|\right\|_{L_t^1L_x^2}\\+\left\|\left(|\psi_{1,\mathrm{int}}|^{2\alpha}+|\psi_{2,\mathrm{int}}|^{2\alpha}\right)|\psi_1-\psi_2|\right\|_{L_t^{q_1'}L_x^{r_1'}}
     \\\leq C(T+T^{\frac{1}{q_1'}})\left({\Zt(\psi_1)}+{\Zt(\psi_2)}+\Zt(\psi_1)^{2\alpha}+\Zt(\psi_2)^{2\alpha}\right)\|\psi_1-\psi_2\|_{L_t^{\infty}(L_x^{\infty}+L_x^2)}.
\end{multline*}
\end{proof}
Concatenating the Strichartz estimate \eqref{eq:Strichartznonh} and Lemma \ref{lem:CDOID} gives the following.
\begin{lemma}\label{coro:nonlinear flow}
Given $\psi_1,\psi_2\in C([0,T];\Eb(\R^2))$ such that $\Zt(\psi_i)\leq M$ for $i=1,2$, there exist $C=C(M)>0$ and $\theta\in(0,1]$ such that 
\begin{equation}\label{eq:Ndiff}
\|\Phi(\psi_1)-\Phi(\psi_2)\|_{S^0([0,T]\times \R^2)}
\leq C_M T^{\theta}\left(\|\psi_1-{\psi_2}\|_{L_t^{\infty}(L_x^{\infty}+L_x^2)}+\||\psi_1|-|\psi_2|\|_{L_t^{2}L_x^2}\right).
\end{equation}
\end{lemma}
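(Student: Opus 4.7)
The plan is to combine the dual Strichartz estimate with the nonlinear difference bound of Lemma \ref{lem:CDOID}, so the proof amounts to a concatenation plus a Hölder step in time.

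First, I would write the Duhamel-type difference explicitly,
\begin{equation*}
\Phi(\psi_1)(t)-\Phi(\psi_2)(t) = -\imu\int_0^t \eul^{\frac{\imu}{2}(t-s)\Delta}\bigl(\mathcal{N}(\psi_1)-\mathcal{N}(\psi_2)\bigr)(s)\,\dd s,
\end{equation*}
and invoke the inhomogeneous Strichartz estimate \eqref{eq:Strichartznonh}. Taking the supremum over admissible pairs $(q,r)$ on the left-hand side and using the definition \eqref{eq:N0} of $N^0$ (which is taken to be the infimum over the relevant dual admissible norms, in particular those with $(q_1,r_1)$ as in \eqref{eq:q1r1} used in Lemma \ref{lem:CDOID}), I obtain
\begin{equation*}
\|\Phi(\psi_1)-\Phi(\psi_2)\|_{S^0([0,T]\times\R^2)} \leq C\,\|\mathcal{N}(\psi_1)-\mathcal{N}(\psi_2)\|_{N^0([0,T]\times\R^2)}.
\end{equation*}

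Second, I would plug in Lemma \ref{lem:CDOID} to control the right-hand side by
\begin{equation*}
C\,T^{\theta}\bigl(1+\Zt(\psi_1)+\Zt(\psi_2)+\Zt(\psi_1)^{2\alpha}+\Zt(\psi_2)^{2\alpha}\bigr)\Bigl(\bigl\||\psi_1|-|\psi_2|\bigr\|_{L^2_tL^2_x}+\|\psi_1-\psi_2\|_{L^2_t(L^{\infty}+L^2)}\Bigr),
\end{equation*}
and then upgrade the time integrability on $\psi_1-\psi_2$ via Hölder,
\begin{equation*}
\|\psi_1-\psi_2\|_{L^2_t(L^{\infty}+L^2)} \leq T^{1/2}\|\psi_1-\psi_2\|_{L^{\infty}_t(L^{\infty}+L^2)},
\end{equation*}
which matches the norm appearing in the statement.

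Finally, using the hypothesis $\Zt(\psi_i)\leq M$ for $i=1,2$, the polynomial prefactor is bounded by a constant $C_M>0$, and the various powers of $T$ can be collected into a single $T^{\theta'}$ with $\theta'\in(0,1]$ (updating $\theta$ if necessary, since $T$ is bounded by $1$ in all applications). I expect no real obstacle here: the statement is essentially a restatement of Lemma \ref{lem:CDOID} after applying Strichartz and Hölder, with the only mild point being a careful matching of the dual admissible norms so that the $N^0$-level bound of Lemma \ref{lem:CDOID} feeds cleanly into the $S^0$-level bound produced by \eqref{eq:Strichartznonh}.
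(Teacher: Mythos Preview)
Your proposal is correct and matches the paper's approach exactly: the paper simply states that the lemma follows by ``concatenating the Strichartz estimates \eqref{eq:Strichartznonh} and Lemma \ref{lem:CDOID}'', which is precisely what you do. The H\"older-in-time step you include to pass from $L^2_t(L^\infty_x+L^2_x)$ to $L^\infty_t(L^\infty_x+L^2_x)$ is consistent with the proof of Lemma \ref{lem:CDOID}, where the estimates are in fact already written with the $L^\infty_t$ norm on the difference $\psi_1-\psi_2$.
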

We are now in position to complete the proof of Proposition \ref{prop:localWP}. Note that the metric space $(\Eb,d_{\Eb})$ is not separable, see also Remark \ref{rem:topology}. In particular, it is not sufficient to show sequential continuity of the solution map.
\begin{proof}[Proof of Proposition \ref{prop:localWP} continued.]
We prove \textbf{continuous dependence on the initial data}.
Given $\psi_0^{\ast}\in \Eb(\R^2)$, let $R:=\En(\psi_0^{\ast})$ and $r\in (0,\sqrt{R}]$. Denote
\begin{equation}\label{eq:nhd}
    \mathcal{O}_r:=\{\psi_0\in \Eb(\R^2) \, : \, d_{\Eb}(\psi_0^{\ast},\psi_0)<r\}.
\end{equation}
If follows that $\En(\psi_0)\leq 4 \En(\psi_0^{\ast})$ for all $\psi_0\in \mathcal{O}_r$.
The first statement of Proposition \ref{prop:localWP} then yields that there exists $T=T(4\En(\psi_0^{\ast}))>0$ such that for all $\psi_0\in \mathcal{O}_r$ there exists a unique strong solution $\psi\in C([0,T];\Eb(\R^2))$. In particular, for $\psi_0\in \mathcal{O}_r$ the maximal time satisfies
\begin{equation*}
    T^{\ast}(\psi_0)\geq T(4\En(\psi_0^{\ast}))>0
\end{equation*}
by virtue of the blow-up alternative.
Hence, 
\begin{equation*}
    T^{\ast}(\mathcal{O}_r)=\inf_{\psi_0\in \mathcal{O}_r}T^{\ast}(\psi_0)\geq T(4\En(\psi_0^{\ast}))>0.
\end{equation*}
Given $\delta>0$ to be chosen later, let $\mathcal O_\delta$ be defined as in \eqref{eq:nhd}. Let us remark (again) that, for any $\psi_0\in\mathcal O_\delta$, we have $\mathcal E(\psi_0)\leq 2(R+\delta^2)$.  In particular, $T^{\ast}(\psi_0)\geq T^{\ast}(\mathcal{O}_{\delta})>0$.
\\
Let $\psi_0^1\in \mathcal{O}_{\delta}$ and denote by $\psi^{\ast},\psi_1$ the respective solutions with initial data $\psi_0^{\ast}, \psi_0^1$ defined at least up to time $T^{\ast}(\mathcal{O}_{\delta})$. For any $0<T<T^{\ast}(\mathcal{O}_{\delta})$ there exists $M=M(T)>0$ such that
$\Zt(\psi_1)\leq M$ by virtue of the blow-up alternative. From \eqref{eq:stabilitylinsol}, we have that there exists $C=C(R, \delta, T)>0$ such that
\begin{equation}\label{eq:CDlinear}
    \sup_{t\in[0,T]}d_{\Eb}(\eitD\psi_0^1,\eitD\psi_0^{\ast})\leq C d_{\Eb}(\psi_0^1,\psi_0^{\ast})\leq 2C\delta.
\end{equation}
To prove continuous dependence of the solution, we proceed in the following four steps that compensate for the lack of local Lipschitz regularity of $\nabla\mathcal{N}$ that in general does not hold under Assumption \ref{ass:N}.
\begin{enumerate}
     \item There exist $C>0$ and $0<T_1<T^{\ast}(\mathcal{O}_{\delta})$, only depending on $M$ such that
    \begin{multline}\label{eq:CD1}
        \|\psi_1-\psi^{\ast}\|_{L^{\infty}([0,T_1];L^{\infty}+L^2(\R^2))}
        +\||\psi_1|-|\psi^{\ast}|\|_{L^{2}([0,T_1];L^2(\R^2))}\\
        \leq C d_{\Eb}(\psi_0^1,\psi_0^{\ast}).
    \end{multline} 
    \item Provided \eqref{eq:CD1} holds and arguing by contradiction, we show that for all $\eps>0$ there exist $T_2=T_2(M)>0$ and $\delta>0$ such that {$d_{\Eb}(\psi_0^1,\psi_0^{\ast})<\delta$} implies
    \begin{equation}\label{eq:CD2}
        \|\nabla\psi_1-\nabla\psi^{\ast}\|_{L^{\infty}([0,T_2];L^2(\R^2))}<\eps.
    \end{equation}
    \item Provided \eqref{eq:CD1} and \eqref{eq:CD2} hold, for all $\eps>0$ there exists $\delta>0$ such that $d_{\Eb}(\psi_0^1,\psi_0^{\ast})<\delta$ implies 
    \begin{equation}\label{eq:CD3}
        \sup_{t\in[0,T_2]}d_{\Eb}(\psi_1(t),\psi^{\ast}(t))<\eps.
    \end{equation}
    \item By iterating \eqref{eq:CD3}, we prove that for all $0<T<T^{\ast}(\mathcal{O}_{\delta})$ and $\eps>0$, there exists $\delta>0$ such that $d_{\Eb}(\psi_0^1,\psi_0^{\ast})<\delta$ yields 
    \begin{equation}\label{eq:CD4}
        \sup_{t\in[0,T]}d_{\Eb}(\psi_1(t),\psi^{\ast}(t))<\eps,
    \end{equation}
\end{enumerate}
\textbf{Step 1} We show \eqref{eq:CD1}.
Let us consider the first term on the left hand side of \eqref{eq:CD1}. By using \eqref{eq:CDlinear} and from Lemma \ref{coro:nonlinear flow}, we know there exists $\theta>0$ such that
\begin{equation}\label{eq:Step1a}
\begin{aligned}
    &\|\psi_1-\psi^{\ast}\|_{L^{\infty}([0,T];L^{\infty}+L^2(\R^2))}\\
    &\leq \|\eitD\psi_0^1-\eitD\psi_0^{\ast}\|_{L^{\infty}([0,T];L^{\infty}+L^2(\R^2))}+\left\|\Phi(\psi_1)-\Phi(\psi^{\ast})\right\|_{L^{\infty}([0,T],L^{2}(\R^2))}\\
    &\leq C d_{\Eb}(\psi_0^1,\psi_0^{\ast})+C_{M} T^{\theta}\left(\|\psi_1-{\psi^{\ast}}\|_{L^{\infty}([0,T];(L^{\infty}+L^2(\R^2))}+\||\psi_{1}|-|\psi^{\ast}|\|_{L^{2}([0,T];L^2(\R^2)}\right).
    \end{aligned}
\end{equation}
Given $\chi$ satisfying \eqref{eq:cutoffchi}, we define $\chi_6(z):=\chi(6z)$. Arguing as in the proof of Lemma \ref{lem:metric} we notice that
\begin{multline}\label{eq:Step1int}
    \left\||\psi_1|-|\psi^{\ast}|\right\|_{L^2([0,T];L^2(\R^2))}\\
    \leq \left\||\psi_1|^2-|\psi^{\ast}|^2\right\|_{L^2([0,T];L^2(\R^2))}+\|\psi_1\chi_6(\psi_1)-\psi^{\ast}\chi_6(\psi^{\ast})\|_{L^2([0,T];L^2(\R^2))}
\end{multline}
To deal with the first contribution on the right-hand side, we notice that
\begin{multline*}
        \left||\psi_1|^2-|\psi^{\ast}|^2\right|\leq \left||\eitD\psi_0^{1}|^2-|\eitD\psi_0^{\ast}|^2\right|+\left|2\RE\left(\eul^{-\frac{\imu}{2}t\Delta}\overline{\psi_0^{\ast}}\left(\Phi(\psi^{\ast})-\Phi(\psi_1)\right)\right)\right|\\
        +\left|2\RE\left(\eul^{-\frac{\imu}{2}t\Delta}(\overline{\psi_0^{\ast}}-\overline{\psi_0^1})\Phi(\psi_1)\right)\right|
        +\left(|\Phi(\psi_1)|+|\Phi(\psi^{\ast})|\right)\left|\Phi(\psi_1)-\Phi(\psi^{\ast})\right|.
\end{multline*}
We control these four terms separately. First, from \eqref{eq:CDlinear}, one has that 
\begin{equation*}
    \begin{aligned}
        \left\||\eitD\psi_0^{1}|^2-|\eitD\psi_0^{\ast}|^2\right\|_{L_t^{2}L_x^2}\leq C T^{\frac{1}{2}}d_{\Eb}(\psi_0^1,\psi_0^{\ast}).
    \end{aligned}
\end{equation*}
Second, upon splitting $\eitD\psi_0^i\in \Eb(\R^2)$ as in \eqref{eq:psihighlow} we have
\begin{align*}
        &\left\|2\RE\left(\eul^{-\frac{\imu}{2}t\Delta}\overline{\psi_0^{\ast}}\left(\Phi(\psi^{\ast})-\Phi(\psi_1)\right)\right)\right\|_{L_t^{2}L_x^{2}}\\
        &\leq T^{\frac{1}{2}}\|\Phi(\psi^{\ast})-\Phi(\psi_1)\|_{L_t^{\infty}L_x^2}
        +T^{\frac{1}{4}}{\Zt(\eitD\psi_0^{\ast})}\|\Phi(\psi^{\ast})-\Phi(\psi_1)\|_{L_t^{4}L_x^4}\\
      &\leq CM\left(T^{\frac12}\|\Phi(\psi^{\ast})-\Phi(\psi_1)\|_{L_t^{\infty}L_x^2} +T^{\frac{1}{4}}\|\Phi(\psi_1)-\Phi(\psi^{\ast})\|_{L_t^{4}L_x^4}\right).
\end{align*}
Third, proceeding similarly and exploiting \eqref{eq:CDlinear} we have
\begin{align*}
        &\left\|2\RE\left(\eul^{-\frac{\imu}{2}t\Delta}(\overline{\psi_0^{\ast}}-\overline{\psi_0^1})\Phi(\psi_1)\right)\right\|_{L_t^{2}L_x^2}\\
        &\leq C\left(T^{\frac{1}{2}}\|\Phi(\psi_1)\|_{L_t^{\infty}L_x^2}+T^{\frac{1}{4}}\|\Phi(\psi^{\ast})\|_{L_t^{4}L_x^4}\right)d_{\Eb}(\eitD\psi_0^1,\eitD\psi_0^{\ast})
        \\
        &\leq C\left(T^{\frac{1}{2}}\|\Phi(\psi_1)\|_{L_t^{\infty}L_x^2}+T^{\frac{1}{4}}\|\Phi(\psi_1)\|_{L_t^{4}L_x^4}\right)d_{\Eb}(\psi_0^1,\psi_0^{\ast})\\
        &\leq C(T^{\frac12}+T^{\frac14})(M+M^{1+2\alpha})d_{\Eb}(\psi_0^1,\psi_0^{\ast}),
\end{align*}
where we used that $\Phi(\psi_1)\in L^{\infty}([0,T];L^2(\R^2))\cap L^{4}([0,T];L^4(\R^2))$ from \eqref{eq:Strichartznonh}.\\
Fourth, one has
\begin{align*}
    &\|\left(|\Phi(\psi_1)|+|\Phi(\psi^{\ast})|\right)\left|\Phi(\psi^{\ast})-\Phi(\psi_1)\right|\|_{L_t^{2}L_x^2}\\
    &\leq\left(\|\Phi(\psi_1)\|_{L_t^4L_x^4}+\|\Phi(\psi^{\ast})\|_{L_t^4L_x^4}\right)\|\Phi(\psi_1)-\Phi(\psi^{\ast})\|_{L_t^4L_x^4}\\
    &\leq CT\left(M+M^{1+2\alpha}\right) \|\Phi(\psi_1)-\Phi(\psi^{\ast})\|_{L_t^4L_x^4},
\end{align*}
where we used \eqref{eq:boundN} in the last inequality.
Combining the previous inequalities, we infer that there exists $\theta_1>0$ such that
\begin{multline}\label{eq:bound1}
   \left\||\psi_1|^2-|\psi^{\ast}|^2\right\|_{L_t^2L_x^2}\leq CT^{\theta_1}\left(1+{M}+M^{1+2\alpha}\right)\\
    \times\left(d_{\Eb}(\psi_0^1,\psi_0^{\ast})+\|\Phi(\psi_1)-\Phi(\psi^{\ast})\|_{L_t^{\infty}L_x^2}+\|\Phi(\psi_1)-\Phi(\psi^{\ast})\|_{L_t^4L_x^4}\right).    
\end{multline}
The second contribution in \eqref{eq:Step1int} is bounded as follows
\begin{equation}\label{eq:bound2}
\begin{aligned}
    &\|\psi_1\chi_6(\psi_1)-\psi^{\ast}\chi_6(\psi^{\ast})\|_{L_t^2L_x^2}\\
    &\leq CT^{\frac{1}{2}}\left(1+M\right)
    d_{\Eb}(\eitD\psi_0^1,\eitD\psi_0^{\ast})+CT^{\frac{1}{2}}\|\Phi(\psi_1)-\Phi(\psi^{\ast})\|_{L_t^{\infty}L_x^2}\\
    &\leq C T^{\frac{1}{2}}\left(1+M\right)\left(d_{\Eb}(\psi_0^1,\psi_0^{\ast})+\|\Phi(\psi_1)-\Phi(\psi^{\ast})\|_{L_t^{\infty}L_x^2}\right),
    \end{aligned}
\end{equation}
where we exploited that for $\psi\in\Eb(\R^2)$ the measure of the support of $\chi_6(\psi)$ is bounded by $\En(\psi)$, see \eqref{eq:Cheby}. 
It follows from \eqref{eq:Step1int}, \eqref{eq:bound1} and \eqref{eq:bound2} that there exists $\theta_2>0$ such that
\begin{multline}\label{eq:Step1c}
    \left\||\psi_1|-|\psi^{\ast}|\right\|_{L^2([0,T];L^2(\R^2))}
    \leq CT^{\theta_2}\left(1+M+M^{1+2\alpha}\right)\\
    \times\Big(d_{\Eb}(\psi_0^1,\psi_0^{\ast})+\|\Phi(\psi_1)-\Phi(\psi^{\ast})\|_{L^{\infty}L^2}+\|\Phi(\psi_1)-\Phi(\psi^{\ast})\|_{L_t^4L_x^4}\Big).
\end{multline}
Summing up \eqref{eq:Step1a} and \eqref{eq:Step1c} and applying \eqref{eq:Ndiff} yields that there exists $\theta>0$ such that
\begin{multline*}
\|\psi_1-\psi^{\ast}\|_{L_t^{\infty}(L_x^{\infty}+L_x^2)}
        +\left\||\psi_1|-|\psi^{\ast}|\right\|_{L_t^{2}L_x^2}
        \leq C_M T^{\theta}\\
        \times\left(d_{\Eb}(\psi_0^1,\psi_0^{\ast})+C_MT^{\theta}\left(\|\psi_1-\psi^{\ast}\|_{L_t^{\infty}(L_x^{\infty}+L_x^2)}
        +    \||\psi_1|-|\psi^{\ast}|\|_{L_t^{2}L_x^2}\right)\right).    
\end{multline*}
For $T_1>0$ sufficiently small, depending only on $M$, inequality \eqref{eq:CD1} follows.
\\
\textbf{Step 2.} 
Provided that \eqref{eq:CD1} holds, note that
\begin{equation*}
    \nabla\psi_1-\nabla\psi^{\ast}=\eitD\left(\nabla\psi_0^1-\nabla\psi_0^{\ast}\right)-i\int_0^t\eul^{\frac{\imu}{2}(t-s)\Delta}\left(\nabla\mathcal{N}(\psi_1)-\nabla\mathcal{N}(\psi^{\ast})\right)(s)\dd s.
\end{equation*}
We estimate the difference of the free solutions by
\begin{equation}\label{eq:diffgradlCD}
    \left\|\eitD\left(\nabla\psi_0^1-\nabla\psi_0^{\ast}\right)\right\|_{L^{\infty}([0,T],L^2(\R^2))}\leq d_{\Eb}(\psi_0^1,\psi_0^{\ast}),
\end{equation}
exploiting that $\eitD$ is an isometry on $L^2(\R^2)$. We recall from \eqref{eq:nablaN} that
\begin{equation*}
   \nabla\mathcal{N}(\psi)=\left(f(|\psi|^2)+f'(|\psi|^2)|\psi|^2\right)\nabla\psi+f'(|\psi|^2)\psi^2\overline{\nabla\psi},
\end{equation*}
which can be bounded by means of \eqref{eq:N_locLip} as
\begin{equation*}
    \left|\nabla \mathcal{N}(\psi)\right|\leq C (1+|\psi|^{2\alpha})|\nabla \psi|\leq C(1+|\psi_{\mathrm{int}}|^{2\alpha})|\nabla\psi|.
\end{equation*}
We apply estimate \eqref{eq:Strichartznonh} to the non-homogeneous term, where $(q_1,r_1))=(\frac{2(\alpha+1)}{\alpha},2(\alpha+1))$, see also \eqref{eq:q1r1}. We decompose $\nabla\mathcal{N}(\psi_1)-\nabla\mathcal{N}(\psi^{\ast})$ by means of the functions $G_{\mathrm{bd}}, G_{\mathrm{int}}$ defined in \eqref{eq: Ginfty Gq} leading to 
\begin{equation}\label{eq:diffgradnlCD}
    \begin{aligned}
    &\left\|i\int_0^t\eul^{\frac{\imu}{2}(t-s)\Delta}\left(\nabla\mathcal{N}(\psi_1)-\nabla\mathcal{N}(\psi^{\ast})\right)(s)\dd s\right\|_{L^{\infty}([0,T];L^2(\R^2))}\\
    &\leq \left\| (G_{\mathrm{bd}}+G_{\mathrm{int}})(\psi_1)\left|\nabla\psi_1-\nabla\psi^{\ast}\right| \right\|_{N^0}\\
    &\qquad +\left\|\left(  (G_{\mathrm{bd}}+G_{\mathrm{int}})(\psi_1)-  (G_{\mathrm{bd}}+G_{\mathrm{int}})(\psi^{\ast})\right)|\nabla\psi^{\ast}|\right\|_{N^0([0,T]\times\R^2))}\\
    &\leq \|\nabla\psi_1-\nabla\psi^{\ast}\|_{L_t^1L_x^2}+\||\psi_{1,\mathrm{int}}|^{2\alpha}\left|\nabla\psi_1-\nabla\psi^{\ast}\right|\|_{L_t^{q_1'}L_x^{r_1'}}\\
    &+\left\|\left(G_{\mathrm{bd}}(\psi_1)-G_{\mathrm{bd}}(\psi^{\ast})\right)|\nabla\psi^{\ast}|\right\|_{L_t^1L_x^2}
    +\left\|\left(G_{\mathrm{int}}(\psi_1)-G_{\mathrm{int}}(\psi^{\ast})\right)|\nabla\psi_1|\right\|_{L_t^{q_1'}L_x^{r_1'}}\\
   & \leq C\left(T+T^{\frac{1}{q_1'}}\Zt(\psi^{\ast})^{2\alpha})\right)\|\nabla\psi_1-\nabla\psi^{\ast}\|_{L_t^{\infty}L_x^2}\\
 &+\left\|\left(G_{\mathrm{bd}}(\psi_1)-G_{\mathrm{bd}}(\psi^{\ast})\right)|\nabla\psi^{\ast}|\right\|_{L_t^1L_x^2}
    +\left\|\left(G_{\mathrm{int}}(\psi_1)-G_{\mathrm{int}}(\psi^{\ast})\right)|\nabla\psi^{\ast}|\right\|_{L_t^{q_1'}L_x^{r_1'}}\\
    \end{aligned}
\end{equation}
Thus, for $T_2=T_2(M)>0$ sufficiently small so that 
\begin{equation*}
    C\left(T_2+T_2^{\frac{1}{q'}}\Zt(\psi_1)^{2\alpha}\right)\leq C\left(T_2+T_2^{\frac{1}{q'}}M^{2\alpha}\right)\leq \frac{1}{2},
\end{equation*}
we conclude by combining \eqref{eq:diffgradlCD} and \eqref{eq:diffgradnlCD} that
\begin{multline*}
    \| \nabla\psi_1-\nabla\psi^{\ast}\|_{L^{\infty}([0,T_2],L^2(\R^2))}\leq d_{\Eb}(\psi_0^1,\psi_0^{\ast})\\
  +\left\|\left(G_{\mathrm{bd}}(\psi_1)-G_{\mathrm{bd}}(\psi^{\ast})\right)|\nabla\psi^{\ast}|\right\|_{L_t^1L_x^2}
    +\left\|\left(G_{\mathrm{int}}(\psi_1)-G_{\mathrm{int}}(\psi^{\ast})\right)|\nabla\psi^{\ast}|\right\|_{L_t^{q_1'}L_x^{r_1'}}.
\end{multline*}
In order to conclude Step 2, we need to show that the second line above can be made arbitrarily small by choosing a sufficiently small $\delta>0$. We proceed by contradiction, assuming that there exist $\eps>0$, a sequence $\{\delta_n\}_{n\in \N}$ and $\{\psi_0^n\}_{n\in \N}\subset\Eb(\R^2)$ such that $d_{\Eb}(\psi_0^{\ast},\psi_0^n)<\delta_n\rightarrow 0$ and for all $n$ sufficiently large,
\begin{equation}\label{eq:Step2contradiction}
    \left\|\left(G_{\mathrm{bd}}(\psi^\ast)-G_{\mathrm{bd}}(\psi_n)\right)|\nabla\psi^{\ast}|\right\|_{L_t^1L_x^2}
    +\left\|\left(G_{\mathrm{int}}(\psi^\ast)-G_{\mathrm{int}}(\psi_n)\right)|\nabla\psi^{\ast}|\right\|_{L_t^{q_1'}L_x^{r_1'}}\geq \eps,
\end{equation}
where $\psi_n\in C([0,T];\Eb(\R^2))$ denotes the unique maximal solution with $\psi_n(0)=\psi_0^n$.
Inequality \eqref{eq:CD1} implies that, up to extracting a subsequence, {not relabeled}, $\psi_n$ converges to $\psi^{\ast}$ a.e. on $[0,T_1]\times \R^2$. If $0<T_1<T_2$, then set $T_2:=T_1$. By virtue of Assumption \ref{ass:N} on $f$, it follows that $G_{\mathrm{bd}}, G_{\mathrm{int}}$ are continuous and thus
\begin{equation*}
    \begin{aligned}
        \left|\left(G_{\mathrm{bd}}(\psi^{\ast})-G_{\mathrm{bd}}(\psi_n)\right)\right|\,|\nabla\psi^{\ast}|&\rightarrow 0 \quad \text{a.e. in} \quad [0,T_2]\times\R^2,\\
            |G_{\mathrm{int}}(\psi^{\ast})-G_{\mathrm{int}}(\psi_n)|\,|\nabla\psi^{\ast}|&\rightarrow 0 \quad \text{a.e. in} \quad [0,T_2]\times\R^2.\\
    \end{aligned}
\end{equation*}
Since in addition one has
\begin{align*}
    \left\|G_{\mathrm{int}}(\psi_n)\right\|_{L_t^{\infty}L_x^{q_1}(\R^2)}&\leq C  \left\|(1+|\psi_{q,n}|^{2\alpha})(1-\chi(\psi_n)\right\|_{L_t^{\infty}L_x^{q_1}(\R^2)}\\
    &\leq C\left({\Zt(\psi_n)}+\Zt(\psi_n)^{2\alpha}\right)\leq C\left({M}+M^{2\alpha}\right)
\end{align*}
for all $n\in \N$, we obtain from \eqref{eq:CD1} that there exists $\phi\in L^{\infty}([0,T];L^{r_1}(\R^2))$ such that $|\psi_{q,n}|\leq \phi$ a.e. on $[0,T_2)\times\R^2$. Therefore,
\begin{equation*}
    \begin{aligned}
        &\left|\left(G_{\mathrm{bd}}(\psi^{\ast})-G_{\mathrm{bd}}(\psi_n)\right)\right|\, |\nabla\psi^{\ast}|\leq C |\nabla\psi^{\ast}|\in L^1([0,T);L^2(\R^2)),\\
            &\left|\left(G_{\mathrm{int}}(\psi^{\ast})-G_{\mathrm{int}}(\psi_n)\right)\right| \, |\nabla\psi^{\ast}|\leq C\left(|\psi^{\ast}|^{2\alpha}+|\phi|^{2\alpha}\right) |\nabla\psi^{\ast}|\in L^{q_1'}([0,T);L^{r_1'}(\R^2)),\\
    \end{aligned}
\end{equation*}
so that the dominated convergence Theorem implies that \eqref{eq:Step2contradiction} is violated. The inequality \eqref{eq:CD2} follows for the time interval $[0,T_2]$ where we stress that $T_2>0$ only depends on $M$.
\newline\textbf{Step 3.} 
Given that \eqref{eq:CD1} and \eqref{eq:CD2} are satisfied, it suffices to prove that, for any $\eps>0$, there exists $\delta>0$ such that  $d_{\Eb}(\psi_0^1,\psi_0^{\ast})<\delta$ implies
\begin{equation*}
     \left\||\psi_1|-|\psi^{\ast}|\right\|_{L^{\infty}([0,T_2];L^2(\R^2))}<\eps.
\end{equation*}
Note that \eqref{eq:CD1} only yields 
\begin{equation*}
     \left\||\psi_1|-|\psi^{\ast}|\right\|_{L^{2}([0,T_2];L^2(\R^2))}< C \delta.
\end{equation*}
We recall that $\psi(t)=\eitD\psi_0+\Phi(\psi)$, where $\eitD\psi_0\in C([0,T];\Eb(\R^2))$ and $\Phi(\psi_i)\in C([0,T];H^1(\R^2))$ for $\psi=\psi^{\ast},\psi^1$.
More precisely, $\Zt(\eitD\psi_0^\ast)+\Zt(\eitD\psi_0^1)\leq 4\sqrt{2}\sqrt{\En(\psi_i^0)}$. It follows from \eqref{ineq:triangular} that
\begin{align*}
    \left\||\psi_1|-|\psi^{\ast}|\right\|_{L_t^{\infty}L_x^2}\leq C\left(1+\sqrt{\En(\psi_0^1)}+\sqrt{\En(\psi_0^{\ast})}+\|\Phi(\psi_1)\|_{L_t^{\infty}H_x^1}+\|\Phi(\psi^{\ast})\|_{L_t^{\infty}H_x^1}\right)\\
    \times\left(d_{\Eb}(\eitD\psi_0^1,\eitD\psi_0^{\ast})+\|\Phi(\psi_1)-\Phi(\psi^{\ast})\|_{L_t^{\infty}H_x^1}\right)\\
    \leq C(1+2\sqrt{R+\delta}+2M+2M^{1+2\alpha})\left(d_{\Eb}(\psi_0^1,\psi_0^{\ast})+\|\Phi(\psi_1)-\Phi(\psi^{\ast})\|_{L_t^{\infty}H_x^1}\right),
\end{align*}
where we used \eqref{eq:stabilitylinsol} in the last inequality. 
We are left to show that for all $\eps>0$ there exists $\delta>0$ such that $d_{\Eb}(\psi_0^{\ast},\psi_0)<\delta$ yields
\begin{equation*}
    \|\Phi(\psi_1)-\Phi(\psi^{\ast})\|_{L_t^{\infty}H_x^1}<\eps.
\end{equation*}
The statement follows by combining \eqref{eq:Ndiff}, and \eqref{eq:CD1} and observing that
\begin{equation*}
    \|\nabla\Phi(\psi_1)-\nabla\Phi(\psi^{\ast})\|_{L_t^{\infty}L_x^2}\leq \|\nabla\psi_1-\nabla\psi^{\ast}\|_{L_t^{\infty}L_x^2}+\sup_{t\in[0,T_2]}d_{\Eb}(\eitD\psi_0^1,\eitD\psi_0^{\ast})
\end{equation*}
followed by \eqref{eq:CD2} and \eqref{eq:CDlinear}. This completes Step 3.
\\
\textbf{Step 4:} Note that Step 3 yields continuous dependence on the initial data w.r.t. to the topology of $\Eb$ induced by the metric $d_{\Eb}$ on a time interval $[0,T_2]$ where $T_2$ only depends on $M$. 
One may hence cover $[0,T]$ by the union of intervals $[t_{k},t_{k+1}]$ with $t_k=k T_2$ for $k\in \{0, ..., N-1\}$ with $N=\lceil \frac{T}{T_2}\rceil$ finite. For all $\eps>0$, there exists $\delta_N>0$ such that $d_{\Eb}(\psi_{1}(t_{N-1}),\psi^{\ast}(t_{N-1}))<\delta_N$ yields $\sup_{t\in [t_{N-1},T]}d_{\Eb}(\psi_{1}(t),\psi^{\ast}(t))<\eps$. Next, there exists $\delta_{N-1}>0$ such that $d_{\Eb}(\psi_{1}(t_{N-2}),\psi^{\ast}(t_{N-2}))<\delta_{N-1}$ yields $\sup_{t\in[t_{N-2},t_{N-1}]}d_{\Eb}(\psi_{1}(t),\psi^{\ast}(t))<\delta_N$. One may then iterate the scheme finitely many times in order to recover the existence of a $\delta=\delta_1>0$ such that $d_{\Eb}(\psi_1^{0},\psi_0^{\ast})<\delta$ implies $\sup_{t\in [0,T]}d_{\Eb}(\psi_{1}(t),\psi^{\ast}(t))<\eps$.
\\
It remains to show that for $\mathcal{O}_r=\{\psi_0\in \Eb(\R^2) : d_{\Eb}(\psi_0^{\ast},\psi_0)<r\}$ it holds 
    \begin{equation*}
        \liminf_{r\rightarrow 0} T^{\ast}(\mathcal{O}_r)\geq T^{\ast}(\psi_0^{\ast}).
    \end{equation*}
This property is an immediate consequence of Step 4.
\end{proof}
We proceed to show a persistence of regularity property for \eqref{eq:NLS} under the general Assumption \ref{ass:N}. Subsequently, we prove the conservation of the Hamiltonian energy $\mathcal H$.
\begin{lemma}\label{lem:H2solution2D}
Let $f$ be as in Assumption \ref{ass:N} and $\psi_0\in\Eb(\R^2)$ such that $\Delta\psi_0\in L^2(\R^2)$. Then, the unique maximal solution $\psi\in C([0,T^{\ast});\Eb(\R^2))$ to \eqref{eq:NLS} satisfies
\begin{equation*}
    \Delta\psi\in C([0,T^{\ast});L^2(\R^2)), \qquad \partial_t\psi\in C([0,T^{\ast});L^2(\R^2)).
\end{equation*}
Furthermore, the Hamiltonian is conserved, namely
\begin{equation*}
    \mathcal{H}(\psi(t))=\mathcal{H}(\psi_0),
\end{equation*}
for all $t\in [0,T^{\ast})$.
\end{lemma}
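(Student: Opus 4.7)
The plan is to first establish the persistence of regularity and then exploit it to prove the conservation of $\mathcal{H}$. The key observation is that, rewriting \eqref{eq:NLS} as
$$-\tfrac{1}{2}\Delta\psi = i\partial_t\psi - \mathcal{N}(\psi),$$
and recalling from Lemma \ref{lem:N} that $\mathcal{N}(\psi)\in C([0,T^\ast);L^2(\R^2))$, the two regularity statements $\Delta\psi\in C([0,T^\ast);L^2)$ and $\partial_t\psi\in C([0,T^\ast);L^2)$ are equivalent. It thus suffices to show the latter. This reformulation is crucial: iterating the Duhamel formula spatially twice would require at least two derivatives of $f$, which is not granted by Assumption \ref{ass:N}, whereas differentiating \eqref{eq:NLS} in time produces a linear Schr\"odinger equation whose coefficients $G_1(\psi),G_2(\psi)$ from \eqref{eq: Gi} are only required to be bounded in the sense of \eqref{eq: estG}.

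Set $w:=\partial_t\psi$. Formally $w$ satisfies
$$i\partial_t w = -\tfrac{1}{2}\Delta w + G_1(\psi)w + G_2(\psi)\overline{w}, \qquad w(0) = \tfrac{i}{2}\Delta\psi_0 - i\mathcal{N}(\psi_0) \in L^2(\R^2),$$
where the membership $w(0)\in L^2$ follows from the hypothesis $\Delta\psi_0\in L^2$ together with the pointwise bound \eqref{eq:Npointwise}. Using the splitting \eqref{eq: Ginfty Gq} into an $L^\infty$--part and an $L^{q_1}$--part (with $q_1=2(\alpha+1)/\alpha$), a Strichartz/Duhamel contraction in $C([0,T_1];L^2)$, entirely analogous to the one used to prove \eqref{eq:diffNSobolev} and Proposition \ref{prop:localWP}, yields a unique solution $w$ on a time interval $[0,T_1]$ whose length depends only on $\Zt(\psi)$ from \eqref{eq:Zt}. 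Since the blow-up alternative from Proposition \ref{prop:localWP} keeps $\Zt(\psi)$ bounded on every compact subinterval of $[0,T^\ast)$, iterating this step extends $w$ to all of $[0,T^\ast)$. The identification $w=\partial_t\psi$ is obtained by approximation: combining Lemma \ref{lem:approx} with a standard mollification one constructs $\psi_0^n\to\psi_0$ in $(\Eb,d_\Eb)$ with in addition $\Delta\psi_0^n\to\Delta\psi_0$ in $L^2$; for such smooth data $w^n$ coincides with $\partial_t\psi^n$ by classical linear theory, and one passes to the limit in $\psi^n(t)=\psi_0^n+\int_0^t w^n(s)\,ds$ using the continuous dependence of Proposition \ref{prop:localWP}.

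For the conservation of $\mathcal H$, the enhanced regularity allows the formal computation
$$\frac{d}{dt}\mathcal{H}(\psi)(t)= \RE\int_{\R^2}\bigl(-\Delta\psi+2f(|\psi|^2)\psi\bigr)\overline{\partial_t\psi}\,dx = \RE\int_{\R^2} 2i\,|\partial_t\psi|^2\,dx = 0,$$
the last equality coming directly from substituting \eqref{eq:NLS}. The delicate point is the justification of the chain rule for $t\mapsto\mathcal{H}(\psi(t))$: because $\nabla\mathcal{N}(\psi)$ is only in $L^2+L^{r_1'}$, the derivative $\partial_t\psi$ is not a priori in $H^1$, so the spatial integration by parts for the kinetic term cannot be done naively. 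I would carry out the computation on the spatial mollification $\psi_\varepsilon:=\rho_\varepsilon*\psi$, for which every manipulation is classical, producing $\mathcal{H}(\psi_\varepsilon)(t)=\mathcal{H}(\psi_\varepsilon)(0)+R_\varepsilon(t)$ with a remainder $R_\varepsilon$ encoding the commutator $[\rho_\varepsilon*,\,\mathcal N(\cdot)]$ paired against $\partial_t\psi_\varepsilon$. The Friedrichs commutator lemma together with the local Lipschitz bound \eqref{eq: N loc Lip} and the regularity $\partial_t\psi\in L^2$ forces $R_\varepsilon\to 0$ as $\varepsilon\to 0$, while $\mathcal{H}(\psi_\varepsilon)\to\mathcal{H}(\psi)$ pointwise in $t$ follows from the continuity of $\mathcal H$ on $(\Eb,d_\Eb)$ inherited from Lemma \ref{lem:metric} and Lemma \ref{lem:finiteenergy}.

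The main obstacle is the low regularity $f\in C^1$ postulated in Assumption \ref{ass:N}: it rules out a direct persistence-of-$H^2$ argument via Duhamel applied to $\Delta\psi$, and it forbids any shortcut in the Hamiltonian conservation based on higher smoothness of $\mathcal N$. Both difficulties are bypassed by systematically replacing the spatial Laplacian by $\partial_t$ through the equation, which trades a nonlinear second-order manipulation for a linear first-order one with bounded potentials.
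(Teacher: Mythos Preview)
Your proposal is correct and follows essentially the same strategy as the paper: both avoid differentiating $\mathcal{N}$ twice by working with $\partial_t\psi$ rather than $\Delta\psi$, exploiting that the time-differentiated equation is linear in $\partial_t\psi$ with coefficients $G_1(\psi),G_2(\psi)$ controlled by \eqref{eq: estG}. The paper, however, is more direct on two points. First, instead of constructing an auxiliary $w$ by contraction and then identifying $w=\partial_t\psi$ via approximation, the paper differentiates the Duhamel formula \eqref{eq:Duhamel} in time (using Corollary~\ref{coro:time derivative}) and obtains immediately an a~priori Strichartz bound on $\|\partial_t\psi\|_{L^\infty_tL^2_x}$, bypassing the identification step entirely. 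Second, for the conservation of $\mathcal{H}$ the paper simply pairs \eqref{eq:NLS} against $\partial_t\psi$ in $L^2$: once $\Delta\psi,\partial_t\psi\in C([0,T_1];L^2)$ have been established, the identity $\frac{d}{dt}\tfrac12\|\nabla\psi\|_{L^2}^2=\RE\langle-\Delta\psi,\partial_t\psi\rangle$ follows from a difference-quotient argument (both factors are continuous in $L^2$ and $\nabla\psi\in C([0,T_1];H^1)$), so your mollification/Friedrichs-commutator machinery, while correct, is heavier than necessary. Your extra care does buy a fully self-contained justification independent of the time-differentiation of Duhamel, but at the cost of an additional approximation layer.
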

\begin{proof}
Let $\psi_0\in \Eb(\R^2)$ such that $\Delta\psi_0\in L^2(\R^2)$. Proposition \ref{prop:localWP} provides a $T^{\ast}>0$ such that there exists a unique maximal strong solution  $\psi\in C([0,T^{\ast});\Eb(\R^2))$ to \eqref{eq:NLS} with initial data $\psi(0)=\psi_0$. The blow-up alternative yields that for any $T\in [0,T^{\ast})$ there exists $M>0$ such that $\Zt\leq M$, defined in \eqref{eq:Zt}. 
\\
First, we show that there exists  $T_1\in(0,T]$ only depending on $\Zt(\psi)$ such that $\partial_t\psi\in C([0,T_1];L^2(\R^2))$. Exploiting that $\psi\in C([0,T];\Eb(\R^2))$ we obtain
\begin{equation*}
    i\partial_t\psi(0)=-\frac{1}{2}\Delta\psi_0+\mathcal{N}(\psi_0).
\end{equation*}
We claim that $\partial_t\psi(0)\in L^2(\R^2)$. We note that $\Delta\psi_0\in L^2(\R^2)$ by assumption yields $\psi_0\in X^2+H^2(\R^2)\subset X^2(\R^2)\subset L^{\infty}(\R^2)$. It follows from \eqref{eq:boundN} that 
\begin{equation*}
    \|\mathcal{N}(\psi_0)\|_{L^2(\R^2)}\leq C\left(\sqrt{\En(\psi_0)}+\En(\psi_0)^{\frac12+\alpha}\right).
\end{equation*}
By differentiating the Duhamel formula \eqref{eq:Duhamel} in time and applying Corollary \ref{coro:time derivative} one has 
\begin{align*}
   \partial_t\psi(t)&=\eitD\left(\frac{\imu}{2}\Delta\psi(0)-i\mathcal{N}(\psi)(0)\right)-i\int_0^t\eul^{\frac{\imu}{2}s\Delta}\partial_t\mathcal{N}(\psi)(t-s)\dd s\\
   &=\eitD(\partial_t\psi(0))+\int_{0}^t \eul^{\frac{\imu}{2}(t-s)\Delta} \left(G_1(\psi)\partial_t\psi+G_2(\psi)\overline{\partial_t\psi}\right)(s) \dd s,
\end{align*}
where $G_1,G_2$ are as defined in \eqref{eq: Gi}.
Hence,
\begin{equation*}
    \|\partial_t\psi\|_{L^{\infty}([0,T];L^2(\R^2))}\leq \|\partial_t\psi(0)\|_{L^2(\R^2)}+\| G_1(\psi)\partial_t\psi+G_2(\psi)\overline{\partial_t\psi}(\partial_t\psi)\|_{N^0([0,T]\times\R^2)}.
\end{equation*}
Upon exploiting the estimates \eqref{eq: estG} on $G_1,G_2$ and following the lines of the proof of Lemma \ref{lem:N}, we conclude that
\begin{multline*}
    \left\|G_1(\psi)\partial_t\psi+G_2(\psi)\overline{\partial_t\psi}\right\|_{N^0([0,T]\times\R^2)}\leq C \|G_{\mathrm{bd}}(\psi)|\partial_t\psi|\|_{L_t^1L_x^2}+\|G_{\mathrm{int}}(\psi)|\partial_t\psi|\|_{N^0}\\
    \leq C \|\partial_t\psi\|_{L_t^1L_x^2}+\|
    \left(1+|\psi|^{2\alpha}\right)|\partial_t\psi|\|_{N^0}
    \leq  C T\|\partial_t\psi\|_{L_t^{\infty}L_x^2}+T^{\frac{1}{q_1'}}\Zt(\psi)^{2\alpha}\|\partial_t\psi\|_{L_t^{\infty}L_x^2}.
\end{multline*}
Thus, there exists $0<T_1<T$ only depending on $\Zt(\psi)$ such that
\begin{equation*}
    \left(T_1+T_1^{\frac{1}{q'}}\right)\left(1+\Zt(\psi)^{2\alpha}\right)<\frac{1}{2},
\end{equation*}
and 
\begin{equation*}
     \|\partial_t\psi\|_{L^{\infty}([0,T_1];L^2(\R^2))}\leq 2 \|\partial_t\psi(0)\|_{L^2(\R^2)}.
\end{equation*}
Second, we deduce a space-time bound for $\Delta\psi$. More precisely,
\begin{multline*}
    \|\Delta\psi\|_{L^{\infty}([0,T_1];L^2(\R^2))}\leq  \|\partial_t\psi\|_{L^{\infty}([0,T_1];L^2(\R^2))}+\|\mathcal{N}(\psi)\|_{L^{\infty}([0,T_1];L^2(\R^2))}\\
    \leq \|\partial_t\psi\|_{L^{\infty}([0,T_1];L^2(\R^2))}+\left(T_1+T_1^{\frac{1}{q_1'}}\right)\left({\Zt(\psi)}+\Zt(\psi)^{2\alpha}\right),
\end{multline*}
by virtue of \eqref{eq:boundN}.
As $\partial_t\psi\in C([0,T_1];L^2(\R^2))$ it then follows $\Delta\psi\in C([0,T_1];L^2(\R^2))$.
\\
Third, we show that $\mathcal{H}(\psi(t))=\mathcal{H}(\psi_0)$ for all $t\in[0,T_1]$. To that end, we compute the $L^2$-scalar product of \eqref{eq:NLS} with $\partial_t\psi$ and take the real part to infer
\begin{equation*}
    0=\RE\left\langle i\partial_t\psi,\partial_t\psi\right\rangle=\RE\left\langle -\frac12\Delta\psi+\mathcal{N}(\psi),\partial_t\psi\right\rangle,
\end{equation*}
for any $t\in [0,T_1]$. We notice that all terms are well-defined and 
conclude that for all $t\in [0,T_1]$ the Hamiltonian energy is conserved, namely
\begin{equation*}
    0=\frac{\dd}{\dd t}\int_{\R^d}\frac{1}{2}|\nabla\psi|^2+F(|\psi|^2)\dd x.
\end{equation*}
As $T_1>0$ only depends on $\Zt(\psi)$, the procedure above may be implemented starting from any $t_0\in [0,T-T_1]$ covering the time interval $[0,T]$ by finitely many sub-intervals. It follows that $\mathcal{H}(\psi)$ is constant in time on each of them. Since $\psi\in C([0,T];\Eb(\R^2))$, by continuity one concludes that $\mathcal{H}(\psi)(t)=\mathcal{H}(\psi_0)$ for all $t\in[0,T]$.
\end{proof}
The results of this Section then yield the proof of Theorem \ref{thm:mainlocal} for $d=2$.
\begin{proof}[Proof of Theorem \ref{thm:mainlocal} in 2D]
For $d=2$, the first three statements follow from Proposition \ref{prop:localWP}, while the fourth and fifth are provided by Lemma \ref{lem:H2solution2D}.
\end{proof}
\subsection{Global well-posedness}\label{sec:global2D}
Assuming the internal energy in \eqref{eq:HamiltonianNLS} to be non-negative, we show that the Cauchy problem associated to \eqref{eq:NLS} is globally well-posed in the space $\mathbb E(\R^2)$ which completes the proof of Theorem \ref{thm:main} for $d=2$. First, we show that the regular solutions provided by Lemma \ref{lem:H2solution2D} are global. 
\begin{corollary}\label{coro : GWP 2D}
Under the same assumptions of Lemma \ref{lem:H2solution2D}, let in addition the nonlinear potential energy density $F$, defined in \eqref{eq:HamiltonianNLS} be non-negative, namely $F\geq 0$. Then, the solution constructed in Lemma \ref{lem:H2solution2D} is global, i.e. $T^{\ast}=+\infty$.
\end{corollary}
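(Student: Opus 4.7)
The plan is to combine the blow-up alternative from Proposition \ref{prop:localWP} with the conservation of the Hamiltonian from Lemma \ref{lem:H2solution2D} and the coercivity estimate \eqref{eq:equivEH} of Lemma \ref{lem:identification Eb}. Specifically, I would argue by contradiction: suppose $T^{\ast}<\infty$, then by part (2) of Proposition \ref{prop:localWP} one must have $\lim_{t\nearrow T^{\ast}}\En(\psi)(t)=+\infty$, and the goal is to derive a uniform-in-time bound on $\En(\psi)(t)$ on $[0,T^{\ast})$ to reach a contradiction.

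First I would observe that the hypotheses of Lemma \ref{lem:identification Eb} are exactly satisfied: $f$ fulfils Assumption \ref{ass:D} and $F\geq 0$, so there exists an increasing function $g:(0,\infty)\rightarrow[0,\infty)$ with $\lim_{r\to 0}g(r)=0$ such that
\begin{equation*}
\En(\phi)\leq g\bigl(\Hc(\phi)\bigr)\qquad\text{for every }\phi\in\Eb(\R^2).
\end{equation*}
Applying this to $\psi(t)$ for $t\in[0,T^{\ast})$ and using that Lemma \ref{lem:H2solution2D} yields $\Hc(\psi(t))=\Hc(\psi_0)$ for all such $t$, one obtains
\begin{equation*}
\En(\psi(t))\leq g\bigl(\Hc(\psi(t))\bigr)=g\bigl(\Hc(\psi_0)\bigr)<\infty,
\end{equation*}
uniformly in $t\in[0,T^{\ast})$. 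This directly contradicts \eqref{eq:blow_up_alt}, so $T^{\ast}=+\infty$ and the solution is global. Note that since the blow-up alternative concerns $\En$, not $\|\psi-\eitD\psi_0\|_{H^1}$ or any similar quantity, the control provided by Lemma \ref{lem:identification Eb} is precisely the right one; this is the reason why the sign assumption $F\geq 0$ enters here but not in the local theory.

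I do not expect any substantial obstacle: all the structural work has already been done in Section \ref{sec:energyspace} (to identify $\Eb$ with $\{\Hc<\infty\}$ and produce the bound \eqref{eq:equivEH}) and in Lemma \ref{lem:H2solution2D} (for the propagation of the $H^2$-regularity needed to justify the energy conservation rigorously). The only small point that requires a word is that Lemma \ref{lem:H2solution2D} gives conservation of $\Hc$ only under the additional assumption $\Delta\psi_0\in L^2(\R^2)$, which is precisely the setting of the present Corollary, so no further approximation argument is required at this stage.
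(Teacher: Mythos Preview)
Your proposal is correct and follows essentially the same approach as the paper: both combine the conservation of $\Hc$ from Lemma \ref{lem:H2solution2D}, the coercivity estimate \eqref{eq:equivEH} of Lemma \ref{lem:identification Eb}, and the blow-up alternative to obtain the uniform bound $\En(\psi(t))\leq g(\Hc(\psi_0))$ on $[0,T^{\ast})$, forcing $T^{\ast}=+\infty$. The only cosmetic difference is that you phrase the argument by contradiction while the paper states the bound directly and then invokes the blow-up alternative; these are the same proof.
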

\begin{proof}
Let $\psi\in C(0,T^{\ast};\Eb(\R^2))$ denote the unique maximal solution to \eqref{eq:NLS} with initial data $\psi(0)=\psi_0\in \Eb(\R^2)$. Since $\mathcal{H}(\psi)(t)=\mathcal{H}(\psi_0)$ for all $t\in [0,T^{\ast})$ it follows from Lemma \ref{lem:identification Eb} that there exists an increasing function $g:(0,\infty)\rightarrow (0,\infty)$ with $\displaystyle \lim_{r\rightarrow 0}g(r)=0$ such that 
\begin{equation}\label{eq:control En}
  \En(\psi)(t)\leq g\left(\mathcal{H}(\psi)(t)\right)=g\left(\mathcal{H}(\psi)(0)\right)=g\left(\mathcal{H}(\psi_0)\right) <+\infty
\end{equation}
for all $t\in [0,T^{\ast})$. The blow-up alternative then yields that $T^{\ast}=+\infty$. In addition, $\psi$ enjoys the bounds $\partial_t\psi\in C([0,T];L^2(\R^2))$ and $\Delta\psi\in C([0,T];L^2(\R^2))$ for any $T>0$ as well as $\mathcal{H}(\psi(t))=\mathcal{H}(\psi_0)$ for all $t\in [0,\infty)$.
\end{proof}
Second, we prove Theorem \ref{thm:main} for $d=2$. More precisely, by exploiting continuous dependence on the initial data we show that the Hamiltonian energy is conserved for solutions in the energy space and deduce global existence.
\begin{proof}[Proof of Theorem \ref{thm:main}]
Note that to complete the proof of the theorem it suffices to prove that the Hamiltonian energy is conserved for all solutions $\psi\in C([0,T^{\ast});\Eb(\R^2))$. Global existence then follows by arguing as in the proof of Corollary \ref{coro : GWP 2D}. To that end, given initial data $\psi_0\in \Eb(\R^3)$ and the unique solution $\psi\in C([0,T^{\ast});\Eb(\R^2))$ to \eqref{eq:NLS} such that $\psi(0)=\psi_0$, we observe that thanks to Lemma \ref{lem:approx} there exists $\{\psi_0^n\}\subset \Eb(\R^2)\cap C^{\infty}(\R^2)$ such that $\Delta\psi_0^n\in L^2(\R^2)$ and $d_{\Eb}(\psi_0,\psi_0^n)$ converges to $0$ as $n$ goes to infinity. Lemma  \ref{lem:H2solution2D} provides a sequence of unique global solutions $\psi_n\in C(\R,\Eb(\R^2))$ such that $\mathcal{H}(\psi_n)(t)=\mathcal{H}(\psi_0^n)$ for all $n$. Relying on the continuous dependence on the initial data, we conclude that for any $0<T<T^{\ast}$ one has
\begin{equation*}
\sup_{t\in [0,T]}d_{\Eb}(\psi(t),\psi_n(t))\rightarrow 0 \qquad \text{as} \quad n\rightarrow \infty.
\end{equation*}
Hence, $\En(\psi_n)(t)\rightarrow \En(\psi(t))$ for all $t\in [0,T]$. Similarly, conservation of the Hamiltonian energy $\mathcal{H}(\psi)$ follows from $\mathcal{H}(\psi_n)(t)\rightarrow \mathcal{H}(\psi)(t)$ for all $t\in[0,T]$. In particular, Lemma \ref{lem:identification Eb} yields an increasing function $g:(0,\infty)\rightarrow (0,\infty)$ with $\displaystyle \lim_{r\rightarrow 0}g(r)=0$ such that 
\begin{equation*}
    \En(\psi)(t)\leq 2\En(\psi_n)(t)\leq 2g\left(\mathcal{H}(\psi_n)(t)\right)=2g\left(\mathcal{H}(\psi_0^n)\right)\leq C,
\end{equation*}
for all $t\in [0,T]$ and $n$ sufficiently large. By means of the blow-up alternative we conclude that the solution is global, namely $\psi\in C(\R,\Eb(\R^2))$.
\end{proof}
\section{3D well-posedness}\label{sec:3d}
The approach to prove well-posedness for $d=3$ differs from the one for $d=2$ in two aspects.
First, we need to exploit that the nonlinear flow belongs to the full range of Strichartz spaces $S^1([0,T]\times\R^3))$, defined in \eqref{eq:S1N1}. In particular, exploiting also \eqref{eq:Strichartz gradient} we use that $\nabla \psi\in L^q([0,T];L^r(\R^3))$ for some $r>2$. For $d=3$, it is not sufficient to work in $L^2$-based function spaces - at least for super-cubic nonlinearities.
Second, Proposition \ref{prop:energyspace3d} yields an affine structure for the energy space $\Eb(\R^3)$. This allows for several simplifications of the well-posedness proofs, compared to Proposition \ref{prop:localWP}.
In this section, let
\begin{equation}\label{eq:qr3D}
    (q,r)=\left(\frac{4(\alpha+1)}{3\alpha},2(\alpha+1)\right)
\end{equation}
and note that 
$(q,r)$ is Schr\"odinger admissible. We recall that the Strichartz spaces $N^0$ and $N^1$ are defined in \eqref{eq:N0} and \eqref{eq:S1N1} respectively and the 
quantity $\Zt(\psi)$ in \eqref{eq:Zt}. 
\begin{proposition}\label{prop:LWP3d}
Let $d=3$ and $f$ be as in Assumption \ref{ass:N}.
\begin{enumerate}
    \item For any $\psi_0\in \Eb(\R^3)$ there exists a maximal existence time  $T^{\ast}=T^{\ast}(\psi_0)>0$ and a unique maximal solution $\psi\in C([0,T^{\ast});\Eb(\R^3))$ of \eqref{eq:NLS}. The blow-up alternative holds, namely if $T^{\ast}<\infty$ then
    \begin{equation*}
    \lim_{t\nearrow T^{\ast}}\En(\psi)(t)=+\infty;
    \end{equation*}
        \item For any $0<T<T^{\ast}(\psi_0),$ it follows
    \begin{equation*}
        \psi-\psi_0\in C([0,T];H^1(\R^3)), \quad \nabla\psi\in S^0([0,T]\times \R^3)),
    \end{equation*}
    moreover, the nonlinear flow satisfies
    \begin{equation*}
        \psi(t)-\eitD\psi_0\in C([0,T];H^1(\R^3))\cap S^1([0,T]\times \R^3);
    \end{equation*}
    \item the solution depends continuously on the initial data, namely if $\{\psi_0^n\}_{n\in \N}\subset \Eb(\R^3)$ is such that $d_{\Eb}(\psi_0^n,\psi_0)\rightarrow 0,$ then for any $0<T<T^{\ast}(\psi_0)$ it follows that $\sup_{t\in [0,T^{\ast})}d_{\Eb}(\psi_n(t),\psi(t))\rightarrow 0$, where $\psi_n$ denotes the unique local solution such that $\psi_n(0)=\psi_0^n$.
    \end{enumerate}
\end{proposition}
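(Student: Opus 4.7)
The plan is to adapt the Kato-type perturbative argument of Proposition \ref{prop:localWP} to dimension three, the key modification being that for super-cubic nonlinearities ($\alpha\geq 1$) the full Strichartz norm $S^1([0,T]\times\R^3)$, and not merely $L^\infty_tH^1_x$, is needed to close the inhomogeneous estimate for $\nabla\mathcal{N}$. Writing $\psi = \eitD\psi_0 + u$, I would seek $u$ as a fixed point of
\[
\Phi(u)(t) := -\imu\int_0^t \eul^{\frac{\imu}{2}(t-s)\Delta}\,\mathcal{N}\!\left(\eul^{\frac{\imu}{2}s\Delta}\psi_0 + u(s)\right)\dd s
\]
on the complete metric space
\[
X_T := \left\{u \in C([0,T];H^1(\R^3)) : u(0)=0,\ \|u\|_{L^\infty_tL^2_x} + \|\nabla u\|_{S^0([0,T]\times\R^3)} \leq M\right\},
\]
equipped with the weakened distance $d_X(u,v) = \|u-v\|_{L^\infty_tL^2_x} + \|u-v\|_{L^q_tL^r_x}$ for the admissible pair \eqref{eq:qr3D}. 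The $d_X$-contraction step propagates only the $L^2$-$L^r$ differences, while the $X_T$-bound propagates the full gradient Strichartz control needed to re-enter the nonlinearity.

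The substitute for Lemma \ref{lem:N} is a pair of bounds of the schematic form $\|\mathcal{N}(\psi)\|_{N^1} \lesssim T^{\theta}\bigl(1+\Zt(\psi)\bigr)^{1+2\alpha}\bigl(1+\|\nabla\psi\|_{S^0}\bigr)$ and the analogous Lipschitz estimate for $\mathcal{N}(\psi_1) - \mathcal{N}(\psi_2)$ in $N^0$. To prove these I would rely on the decomposition \eqref{eq:N}--\eqref{eq:Nb}: the contributions of $\Na$ and $\Nbl$ are handled in $L^1_tL^2_x$ as in $d=2$, via the finite-measure support estimate \eqref{eq:supp eta}; the delicate term $\Nbh$ is controlled by Hölder's inequality in space-time against the dual exponents of \eqref{eq:qr3D}, using the Sobolev embedding $H^1(\R^3)\hookrightarrow L^{2(\alpha+1)}(\R^3)$ (valid since $\alpha<2$) to bound $\psih$ and the Strichartz estimate \eqref{eq:Strichartz gradient} for $\nabla\psi$. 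For $\nabla\mathcal{N}$ I would combine \eqref{eq:nablaN} with the pointwise estimates \eqref{eq: estG} on $G_\infty,\,G_q$.

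With these estimates in hand, choosing $M\asymp\sqrt{\En(\psi_0)}$ and $T>0$ sufficiently small in terms of $\En(\psi_0)$, a Banach fixed-point argument yields a unique $u\in X_T$ and hence a local strong solution $\psi = \eitD\psi_0 + u \in C([0,T];\Eb(\R^3))$ via Lemma \ref{lem:energyspace} and Proposition \ref{prop:linear}; the membership $\psi - \psi_0 \in C([0,T];H^1)$ and $\nabla\psi\in S^0$ is then automatic from \eqref{eq:linearflow1} and the Strichartz bound on the Duhamel term. Uniqueness in $C([0,T];\Eb(\R^3))$ follows from the Duhamel difference combined with the $N^0$-Lipschitz bound, iterated over a partition of $[0,T]$. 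The blow-up alternative follows by standard continuation, since the existence time provided by the fixed-point construction depends only on $\En(\psi_0)$.

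For the sequential continuous dependence, Proposition \ref{prop:energyspace3d} and Lemma \ref{lem:metric} decompose $\psi_0^n = c_n + v_n$ and $\psi_0 = c + v$ with $|c_n|=|c|=1$; the hypothesis $d_{\Eb}(\psi_0^n,\psi_0)\to 0$ gives $c_n\to c$ in $\C$ and $\nabla v_n \to \nabla v$ in $L^2(\R^3)$. The linear stability \eqref{eq:stabilitylinsol} yields $\eitD\psi_0^n \to \eitD\psi_0$ in $C([0,T];\Eb(\R^3))$, and the uniform bound $\En(\psi_0^n)\leq 2\En(\psi_0)$ for $n$ large allows a common lifetime $T$ in the fixed-point construction, so that $u_n$ is bounded uniformly in $X_T$. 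Passing to the limit in the Duhamel formula via the $N^0$-Lipschitz estimate for the nonlinearity, combined with the triangular-type inequality \eqref{ineq:triangular}, produces convergence $\psi_n\to\psi$ in $C([0,T];\Eb(\R^3))$. The main technical obstacle throughout is to close the Strichartz estimate on $\nabla\mathcal{N}$ under the mere Kato-type assumption $f\in C^1((0,\infty))$: the coefficient $G_q(\psi)$ is not globally Lipschitz in $\psi$, so that a dominated-convergence argument in the spirit of Step 2 in the proof of Proposition \ref{prop:localWP} is required when passing to the limit in the term $\bigl(G_q(\psi_n)-G_q(\psi)\bigr)\nabla\psi$.
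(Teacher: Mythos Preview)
Your proposal is correct and follows essentially the same strategy as the paper: a Kato-type fixed-point argument in a space carrying the full $S^1$ Strichartz norm (with the weakened $L^\infty_tL^2_x\cap L^q_tL^r_x$ distance for the contraction), the same decomposition $\Na,\Nbl,\Nbh$ of the nonlinearity, and for continuous dependence the affine structure $\psi=c+v$ of $\Eb(\R^3)$ together with a dominated-convergence argument for the non-Lipschitz term $(G_q(\psi_n)-G_q(\psi))\nabla\psi$. The paper organizes the continuous-dependence step through an explicit auxiliary norm $\|v-v_n\|_{L^2_tL^6_x}+\||\psi|-|\psi_n|\|_{L^2_tL^2_x}$ (its Lemma~\ref{lem:CDOID3D}) and a three-step bootstrap mirroring the $2D$ argument, which is exactly the structure you anticipate in your final paragraph.
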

The affine structure of the energy space, see Proposition \ref{prop:energyspace3d} allows one to reduce the wellposedness of Cauchy Problem for \eqref{eq:NLS} to the wellposedness of an affine problem in $\mathcal{F}_c(\R^3)$, see Lemma \ref{lem:affine} and Remark \ref{rem: WP affine space} below. However, we only exploit this property for the proof of the continuous dependence on the initial data. Note that due to the affine structure it suffices to show sequential continuity. 
\begin{proof}
 To show existence of a local strong solution $\psi$, it suffices to implement a fixed-point argument for the map
\begin{equation}\label{eq:solutionmap3Dsuper}
    \Phi(u)(t)= i\int_0^t\eul^{\frac{\imu}{2}(t-s)\Delta}\mathcal{N}(\eul^{\imu\frac{s}{2}\Delta}\psi_0+u(s))\dd s.
\end{equation}
Indeed, if $u\in C([0,T];H^1(\R^3))$ is a fixed-point of \eqref{eq:solutionmap3Dsuper} then $\psi(t)=\eitD \psi_0+u(t)$ is such that $\psi\in C([0,T];\Eb(\R^3))$ due to Lemma \ref{lem:energyspace} and $\psi$ is a local strong solution of \eqref{eq:NLS}.
\\
\textbf{Local existence}
Fixed $(q,r)$ as in \eqref{eq:qr3D}, we implement a fixed-point argument for \eqref{eq:solutionmap3Dsuper} in
\begin{equation*}
    X_T=\{u\in C([0,T];H^1(\R^3))\cap L^q([0,T];W^{1,r}(\R^3)), \quad u(0)=0, \, \, \|u\|_{X_T}\leq M\}
\end{equation*}
with 
\begin{equation*}
    \|\cdot\|_{X_T}=\|\cdot\|_{L^{\infty}([0,T];H^1(\R^3))}+\|\cdot\|_{L^{q}([0,T];W^{1,r}(\R^3))}.
\end{equation*}
Equipped with the distance function
\begin{equation*}
    d_X(u,v)=\|u-v\|_{L^{\infty}([0,T];L^2(\R^3))}+\|u-v\|_{L^q([0,T];L^r(\R^3))},
\end{equation*}
the space $(X_T,d)$ is a complete metric space.
Let $\psi_0\in\Eb(\R^3)$ with $\En(\psi_0)\leq R$, where $M>0$ and $0<T\leq 1$ are to be fixed later. First, we verify that $\Phi: X_T \rightarrow X_T$. To that end, we recall that for $T=T(R)>0$ sufficiently small
\begin{equation*}
    \Zt(\eitD\psi_0+u)\leq \Zt(\eitD)+\|u\|_{H^1(\R^2)}\leq 2 \sqrt{2\En(\psi_0)}+M\leq 2\sqrt{2R}+M,
\end{equation*}
where $\Zt$ is defined in \eqref{eq:Zt} and \eqref{eq:energyH1} and \eqref{eq:freesolution} have been applied in the first and second inequality respectively.
It follows from \eqref{eq:Strichartznonh} that
\begin{equation*}
    \|\Phi(u)(t)\|_{L_t^{\infty}L_x^2}+ \|\Phi(u)(t)\|_{L_t^{q}L_x^r}\leq 2 \|\mathcal{N}(\eitD\psi_0+u)\|_{N^0}.
\end{equation*}
Defining $\mathcal{N}_1,\mathcal{N}_2$ as in \eqref{eq:N} and exploiting the pointwise bounds \eqref{eq:Npointwise}, we infer
\begin{equation*}
    \left\|\Na(\eitD\psi_0+u)\right\|_{L_t^1L_x^2}\leq CT \Zt(\eitD\psi_0+u)\leq CT\left(2\sqrt{2R}+M\right).
\end{equation*}
Next, using again \eqref{eq:Npointwise} and the Chebychev inequality \eqref{eq:Cheby} one has
\begin{equation*}
    \left\|\Nbl(\eitD\psi_0+u)\right\|_{L_t^1L_x^2}\leq CT \mathcal{L}^3\left(\supp(1-\eta(\eitD\psi_0+u))\right)^{\frac{1}{2}}\leq C T\left(2\sqrt{2R}+M\right)
\end{equation*}
and
\begin{multline*}
    \left\|\Nbh(\eitD\psi_0+u)\right\|_{L_t^1L_x^2+L_t^{q'}L_x^{r'}}\\
    \leq \left\|(1+|\eitD\psi_0+u|^{2\alpha})|\eitD\psi_0+u|(1-\chi(\eitD\psi_0+u))\right\|_{L_t^1L_x^2+L_t^{q'}L_x^{r'}} \\
    \leq CT(2\sqrt{2R}+M)+ \left\|\left|\left(\eitD\psi_0+u\right)(1-\chi(\eitD\psi_0+u))\right|^{2\alpha+1}\right\|_{L_t^{q'}L_x^{r'}}\\
    \leq CT(2\sqrt{2R}+M)+C{T}^{\frac{q-q'}{qq'}} \left\|(\eitD\psi_0+u)_q\right\|_{L^{\infty}L^r}^{2\alpha}\left\|(\eitD\psi_0+u)_q\right\|_{L_t^qL_x^r}
    \\
    \leq C\left(T+T^{\frac{q-q'}{qq'}}\left(2\sqrt{2R}+M\right)^{2\alpha}\right)\left(2\sqrt{2R}+M\right).
\end{multline*}
Moreover, Assumption \ref{ass:N}, see also \eqref{eq:nablaN}, implies the bound
\begin{equation*}
    \left|\nabla\mathcal{N}(\psi)\right|\leq C(1+|\psi|^{2\alpha})|\nabla \psi|,
\end{equation*}
which allows one to infer that 
\begin{multline*}
    \left\|\nabla\Na(\eitD\psi_0+u)+\nabla \Nbl(\eitD\psi_0+u)\right\|_{L_t^1L_x^2}\\
    \leq C T\left(\|\nabla\psi_0\|_{L_t^{\infty}L_x^2}+\|\nabla u\|_{L_t^{\infty}L_x^2}\right)\leq CT\left(2\sqrt{2R}+M\right).
\end{multline*}
To control $\nabla\Nbh$, note that $\eitD\nabla\psi_0\in L^q([0,T];L^r(\R^3))$ for any admissible pair $(q,r)$ from Lemma \ref{lem:Strichartz} and $\En(\psi_0)\leq R$. Therefore, 
\begin{multline*}
    \|\nabla \Nbh(\eitD\psi_0+u)\|_{L_t^1L_x^2+L_t^{q'}L_x^{r'}}
\leq CT\left(\|\nabla\psi_0\|_{L^2}+\|u\|_{X_T}\right)\\+ C\left(\left\||(\eitD\psi_0+u)_q|^{2\alpha}\nabla \eitD\psi_0\right\|_{L_t^{q'}L_x^{r'}}+\left\||(\eitD\psi_0+u)_q|^{2\alpha}\nabla u\right\|_{L_t^{q'}L_x^{r'}}\right)\\
\leq CT(2\sqrt{2R}+M)+C T^{\frac{q-q'}{qq'}}(2\sqrt{2R}+M)^{2\alpha}\left(\|\nabla\psi_0\|_{L_x^2}+\|\nabla u\|_{L_t^qL_x^{r}}\right)\\
\leq C\left(T+T^{\frac{q-q'}{qq'}}(2\sqrt{2R}+M)^{2\alpha}\right)\left(2\sqrt{2R}+M\right).    
\end{multline*}
Finally,
\begin{equation*}
    \|\Phi(u)\|_{X_T}\leq C\left(T+T^{\frac{q-q'}{qq'}}(2\sqrt{2R}+M)^{2\alpha}\right)\left(2\sqrt{2R}+M\right).    
\end{equation*}
We proceed to show that $\Phi$ defines a contraction on $X_T$. Let $\psi_0\in \Eb(\R^3)$ such that $\En(\psi_0)\leq R$ and $u,v\in X_T$. Then,
\begin{equation*}
    d_X\left(\Phi(u),\Phi(v)\right)\leq \left\|\mathcal{N}\left(\eitD\psi_0+u\right)-\mathcal{N}\left(\eitD\psi_0+v\right)\right\|_{N^0}
\end{equation*}
Inequality \eqref{eq: N loc Lip} implies that 
\begin{equation*}
    \left\|\Na\left(\eitD\psi_0+u\right)-\Na\left(\eitD\psi_0+v\right)\right\|_{L_t^1L_x^2}
    \leq C T d_X(u,v).
\end{equation*}
and 
\begin{equation*}
    \left\|\Nbl\left(\eitD\psi_0+u\right)-\Nbl\left(\eitD\psi_0+v\right)\right\|_{L_t^1L_x^2}
    \leq C T d_X(u,v).
\end{equation*}
Again inequality \eqref{eq: N loc Lip} allows us to control the remaining term as follows
\begin{align*}
    &\left\|\Nbh\left(\eitD\psi_0+u\right)-\Nbh\left(\eitD\psi_0+v\right)\right\|_{L^1L^2+L_t^{r'}L_x^{q'}}\\
    &\leq CT\|u-v\|_{L_t^{\infty}L_x^2}\\
    & \qquad + CT^{\frac{q-q'}{qq'}}\left(\Zt\left(\eitD\psi_0+u\right)^{2\alpha}+\Zt\left(\eitD\psi_0+v\right)^{2\alpha}\right)\|u-v\|_{L_t^qL_x^r}\\
    &\leq C\left(T+T^{\frac{q-q'}{qq'}}(2\sqrt{2R}+M)^{2\alpha}\right)d_X(u,v).
\end{align*}
Finally,
\begin{equation*}
     d_X\left((\Phi(u),\Phi(v)\right)\leq C\left(T+T^{\frac{q-q'}{qq'}}(2\sqrt{2R}+M)^{2\alpha}\right)d_X(u,v).
\end{equation*}
Therefore, it suffices to set $M=\sqrt{R}$ and choose $T=T(M)>0$ sufficiently small in order to conclude that $\Phi:X_T\rightarrow X_T$ and $\Phi$ defines a contraction on $X_T$. The Banach fixed-point Theorem yields a unique solution $u\in X_T$ to \eqref{eq:solutionmap3Dsuper}. In particular, $\psi(t)=\eitD\psi_0+u(t)$ solves \eqref{eq:NLS} with $\psi\in C([0,T];\Eb(\R^3))$.\\
\textbf{Uniqueness}
Let $R>0$ be fixed. Let $\psi_0\in \Eb(\R^3)$ with $\En(\psi_0)\leq R$ and $\psi_1,\psi_2\in C([0,T];\Eb(\R^3))$ two solutions to \eqref{eq:NLS} such that $\psi_1(0)=\psi_2(0)=\psi_0$. We note that $\psi_1-\psi_2\in S^1([0,T]\times\R^3)$. In particular, from the Strichartz estimate \eqref{eq:Strichartznonh} and arguing as for the local existence we obtain that
\begin{align*}
    d_{X}(\psi_1,\psi_2)&\leq \left\|\mathcal{N}(\psi_1)-\mathcal{N}(\psi_2)\right\|_{N^0([0,T]\times \R^3)}\\
    &\leq C\left(T+T^{\frac{q-q'}{qq'}}(\Zt(\psi_1)^{2\alpha}+\Zt(\psi_2)^{2\alpha}\right)d_{X}(\psi_1,\psi_2).
\end{align*}
Thus, there exists a sufficiently small $T_1>0$  such that $\psi_1=\psi_2$ a.e. on $[0,T_1]\times \R^3$. As $T_1$ only depends on $\Zt(\psi_i)$ with $i=1,2$ one may iterate the argument. This yields uniqueness in $C([0,T];\Eb(\R^3))$.
\\
\textbf{Blow up alternative}
The proof of the blow-up alternative follows \emph{verbatim} the proof of the respective statement for $d=2$, see Proposition \ref{prop:localWP} and thus it is omitted.
\\
\textbf{Membership in Strichartz spaces}
Statement (2) of Proposition \ref{prop:LWP3d} follows directly from the local existence argument and the properties of the free solution, see \eqref{eq:linearflow1} and \eqref{eq:Strichartz gradient}.
\\
The proof of the continuous dependence on the initial data requires some preliminary properties and is postponed after Lemma \ref{lem:CDOID3D}.
\end{proof}
In view of the equivalent characterisation of the energy space $\Eb(\R^3)$ provided by Proposition \ref{prop:energyspace3d}, the well-posedness for \eqref{eq:NLS} can be reduced to the well-posedness of the following "affine" problem.
\begin{lemma}\label{lem:affine}
Given $\psi_0\in \Eb(\R^3)$, let $\psi\in C([0,T^{\ast});\Eb(\R^3))$ be the unique maximal solution to \eqref{eq:NLS} with initial data $\psi_0$. Then, there exist $|c|=1$ and  $v\in C([0,T^{\ast});\Fc)$ such that $\psi(t)=c+v(t)$ for all $t\in [0,T^{\ast})$ and where $v$ is a solution to 
\begin{equation}\label{eq:NLS3d}
    i\partial_t v=-\frac{1}{2}\Delta v+f(|c+v|^2)(c+v), \quad v(0)=v_0.
\end{equation}
\end{lemma}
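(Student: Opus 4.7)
The strategy is to unpack the affine description of $\Eb(\R^3)$ furnished by Proposition \ref{prop:energyspace3d}, write $\psi_0=c+v_0$ with $|c|=1$ and $v_0\in\Fc$, and then simply \emph{define} $v(t):=\psi(t)-c$. It then remains to verify three things: that $v(t)\in\Fc$ for every $t\in[0,T^{\ast})$; that $v$ solves \eqref{eq:NLS3d}; and that $t\mapsto v(t)$ is continuous as a map into $(\Fc,\tilde\delta)$.

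First, I would invoke point (2) of Proposition \ref{prop:LWP3d}, which gives $\psi-\psi_0\in C([0,T^{\ast});H^1(\R^3))$, so that $v(t)-v_0=\psi(t)-\psi_0\in H^1(\R^3)$. To conclude $v(t)\in\Fc$ it therefore suffices to check that $\Fc$ is stable under $H^1$-perturbations, which is a direct computation: for $w\in H^1(\R^3)$ one has $v_0+w\in\dot H^1(\R^3)$ and
\[
|v_0+w|^2+2\RE(\bar c(v_0+w))=\bigl(|v_0|^2+2\RE(\bar cv_0)\bigr)+\bigl(|w|^2+2\RE(\bar cw)\bigr)+2\RE(\bar v_0 w),
\]
where the first summand lies in $L^2$ by hypothesis, while the remaining ones belong to $L^2$ via the Sobolev embeddings $H^1(\R^3)\hookrightarrow L^2\cap L^4\cap L^6$ together with $v_0\in\dot H^1(\R^3)\subset L^6(\R^3)$ and H\"older's inequality. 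To derive \eqref{eq:NLS3d} I just substitute $\psi=c+v$ into \eqref{eq:NLS}; since $c$ is constant, $\partial_t\psi=\partial_t v$ and $\Delta\psi=\Delta v$, while $f(|\psi|^2)\psi=f(|c+v|^2)(c+v)$, and the resulting identity is meaningful in $H^{-1}(\R^3)$ by the regularity of $v$ inherited from Proposition \ref{prop:LWP3d}.

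For the continuity of $v$ in the $\Fc$-metric, I would invoke Proposition \ref{prop:energyspace3d} one more time: since $\psi(t)=c+v(t)$ for the \emph{same} $c$ at every time, the equivalent metric \eqref{eq:metric3d} reduces to
\[
\delta(\psi(t),\psi(s))=\|\nabla v(t)-\nabla v(s)\|_{L^2}+\bigl\||v(t)|^2+2\RE(\bar c v(t))-|v(s)|^2-2\RE(\bar c v(s))\bigr\|_{L^2}=\tilde\delta(v(t),v(s)),
\]
so that continuity of $\psi$ in $(\Eb,d_{\Eb})$ transfers directly to continuity of $v$ in $(\Fc,\tilde\delta)$.

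The only place where care is really needed is the consistency of the constant $c$ across time: a priori, each $\psi(t)$ is decomposed as $c(t)+v(t)$ with its own $c(t)\in S^1$, and one must rule out jumps in $c(t)$. This, however, is not a genuine obstacle but an automatic consequence of the perturbation argument above: writing $v(t):=\psi(t)-c$ with the initial $c$ and showing $v(t)\in\Fc$ is precisely the statement that $\psi(t)$ stays in the connected component $c+\Fc$ of $\psi_0$. Because $\psi(t)-\psi_0\in H^1(\R^3)$ lies in the ``tangent space'' of that affine manifold, stability of the far-field phase is forced, and no separate connectedness argument on $\Eb(\R^3)$ is needed.
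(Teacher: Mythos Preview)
Your proof is correct and follows essentially the same approach as the paper: both hinge on point (2) of Proposition \ref{prop:LWP3d}, namely $\psi(t)-\psi_0\in H^1(\R^3)$, to force the far-field constant $c$ to be time-independent. The paper phrases this as ``$\psi(t)$ and $\psi_0$ share the same far-field behavior,'' while you spell out explicitly the stability of $\Fc$ under $H^1$-perturbations and the continuity of $v$ in the $\Fc$-metric, but the underlying argument is identical.
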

\begin{proof}
The unique maximal solution exists in virtue of Proposition \ref{prop:LWP3d}, Proposition \ref{prop:energyspace3d} yields the decomposition $\psi(t)=c(t)+v(t)$ for some $|c(t)|=1$ and $v(t)\in \Fc$ for all $t\in[0,T^{\ast})$. In particular, $c(0)=c$ and $v(0)=v_0$. It suffices to show that $c(t)=c$ for all $t\in [0,T^{\ast})$. From (\emph{2}) Proposition \ref{prop:LWP3d} we infer $\psi-\psi_0\in C([0,T];H^1(\R^3))$ for all $0<T<T^{\ast}$, namely $\psi(t)=c(t)+v(t)$ and $\psi_0=c+v_0$ share the same far-field behavior for all $t\in [0,T]$. It follows that $c(t)=c$ for all $t\in [0,T]$ with $0<T<T^{\ast}$.
\end{proof}
Given initial data $\psi_0=c+v_0$, the solution $\psi$ satisfies $\psi=\eitD\psi_0+\Phi(\psi)\in \{c\}+\Fc(\R^3)+H^1(\R^3)$. The connected component of $\Eb(\R^3)$ the solution $\psi$ belongs to is determined by the constant $c$, see Remark \ref{rem: connected components 3D}. Moreover, if $\psi=c+v\in C([0,T);\Eb(\R^3))$ such that $v$ solves \eqref{eq:NLS}, then $\widetilde{\psi}=\overline{c}\psi=1+\overline{c}v$ solves \eqref{eq:NLS} and $\widetilde{v}=\overline{c}v$ solves 
\begin{equation}\label{eq:c1}
    i\partial_t \tilde{v}=-\frac{1}{2}\Delta \tilde{v}+f(|1+\tilde{v}|^2)(1+\tilde{v}), \quad \tilde{v}(0)=\overline{c}v_0.
\end{equation}
It therefore suffices to consider $c=1$.
\begin{remark}\label{rem: WP affine space}
Note that Lemma \ref{lem:affine} reduces the well-posedness of \eqref{eq:NLS} in $\Eb(\R^3)$ to solving the affine problem \eqref{eq:NLS3d} in $\Fc$ where the constant $c$ is determined by the choice of the initial data. In particular, the continuous dependence on the initial data can be stated equivalently in terms of the metric \eqref{eq:metric3d} with the constants $c_1$ and $c_2$ determined by the initial data.
\\
If the nonlinearity is such that $f$ satisfies \eqref{ass:lipschitz}, then it is convenient to implement the well-posedness result in homogeneous spaces by exploiting Strichartz estimates on the gradient, see also \cite[Remark 4.5]{Gerard} for \eqref{eq:GP} and \cite[Proposition 1.1.18]{H} for \eqref{eq:NLS} with nonlinearity \eqref{eq:powernonlinearity}. Indeed, Assumption \eqref{ass:lipschitz} ensures that $\nabla\mathcal{N}$ is locally Lipschitz. A suitable choice of the functional spaces for the local well-posedness is given by
\begin{equation*}
    X_T=C([0,T];\Fc(\R^3))\cap L^q([0,T];\dot{W}^{1,r}(\R^3)),
\end{equation*}
where the Strichartz admissible pair is for instance $(q,r)=(10,\frac{30}{13})$, see \cite[Proposition 1.1.18]{H}.
\\
However, note that in the framework of Assumption \ref{ass:N}, this is ruled out by the lack of regularity of the nonlinearity $f$. More precisely, for $\nabla \mathcal{N}$ to be locally Lipschitz we require \eqref{ass:lipschitz}.
\end{remark}
We proceed to the proof of continuous dependence on the initial data for which we exploit the decomposition of $\psi$ given by Lemma \ref{lem:affine}.
\begin{lemma}\label{lem:CDOID3D}
Let $f$ satisfy Assumption \ref{ass:N}, $T>0$, $(q,r)$ as defined in \eqref{eq:qr3D} and $\psi_1,\psi_2\in C([0,T];\Eb(\R^3))$ such that $\psi_i=c_i+v_i$ with $c_i\in \C$, $|c_i|=1$ and $v_i\in C([0,T];\mathcal{F}_{c_i})$ for $i=1,2$. Then, there exists $\theta\in(0,1]$ such that
\begin{multline*}
    \left\|\mathcal{N}(\psi_1)-\mathcal{N}(\psi_2)\right\|_{N^0([0,T]\times\R^3)}\\
    \leq C T^{\theta}\left(1+{\Zt(\psi_1)}+{\Zt(\psi_2)}+\Zt(\psi_1)^{2\alpha}+\Zt(\psi_2)^{2\alpha}\right)\\
    \times\left(|c_1-c_2|+\|v_1-v_2\|_{L_t^{2}L_x^6}+\left\||\psi_1|-|\psi_2|\right\|_{L_t^{2}L_x^2}
    \right).
\end{multline*}
\end{lemma}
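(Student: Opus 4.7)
I plan to mirror the 2D proof of Lemma~\ref{lem:CDOID}, decomposing $\mathcal N(\psi)=\Na(\psi)+\Nbl(\psi)+\Nbh(\psi)$ via \eqref{eq:N}--\eqref{eq:Nb} and invoking the pointwise bounds \eqref{eq:pointwise Nb}. The decisive new ingredient in $d=3$ is the affine decomposition $\psi_i=c_i+v_i$ provided by Proposition~\ref{prop:energyspace3d}: it gives $\psi_1-\psi_2=(c_1-c_2)+(v_1-v_2)$, and Sobolev's embedding $\dot H^1(\R^3)\hookrightarrow L^6(\R^3)$ yields $\|v_i\|_{L^\infty_tL^6_x}\leq C\Zt(\psi_i)$. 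Since $|c_i|=1$, one also has the pointwise inequality $||\psi_i|-1|\leq |v_i|$, which effectively lifts the a-priori $L^2$ information on $||\psi_i|-1|$ to an extra $L^6$ control. Every occurrence of $\psi_1-\psi_2$ will be systematically split into a constant part (producing the factor $|c_1-c_2|$) and a fluctuating part $v_1-v_2$ (placed in $L^2_tL^6_x$), while the accompanying factors $|\psi_i|$ or $|\psi_i|^{2\alpha}$ are handled through the same decomposition. The three Strichartz--dual pairs I use are $L^1_tL^2_x$ (dual to $(\infty,2)$), $L^{4/3}_tL^{3/2}_x$ (dual to the admissible pair $(4,3)$), and $L^{q'}_tL^{r'}_x$ with $(q,r)$ as in \eqref{eq:qr3D}.

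For the term $\Na$ I use the first inequality in \eqref{eq:pointwise Nb}. Its first summand $C|\psi_1|\,||\psi_1|-|\psi_2||$ is split via $|\psi_1|\leq 1+|v_1|$: the $1$-piece goes into $L^1_tL^2_x$, giving $CT^{1/2}\||\psi_1|-|\psi_2|\|_{L^2_tL^2_x}$, whereas the $|v_1|$-piece goes into $L^{4/3}_tL^{3/2}_x$ via the spatial H\"older bound $\|v_1(|\psi_1|-|\psi_2|)\|_{L^{3/2}_x}\leq\|v_1\|_{L^6_x}\||\psi_1|-|\psi_2|\|_{L^2_x}$, yielding $CT^{1/4}\Zt(\psi_1)\||\psi_1|-|\psi_2|\|_{L^2_tL^2_x}$. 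Its second summand $||\psi_2|-1|\eta(|\psi_2|)|\psi_1-\psi_2|$, after splitting $|\psi_1-\psi_2|$ into the two parts above, contributes $CT\Zt(\psi_2)|c_1-c_2|$ in $L^1_tL^2_x$ and $CT^{1/4}\Zt(\psi_2)\|v_1-v_2\|_{L^2_tL^6_x}$ in $L^{4/3}_tL^{3/2}_x$, using $\||\psi_2|-1\|_{L^2_x}\leq\Zt(\psi_2)$ in both. The intermediate piece $\Nbl$ is handled analogously, additionally exploiting that $S=\supp(1-\eta(|\psi_1|))\cup\supp(1-\eta(|\psi_2|))$ has measure $\leq C(\En(\psi_1)+\En(\psi_2))$ by the Chebychev inequality \eqref{eq:Cheby}, so that $\|\mathbf 1_S\|_{L^2_x}$ and $\|\mathbf 1_S\|_{L^3_x}$ are both bounded by $C(1+\Zt(\psi_1)+\Zt(\psi_2))$.

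For the super-quadratic tail $\Nbh$ I use $|\Nbh(\psi_1)-\Nbh(\psi_2)|\leq C(|\psi_1|^{2\alpha}+|\psi_2|^{2\alpha})|\psi_1-\psi_2|$. On $\supp(\Nbh(\psi_i))$ the combination of $|\psi_i|\geq 5/4$ and $|c_i|=1$ forces $|v_i|\geq 1/4$, hence $|\psi_i|\leq C|v_i|$ there. I then work in the dual pair $L^{q'}_tL^{r'}_x$ from \eqref{eq:qr3D}, which is precisely tuned so that the spatial H\"older inequality pairs $\||\psi_i|^{2\alpha}\mathbf 1_{\supp(\Nbh(\psi_i))}\|_{L^{a}_x}\leq C\Zt(\psi_i)^{2\alpha}$ (obtained from Sobolev on $v_i$) with $\|v_1-v_2\|_{L^6_x}$; the constant piece coming from $|c_1-c_2|$ is controlled through the finite-measure of $\supp(\Nbh(\psi_i))$, again via Chebychev. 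Each contribution gains a positive power of $T$ by time-H\"older. Summing the three blocks and collecting the smallest time exponent produces the announced estimate with $\theta\in(0,1]$.

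The main obstacle is the $\Na$ piece, because the cutoff $\eta(|\psi|)$ restricts only the intensity, not the $x$--support, so no finite-measure argument is available for it. Overcoming this requires the simultaneous use of (i) the $L^2$ smallness of $||\psi_i|-1|$, (ii) the bound $|\psi_1|\leq 1+|v_1|$ afforded by the affine structure of $\Eb(\R^3)$, and (iii) Sobolev's embedding $\dot H^1(\R^3)\hookrightarrow L^6(\R^3)$; only the joint deployment of these three ingredients allows one to H\"older-pair the offending products into the dual Strichartz space $L^{4/3}_tL^{3/2}_x$ without incurring an $L^p$-norm on a constant or relying on a Lipschitz property of $\mathcal N$ not guaranteed by Assumption~\ref{ass:N}.
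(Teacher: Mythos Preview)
Your proposal is correct and follows essentially the same route as the paper: the same three-block decomposition $\Na+\Nbl+\Nbh$, the same pointwise bounds \eqref{eq:pointwise Nb}, the affine splitting $\psi_1-\psi_2=(c_1-c_2)+(v_1-v_2)$, and Chebychev for the finite-measure pieces. The only noteworthy difference is the choice of dual Strichartz pair for $\Nbh$: the paper first reduces to $\alpha\in[1,2)$ (since $|\psi_i|\geq 3/2$ on that support) and then uses the pair $(2/(3-\alpha),6/(2\alpha+1))$, which is tuned so that $\|\psi_{i,q}\|_{L^6_x}^{2\alpha}$ appears directly without any support-measure correction, whereas you work with $(q,r)$ from \eqref{eq:qr3D} and absorb the slight exponent mismatch through the finite measure of $\supp(\Nbh(\psi_i))$; both choices yield a positive power of $T$ for $\alpha<2$ and lead to the same estimate.
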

\begin{proof}
First, we notice that for $\mathcal{N}_1,\mathcal{N}_2$ defined in \eqref{eq:N} it follows from the first inequality of \eqref{eq:pointwise Nb} and the decomposition $\psi_i=c_i+v_i$ provided by Lemma \ref{lem:affine} that
\begin{multline*}
    \left\|\Na(\psi_1)-\Na(\psi_2)\right\|_{L_t^1L_x^2+L_t^{\frac43}L_x^{\frac32}}\\
    \leq \left\||c_1+v_1|\left||\psi_1|-|\psi_2|\right|\right\|_{L_t^1L_x^2+L_t^{\frac43}L_x^{\frac32}}+\left\|\left||\psi_2|-1\right|\left|c_1-c_2+v_1-v_2|\right|\right\|_{L_t^1L_x^2+L_t^{\frac43}L_x^{\frac32}}\\
    \leq C\left(T^{\frac12}+T^{\frac14}\Zt(\psi_1)\right) \left\||\psi_1|-|\psi_2|\right\|_{L_t^{2}L_x^2}\\
    +C T^{\frac{1}{4}}\Zt(\psi_2)\left|c_1-c_2\right|+C T^{\frac{1}{2}}\Zt(\psi_2))\|v_1-v_2\|_{L_t^{2}L_x^6}.
\end{multline*}
 Second, we observe that $\mathcal{L}^3(\supp(\Nb(\psi_i)))\leq \Zt(\psi_i)^2$ for $i=1,2$ from \eqref{eq:Cheby}. From \eqref{eq:pointwise Nb}, we conclude
\begin{multline*}
    \|\Nbl(\psi_1)-\Nbl(\psi_2)\|_{L_t^1L_x^2}\leq  CT\left(\Zt(\psi_1)+\Zt(\psi_2)\right)|c_1-c_2|\\
    +CT^{\frac12}\left(\Zt(\psi_1)^{\frac{2}{3}}+{\Zt(\psi_2)}^{\frac{2}{3}}\right) \left\|v_1-v_2\right\|_{L_t^{2}L_x^{6}}.
\end{multline*}
Third, we establish the desired bound for $\Nbh(\psi_1)-\Nbh(\psi_2)$. As $|\psi_i|\geq\frac32$ on $\supp(\Nbh(\psi_i))$, it follows from \eqref{eq:pointwise Nb} that
\begin{align*}
    \left|\Nbh(\psi_1)-\Nbh(\psi_2)\right|\leq C\left(1+|\psi_1|^{2\alpha}+|\psi_2|^{2\alpha}\right)|\psi_1-\psi_2|\\
    \leq C\left(|\psi_1|^{\beta}+|\psi_2|^{\beta}\right)|\psi_1-\psi_2|,
\end{align*}
with $\beta=\max\{2,2\alpha\}$. Hence, it suffices to consider $\alpha\in [1,2)$. We observe that 
\begin{equation*}
     \left|\Nbh(\psi_1)-\Nbh(\psi_2)\right|\leq C\left(1+|\psi_{1,\mathrm{int}}|^{\alpha}+|\psi_{2,\mathrm{int}}|^{\alpha}\right)|\psi_1-\psi_2|,
\end{equation*}
see also \eqref{eq:pointwise psi}. Using again that $\mathcal{L}^3(\supp(\Nb(\psi_i)))\leq \Zt(\psi_i)^2$, one recovers
\begin{align*}
      &\left\|\Nbh(\psi_1)-\Nbh(\psi_2)\right\|_{N^0}
      \leq \|\psi_1-\psi_2\|_{L_t^1L_x^2}+\|(|\psi_{1,\mathrm{int}}|^{2\alpha}+|\psi_{2,\mathrm{int}}|^{2\alpha})|c_1-c_2|\|_{L_t^{\frac43}L_x^{\frac32}}\\
      &+\|(|\psi_{1,\mathrm{int}}|^{2\alpha}+|\psi_{2,\mathrm{int}}|^{2\alpha})|v_1-v_2|\|_{L_t^{\frac{2}{3-\alpha}}L_x^{\frac{6}{2\alpha+1}}}
      \\&\leq C T\left(\Zt(\psi_1)+\Zt(\psi_2)\right)|c_1-c_2|+CT^{\frac{1}{2}}\left(\Zt(\psi_1)^{\frac23}+\Zt(\psi_2)^{\frac23}\right)\|v_1-v_2\|_{L_t^2L_x^6}\\
      &+C\left(\Zt(\psi_1)^{2\alpha}+\Zt(\psi_2)^{2\alpha}\right)T^{\frac{3}{4}}|c_1-c_2|+T^{\frac{2-\alpha}{2}}\|v_1-v_2\|_{L_t^2L_x^6}
\end{align*}
Combining the previous estimates, one concludes that there exists $\theta\in(0,1]$ such that
\begin{multline*}
    \left\|\mathcal{N}(\psi_1)-\mathcal{N}(\psi_2)\right\|_{N^0}\leq C T^{\theta}\left(1+\Zt(\psi_1)+\Zt(\psi_2)+\Zt(\psi_1)^{2\alpha}+\Zt(\psi_2)^{2\alpha}\right)\\
    \times\left(|c_1-c_2|+\|v_1-v_2\|_{L_t^2L_x^6}+\||\psi_1|-|\psi_2|\|_{L_t^2L_x^2}\right).
\end{multline*}
\end{proof}
We now prove continuous dependence on the initial data. As in the proof of Proposition \ref{prop:localWP}, we rely on an auxiliary metric to compensate for the lack of regularity of the nonlinearity $f$ and to deal with the non-integrability of the wave-functions. However, by virtue of Lemma \ref{lem:affine}, it suffices to consider the affine problem \eqref{eq:NLS3d}. This decomposition enables us to implement an argument in $L^2([0,T];L^6(\R^3))$. In particular, it is sufficient to prove sequential continuity.
\begin{proof}[Proof of Proposition \ref{prop:LWP3d} continued.]
Let $R>0$, $\psi_0\in \Eb(\R^3)$ with $\En(\psi_0)\leq R$ and $\psi_0^n\in \Eb(\R^3)$ such that $\En(\psi_0^n)\leq R$ and $d_{\Eb}(\psi_0,\psi_0^n)\rightarrow 0$. In particular, there exist complex constants $|c|=1$, $|c_n|=1$ and $v_0,v_0^n\in \Fc$ such that
\begin{equation*}
    \psi_0=c+v_0, \quad \psi_0^n=c_n+v_0^n.
\end{equation*}
It follows from the equivalence of metrics, see Proposition \ref{prop:energyspace3d}, that 
\begin{equation*}
    \delta(c+v_0,c_n+v_0^n)\rightarrow 0,
\end{equation*}
where $\delta$ is defined in \eqref{eq:metric3d}. There exists $T=T(2\En(\psi_0)>0$ such that the unique solutions $\psi,\psi_n\in C([0,T];\Eb(\R^3))$ to \eqref{eq:NLS} with initial data $\psi_0,\psi_0^n$ respectively satisfy
\begin{equation*}
    \Zt(\psi)+\Zt(\psi_n)\leq M
\end{equation*}
for sufficiently large $n$.
Then, Lemma \ref{lem:affine} implies that there exist $v,v_n\in C([0,T];\Fc)$ such that 
\begin{equation*}
    \psi=c+v, \qquad \psi_n=c_n+v_n.
\end{equation*}
The proof follows the same lines as the proof of Proposition \ref{prop:localWP}. We proceed in three steps corresponding to \eqref{eq:CD1}, \eqref{eq:CD2} and \eqref{eq:CD3} respectively.
\\
\textbf{Step 1:} We show that there exists $T_1=T_1(M)>0$ such that  
\begin{equation}\label{eq:3DCD1}
    \|v-v_n\|_{L^2([0,T_1];L^6(\R^3))}+\||\psi|-|\psi_n|\|_{L^2([0,T_1];L^2(\R^3))}\leq C\delta(c+v_0,c_n+v_0^n).
\end{equation}
For the first contribution, we observe that
\begin{align*}
    &\|v-v_n\|_{L^2([0,T];L^6(\R^3))}\\
    &= \left\|\eitD\psi_0-c+\Phi(\psi)-\eitD\psi_0^n+c_n-\Phi(\psi_n)\right\|_{L_t^2L_x^6}\\
    &\leq\left\|\eitD(\psi_0-\psi_0^n)-(\psi_0-\psi_0^n)\right\|_{L_t^2L_x^6}+\|v_0-v_0^n\|_{L_t^2L_x^6}+\left\|\mathcal{N}(\psi)-\mathcal{N}(\psi_n)\right\|_{N^0}\\
    &\leq C(T+T^{\frac{1}{2}})\delta(c+v_0,c_n+v_0^n)+\left\|\mathcal{N}(\psi)-\mathcal{N}(\psi_n)\right\|_{N^0},
\end{align*}
where we used \eqref{eq:Strichartznonh} in the second to last inequality and \eqref{eq:linearflow1} to control the difference of the free solutions in the last inequality. More precisely,
\begin{multline*}
    \left\|\eitD(\psi_0-\psi_0^n)-(\psi_0-\psi_0^n)\right\|_{L_t^2L_x^6}
    \leq T^{\frac{1}{2}}\left\|\eitD(\nabla\psi_0-\nabla\psi_0^n)-(\nabla\psi_0-\nabla\psi_0^n)\right\|_{L_t^{\infty}L_x^2}\\
    \leq CT\|\nabla\psi_0-\nabla\psi_0^n\|_{L_x^2}\leq C T \delta(c+v_0,c_n+v_0^n).
\end{multline*}
To bound the second contribution in \eqref{eq:3DCD1}, we proceed as in \eqref{eq:Step1int}. More precisely, we observe that \eqref{eq:Step1c} remains valid upon replacing the admissible Strichartz pair $(4,4)$ for $d=2$ with $(\frac83,4)$ for $d=3$.
Hence, the respective version of \eqref{eq:Step1c} reads that there exists $\theta_2\in (0,1]$ such that
\begin{multline}\label{eq:3DStep1c}
    \left\||\psi|-|\psi_n|\right\|_{L^2([0,T];L^2(\R^3))}
    \leq CT^{\theta_2}\left(1+M+M^{1+2\alpha}\right)\\
    \times\Big(\delta(c+v_0,c_n+v_0^n)+\|\Phi(\psi)-\Phi(\psi^n)\|_{S^0}\Big).
\end{multline}
Summing up and applying the Strichartz estimate \eqref{eq:Strichartznonh}, we conclude from Lemma \ref{lem:CDOID3D} that there exist $C=C(M)>0$ and $\theta>0$ such that
\begin{multline*}
\|v-v_n\|_{L^{2}([0,T_1];L^6(\R^3))}
        +\left\||\psi_n|-|\psi|\right\|_{L^{2}([0,T_1];L^2(\R^3))}
        \leq C_MT^{\theta}\\
        \times\left(\delta(c+v_0,c_n+v_0^n)+C_MT^{\theta}\left(\|v-v_n\|_{L_t^{2}L_x^6}
        +    \||\psi_n|-|\psi|\|_{L_t^{2}L_x^2}\right)\right).    
\end{multline*}
For $T_1>0$ sufficiently small depending only on $M$, inequality \eqref{eq:3DCD1} follows and Step 1 is complete. 
\\
\textbf{Step 2} We show that \eqref{eq:3DCD1} implies that there exists $T_2=T_2(M)>0$ such that 
\begin{equation}\label{eq:3DCD2}
    \|\nabla v-\nabla v_n\|_{L^{\infty}([0,T_2];L^2(\R^3))}+\|\nabla v-\nabla v_n\|_{L^{q}([0,T_2];L^r(\R^3))}\rightarrow 0,
\end{equation}
as $n\rightarrow \infty$ and where $(q,r)$ as in \eqref{eq:qr3D}. The proof follows closely the one of \eqref{eq:CD2} to which we refer for full details. In view of the Strichartz estimates of Lemma \ref{lem:Strichartz} it follows that
\begin{multline}\label{eq:3Ddifflinear}
    \|\nabla \eitD(c+v_0)-\nabla \eitD(c+v_0)\|_{L_t^{\infty}L_x^2}+ \|\nabla \eitD(c+v_0)-\nabla \eitD(c+v_0)\|_{L_t^{q}L_x^r}\\
    \leq C\|\nabla v_0-\nabla v_0^n\|_{L_x^2}.
\end{multline}
To control the non-homogeneous term, we recall that \eqref{eq:N_locLip} yields
\begin{equation*}
    \left|\nabla \mathcal{N}(\psi)\right|\leq C (1+|\psi|^{2\alpha})|\nabla \psi|\leq C(1+|\psi_{\mathrm{int}}|^{2\alpha})|\nabla\psi|.
\end{equation*}
More precisely, for $G_{\mathrm{bd}}, G_{\mathrm{int}}$ defined in \eqref{eq: Ginfty Gq} and upon applying \eqref{eq:Strichartznonh}, we split the non-homogeneous term:
\begin{equation}\label{eq:3DdiffgradnlCD}
    \begin{aligned}
    &\left\|i\int_0^t\eul^{\frac{\imu}{2}(t-s)\Delta}\left(\nabla\mathcal{N}(\psi)-\nabla\mathcal{N}(\psi_n)\right)(s)\dd s\right\|_{S^0([0,T]\times \R^3)}\\
    &\leq \|(G_{\mathrm{bd}})(\psi)|\nabla v-\nabla v_n|\|_{L_t^1L_x^2}+\|G_{\mathrm{int}}(\psi)|\nabla v-\nabla v_n|\|_{L_t^{q'}L_x^{r'}}\\
    &+\left\|\left(G_{\mathrm{bd}}(\psi)-G_{\mathrm{bd}}(\psi_n)\right)|\nabla v|\right\|_{L_t^1L_x^2}+\left\|\left(G_{\mathrm{int}}(\psi)-G_{\mathrm{int}}(\psi_n)\right)|\nabla v|\right\|_{L_t^{q'}L_x^{r'}}\\
   & \leq CT\|\nabla v-\nabla v_n\|_{L_t^{\infty}L_x^2}+CT^{\frac{q-q}{qq'}}\Zt(\psi)^{2\alpha}\|\nabla v -\nabla v_n\|_{L_t^qL_x^r}\\
  &+\left\|\left(G_{\mathrm{bd}}(\psi)-G_{\mathrm{bd}}(\psi_n)\right)|\nabla v|\right\|_{L_t^1L_x^2}+\left\|\left(G_{\mathrm{int}}(\psi)-G_{\mathrm{int}}(\psi_n)\right)|\nabla v|\right\|_{L_t^{q'}L_x^{r'}}.
    \end{aligned}
\end{equation}
Thus, for $T_2>0$ sufficiently small so that 
\begin{equation*}
    C\left(T_2+T_2^{\frac{q-q'}{qq'}}\Zt(\psi)^{2\alpha}\right)\leq \frac{1}{2},
\end{equation*}
we conclude from \eqref{eq:3Ddifflinear} and \eqref{eq:3DdiffgradnlCD} that
\begin{multline*}
    \| \nabla v-\nabla v_n\|_{L^{\infty}([0,T_2],L^2(\R^3))}+\| \nabla v-\nabla v_n\|_{L^{q}([0,T_2],L^r(\R^3))}\leq C \delta(c+v_0,c_n+v_0^n)\\
  +\left\|\left(G_{\mathrm{bd}}(\psi)-G_{\mathrm{bd}}(\psi_n)\right)|\nabla v|\right\|_{L_t^1L_x^2}+\left\|\left(G_{\mathrm{int}}(\psi)-G_{\mathrm{int}}(\psi_n)\right)|\nabla v|\right\|_{L_t^{q'}L_x^{r'}}.
\end{multline*}
To conclude that \eqref{eq:3DCD2}  holds, it suffices to show that the second line of the right-hand side converges to $0$ as $n$ goes to infinity. We proceed by contradiction assuming that there exists a subsequence still denoted $\psi_n$ such that there exists $\eps>0$ such that for all $n$ sufficiently large,
\begin{equation}\label{eq:Step2contradiction3D}
   \left\|\left(G_{\mathrm{bd}}(\psi)-G_{\mathrm{bd}}(\psi_n)\right)|\nabla v|\right\|_{L_t^1L_x^2}+\left\|\left(G_{\mathrm{int}}(\psi)-G_{\mathrm{int}}(\psi_n)\right)|\nabla v|\right\|_{L_t^{q'}L_x^{r'}}\geq \eps.
\end{equation}
Inequality \eqref{eq:3DCD1} implies that up to extracting a further subsequence, still denoted $\psi_n$, that $\psi_n=c_n+v_n$ converges to $\psi=c+v$ a.e. on $[0,T)\times \R^3$. By virtue of the Assumption \ref{ass:N}, one has that $G_{\mathrm{bd}}, G_{\mathrm{int}}$ are continuous. Therefore,
\begin{equation*}
    \begin{aligned}
        \left|\left(G_{\mathrm{bd}}(\psi)-G_{\mathrm{bd}}(\psi_n)\right)\right|\, |\nabla v|&\rightarrow 0 \quad \text{a.e. in} \quad [0,T)\times\R^3,\\
            \left|\left(G_{\mathrm{int}}(\psi)-G_{\mathrm{int}}(\psi_n)\right)\right|\,|\nabla v|&\rightarrow 0 \quad \text{a.e. in} \quad [0,T)\times\R^3.\\
    \end{aligned}
\end{equation*}
Further,
\begin{align*}
   &\|G_{\mathrm{int}}(\psi_n)\|_{L_t^{\infty}L_x^{\frac{2(\alpha+1)}{2\alpha}}(\R^3)}\leq C \||\psi_n|^{2\alpha}(1-\chi(\psi_n))\|_{L_t^{\infty}L_x^{\frac{2(\alpha+1)}{2\alpha}}(\R^3)}\\
   &\leq \mathcal{L}^3\left(\supp(1-\chi(\psi_n))\right)^{\frac{\alpha}{\alpha+1}}+\|\psi_{q,n}\|_{L^{\infty}L^{2(\alpha+1)}}^{2\alpha}\leq C\left(\Zt(\psi_n)^{\frac{2\alpha}{1+\alpha}}+\Zt(\psi_n)^{2\alpha}\right)\\
   &\leq C(M^{\frac{2\alpha}{\alpha+1}}+M^{2\alpha}),
\end{align*}
for all $n\in \N$, where we exploited \eqref{eq:Cheby}, namely that the measure of $\supp(1-\chi(\psi_n))$ is finite.
We obtain that there exists $\phi\in L^{\infty}([0,T];L^{2(\alpha+1)}(\R^3))$ such that $|\psi_{q,n}|\leq \phi$ a.e. on $[0,T)\times\R^3$. Therefore, we control
\begin{equation*}
    \begin{aligned}
        &\left|\left(G_{\mathrm{bd}}(\psi)-G_{\mathrm{bd}}(\psi_n)\right)\right|\,|\nabla v|\leq C |\nabla\psi|\in L^1([0,T);L^2(\R^3)),\\
            &\left|\left(G_{\mathrm{int}}(\psi)-G_{\mathrm{int}}(\psi_n)\right)\right|\,|\nabla v|\leq C\left(|\psi|^{2\alpha}+|\phi|^{2\alpha}\right) |\nabla\psi|\in L^{q'}([0,T);L^{r'}(\R^3)).\\
    \end{aligned}
\end{equation*}
The dominated convergence Theorem then implies that \eqref{eq:Step2contradiction3D} is violated, \eqref{eq:CD2} follows and Step 2 is complete.\\
\textbf{Step 3.} It remains to show that 
\begin{equation}\label{eq:3DCD3}
    \left\||\psi|-|\psi_n|\right\|_{L^{\infty}([0,T];L^2(\R^3))}\rightarrow 0.
\end{equation}
More precisely, we need to upgrade
\begin{equation*}
    \left\||\psi|-|\psi_n|\right\|_{L^{2}([0,T];L^2(\R^3))}\rightarrow 0,
\end{equation*}
so that the convergence holds for almost all times $t\in[0,T]$.
The proof follows closely the respective proof for $d=2$, namely the proof of \eqref{eq:CD3}. We omit the details.
\end{proof}
Next, we show a persistence of regularity property and that the Hamiltonian energy $\mathcal{H}$ is conserved for regular solutions.
The proof is completely analogous to the one for $d=2$, except that here we can exploit the affine structure of the energy space $\Eb$ and Sobolev embeddings which depend on the dimension. For the sake of clarity, we provide the proof of this lemma.
\begin{lemma}\label{lem: H23D}
Let $d=3$, $f$ as in Assumption \ref{ass:N} and $\psi_0\in\Eb(\R^3)$ such that $\Delta\psi_0\in L^2(\R^3)$. Then, the unique maximal solution $\psi\in C([0,T^{\ast});\Eb(\R^3))$ satisfies
\begin{equation*}
    \Delta\psi \in C([0,T];L^2(\R^3)), \qquad  \partial_t\psi \in C([0,T];L^2(\R^3))
\end{equation*}
for all $T\in [0,T^{\ast})$. Moreover, $\mathcal{H}(\psi)(t)=\mathcal{H}(\psi_0)$ for all $t\in [0,T^{\ast}))$. 
\end{lemma}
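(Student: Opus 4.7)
The plan is to follow closely the argument given for $d=2$ in Lemma \ref{lem:H2solution2D}, replacing 2D Strichartz admissibility and Sobolev embeddings by their 3D counterparts, and taking advantage of the affine decomposition $\psi=c+v$ provided by Lemma \ref{lem:affine}. Throughout, let $\psi\in C([0,T^{\ast});\Eb(\R^3))$ be the unique maximal solution and fix $T\in[0,T^{\ast})$. By the blow-up alternative (Proposition \ref{prop:LWP3d}) there exists $M>0$ with $\Zt(\psi)\leq M$.

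First I would show that $\partial_t\psi(0)\in L^2(\R^3)$. The equation gives $i\partial_t\psi(0)=-\tfrac12\Delta\psi_0+\mathcal{N}(\psi_0)$; the hypothesis $\Delta\psi_0\in L^2$ together with $\psi_0\in\Eb\subset X^1+H^1$ yields $\psi_0\in X^2(\R^3)+H^2(\R^3)$, which embeds into $L^\infty(\R^3)$ by Sobolev embedding in three dimensions. Then, arguing as in the proof of \eqref{eq:boundN} (or directly via the pointwise bounds \eqref{eq:Npointwise} combined with $\mathcal{L}^3(\supp(\Nb(\psi_0)))\leq \En(\psi_0)$), one obtains
\begin{equation*}
\|\mathcal N(\psi_0)\|_{L^2(\R^3)}\leq C\Bigl(\sqrt{\En(\psi_0)}+\En(\psi_0)^{\frac12+\alpha}\Bigr)<+\infty.
\end{equation*}

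Next, I differentiate the Duhamel formula \eqref{eq:Duhamel} in time (justified as in Corollary \ref{coro:time derivative}) to get
\begin{equation*}
\partial_t\psi(t)=\eitD(\partial_t\psi(0))-i\int_0^t \eul^{\frac{\imu}{2}(t-s)\Delta}\bigl(G_1(\psi)\partial_t\psi+G_2(\psi)\overline{\partial_t\psi}\bigr)(s)\,\dd s,
\end{equation*}
with $G_1,G_2$ as in \eqref{eq: Gi}. Using the 3D Strichartz admissible pair $(q,r)$ from \eqref{eq:qr3D}, together with the bounds \eqref{eq: estG} on $G_\infty,G_q$ and the fact that $\mathcal L^3(\supp(1-\chi(\psi)))\leq \Zt(\psi)^2$, the nonhomogeneous term is bounded in $N^0$ by
\begin{equation*}
C\Bigl(T+T^{\frac{q-q'}{qq'}}\Zt(\psi)^{2\alpha}\Bigr)\|\partial_t\psi\|_{L^{\infty}([0,T_1];L^2(\R^3))},
\end{equation*}
so that for $T_1=T_1(M)>0$ sufficiently small a standard fixed-point/absorbing argument gives $\partial_t\psi\in C([0,T_1];L^2(\R^3))$ with $\|\partial_t\psi\|_{L^\infty_tL^2_x}\leq 2\|\partial_t\psi(0)\|_{L^2}$. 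Returning to the equation $i\partial_t\psi=-\frac12\Delta\psi+\mathcal{N}(\psi)$ and using the bound \eqref{eq:boundN} (whose 3D analogue follows identically, since \eqref{eq:Npointwise} and Chebychev's inequality are dimension-independent), we deduce $\Delta\psi\in C([0,T_1];L^2(\R^3))$.

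Once this regularity holds, I would prove conservation of $\mathcal H$ by pairing \eqref{eq:NLS} in $L^2(\R^3)$ with $\partial_t\psi$ and taking the real part:
\begin{equation*}
0=\RE\langle i\partial_t\psi,\partial_t\psi\rangle=\RE\Bigl\langle-\tfrac12\Delta\psi+\mathcal N(\psi),\,\partial_t\psi\Bigr\rangle=\frac{\dd}{\dd t}\int_{\R^3}\tfrac12|\nabla\psi|^2+F(|\psi|^2)\,\dd x,
\end{equation*}
where each pairing is well defined: the first using $\Delta\psi,\partial_t\psi\in L^2$, the second by splitting $\mathcal{N}=\Na+\Nbl+\Nbh$ and exploiting \eqref{eq:Npointwise} and \eqref{eq:Cheby} in the finite-measure region, plus $\Zt(\psi)\leq M$. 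Hence $\mathcal H(\psi)(t)=\mathcal H(\psi_0)$ on $[0,T_1]$.

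Finally, since $T_1$ depends only on $M$ and the bound $\Zt(\psi)\leq M$ is uniform on $[0,T]$, iterating the previous step starting from any $t_0\in[0,T-T_1]$ covers $[0,T]$ by finitely many subintervals; continuity of $\psi$ in $\Eb$ then propagates both the regularity $\Delta\psi,\partial_t\psi\in C([0,T];L^2(\R^3))$ and the identity $\mathcal H(\psi)(t)=\mathcal H(\psi_0)$ for all $t\in[0,T^{\ast})$. The main technical point is ensuring that the nonlinear Strichartz estimates of Lemma \ref{lem:N} and the cutoff estimates \eqref{eq:Npointwise}, which were stated for $d=2$, are carried over to $d=3$ using the admissible pair \eqref{eq:qr3D}; this is routine given the affine decomposition $\psi=c+v$ of Lemma \ref{lem:affine} and the energy-subcritical range $\alpha<2$.
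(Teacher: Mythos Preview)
Your overall strategy is the right one and matches the paper, but there are two concrete gaps that make the argument fail for super-cubic nonlinearities $1<\alpha<2$ (recall Assumption~\ref{ass:N} allows $\alpha$ up to $2$).

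\textbf{The absorption for $\partial_t\psi$.} You claim the nonhomogeneous term is bounded in $N^0$ by $C\bigl(T+T^{\frac{q-q'}{qq'}}\Zt(\psi)^{2\alpha}\bigr)\|\partial_t\psi\|_{L^\infty_tL^2_x}$. This is not true in three dimensions when $\alpha>1$. To place $|\psi_q|^{2\alpha}|\partial_t\psi|$ in $L^{q_1'}_tL^{r_1'}_x$ for some admissible pair using only $\partial_t\psi\in L^\infty_tL^2_x$, H\"older forces $|\psi_q|^{2\alpha}$ into $L^{q_1'}_tL^s_x$ with $1/s=1/r_1'-1/2$; since $r_1'\in[6/5,2]$ one has $s\ge 3$, hence $\psi_q$ must lie in $L^{2\alpha s}_x$ with $2\alpha s\ge 6\alpha>6$, which is beyond the Sobolev embedding $H^1(\R^3)\subset L^6(\R^3)$. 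The paper avoids this by running the absorbing argument simultaneously in $L^\infty_tL^2_x$ and $L^q_tL^r_x$ (with $(q,r)$ from \eqref{eq:qr3D}), obtaining
\[
\|G_q(c+v)|\partial_t v|\|_{L^{q'}_tL^{r'}_x}\le T^{\frac{q-q'}{qq'}}\Zt(c+v)^{2\alpha}\,\|\partial_t v\|_{L^q_tL^r_x},
\]
so both Strichartz norms of $\partial_t v$ are closed together. This is precisely the point flagged at the start of Section~\ref{sec:3d}: for $d=3$ the argument cannot be run in $L^2$-based spaces alone.

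\textbf{The bound on $\|\mathcal N(\psi)\|_{L^\infty_tL^2_x}$.} You assert that the 3D analogue of \eqref{eq:boundN} ``follows identically''. It does not: the term $\|\Nbh(\psi)\|_{L^2_x}$ requires $\|\psi_q\|_{L^{2(2\alpha+1)}_x}$, and $2(2\alpha+1)>6$ once $\alpha>1$, so this is again unavailable from $\Zt(\psi)$ alone. The paper closes this by exploiting the affine decomposition $\psi=c+v$ (so that $|(c+v)_q|\lesssim|v|$ on $\supp(1-\chi(\psi))$) and then, for $\alpha\in(1,2)$, a Gagliardo--Nirenberg inequality
\[
\|v\|_{L^{2(2\alpha+1)}_x}^{2\alpha+1}\le C\,\|v\|_{L^6_x}^{2-\alpha}\,\|\Delta v\|_{L^2_x}^{\alpha-1},
\]
whose sublinear power $\alpha-1<1$ on $\|\Delta v\|_{L^2_x}$ can be absorbed into the left-hand side. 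Without this step your deduction of $\Delta\psi\in C([0,T_1];L^2)$ is incomplete.

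For $\alpha\le 1$ your sketch is essentially fine; the Hamiltonian-conservation and iteration steps match the paper's.
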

\begin{proof}
In view of Lemma \ref{lem:affine}, one has $\psi(t)=c+v(t)$ for all $t\in [0,T^{\ast})$ and it suffices to consider $v\in C([0,T^{\ast});\Fc(\R^3))$ solution to \eqref{eq:NLS3d}. The assumption  $v_0\in \Fc(\R^3)\cap \dot{H}^2(\R^3)$ yields that $\partial_tv(0)\in L^2(\R^3)$. Indeed, by continuity in time one has
\begin{equation*}
    i\partial_t v(0)=-\frac{1}{2}\Delta v(0)+\mathcal{N}(c+v)(0).
\end{equation*}
As $v(0)=v_0\in \Fc(\R^3)\cap \dot{H}^2(\R^3) \subset L^{\infty}(\R^3)$ it follows that $\Na(c+v_0)\in L^2(\R^3)$ from \eqref{eq:Npointwise} and $\Nb(c+v_0)\in L^{\infty}(\R^3)$ and hence in $L^2(\R^3)$ by means of \eqref{eq:Cheby}.
By differentiating the Duhamel formula in time and applying Corollary \ref{coro:time derivative}, it follows that
\begin{align*}
    i\partial_t v(t)&=\eitD\left(\frac{i}{2}\Delta v(0)-i\mathcal{N}(c+v)(0)\right)-i\int_{0}^t\eul^{\frac{\imu}{2}s\Delta}\partial_t\left(\mathcal{N}(c+v)(t-s)\right)\dd s\\
    &=\eitD\partial_tv-i\int_{0}^t\eul^{\frac{\imu}{2}(t-s)\Delta}\left(G_1(c+v)\partial_t v+G_2(c+v)\overline{\partial_tv}\right)(s)\dd s
\end{align*}
By means of the Strichartz estimates of Lemma \ref{lem:Strichartz}, for the admissible pair $(q,r)$ as in \eqref{eq:qr3D} and any $0<T<T^{\ast}$ we have that
\begin{multline*}
    \|\partial_t v\|_{L^{\infty}([0,T];L^2(\R^3))}+\|\partial_t v\|_{L^q([0,T];L^r(\R^3))}\\
    \leq 2\|\partial_t v(0)\|_{L^2(\R^3)}+\left\|G_1(c+v)\partial_t v+G_2(c+v)\overline{\partial_tv}\right\|_{N^0([0,T]\times\R^3)},
\end{multline*}
with $G_1,G_2$ defined in \eqref{eq: Gi}. Upon splitting $G_i$ in $G_{i,\infty}$ and $G_{i,\mathrm{int}}$, as in \eqref{eq: Ginfty Gq}, it follows that
\begin{align*}
    &\|G_i(c+v)|\partial_t v|\|_{N^0([0,T]\times\R^3)}\\
    &\leq C T \|\partial_t v\|_{L^{\infty}([0,T];L^2(\R^3))}+\||c+v|^{2\alpha}(1-\chi(c+v))|\partial_t v|\|_{N^0([0,T]\times\R^3)}\\
    &\leq C T \|\partial_t v\|_{L^{\infty}([0,T];L^2(\R^3))}+\left\||(c+v)_{q}|^{2\alpha}|\partial_t v \right\|_{L^{q'}([0,T];L^{r'}(\R^3)}\\
    &\leq C T \|\partial_t v\|_{L^{\infty}([0,T];L^2(\R^3))}+T^{\frac{q-q'}{qq'}}\Zt(c+v)^{2\alpha}\|\partial_t v\|_{L^q([0,T];L^r(\R^3))}
\end{align*}
Therefore, 
\begin{align*}
     \|\partial_t v\|_{L^{\infty}([0,T];L^2(\R^3))}&+\|\partial_t v\|_{L^q([0,T];L^r(\R^3))}\leq  2\|\partial_t v(0)\|_{L^2(\R^3)}\\
     &+C T \|\partial_t v\|_{L^{\infty}([0,T];L^2(\R^3))}+T^{\frac{q-q'}{qq'}}\Zt(c+v)^{2\alpha}\|\partial_t v\|_{L^q([0,T];L^r(\R^3))}.
\end{align*}
For $0<T_1<T^{\ast}$ sufficiently small, it holds
\begin{equation*}
    \|\partial_t v\|_{L^{\infty}([0,T_1];L^2(\R^3))}+\|\partial_t v\|_{L^q([0,T_1];L^r(\R^3))}\leq  4\|\partial_t v(0)\|_{L^2(\R^3)}.
\end{equation*}
Further,
\begin{align*}
    &\|\Delta v\|_{L^{\infty}([0,T_1];L^2(\R^3))}\leq 2\|\partial_t v\|_{L^{\infty}([0,T_1];L^2(\R^3))}+2\|\mathcal{N}(c+v)\|_{L^{\infty}([0,T_1];L^2(\R^3))}\\
    &\leq 2\|\partial_t v\|_{L^{\infty}([0,T_1];L^2(\R^3))}+4\Zt(c+v)+\||(c+v)_q)|^{2\alpha+1}\|_{L^{\infty}([0,T_1];L^2(\R^3))}
\end{align*}
Note that $|(c+v)_q|\geq 2$  and $|v|\geq 1$ on $\supp(1-\chi(c+v))$. If $\alpha\in (0,1]$, then 
\begin{equation*}
    \||(c+v)_q|^{2\alpha+1}\|_{L^{\infty}([0,T_1];L^2(\R^3))}\leq C \|v\|_{L^{\infty}([0,T_1];L^{6}(\R^3)}^{1+2\alpha}\leq C \Zt(c+v)^{1+2\alpha}.
\end{equation*}
If $\alpha\in (1,2)$, then we apply the Gagliardo-Nirenberg inequality to obtain that
\begin{equation*}
    \||(c+v)_q|^{2\alpha+1}\|_{L^{\infty}([0,T_1];L^2(\R^3))}\leq C \|v\|_{L^{\infty}([0,T_1];L^6(\R^3))}^{2-\alpha}\|\Delta v\|_{L^{\infty}([0,T_1];L^2(\R^3))}^{\alpha-1},
\end{equation*}
where we note that $0<\alpha-1<1$. It follows
\begin{equation*}
    \Delta v\in C([0,T_1];L^2(\R^3)). 
\end{equation*}
Finally, we conclude that $\mathcal{H}(c+v)(t)=\mathcal{H}(c+v_0)$ by performing an analoguous argument as in the proof of Lemma \ref{lem:H2solution2D} for $d=2$.
\end{proof}
\begin{proof}[Proof of Theorem \ref{thm:mainlocal} in 3D]
It only remains to show that the Hamiltonian energy is conserved for all solutions $\psi\in C([0,T^{\ast}),\Eb(\R^3))$ which follows from Proposition \ref{prop:LWP3d} and an approximation by smooth solutions by means of Lemma \ref{lem:approx} together with Lemma \ref{lem: H23D}.
\end{proof}
\subsection{Global well-posedness}\label{sec:3dglobal}
Similar to the $2D$ case, the lack of a suitable notion of (renormalized) mass and the lack of sign-definiteness of the Hamiltonian energy $\mathcal{H}$ constitute the main obstacles for proving global existence. 
\\
Assuming that $F\geq 0$ allows one to control the functional $\En(\cdot)$, in terms of which the blow-up alternative in Proposition \ref{prop:LWP3d} is stated, by $\mathcal{H}(\cdot)$, see Lemma \ref{lem:identification Eb}. Global existence is proven following closely the method detailed in Section \ref{sec:global2D} for $d=2$.
\begin{corollary}\label{coro:FnGWP3D}
Let Assumption \ref{ass:D} be satisfied and in addition the nonlinear potential energy density $F$, defined in \eqref{eq:HamiltonianNLS} be non-negative, namely $F\geq 0$. Then, the unique solution constructed in Proposition \ref{prop:LWP3d} is global, i.e. $T^{\ast}=+\infty$.
\end{corollary}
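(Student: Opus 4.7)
The proof proposal mirrors the argument for Corollary \ref{coro : GWP 2D} in dimension two, with Proposition \ref{prop:LWP3d} replacing Proposition \ref{prop:localWP} as the source of local solutions and the blow-up alternative. The key ingredients are already in place: conservation of the Hamiltonian for energy-space solutions (from Theorem \ref{thm:mainlocal} in $3D$, which was established via Lemma \ref{lem: H23D} combined with the smooth approximation of Lemma \ref{lem:approx} and continuous dependence from Proposition \ref{prop:LWP3d}), the equivalence $\Eb = \{\mathcal{H}(\psi) < \infty\}$ provided by Lemma \ref{lem:identification Eb} when $F \geq 0$ and Assumption \ref{ass:D} holds, and the blow-up alternative formulated in terms of $\En(\psi)$.

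Concretely, the plan is as follows. Let $\psi_0 \in \Eb(\R^3)$ and let $\psi \in C([0,T^{\ast});\Eb(\R^3))$ denote the unique maximal solution to \eqref{eq:NLS} given by Proposition \ref{prop:LWP3d}. First, invoke the conservation of the Hamiltonian established in Theorem \ref{thm:mainlocal} to get $\mathcal{H}(\psi)(t) = \mathcal{H}(\psi_0)$ for all $t \in [0,T^{\ast})$. Second, since $F \geq 0$ and Assumption \ref{ass:D} is satisfied, apply Lemma \ref{lem:identification Eb} to obtain an increasing function $g:(0,\infty)\to[0,\infty)$ with $\lim_{r\to 0} g(r) = 0$ such that
\begin{equation*}
    \En(\psi)(t) \leq g\bigl(\mathcal{H}(\psi)(t)\bigr) = g\bigl(\mathcal{H}(\psi_0)\bigr) < +\infty
\end{equation*}
for every $t \in [0,T^{\ast})$. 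Third, invoke the blow-up alternative of Proposition \ref{prop:LWP3d}: if $T^{\ast} < \infty$ then $\En(\psi)(t) \to +\infty$ as $t \nearrow T^{\ast}$, which contradicts the uniform bound above. Hence $T^{\ast} = +\infty$.

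There is essentially no obstacle in this argument; it is a direct transcription of the $2D$ proof, relying on the fact that Lemma \ref{lem:identification Eb} and the Hamiltonian conservation from Theorem \ref{thm:mainlocal} are both stated uniformly for $d=2,3$. The only dimension-sensitive ingredient is the blow-up alternative, which is supplied by Proposition \ref{prop:LWP3d} in exactly the form required.
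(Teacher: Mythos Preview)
Your proposal is correct and matches the paper's approach: the paper explicitly states that global existence for $d=3$ is proven ``following closely the method detailed in Section \ref{sec:global2D} for $d=2$,'' and your argument is precisely that transcription, using Hamiltonian conservation from Theorem \ref{thm:mainlocal}, the bound $\En(\psi)\leq g(\mathcal{H}(\psi))$ from Lemma \ref{lem:identification Eb}, and the blow-up alternative from Proposition \ref{prop:LWP3d}.
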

This proves Theorem \ref{thm:main} for $d=3$.
\\
Exploiting the affine structure of the energy space $\Eb(\R^3)$, we also prove global well-posedness for a class of equations for which the associated nonlinear potential energy density $F(|\psi|^2)$ fails to be non-negative. Such equations arise for instance in nonlinear optics to investigate self-focusing phenomena in a defocusing medium, see \cite{Barashenkov89, KL98, PelinovskySK}. A showcase model for such phenomena is \eqref{eq:NLS3d} with competing subcritical power-type nonlinearities satisfying Assumption \ref{ass:D} and having the form
\begin{equation}\label{eq:competing}
     f(r)=a_1(r^{\alpha_1}-\rho_0)-a_2(r^{\alpha_2}-\rho_0),
\end{equation}
where $a_1,a_2>0$ and $0<\alpha_2<\alpha_1<2$. The defocusing nonlinearity is dominant for large intensities $|\psi|^2>>\rho_0$ and focusing phenomena occur for small intensities $|\psi|^2\leq \rho_0$ where $\rho_0$ is determined by the far-field. 
The case $\alpha_1=2$, $\alpha_2=1$ corresponds to the energy-critical cubic-quintic nonlinearity and is investigated in \cite{Killip16, KOPV}. As before, we set $\rho_0=1$, further as in \eqref{eq:c1}, it suffices to consider $c=1$ and upon scaling space and time $a_1=1$. We are hence led to consider nonlinearities of the type 
\begin{equation}\label{eq:competing-scaled}
     f(r)=(r^{\alpha_1}-1)-a_2(r^{\alpha_2}-1),
\end{equation}
where Assumption \ref{ass:D} implies $\frac{\alpha_1}{\alpha_2}>a_2$. Furthermore, we may assume that $a_2>1$ as otherwise $F\geq 0$. Indeed, the following hold
\begin{enumerate}
    \item if $a_2\leq 0$, then it follows from \eqref{eq:power-potential} that $F(\rho)>0$ for all $\rho\geq 0$ with $\rho\neq 1$,
    \item if $0<a_2\leq 1$, then $f$ admits only one positive real root for $r=1$ corresponding to a global minimum of $F$. Hence, $F(\rho)>0$ for all $\rho\geq 0$ with $\rho\neq 1$,
    \item if $a_2>1$, then $f$ admits two positive real roots $\rho_1,1$ with  $0<\rho_1<1$ and $F$ displays a local minimum in $\rho=1$ and a local maximum in $\rho=\rho_1$. Depending on the location of the root $\rho_1$ two scenarios may occur:
    \begin{enumerate}[(a)]
        \item the root $\rho_1$ is sufficiently close to $0$ such that $F(\rho)\geq 0$ for all $\rho\geq 0$,
        \item the root $\rho_1$ is sufficiently close to $1$ such that there exists $\rho_2$ with $F(\rho)<0$ for all $0\leq \rho<\rho_2$.  
    \end{enumerate}
\end{enumerate} 
Thus, it suffices to study the case (3b), in particular $\frac{\alpha_1}{\alpha_2}>a_2>1$. The behavior of the competing power-type nonlinearities motivates the following assumptions.
\begin{assumption}\label{ass:global3D}
Let $f$ be a real-valued function satisfying Assumption \ref{ass:D} and having the form 
\begin{equation*}
    f(r)=(r^{\alpha_1}-1)+g(r)
\end{equation*}
where $0<\alpha_1<2$ and where $g\in C^{0}([0,\infty))\cap C^1(0,\infty)$ is such that
\begin{equation*}
|g(\rho)|,|\rho g'(\rho)|\leq C(1+\rho^{\alpha_2})
\end{equation*}
with $0\leq\alpha_2<\alpha_1$ for all $\rho\geq 0$.
In addition, $F(\rho)>0$ for all $\rho>1$. 
\end{assumption}
Local well-posedness for \eqref{eq:NLS3d} is provided by Theorem \ref{thm:mainlocal}. The assumptions yield that the nonlinear potential energy density $F$ is well-approximated by the one of the Ginzburg-Landau energy for $\rho$ close to $1$, see \eqref{eq:Taylor F} and coercive. Further, there exists $0\leq \rho_2<1$ such that the negative part $F_{-}$ satisfies
\begin{equation}\label{eq:negF}
    \supp (F_{-})\subset [0,\rho_2].
\end{equation}
For \eqref{eq:competing-scaled}, let $0<\rho_1<1$ denote the smaller root of $f$. Then, $0\leq \rho_2<\rho_1<1$.
\begin{proposition}\label{prop:3Dglobal}
    Let $f$ satisfy assumption \ref{ass:global3D} and $v_0\in \Fone(\R^3)$ with $\RE(v_0)\in L^2(\R^3)$. Then the unique local solution $v\in C([0,T);\Fone(\R^3))$ to \eqref{eq:c1} with initial data $v(0)=v_0$ provided by Proposition \ref{prop:LWP3d} is global.
\end{proposition}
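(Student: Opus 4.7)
The plan is to apply the blow-up alternative from Proposition \ref{prop:LWP3d}: it suffices to show that $\mathcal{E}(\psi)(t)$ remains finite on every compact subinterval of $[0,T^{\ast})$. Since Proposition \ref{prop:LWP3d}(2) guarantees $v(t)-v_0\in C([0,T];H^1(\R^3))\subset C([0,T];L^2(\R^3))$ and $\RE(v_0)\in L^2(\R^3)$ by assumption, the auxiliary quantity $t\mapsto\|\RE v(t)\|_{L^2(\R^3)}^2$ is finite and continuous in $t$. I will derive two a priori estimates: a coercivity bound $\mathcal{E}(\psi)(t)\leq C\bigl(\mathcal{H}(\psi_0)+\|\RE v(t)\|_{L^2}^2\bigr)$ and a Gronwall-type differential inequality $\frac{d}{dt}\|\RE v\|_{L^2}^2\leq C\bigl(1+\|\RE v\|_{L^2}^2\bigr)$. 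Together with conservation of $\mathcal{H}$ (Theorem \ref{thm:mainlocal}(4)), these exclude finite-time blow-up of $\mathcal{E}(\psi)(t)$.

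For the coercivity bound, the key identity is $|\psi|^2-1=2\RE v+|v|^2$. On the bad set $B=\{|\psi|^2\leq\rho_2\}$, where $F$ may be negative with $\supp F_-\subset[0,\rho_2]$, this identity forces $\RE v\leq-(1-\rho_2)/2$ pointwise, whence Chebyshev's inequality yields $\mathcal{L}^3(B)\leq C\|\RE v\|_{L^2}^2$ and consequently $\int F_-(|\psi|^2)\dd x\leq C\|\RE v\|_{L^2}^2$. The same argument on a slight enlargement $\{|\psi|^2\leq\rho_2+\delta\}$ handles the zone where $F$ is positive but small. On the remaining set, I combine the local convexity $F(\rho)\sim f'(1)(\rho-1)^2$ near $\rho=1$ (Assumption \ref{ass:D}) with the asymptotic lower bound $F(\rho)\geq c\,\rho^{\alpha_1+1}$ for $\rho\geq M^2$ with $M$ sufficiently large (a consequence of $f(r)=(r^{\alpha_1}-1)+g(r)$ with $|g(r)|\lesssim 1+r^{\alpha_2}$, $\alpha_2<\alpha_1$) to bound $\||\psi|-1\|_{L^2}^2\leq C\bigl(\int F_+(|\psi|^2)\dd x+\|\RE v\|_{L^2}^2\bigr)$. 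Conservation of $\mathcal{H}$ then closes the estimate.

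For the differential inequality, a direct computation starting from $i\partial_t v=-\tfrac12\Delta v+f(|\psi|^2)(1+v)$ yields
\begin{equation*}
\frac{d}{dt}\|\RE v\|_{L^2}^2=\int\nabla\RE v\cdot\nabla\IM v\dd x+\int f(|\psi|^2)\IM(v^2)\dd x,
\end{equation*}
justified rigorously first on regular initial data via Lemma \ref{lem: H23D} and then extended by smooth approximation (Lemma \ref{lem:approx}) together with continuous dependence (Proposition \ref{prop:LWP3d}(3)). The gradient term is bounded by $\tfrac12\|\nabla v\|_{L^2}^2\leq C\mathcal{E}(\psi)$. For the nonlinear term, I split at a large threshold $M$: on $\{|\psi|\leq M\}$, since $f\in C^1$ with $f(1)=0$, $|f(|\psi|^2)|\leq C\bigl||\psi|^2-1\bigr|$, and combined with $\|v\|_{L^4}^2=\||v|^2\|_{L^2}\leq\||\psi|^2-1\|_{L^2}+2\|\RE v\|_{L^2}$ (again from $|v|^2=(|\psi|^2-1)-2\RE v$) and Cauchy--Schwarz, this contribution is $\leq C\bigl(1+\|\RE v\|_{L^2}^2\bigr)$. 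On $\{|\psi|\geq M\}$, the crucial observation is that $|f(|\psi|^2)||v|^2\lesssim|\psi|^{2(\alpha_1+1)}$ while conservation of $\mathcal{H}$ and the lower bound on $F$ at infinity give $\int_{|\psi|\geq M}|\psi|^{2(\alpha_1+1)}\dd x\leq C\int F_+(|\psi|^2)\dd x\leq C\bigl(\mathcal{H}(\psi_0)+\|\RE v\|_{L^2}^2\bigr)$.

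Invoking the coercivity bound of the second paragraph then yields $\frac{d}{dt}\|\RE v\|_{L^2}^2\leq C\bigl(1+\|\RE v\|_{L^2}^2\bigr)$, so Gronwall provides an exponential bound on $\|\RE v(t)\|_{L^2}^2$ over any $[0,T]\subset[0,T^{\ast})$; the coercivity bound then forces $\mathcal{E}(\psi)(t)\leq C(T)$ on $[0,T]$, and the blow-up alternative gives $T^{\ast}=\infty$. The main difficulty is the large-amplitude behavior of the nonlinear term: a direct Sobolev interpolation $\|v\|_{L^{2(\alpha_1+1)}}^{2(\alpha_1+1)}\lesssim\|v\|_{L^4}^{8-4\alpha_1}\|v\|_{L^6}^{6\alpha_1-6}$ gives only the superlinear bound $\|\RE v\|_{L^2}^{4\alpha_1-2}$ for $\alpha_1\in(1,2)$, which would permit finite-time blow-up of the auxiliary quantity; the obstacle is overcome by exploiting $\mathcal{H}$ not merely as a conserved quantity but as a \emph{coercive} control on the large-$|\psi|$ tail of $|v|^{2(\alpha_1+1)}$, which is exactly where Assumption \ref{ass:global3D} enters.
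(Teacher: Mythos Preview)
Your proposal is correct and follows essentially the same strategy as the paper: define an augmented functional by adding $C_0\|\RE v\|_{L^2}^2$ to the Hamiltonian, establish coercivity $\mathcal{E}(\psi)\leq C(\mathcal{H}(\psi_0)+\|\RE v\|_{L^2}^2)$ via the key pointwise observation that $|\psi|^2<1-\delta$ forces $|\RE v|$ bounded below, derive a Gronwall inequality for $\|\RE v\|_{L^2}^2$ using conservation of $\mathcal{H}$, and conclude by the blow-up alternative. The only cosmetic difference is that the paper splits the nonlinear term $\int f(|\psi|^2)\RE v\,\IM v$ into three zones ($|\psi|^2\leq 1-\delta$, $||\psi|^2-1|<\delta$, $|\psi|^2\geq 1+\delta$) whereas you split into two ($|\psi|\leq M$ and $|\psi|\geq M$) and invoke the identity $\|v\|_{L^4}^2\leq\||\psi|^2-1\|_{L^2}+2\|\RE v\|_{L^2}$; both decompositions work and the underlying mechanism is identical.
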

In particular, Theorem \ref{thm:main3D} follows upon considering the phase shift given by multiplication of the datum with $\overline{c}$, see \eqref{eq:c1}. 
In order to compensate for the lack of sign-definiteness of the total energy, we restrict our analysis to the subspace of $\Fone(\R^3)$ such that $\RE(v)\in L^2(\R^3)$. Following \cite{KOPV}, for any $v\in \Fone(\R^3)$ with $\RE(v)\in L^2(\R^3)$, we define for $\psi=1+v$ the functional 
\begin{equation*}
    M(\psi)=\mathcal{H}(\psi)+C_0\int_{\R^3}\left|\RE(v)\right|^2\dd x,
\end{equation*}
for a suitable $C_0>0$ to be determined. 
To prove global well-posedness, we show coercivity of  $M$ and then conclude global existence by means of the Gronwall inequality, see Lemma \ref{lem:EM} and Lemma \ref{lem:bound M} respectively.
\begin{lemma}\label{lem:EM}
    Let $v\in \Fone(\R^3)$ such that $\RE(v)\in L^2(\R^3)$. Then, $M(1+v)$ is well-defined, in particular for all $C_0>0$ there exists an increasing function $h: (0,\infty)\rightarrow [0,\infty)$ with $\displaystyle \lim_{r\rightarrow 0}h(r)=0$ such that
    \begin{equation*}
       M(1+v) \leq h(\En(1+v))+C_0\|\RE(v)\|_{L^2}^2.
    \end{equation*}
    Moreover, there exist $C_0(\rho_1)>0$ and $C>0$ such that 
      \begin{equation}\label{eq:bound-EM}
        \En(1+v)\leq C M(1+v).
    \end{equation}
\end{lemma}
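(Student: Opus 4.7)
For the upper bound in part (1), the functional $M(1+v)$ is well-defined since $\Hc(1+v)$ is finite by Lemma~\ref{lem:finiteenergy} and $\RE(v)\in L^2(\R^3)$ by assumption. A quantitative inspection of the proof of Lemma~\ref{lem:finiteenergy} produces an increasing continuous function $\tilde h\colon[0,\infty)\to[0,\infty)$ with $\tilde h(0)=0$, of the form $\tilde h(r)\simeq r+r^{(5+2\alpha)/3}$, such that $\Hc(1+v)\leq\tilde h(\En(1+v))$; adding $C_0\|\RE(v)\|_{L^2}^2$ yields the stated upper bound with $h:=\tilde h$, independently of $C_0>0$.

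For the coercivity bound (2), the key observation is that $\|\RE(v)\|_{L^2}^2$ controls the measure of the region where $F$ can be negative. By \eqref{eq:negF}, $F_-$ is supported in $[0,\rho_2]$ with $0\leq\rho_2<1$. On $\{|\psi|^2<\rho_2\}$ one has $\RE(\psi)\leq|\psi|<\sqrt{\rho_2}$, whence $\RE(v)<\sqrt{\rho_2}-1<0$ and $|\RE(v)|>1-\sqrt{\rho_2}$. Chebyshev's inequality then delivers
\[
\mathcal{L}^3(\{|\psi|^2<\rho_2\})\leq(1-\sqrt{\rho_2})^{-2}\|\RE(v)\|_{L^2}^2.
\]
Since $F$ is continuous and hence bounded on $[0,\rho_2]$ and $\big||\psi|-1\big|^2\leq 1$ on $\{|\psi|<\sqrt{\rho_2}\}$, both $\int F_-\,\dd x$ and $\int_{\{|\psi|^2<\rho_2\}}\big||\psi|-1\big|^2\,\dd x$ are bounded by $C_1(\rho_2)\|\RE(v)\|_{L^2}^2$.

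The main technical step, and the hardest part, is to establish a uniform lower bound $F(|\psi|^2)\geq c_1\big||\psi|-1\big|^2$ on $\{|\psi|^2\geq\rho_2\}$ for some $c_1=c_1(\rho_2)>0$. I would prove this by combining three ingredients: the Taylor expansion \eqref{eq:Taylor F} $F(r)\simeq\tfrac{1}{2}f'(1)(r-1)^2$, which yields $F(|\psi|^2)\gtrsim(|\psi|-1)^2$ in a neighborhood of $|\psi|=1$ (using $(r-1)^2=(\sqrt{r}-1)^2(\sqrt{r}+1)^2$); the continuity and strict positivity of $F$ on any compact subset of $[\rho_2,\infty)\setminus\{1\}$, which provides a positive lower bound against the bounded quantity $\big||\psi|-1\big|^2$; and the asymptotic behaviour $F(r)\sim\frac{a}{\alpha_1+1}r^{\alpha_1+1}$ as $r\to\infty$, where by Assumption~\ref{ass:global3D} the $g$-contribution is of lower order, which dominates $|\sqrt{r}-1|^2\lesssim r$ since $\alpha_1>0$.

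Granting this uniform estimate,
\[
\int_{\{|\psi|^2\geq\rho_2\}}\big||\psi|-1\big|^2\,\dd x\leq c_1^{-1}\int F(|\psi|^2)\,\dd x+c_1^{-1}\int F_-\,\dd x.
\]
Substituting $\int F\,\dd x=M(1+v)-\tfrac{1}{2}\|\nabla v\|_{L^2}^2-C_0\|\RE(v)\|_{L^2}^2$ on the right and also bounding $\|\nabla v\|_{L^2}^2=2M(1+v)-2\int F\,\dd x-2C_0\|\RE(v)\|_{L^2}^2\leq 2M(1+v)+\bigl(2C_1(\rho_2)-2C_0\bigr)\|\RE(v)\|_{L^2}^2$, I would gather all terms of the form $\|\RE(v)\|_{L^2}^2$ and choose $C_0=C_0(\rho_2)>0$ sufficiently large so that their total coefficient is non-positive. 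This absorbs them into $M(1+v)$ and produces $\En(1+v)\leq CM(1+v)$ with $C$ depending only on $c_1$ and $\rho_2$, closing the proof.
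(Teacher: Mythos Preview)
Your approach is essentially the paper's, but there is one genuine gap in the coercivity step. You claim $F(|\psi|^2)\geq c_1\big||\psi|-1\big|^2$ on $\{|\psi|^2\geq\rho_2\}$, invoking ``strict positivity of $F$ on any compact subset of $[\rho_2,\infty)\setminus\{1\}$.'' Under Assumption~\ref{ass:global3D} this is not guaranteed: the only information on $[\rho_2,1)$ is $F\geq 0$ (from $\supp(F_-)\subset[0,\rho_2]$), not $F>0$. In particular, if $\rho_2$ is the supremum of $\supp(F_-)$ then $F(\rho_2)=0$ by continuity, while $(\sqrt{\rho_2}-1)^2>0$, so your pointwise lower bound fails at $r=\rho_2$; more generally nothing in the hypotheses prevents $F$ from vanishing on a subset of $(\rho_2,1-\delta)$.

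The remedy is exactly the paper's choice of splitting point: instead of cutting at $\rho_2$, cut at $1-\delta$ with $\delta$ from the Taylor expansion \eqref{F:convex}. Since $\{|1+v|^2\leq 1-\delta\}\subset\{|\RE(v)|>1-\sqrt{1-\delta}\}$, the \emph{entire} region $\{|\psi|^2\leq 1-\delta\}$---including the problematic strip $[\rho_2,1-\delta]$ where $F$ may vanish---is absorbed by $\|\RE(v)\|_{L^2}^2$ via Chebyshev, and on $\{|\psi|^2> 1-\delta\}$ the lower bound $F(|\psi|^2)\gtrsim(|\psi|-1)^2$ does hold, by \eqref{F:convex} near $1$, by $F(\rho)>0$ on compact subsets of $(1,\infty)$, and by the asymptotics at infinity. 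With this single adjustment your gathering-of-terms argument goes through unchanged and matches the paper's proof.
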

The constant $C_0>0$ only depends on $\rho_1$ being the second largest root of $F$ as in \eqref{eq:negF}.
\begin{proof}
The first inequality immediately follows from Lemma \ref{lem:finiteenergy}. To show the second inequality, it suffices to prove that there exist $C_2,C_0>0$ such that
\begin{multline*}
    \En(1+v)+C_2\int_{\R^3}F_{-}(|1+v|^2)\dd x\\
    \leq C_2\left(\frac12 \|\nabla v\|_{L^2(\R^3)}^2+\int_{\R^3}F_{+}(|1+v|^2)\dd x+C_0 \left\|\RE(v)\right\|_{L^2(\R^3)}^2\right).
\end{multline*}
Let $\delta\in (0,1)$ be such that the expansion \eqref{eq:Taylor F} of $F$ yields that\begin{equation*}
    \|\left(|1+v|-1\right)\mathbf{1}_{\{\left||1+v|^2-1\right|<\delta\}}\|_{L^2(\R^3)}^2\leq {C_l} \int_{\R^3}F(|1+v|^2)\mathbf{1}_{\{\left||1+v|^2-1\right|<\delta\}}\dd x.
\end{equation*}
for some $C_l>0$. On the other hand, by Assumption \ref{ass:global3D} the nonlinear potential energy is coercive and there exists $R_0>>1$ such that, 
\begin{align*}
    \left||1+v|-1\right|^2\leq C F(|1+v|^2),
\end{align*}
for all $|1+v|^2\geq R_0$. For $1+\delta\leq |1+v|^2\leq R_0$, it suffices to notice that $F$ is bounded from above and below away from $0$ to conclude that there exists $C_h>0$ such that
\begin{equation*}
\int_{\R^3}\left||1+v|-1\right|^2\mathbf{1}_{\{|1+v|^2\geq 1+\delta\}}\dd x\leq C_h  \int_{\R^3}F(|1+v|^2)\mathbf{1}_{\{|1+v|^2\geq 1+\delta\}}\dd x
\end{equation*}
by Assumption \ref{ass:global3D}. Let $C:=\max\{C_l,C_h\}$.
It remains to bound the negative part of $F$. One has 
\begin{equation*}
\supp(F_{-}(|1+v|^2))\subset \{|1+v|^2\leq \rho_2\}\subset \{|1+v|^2<1-\delta\}.
\end{equation*}
If $v$ is in the latter set, then necessarily $\RE(v)\in (-1-\sqrt{1-\delta},-1+\sqrt{1-\delta})$. In particular,
\begin{equation*}
    \{|1+v|^2<1-\delta\}\subset \{|\RE(v)|>\eta, \,\, \text{with} \, \eta:=1-\sqrt{1-\delta}\},
\end{equation*}
from which we conclude
\begin{equation*}
     \int_{\R^3}\left(\left||1+v|-1\right|^2+CF_{-}(|1+v|^2)\right)\mathbf{1}_{\{|1+v|^2\leq 1-\delta\}}\dd x\leq \frac{1+C}{\eta^2}\int_{\R^3}\left|\RE(v)\right|^2\dd x.
\end{equation*}
We observe that $\delta,\eta>0$ only depend on $0<\rho_1<1$ (and more precisely $\rho_2$) being the root of $f$ closest to but smaller than $1$. The expansion \eqref{eq:Taylor F} which is determined by $\alpha_1$ and $g$ guaranties that $f$ has an isolated root in $1$. Hence, there exists $C_0=C_0(\eta)>0$ such that the claim follows.
\end{proof}
\begin{remark}
    Note that in the case of a competing power-type nonlinearity \eqref{eq:competing-scaled} the constant $C_0>0$ only depends on $\alpha_1,\alpha_2$ and $a_2$ satisfying $\frac{\alpha_1}{\alpha_2}>a_2>1$.
\end{remark}
\begin{lemma}\label{lem:bound M}
Let $f$ satisfy Assumption \ref{ass:global3D}, $v_0\in \Fone(\R^3)$ such that $\RE(v_0)\in L^2(\R^3)$ and $v\in C([0,T^{\ast});\Fone)$ be the unique maximal solution to \eqref{eq:c1} with initial data $v_0$. Then there exists $C>0$ such that
\begin{equation*}
    M(1+v)(t)\leq  \eul^{C t} M(1+v_0))
\end{equation*}
for all $t\in [0,T^{\ast})$. In particular, there exists $D=D(\En(1+v_0),\|\RE(v_0)\|_{L^2}^2)>0$ such that 
\begin{equation*}
\En(1+v)(t)\leq D \eul^{C t}.
\end{equation*}
for all $t\in [0,T^{\ast})$.
\end{lemma}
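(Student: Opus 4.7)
The plan is to derive a differential inequality of the form $\frac{d}{dt}\|\RE(v)\|_{L^2}^2 \leq CM(1+v)$ and then conclude by Gronwall, using that $\mathcal H$ is conserved. To justify the pointwise-in-time computation, I will first approximate $v_0$ by smooth data $v_0^n$ via convolution with a standard mollifier; this preserves the condition $\RE(v_0^n)\in L^2(\R^3)$ and yields $\Delta v_0^n\in L^2(\R^3)$, so that Lemma \ref{lem: H23D} provides classical solutions $v^n$ along which the following manipulations are rigorous. The inequality will then be transferred to the general solution through the continuous dependence in Proposition \ref{prop:LWP3d}. Taking real parts of \eqref{eq:c1} yields $\partial_t\RE(v)=-\frac12\Delta\IM(v)+f(|1+v|^2)\IM(v)$, whence
\begin{equation*}
\frac{d}{dt}\|\RE(v)\|_{L^2}^2 = \int_{\R^3}\nabla\RE(v)\cdot\nabla\IM(v)\dd x + 2\int_{\R^3}f(|1+v|^2)\RE(v)\IM(v)\dd x.
\end{equation*}

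The first term is bounded by $\tfrac12\|\nabla v\|_{L^2}^2\leq CM(1+v)$ via the coercivity \eqref{eq:bound-EM}. For the nonlinear term I split space into $A=\{|1+v|\leq 2\}$ and $B=\{|1+v|>2\}$. On $A$ the continuity of $f$ at $0$, together with $f(1)=0$ and $f\in C^1((0,\infty))$, gives the pointwise bound $|f(|1+v|^2)|\leq C||1+v|^2-1|\leq C||1+v|-1|$; since $|\IM(v)|\leq|v|\leq 3$ in $A$, Cauchy-Schwarz then produces $\int_A|f(|1+v|^2)\RE(v)\IM(v)|\dd x\leq C\sqrt{\En(1+v)}\,\|\RE(v)\|_{L^2}\leq CM(1+v)$. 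On $B$ one has $|v|\leq 2|1+v|$ and (K2) with $\alpha_1<2$ delivers the pointwise estimate $|f(|1+v|^2)\RE(v)\IM(v)|\leq C|1+v|^{2(\alpha_1+1)}$.

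The control of $\int_B|1+v|^{2(\alpha_1+1)}\dd x$ is the crux of the argument, and exploits Assumption \ref{ass:global3D} fully: since $\alpha_2<\alpha_1$, a direct computation of $F(\rho)=\int_1^\rho f$ yields $F(\rho)\geq c\,\rho^{\alpha_1+1}$ for $\rho\geq R_0$ with $R_0$ large, while the negative part $F_-$ is supported on $\{|1+v|^2\leq\rho_2\}$, disjoint from $B$. Conservation of $\mathcal H(1+v)=\tfrac12\|\nabla v\|_{L^2}^2+\int F_+\dd x-\int F_-\dd x$ together with the bound $\int F_-\dd x\leq C\|\RE(v)\|_{L^2}^2$ extracted from the proof of Lemma \ref{lem:EM} gives $\int F_+\dd x\leq CM(1+v)$; this handles the far region $\{|1+v|^2\geq R_0\}$, while the transition region $\{2<|1+v|<\sqrt{R_0}\}$ has Lebesgue measure controlled by $\En(1+v)\leq CM(1+v)$ via \eqref{eq:Cheby}. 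Assembling all estimates gives $\frac{d}{dt}M(1+v)\leq C M(1+v)$, Gronwall yields the first conclusion, and the bound on $\En$ then follows from \eqref{eq:bound-EM}. The main obstacle is precisely the $B$ contribution: a naive Sobolev $L^6$ estimate would only produce a bound of order $M^3$, insufficient for Gronwall, and the coercivity of $F$ at infinity together with the disjointness of $B$ from $\supp(F_-)$ is what allows one to trade the super-linear growth against the conserved Hamiltonian.
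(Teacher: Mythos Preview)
Your proposal is correct and follows essentially the same strategy as the paper's proof: approximate by regular data, compute $\frac{d}{dt}\|\RE(v)\|_{L^2}^2$, bound the gradient term directly and the nonlinear term by splitting into regions, exploit the coercivity $F(\rho)\gtrsim \rho^{\alpha_1+1}$ for large $\rho$ together with $\int F_-\leq C\|\RE(v)\|_{L^2}^2$ to close via Gronwall. The only cosmetic difference is that the paper uses a three-way split $\{|1+v|^2\leq 1-\delta\}$, $\{||1+v|^2-1|<\delta\}$, $\{|1+v|^2\geq 1+\delta\}$ and routes the near-$1$ contribution through $F$ via \eqref{F:convex}, whereas your two-region split $A=\{|1+v|\leq 2\}$, $B=\{|1+v|>2\}$ handles the bounded region in one stroke via Cauchy--Schwarz against $\||1+v|-1\|_{L^2}$ and the coercivity \eqref{eq:bound-EM}; this is slightly more direct but equivalent in substance.
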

\begin{proof}
First, let $v_0\in \Fone$, i.e. $\psi_0:=1+v_0\in \Eb(\R^3)$ and $\RE(v_0)\in L^2(\R^3)$, such that $\Delta v_0\in L^2(\R^3)$, then $\psi=1+v\in C([0,T^{\ast});\Eb(\R^3))$ and $\Delta v\in C([0,T];L^2(\R^3))$ for all $0<T<T^{\ast}$ by virtue of Theorem \ref{thm:mainlocal}. It follows that
\begin{equation*}
    \frac{\dd}{\dd t}M(\psi)(t)=C_0\frac{\dd}{\dd t}\int_{\R^3}\left|\RE(v)\right|^2\dd x,
\end{equation*}
where we exploited that $\mathcal{H}(\psi)(t)=\mathcal{H}(\psi_0)$ for all $t\in [0,T]$ from (4) Theorem \ref{thm:mainlocal}. Therefore,
\begin{equation*}
    \begin{aligned}
    &\frac{\dd}{\dd t}\int_{\R^3}\left|\RE(v)\right|^2\dd x
    =-2\int_{\R^3}\RE(v)\IM(\Delta v)\dd x+2\int_{\R^3}f(|1+v|^2)\RE(v)\IM(1+v)\dd x\\
    &\leq \int_{\R^3}\left|\nabla v\right|^2\dd x+2\int_{\R^3}f(|1+v|^2)\RE(v)\IM(v)\dd x,
    \end{aligned}
\end{equation*}
upon integrating by parts and using Young's inequality. The second term is decomposed as
\begin{multline*}
    2\int_{\R^3}f(|1+v|^2)\RE(v)\IM(v)\dd x=2\int_{\R^3}f(|1+v|^2)\IM(v)\RE(v)\mathbf{1}_{\{|1+v|^2\leq 1-\delta\}}\dd x\\
    +2\int_{\R^3}f(|1+v|^2)\IM(v)\RE(v)\mathbf{1}_{\{||1+v|^2-1|<\delta\}}\dd x\\
    +2\int_{\R^3}f(|1+v|^2)\IM(v)\RE(v)\mathbf{1}_{\{|1+v|^2\geq 1+\delta\}}\dd x
    =:I_1+I_2+I_3,
\end{multline*}
with $\delta\in(0,1)$ such that \eqref{eq:Taylor F} is valid for $||1+v|^2-1|\leq \delta$. We treat of the terms separately. Note that on $\{|1+v|^2\leq 1-\delta\}$ one has $\RE(v)\in (-1-\sqrt{1-\delta},-1+\sqrt{1-\delta})$.
Hence, for $\eta=1-\sqrt{1-\delta}$ we obtain
\begin{equation*}
    |I_1| \leq \frac{C}{\eta^2}\int_{\R^3}\left|\RE(v)\right|^2 \dd x.
\end{equation*}
Upon using the local Lipschitz property of $f$ and $f(1)=0$ and Cauchy-Schwarz followed by Young inequality, one has
\begin{equation*}
\begin{aligned}
        |I_2|&\leq C\int_{\R^3}(|1+v|^2-1)|\RE(v)|\mathbf{1}_{\{||1+v|^2-1|<\delta\}}\dd x\\
    &\leq C\left(\int_{\R^3}(|1+v|^2-1)^2\mathbf{1}_{\{||1+v|^2-1|<\delta\}} \dd x+\|\RE(v)\|_{L^2}^2\right)\\
    &\leq C\int_{\R^3}F(|1+v|^2)\mathbf{1}_{\{||1+v|^2-1|<\delta\}}\dd x+C\|\RE(v)\|_{L^2}^2,
    \end{aligned}
\end{equation*}
where we used \eqref{F:convex} in the last inequality.
It remains to control $I_3$. By virtue of Assumption \ref{ass:global3D}, we have that $F(\rho)>0$ for all $\rho>1$ and there exist $C>0$, $R_0>1$ such that $F(\rho)\geq C\rho^{1+\alpha_1}$ for all $\rho\geq R_0$. It follows that
\begin{equation*}
    |I_3|\leq \frac{CR_0^{1+\alpha_1}}{m}\int_{\R^3}F(|\psi|^2)\mathbf{1}_{\{1+\delta\leq |\psi|^2\leq R_0\}}\dd x+C\int_{\R^3}F(|\psi|^2)\mathbf{1}_{\{|\psi|^2\geq R_0\}}\dd x,
\end{equation*}
where $m=\displaystyle\min_{\rho\in[1+\delta,R_0]}F(\rho)>0$. We conclude that there exists $C>0$ such that 
\begin{equation*}
\frac{\dd}{\dd t}M(t)\leq CC_0\left(\mathcal{H}(1+v)(t)+\int_{\R^3}F_{-}(|1+v|^2)\dd x+\|\RE(v)\|_{L^2}^2\right).
\end{equation*}
Further, using that $\supp(F_{-})\subset \{|1+v|^2<1-\delta\}\subset \{|\RE(v)|>\eta\}$, we infer
\begin{equation*}
    \int_{\R^3}F_{-}(|1+v|^2)\dd x\leq \frac{C}{\eta^2}\|\RE(v)\|_{L^2}^2.
\end{equation*}
Finally, there exists $C>0$ such that 
\begin{equation*}
\frac{\dd}{\dd t}M(t)\leq C M(t). 
\end{equation*}
Gronwall's Lemma then yields
\begin{equation*}
    M(1+v)(t)\leq \eul^{C t}M(1+v)(0),
    \end{equation*}
and from Lemma \ref{lem:EM} we infer that there exists $D=D(\En(1+v_0), \|\RE(v_0)\|_{L^2}^2)>0$ with
\begin{equation*}
    \En(1+v)(t)\leq D\eul^{C t}.
    \end{equation*}
 The statement follows by approximation and the continuous dependence on the initial data provided by Lemma \ref{lem:approx} and Theorem \ref{thm:mainlocal} respectively.
\end{proof}
Global existence then follows from Lemma \ref{lem:bound M} and Theorem \ref{thm:mainlocal} by means of the blow-up alternative,  completing the proof of Theorem \ref{thm:main3D}. 
\begin{remark}\label{rem:smallness}
While our proof of global well-posedness in the case of non-sign-definite total energy $\Hc$ does not require a smallness condition, more decay of $\RE(v_0)$ than provided by $v_0\in \Fone(\R^3)$ is assumed, namely $\RE(v_0)\in L^2(\R^3)$. The finite energy assumption only yields $v_0\in L^6(\R^3)$ and $|v|^2+2\RE(v_0)\in L^2(\R^3)$. 
\\
Under Assumption \ref{ass:global3D} and instead of $\RE(v_0)\in L^2(\R^3)$, one may alternatively assume that the initial data are such that $\Hc(1+v_0)$ and $\|\nabla\RE(v_0)\|_{L^2}^2$ are sufficiently small adapting \cite[Lemma 3.2]{Killip16} stated for cubic-quintic nonlinearities \eqref{eq:cubicquintic}. Moreover, as pointed out in \cite[Remark p. 2683]{Killip18} the same argument yields small data global well-posedness for the cubic-quintic nonlinearity where the quintic part is focusing and the cubic part defocusing, hence for $F$ being unbounded from below. Inspired, by this observation and the classical small data global well-posedness in $H^1$ for NLS eq, see e.g. \cite[Chapter 3.4]{Tao}, we prove that \eqref{eq:NLS3d} is globally well-posed in the energy space for small data provided that Assumption \ref{ass:D} holds.
\end{remark}
\begin{proposition}
    If Assumption \ref{ass:D} is satisfied, then there exists $\varepsilon>0$ only depending on $\delta>0$ as in \eqref{F:convex} such that if $\mathcal{H}(1+v_0)\leq \frac{1}{4}\eps$ and $\|\nabla v_0\|_{L^2}^2\leq \eps$, then the unique solution provided by Proposition \ref{prop:LWP3d} is global. 
\end{proposition}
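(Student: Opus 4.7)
The plan is a continuity/bootstrap argument on $\|\nabla v(t)\|_{L^2}^2$, after reducing to the case $c=1$ via the phase rotation \eqref{eq:c1}, so that $\psi(t)=1+v(t)$ with $v\in C([0,T^{\ast});\Fone(\R^3))$ given by Lemma \ref{lem:affine}. The key input is Assumption \ref{ass:D}: by \eqref{F:convex} the nonlinear potential $F(|\psi|^2)$ is non-negative and comparable to $(|\psi|^2-1)^2$ whenever $||\psi|^2-1|\leq\delta$, while away from that region $F$ may be negative. The smallness of $\|\nabla v\|_{L^2}$ combined with the 3D Sobolev embedding $\dot H^1(\R^3)\hookrightarrow L^6(\R^3)$ will render both the measure of the bad region and the contribution of $F$ on it negligibly small, of order $\eps^3$.

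Concretely, for any $t$ with $\|\nabla v(t)\|_{L^2}^2\leq 2\eps$ I split
\begin{equation*}
\int_{\R^3} F(|\psi(t)|^2)\,\dd x = I_1(t) + I_2(t),
\end{equation*}
where $I_1(t)$ and $I_2(t)$ denote the integrals over $\{|v(t)|\leq \delta/4\}$ and its complement, respectively. On the first set $||\psi|^2-1|\leq |v|^2+2|v|\leq\delta$, so \eqref{F:convex} yields $I_1(t)\geq 0$. On the second set, \emph{(K2)} of Assumption \ref{ass:N} gives $|F(|\psi|^2)|\leq C(1+|v|^{2(\alpha+1)})$, and because $\alpha<2$ the pointwise inequality $|v|^{2(\alpha+1)-6}\leq(\delta/4)^{2\alpha-4}$ on $\{|v|>\delta/4\}$ together with Chebychev control both $|\{|v|>\delta/4\}|$ and $\int_{\{|v|>\delta/4\}}|v|^{2(\alpha+1)}\,\dd x$ by $C(\delta)\|v\|_{L^6}^6$. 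Sobolev then delivers
\begin{equation*}
|I_2(t)|\leq C(\delta)\|\nabla v(t)\|_{L^2}^6\leq 8\,C(\delta)\,\eps^3.
\end{equation*}

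Now let $T\in(0,T^{\ast}]$ be the supremum of times on which $\|\nabla v(\cdot)\|_{L^2}^2\leq 2\eps$; $T>0$ since the hypothesis supplies $\|\nabla v(0)\|_{L^2}^2\leq\eps$ and $t\mapsto\|\nabla v(t)\|_{L^2}$ is continuous by Theorem \ref{thm:mainlocal}(2) and Lemma \ref{lem:affine}. The conservation of the Hamiltonian (Theorem \ref{thm:mainlocal}(4)), together with $I_1\geq 0$ and the previous bound, gives for every $t\in[0,T)$
\begin{equation*}
\tfrac12\|\nabla v(t)\|_{L^2}^2 = \Hc(\psi_0) - I_1(t) - I_2(t) \leq \tfrac{\eps}{4} + 8\,C(\delta)\,\eps^3.
\end{equation*}
Choosing $\eps>0$ small enough, depending only on $\delta$ (hence only on $f$), so that $16\,C(\delta)\,\eps^2\leq \tfrac12$, I obtain $\|\nabla v(t)\|_{L^2}^2\leq\eps$ on $[0,T)$, which strictly improves the bootstrap; continuity then forces $T=T^{\ast}$. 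Inserting this bound back into the split, \eqref{F:convex} bounds $\int_{\{|v|\leq\delta/4\}}(|\psi|^2-1)^2\,\dd x$ by $C\,I_1\leq C\eps$, while the Sobolev estimate bounds $\int_{\{|v|>\delta/4\}}(|\psi|^2-1)^2\,\dd x\leq C\int_{\{|v|>\delta/4\}}(|v|^4+|v|^2)\,\dd x$ by $C(\delta)\eps^3$. Hence $\En(\psi(t))$ stays uniformly bounded on $[0,T^{\ast})$ by Lemma \ref{lem:metric}, and the blow-up alternative of Theorem \ref{thm:mainlocal}(1) forces $T^{\ast}=+\infty$.

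The membership $\psi_0\in\Eb(\R^3)$ is a consequence of the same split applied at $t=0$: $\nabla\psi_0\in L^2$ provides, via \cite{Hormander}, a constant $c$ with $\psi_0-c\in\dot H^1(\R^3)$; \eqref{F:convex} on the region $\{|\psi_0-c|\leq \delta/4\}$, combined with $F(|\psi_0|^2)\in L^1$ (embedded in $\Hc(\psi_0)<\infty$), gives $(|\psi_0|^2-1)^2\in L^1$ there, the complementary region has finite measure by Chebychev, and $|c|=1$ is forced since $1$ is the only root of $f$ in a neighbourhood of $|c|$ under Assumption \ref{ass:D}. The main obstacle in this scheme is the complete absence of any mass-type conserved quantity that could absorb the sign-indefinite contributions of $F$ away from $1$: they must be tamed purely through Sobolev smallness, which is possible only because in $d=3$ one controls $\|v\|_{L^6}$ by $\|\nabla v\|_{L^2}$ alone.
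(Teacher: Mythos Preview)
Your proof is correct and follows the same strategy as the paper: bound the sign-indefinite part of $\int F(|\psi|^2)\,\dd x$ by $C(\delta)\|\nabla v\|_{L^2}^6$ via the 3D Sobolev embedding, then close a continuity argument using conservation of $\Hc$. The only difference is cosmetic: you split on $\{|v|\leq\delta/4\}$ versus its complement, whereas the paper splits on $\{||1+v|^2-1|<\delta\}$ and then observes that on the complementary regions either $|\RE(v)|$ or $|\IM(v)|$ is bounded below by some $\eta(\delta)>0$, so that $\|\RE(v)\|_{L^6}^6$ (resp.\ $\|\IM(v)\|_{L^6}^6$) controls the bad contribution. Your splitting on $|v|$ directly is slightly more economical and avoids the separate treatment of real and imaginary parts; the paper's version has the minor advantage (noted there) that when $\supp(F_{-})\subset[0,1)$ one only needs $\|\nabla\RE(v_0)\|_{L^2}$ small rather than the full $\|\nabla v_0\|_{L^2}$. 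Either way the mechanism is identical.
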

This proves Theorem \ref{thm: smalldata GWP}. If $\supp(F_{-})\subset [0,1)$, it suffices to assume $\|\nabla \RE(v_0)\|_{L^2}^2$ small instead of  $\|\nabla v_0\|_{L^2}^2$ small. 
\begin{proof}
First we show that under the given assumptions one has $\En(1+u_0)\leq C{\eps}$ and second a continuity argument then yields that $\En(1+v)(t)$ remains bounded for all times. 
We claim that there exists $\eps>0$ such that if $\|\nabla v_0\|_{L^2}^2\leq \eps$ and $\mathcal{H}(1+v_0)\leq\frac{\varepsilon}{4}$, then $v_0\in \Fone$ and 
\begin{equation}\label{eq:energy threshold}
    \En(1+v_0)\leq C\left(\Hc(1+v_0)+\|\nabla v_0\|_{L^2}^2\right)=C\eps.
\end{equation}
The inequality is proven arguing as in Lemma \ref{lem:EM}. Indeed, instead of relying on the bound $\RE(v_0)\in L^2(\R^3)$, one exploits the bound 
\begin{equation*}
    \|\RE(v_0)\|_{L^6}^6\leq C \|\nabla \RE(v_0)\|_{L^2}^6
\end{equation*}
together with $|\RE(v)|\geq \eta>0$ for some $\eta>0$ whenever $|1+v|^2\in \supp(F_{-}\mathbf{1}_{\{|1+v|^2<1\}})$, where $\eta$ depends on $\delta>0$ as in \eqref{F:convex}. This yields
\begin{equation*}
    \left|\int_{\R^3}F_{-}(|1+v_0|^2)\mathbf{1}_{\{|1+v_0|^2<1-\delta\}}\mathrm{d}x\right|\leq \frac{C}{\eta^6}\|\RE(v_0)\|_{L^6}^6\leq \frac{C}{\eta^6}\|\nabla\RE(v_0)\|_{L^2}^6 \leq \frac{1}{8} \|\nabla\RE(v_0)\|_{L^2}^2
\end{equation*}
provided that $\|\nabla\RE(v_0)\|_{L^2}<< \eta^\frac{3}{2}$. Similarly, there exists $\nu>0$ only depending on $\delta>0$ as in \eqref{F:convex} such that in $\supp(F \mathbf{1}_{\{|1+v|^2>1}\})$, we have $|\RE(v_0)|>\nu$ or $|\IM(v_0)|>\nu$. Hence, 
\begin{equation*}
    \left|\int_{\R^3}F(|1+v_0|^2)\mathbf{1}_{\{|1+v_0|^2>1+\delta\}}\mathrm{d}x\right|\leq \frac{1}{8} \|\nabla v_0\|_{L^2}^2.
\end{equation*}
The inequality \eqref{eq:energy threshold} follows. Along the same lines one proves that 
\begin{align*}
    \En(1+v)(t)&\leq C\Hc(1+v)(t)+C'\|\nabla v\|_{L^2}^6\\
    &\leq C\Hc(1+v_0)+C'\En(1+v)^6(t)=\frac{C}{4}\eps+C'\En(1+v)^6(t).
\end{align*}
Provided that $\varepsilon>0$ is sufficiently small, a continuity argument yields that $\En(1+v)(t)$ remains bounded, hence by virtue of the blow-up alternative stated in Proposition \ref{prop:LWP3d} global existence follows.
\end{proof}
\section{Lipschitz continuity of the solution map}\label{sec:Lipschitz}
In this section, we provide the proof of Theorem \ref{thm:lipschitz}. Namely, we show that provided $f$ satisfies \eqref{ass:lipschitz} in addition to Assumption \ref{ass:N}, the solution map is Lipschitz continuous on bounded sets of $\Eb(\R^d)$.
\begin{proof}[Proof of Theorem \ref{thm:lipschitz}]
Let $R>0$ and $\psi_0^1,\psi_0^2\in \Eb(\R^d)$ such that $\En(\psi_0^i)\leq R$ for $i=1,2$. Then, for all $0<T<T^{\ast}(\mathcal{O}_{R})$ there exists $M>0$ such that the unique maximal solutions $\psi_1,\psi_2\in C([0,T];\Eb(\R^d))$ satisfy
\begin{equation*}
    \Zt(\psi_1)+\Zt(\psi_2)\leq M,
\end{equation*}
with $\Zt$ defined in \eqref{eq:Zt}. By virtue of \eqref{ineq:triangular}, it follows that
\begin{equation}\label{eq:triangular ineq}
\begin{aligned}
    &d_{\Eb}(\psi_1(t),\psi_2(t))
    \leq C(1+M) d_{\Eb}(\eitD \psi_0^1,\eitD\psi_0^2) \\
    &+ C(1+M) \left\|-i\int_0^t\eul^{\frac{\imu}{2}(t-s)\Delta}\left(\mathcal{N}(\psi_1(s))-\mathcal{N}(\psi_2(s))\right)\dd s \right\|_{L^{\infty}([0,T];H^1(\R^3))}\\
    &\leq C(1+M) d_{\Eb}(\psi_0^1,\psi_0^2)+ C(1+M) \left\|\mathcal{N}(\psi_1)-\mathcal{N}(\psi_2)\right\|_{N^1([0,T]\times \R^d)},
\end{aligned}
\end{equation}
where we used \eqref{eq:stabilitylinsol} to control the distance of the free solutions and the Strichartz estimate \eqref{eq:Strichartznonh} to control the nonlinear flow. Lemma \ref{coro:nonlinear flow} and Lemma \ref{lem:CDOID3D} for $d=2,3$ respectively yield that 
\begin{equation}\label{eq: Lipschitz N}
    \left\|\mathcal{N}(\psi_1)-\mathcal{N}(\psi_2)\right\|_{N^0([0,T]\times \R^d)}\leq C(1+M+M^{2\alpha}) T^{\theta}\sup_{t\in[0,T]}d_{\Eb}(\psi_1(t),\psi_2(t)).
\end{equation}
It remains to control $\nabla\mathcal{N}(\psi_1)-\nabla\mathcal{N}(\psi_2)$ in $N^0([0,T]\times \R^d)$. To that end, we recall that $\nabla\mathcal{N}(\psi_i)$ can be decomposed by means of the functions $G_{\mathrm{bd}}(\psi_i), G_{\mathrm{int}}(\psi_i)$ defined in \eqref{eq: Ginfty Gq}.
One has that 
\begin{equation}\label{eq: grad N Lipschitz}
\begin{aligned}
    &\left\|\nabla\mathcal{N}(\psi_1)-\nabla\mathcal{N}(\psi_2)\right\|_{N^0([0,T]\times \R^d)}\\
    &\leq \||G_{\mathrm{bd}}(\psi_1)||\nabla\psi_1-\nabla\psi_2|\|_{L^{\infty}([0,T];L^{2}(\R^d))}+\||G_{\mathrm{int}}(\psi_1)||\nabla\psi_1-\nabla\psi_2|\|_{N^0([0,T]\times \R^d)}\\
    &+\left\||G_{\mathrm{bd}}(\psi_1)-G_{\mathrm{bd}}(\psi_2)||\nabla\psi_2|\right\|_{N^0([0,T]\times \R^d)}+\left\||G_{\mathrm{int}}(\psi_1)-G_{\mathrm{int}}(\psi_2)||\nabla\psi_2|\right\|_{N^0([0,T]\times \R^d)}.
\end{aligned}
\end{equation}
Note that \eqref{eq: estG} yields that
\begin{equation*}
    |G_{\mathrm{bd}}(\psi_1)|\leq C, \quad \text \quad |G_{\mathrm{int}}(\psi_1)|\leq C(1+|\psi_1|^{2\alpha}).
\end{equation*}
Further, \eqref{ass:lipschitz} yields that $G_{\mathrm{bd}}$ and $ G_{\mathrm{int}}$ are locally Lipschitz, namely,
\begin{equation}
    \begin{aligned}
    \left|G_{\mathrm{bd}}(\psi_1)-G_{\mathrm{bd}}(\psi_2)\right|&\leq C \left||\psi_1|-|\psi_2|\right|,\\
    \left|G_{\mathrm{int}}(\psi_1)-G_{\mathrm{int}}(\psi_2)\right|&\leq C\left(1+|\psi_1|^{2\beta}+|\psi_2|^{2\beta}\right) \left||\psi_1|-|\psi_2|\right|,
    \end{aligned}
\end{equation}
wit $\beta=\max\{0,\alpha-\frac12\}$. As $|\psi_i|\geq 1$ on the support of $G_{\mathrm{int}}(\psi_i)$, we may assume in the following that $\beta \geq 1$.
\\
In the following, we distinguish to cases.
\\
\textbf{Case 1: $d=2$:} 
Consider the admissible pair $(q_1,r_1))=(\frac{2(\alpha+1)}{\alpha},2(\alpha+1))$, see also \eqref{eq:q1r1}. To bound the first line on the right hand side of \eqref{eq: grad N Lipschitz}, we observe that
\begin{equation*}
    \||G_{\mathrm{bd}}(\psi_1)||\nabla\psi_1-\nabla\psi_2|\|_{L^{1}([0,T];L^{2}(\R^2))}\leq C T \|\nabla\psi_1-\nabla\psi_2\|_{L^{\infty}([0,T];L^2(\R^2))},
\end{equation*}
and 
\begin{equation*}
    \||G_{\mathrm{int}}(\psi_1)||\nabla\psi_1-\nabla\psi_2|\|_{N^0([0,T]\times \R^2)}\leq T^{\frac{1}{q_1'}}\Zt(\psi_1)^{2\alpha}\|\nabla\psi_1-\nabla\psi_2\|_{L^{\infty}([0,T];L^2(\R^2))}.
\end{equation*}
To bound the first term of the second line on the right hand side of \eqref{eq: grad N Lipschitz}, one has
\begin{align*}
    &\left\||G_{\mathrm{bd}}(\psi_1)-G_{\mathrm{bd}}(\psi_2)||\nabla\psi_2|\right\|_{N^0([0,T]\times \R^2)}\leq C \left\||\psi_1|-|\psi_2||\nabla\psi_2| \right\|_{L^{\frac{4}{3}}([0,T];L^{\frac{4}{3}}(\R^2))}\\
    &\leq T^{\frac12} \left\||\psi_1|-|\psi_2| \right\|_{L^{\infty}([0,T];L^2(\R^2)}\|\nabla\psi_2\|_{L^4([0,T];L^4(\R^2))}\\
    &\leq T^{\frac{1}{2}}\left(1+T+T^{\frac{1}{q_1'}}\Zt(\psi_1)^{2\alpha}\right)\Zt(\psi)  \left\||\psi_1|-|\psi_2| \right\|_{L^{\infty}([0,T];L^2(\R^2))},
\end{align*}
where we used the Strichartz estimates \eqref{eq:Strichartz gradient}, \eqref{eq:Strichartznonh} and \eqref{eq:boundgradN} in the last inequality.
To bound the second term of the line on the right hand side of \eqref{eq: grad N Lipschitz}, we have that
\begin{align*}
    &\left\||G_{\mathrm{int}}(\psi_1)-G_{\mathrm{int}}(\psi_2)||\nabla\psi_2|\right\|_{N^0([0,T]\times \R^2)}\\
    &\leq C \left\|\left(1+|\psi_{1,\mathrm{int}}|^{2\beta}+|\psi_{2,\mathrm{int}}|^{2\beta}\right)\left||\psi_1|-|\psi_2|\right|\nabla\psi_2| \right\|_{N^0([0,T]\times \R^2)}\\
    &\leq \left(T^{\frac12}\|\nabla\psi\|_{L^4L^4}
    + T^{\frac13}\left(\||\psi_{1,\mathrm{int}}|^{2\beta}+|\psi_{1,\mathrm{int}}|^{2\beta}\|_{L_t^{\infty}L_x^6}\right)\|\nabla\psi\|_{L_t^3L_x^6}\right)\left\||\psi_1|-|\psi_2| \right\|_{L_{t}^{\infty}L_x^2}\\
    &\leq \left(T^{\frac{1}{2}}+T^{\frac{1}{3}}\left(\Zt(\psi_1)^{2\beta}+\Zt(\psi_2)^{2\beta}\right)\right)
    \left(1+T+T^{\frac{1}{q_1'}}\Zt(\psi_1)^{2\alpha}\right)\Zt(\psi) \\
    &\qquad \cdot\left\||\psi_1|-|\psi_2| \right\|_{L_t^{\infty}L_x^2}
\end{align*}
where we used the Strichartz estimates \eqref{eq:Strichartz gradient}, \eqref{eq:Strichartznonh} and \eqref{eq:boundgradN} in the last inequality.
Combining the above estimates, we obtain that there exists a sufficiently small $T_1=T_1(M)>0$ so that 
\begin{equation*}
    d_{\Eb}\left(\psi_1(t), \psi_2(t)\right)\leq C(1+M) d_{\Eb}(\psi_0^1,\psi_0^2) 
\end{equation*}
for all $t\in [0,T_1]$. Note that $T_1$ only depends on $M$, one may hence iterate the procedure $N:=\lceil \frac{T}{T_1}\rceil$ times to cover the time interval $[0,T]$. This completes the case $d=2$.
\\
\textbf{Case 2: $d=3$.} The proof for $d=3$ follows the same lines upon modifying the space-time norms so that the pairs of exponents are Strichartz admissible for $d=3$. In particular, one relies on the endpoint Strichartz estimate \eqref{eq:Strichartz gradient} to bound $\nabla\psi_2\in L^2([0,T];L^6(\R^3))$.
\end{proof}
If the solutions are global, i.e. $T^{\ast}(\mathcal{O}_{ R})=+\infty$, then Theorem \ref{thm:lipschitz} extends to the following.
\begin{corollary}\label{coro:lipschitz}
Under the Assumptions of Theorem \ref{thm:lipschitz}, if in addition $f$ is such that \eqref{eq:NLS} is globally well-posed then for any $R>0$, $T>0$, there exists $C>0$ such that for all  $\psi_0^i\in\Eb(\R^d)$, where $i=1,2$, with $\En(\psi_i)\leq R$ the respective unique solutions $\psi_i\in C(\R,\Eb(\R^d))$ satisfy \eqref{eq:Lipschitz main}.
\end{corollary}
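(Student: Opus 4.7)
The plan is to deduce the global Lipschitz estimate by iterating the local Lipschitz estimate of Theorem~\ref{thm:lipschitz}. The key ingredient is that the global well-posedness proofs in Theorems~\ref{thm:main},~\ref{thm:main3D} and~\ref{thm: smalldata GWP} all supply a uniform control of the energy along the flow on any compact time interval $[0,T]$, depending only on $T$ and the initial-energy bound $R$. The rest of the argument is a finite iteration on a partition of $[0,T]$.

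\textbf{Step 1.} For any $R>0$ and $T>0$, the first task is to produce $M=M(T,R)>0$ such that every global solution $\psi\in C(\R;\Eb(\R^d))$ with $\En(\psi(0))\leq R$ satisfies $\sup_{t\in[0,T]}\En(\psi(t))\leq M$. When $F\geq 0$ (Theorem~\ref{thm:main}), Lemma~\ref{lem:identification Eb} together with the conservation of $\mathcal H$ provides the stronger uniform-in-time bound $\En(\psi(t))\leq g(\mathcal H(\psi_0))$. In the setting of Theorem~\ref{thm:main3D}, Lemma~\ref{lem:bound M} gives $\En(1+v(t))\leq D\,\eul^{CT}$ with $D$ monotone in $R$. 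In the small-data regime of Theorem~\ref{thm: smalldata GWP}, the continuity/trapping argument already yields a uniform bound of the form $\En(\psi(t))\leq C\varepsilon$. Consequently $\Zt(\psi)\leq 2\sqrt M$ on $[0,T]$ for every solution issued from initial data with $\En(\psi_0)\leq R$.

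\textbf{Step 2.} Theorem~\ref{thm:lipschitz}, applied at energy level $M$, furnishes constants $T_1=T_1(M)>0$ and $C_1=C_1(M)>0$ such that for every pair of initial data $\phi_0^1,\phi_0^2\in\Eb(\R^d)$ with $\En(\phi_0^i)\leq M$, the associated solutions satisfy
\[
\sup_{t\in[0,T_1]}d_{\Eb}(\phi_1(t),\phi_2(t))\leq C_1\,d_{\Eb}(\phi_0^1,\phi_0^2).
\]
Partition $[0,T]$ into $N=\lceil T/T_1\rceil$ intervals $I_k=[kT_1,(k+1)T_1]\cap[0,T]$. At each restart time $kT_1$ the solutions $\psi_1,\psi_2$ satisfy $\En(\psi_i(kT_1))\leq M$ by Step~1, so the local Lipschitz estimate applies on $I_k$ to the data $(\psi_1(kT_1),\psi_2(kT_1))$. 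Concatenating the $N$ inequalities yields
\[
\sup_{t\in[0,T]}d_{\Eb}(\psi_1(t),\psi_2(t))\leq C_1^{N}\,d_{\Eb}(\psi_0^1,\psi_0^2),
\]
and setting $C:=C_1^N$, which depends only on $R$ and $T$, delivers \eqref{eq:Lipschitz main}.

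The only substantive step is the uniform energy bound of Step~1: it rests on the specific structure of each global well-posedness result (sign-definiteness of $\mathcal H$, the exponential-in-time Gronwall bound of Lemma~\ref{lem:bound M}, or the smallness trapping). Once this is in place the iteration is essentially mechanical, because both $T_1$ and $C_1$ depend only on $M$ and not on the particular pair of initial data within the energy ball.
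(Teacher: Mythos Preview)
Your argument is correct and is exactly the route the paper intends. The paper does not give a separate proof of the corollary: it is stated immediately after Theorem~\ref{thm:lipschitz} with only the remark that global well-posedness yields $T^{\ast}(\mathcal O_R)=+\infty$, so that Theorem~\ref{thm:lipschitz} applies for every $T>0$. Your Step~2 reproduces the iteration that is already carried out at the end of the paper's proof of Theorem~\ref{thm:lipschitz} (``one may hence iterate the procedure $N:=\lceil T/T_1\rceil$ times''), and your Step~1 makes explicit the one point the paper leaves implicit, namely that the bound $M$ on $\Zt$ can be chosen uniformly over all initial data with $\En(\psi_0)\leq R$, by appealing to the specific energy controls in Lemma~\ref{lem:identification Eb}, Lemma~\ref{lem:bound M}, or the small-data trapping.
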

\section*{Acknowledgements}
P.A. and P.M. acknowledge partial support from INdAM-GNAMPA. 
P.A. is partially supported by PRIN project 20204NT8W4 
"Nonlinear evolution PDEs, fluid dynamics and transport equations: theoretical foundations and applications'' and by the Italian Ministry of University and Research (MUR) through the Excellence Department Project awarded to GSSI, CUP D13C22003740001.
L.E.H. is funded by the Deutsche Forschungsgemeinschaft (DFG, German Research Foundation) – Project-ID 317210226 – SFB 1283.

\bibliographystyle{siam}
\bibliography{bibliomain}  
\end{document}